\documentclass{amsart}

\usepackage{amscd}
\usepackage{amsmath}
\usepackage{amssymb}
\usepackage{amsthm}
\usepackage{arydshln}
\usepackage{bbm}
\usepackage{bm}
\usepackage{colonequals}
\usepackage{color}
\usepackage{enumitem}
\usepackage{latexsym}
\usepackage{mathrsfs}
\usepackage{mathtools}
\usepackage{stmaryrd}
\usepackage{tikz-cd}
\usepackage{varwidth}
\usepackage[all]{xy}
\usepackage{yhmath}
\usepackage{enumitem}

\usepackage{time}

\usepackage[dvipdfmx, bookmarkstype=toc, colorlinks=false, pdfborder={0 0 1}, bookmarks=true, bookmarksnumbered=true]{hyperref}

\newcommand{\temp}{\mathrm{temp}}

\newcommand{\ur}{\mathrm{ur}}

\newcommand{\BT}{\mathrm{BT}}

\newcommand{\lan}{\langle}
\newcommand{\ran}{\rangle}
\newcommand{\ol}{\overline}

\renewcommand{\L}{{}^{L}}

\renewcommand{\-}{\text{-}}

\newcommand{\ringofintegers}{\mathcal{O}}
\newcommand{\padicfield}{k}
\newcommand{\Langlandsparameter}{\phi_{\pi}}
\newcommand{\Langlandsparameternorep}{\phi}

\newcommand{\bbA}{\mathbb{A}}
\newcommand{\C}{\mathbb{C}}
\newcommand{\F}{\mathbb{F}}
\newcommand{\Q}{\mathbb{Q}}
\newcommand{\R}{\mathbb{R}}
\newcommand{\Z}{\mathbb{Z}}


\newcommand{\bfo}{\mathbf{o}}

\newcommand{\bfB}{\mathbf{B}}

\newcommand{\bfG}{\mathbf{G}}
\newcommand{\bfH}{\mathbf{H}}

\newcommand{\bfT}{\mathbf{T}}
\newcommand{\bfU}{\mathbf{U}}

\newcommand{\x}{\mathbf{x}}

\newcommand{\G}{\mathbf{G}}


\newcommand{\mff}{\mathfrak{f}}

\newcommand{\mfp}{\mathfrak{p}}

\newcommand{\mfw}{\mathfrak{w}}

\newcommand{\mcA}{\mathcal{A}}

\newcommand{\mcE}{\mathcal{E}}

\newcommand{\mcH}{\mathcal{H}}

\newcommand{\mcO}{\mathcal{O}}


\newcommand{\biota}{\boldsymbol{\iota}}

\DeclareMathOperator{\cInd}{c-Ind}

\DeclareMathOperator{\diag}{diag}

\DeclareMathOperator{\id}{id}

\DeclareMathOperator{\nInd}{n-Ind}

\DeclareMathOperator{\val}{val}
\DeclareMathOperator{\vol}{vol}

\DeclareMathOperator{\Ad}{Ad}
\DeclareMathOperator{\Artin}{Artin}

\DeclareMathOperator{\Cent}{Cent}

\DeclareMathOperator{\End}{End}
\DeclareMathOperator{\Frob}{Frob}
\DeclareMathOperator{\Gal}{Gal}
\DeclareMathOperator{\Hom}{Hom}

\DeclareMathOperator{\Ind}{Ind}

\DeclareMathOperator{\Lie}{Lie}
\DeclareMathOperator{\LLC}{LLC}

\DeclareMathOperator{\Res}{Res}

\DeclareMathOperator{\Swan}{Swan}
\DeclareMathOperator{\Sym}{Sym}
\DeclareMathOperator{\Tr}{Tr}

\DeclareMathOperator{\GL}{GL}

\DeclareMathOperator{\SL}{SL}
\DeclareMathOperator{\SO}{SO}
\DeclareMathOperator{\Sp}{Sp}

\DeclareMathOperator{\SU}{SU}

\DeclareMathOperator{\Param}{\delta}

\renewcommand{\Re}{{\mathrm{Re}}}

\pagestyle{plain}
 \setlength{\itemsep}{0pt}
 \setcounter{totalnumber}{3}
 \setcounter{topnumber}{1}
 \setcounter{bottomnumber}{3}
 \setcounter{secnumdepth}{3}

\theoremstyle{plain}
\newtheorem{thm}{Theorem}[section]
\newtheorem*{thm*}{Theorem}
\newtheorem{prop}[thm]{Proposition}
\newtheorem{lem}[thm]{Lemma}
\newtheorem{cor}[thm]{Corollary}
\newtheorem{conj}[thm]{Conjecture}

\theoremstyle{definition}

\theoremstyle{remark}
\newtheorem{rem}[thm]{Remark}

\newtheorem*{claim*}{Claim}

\SelectTips{cm}{11}

\title{Simple supercuspidal $L$-packets of split special orthogonal groups over dyadic fields}

\author{Moshe Adrian}
\address{Department of Mathematics, Queens College, CUNY, Queens, NY 11367-1597}
\email{moshe.adrian@qc.cuny.edu}

\author{Guy Henniart}
\address{Universit\'e Paris-Saclay, CNRS, Laboratoire de Math\'ematiques d'Orsay, 91405, Orsay, France.}
\email{Guy.Henniart@math.u-psud.fr}

\author{Eyal Kaplan}
\address{Department of Mathematics, Bar Ilan University, Ramat Gan 5290002, Israel}
\email{kaplaney@gmail.com}

\author{Masao Oi}
\address{Department of Mathematics, National Taiwan University, Astronomy Mathematics Building 5F, No.\ 1, Sec.\ 4, Roosevelt Rd., Taipei 10617, Taiwan}
\email{masaooi@ntu.edu.tw}

\date{\now, \today}

\begin{document}

\begin{abstract}
We consider the split special orthogonal group $\SO_{N}$ defined over a $p$-adic field.
We determine the structure of any $L$-packet of $\SO_{N}$ containing a simple supercuspidal representation (in the sense of Gross--Reeder).
We also determine its endoscopic lift to a general linear group.
Combined with the explicit local Langlands correspondence for simple supercuspidal representations of general linear groups, this leads us to get an explicit description of the $L$-parameter as a representation of the Weil group of $F$.
Our result is new when $p=2$ and our method provides a new proof even when $p\neq2$.
\end{abstract}

\subjclass[2010]{Primary: 11S37, 22E50; Secondary: 11F70, 11F80, 11F85}

\maketitle

\tableofcontents

\section{Introduction}
The aim of this paper is to give a complete description of the local Langlands correspondence for simple supercuspidal representations of split special orthogonal groups.

Let us first briefly recall the local Langlands correspondence.
Suppose that $\G$ is a connected reductive group over a $p$-adic field $F$.
Let $\Pi(\G)$ denote the set of equivalence classes of irreducible admissible representations of $\G(F)$ and $\Phi(\G)$ denote the set of $\hat{\G}$-conjugacy classes of $L$-parameters of $\G$.
Here, recall that an $L$-parameter of $\G$ is a homomorphism $W_{F}\times\SL_{2}(\C)\rightarrow\hat{\G}\rtimes W_{F}$ satisfying certain conditions, where $W_{F}$ is the Weil group of $F$ and $\hat{\G}$ is the Langlands dual group of $\G$ over $\C$.
\textit{The local Langlands correspondence for $\G$}, which is still conjectural in general, asserts that there exists a natural finite-to-one map
\[
\LLC_{\G}\colon\Pi(\G)\rightarrow\Phi(\G).
\]
In other words, it is expected that the set $\Pi(\G)$ can be partitioned into the disjoint union of finite sets $\Pi^{\G}_{\phi}:=\LLC_{\G}^{-1}(\phi)$ (called \textit{$L$-packets}) labelled by $L$-parameters $\phi\in\Phi(\G)$:
\[
\Pi(\G)
=
\bigsqcup_{\phi\in\Phi(\G)}\Pi_{\phi}^{\G}.
\]

The local Langlands correspondence has been established for several specific groups.
Especially, when $\G$ is $\GL_{N}$, the correspondence was constructed by Harris--Taylor \cite{HT01} and the second named author \cite{Hen00}.
Also, for a certain class of classical groups, the correspondence was constructed by Arthur \cite{Art13} (quasi-split symplectic and orthogonal groups) and Mok \cite{Mok15} (quasi-split unitary groups).

When $\G$ is one of these groups (assume that $\G$ is split for simplicity), we can naturally regard $\L{\G}$ as a subgroup of $\GL_{N}(\C)\times W_{F}$ for an appropriate positive integer $N$.
This means that we may think of an $L$-parameter of $\G$ as a representation of $W_{F}\times\SL_{2}(\C)$ equipped with additional structure.
For example, when $\G$ is the split odd special orthogonal group $\SO_{2n+1}$, $\hat{\G}$ is given by $\Sp_{2n}(\C)$ with trivial Galois action.
Thus an $L$-parameter of $\SO_{2n+1}$ is regarded as a $2n$-dimensional symplectic representation of $W_{F}\times\SL_{2}(\C)$.
Given this observation, it is natural to ask the following question:
\begin{quote}
Describe $\LLC_{\G}$ explicitly.
More precisely, for a given $\pi\in\Pi(\G)$,
\begin{enumerate}
\item
describe its $L$-parameter $\phi$ as a representation of $W_{F}\times\SL_{2}(\C)$;
\item
determine the $L$-packet $\Pi^{\G}_{\phi}$ containing $\pi$.
\end{enumerate}
\end{quote}

Here we concentrate on the case of \textit{simple supercuspidal representations} in the sense of Gross--Reeder (see Section \ref{sec:SSC}).
We briefly discuss previous works.

For $\G=\GL_{N}$, $L$-packets are singletons, and the parameter of an irreducible supercuspidal representation $\pi$ of $\GL_{N}(F)$ is an $N$-dimensional irreducible representation $\phi$ of $W_{F}$, taken up to equivalence.
When $\pi$ is a simple supercuspidal representation, Bushnell--Henniart (\cite{BH14}), using the theory of cyclic base change and that of tame base change, explicitly described the projective representation determined by $\phi$.
They also established a characterization of $\pi$ via the $\varepsilon$-factors $\varepsilon(s,\pi\times\chi,\psi)$, for tame characters $\chi$ of $F^{\times}$.
That translates into an analogous characterization of $\phi$, and it suffices to produce such an explicit $\phi$ such that $\varepsilon(s,\phi\otimes\chi,\psi)=\varepsilon(s,\pi\times\chi,\psi)$.
That was done by Adrian--Liu when $N$ is prime to $p$ (\cite{AL16}), thus giving a more direct proof of the essentially tame local Langlands correspondence \cite{BH05-1, BH05-2, BH10} in the case of a simple supercuspidal representation.
That was also done for general $N$ by Imai and Tsushima (\cite{IT23}), who determined $\phi$ (for a simple supercuspidal $\pi$) explicitly using geometry.

When $\G$ is one of our split classical groups, the parameter of a simple supercuspidal representation $\pi$ of $\G(F)$ can be seen as a direct sum $\phi$ of irreducible representations $\phi_{i}$ of $W_{F}$.
The known methods to explicitly describe the parameter rather determine the supercuspidal representation $\pi_i$ of $\GL_{d_{i}}(F)$ (where $d_{i}=\dim(\phi_{i}))$ with parameter $\phi_{i}$ (the \textit{endoscopic lift} to a general linear group), and one can then apply the results for $\GL_{d_{i}}(F)$ to get $\phi_{i}$.
There are at least three such methods:
\begin{enumerate}
\item\label{method:1}
One can use the endoscopic and twisted endoscopic character relations by which Arthur's results are expressed.
\item\label{method:2}
One can use M{\oe}glin's criterion using reducibility points of representations parabolically induced from $\pi\boxtimes\tau$ where $\tau$ is a (supercuspidal) representation of some $\GL_{r}(F)$.
\item\label{method:3}
One can use $\gamma$-factors for the pairs $(\pi,\tau)$.
Indeed the Rankin--Selberg (or Langlands--Shahidi) factor $\gamma(s,\pi\times\tau,\psi)$ should be the product of the factors $\gamma(s,\pi_{i}\times\tau,\psi)$, and by the above-mentioned characterization due to \cite{BH14} it is enough to consider the case where $\tau$ is a tame character of $F^{\times}=\GL_{1}(F)$.
\end{enumerate}

The three methods can be used separately or jointly.
For \eqref{method:2} one has to know that the criterion does indeed determine Arthur's parameter (that is due to Bin Xu \cite{Xu17-MM}, see below Remark \ref{rem:Moeglin-Xu}).
For \eqref{method:3} one has to know that Arthur's lifting from $\G$ to $\GL_{N}$ preserves the $\gamma$-factors.
That we establish in Appendices \ref{sec:lift} and \ref{sec:unram}, for a generic supercuspidal representation $\pi$ (see the end of this introduction for more detailed comments).

Method \eqref{method:2} has been used by Blondel--Henniart--Stevens \cite{BHS18} for $\G=\Sp_{2n}$, when $p$ is odd.
The method applies to a general supercuspidal representation, but determines the parameter only up to twist by an unramified quadratic character (see \cite[Section 5]{BHS18})\footnote{For a simple supercuspidal $\pi$, the second named author recently realized that the method of \cite{BHS18} actually allows to determine the full parameter, see \cite[Section 5.2]{BHS23}.
}.  In this work, a character is quadratic if and only if its square is trivial.

The first definitive result was obtained in \cite{Adr16} for $\G=\SO_{2n+1}$ by using Method \eqref{method:3}.
In \cite{Adr16}, the factors $\gamma(s,\pi\times\tau,\psi)$ is computed for tame characters $\tau$ (and arbitrary $p$).
Moreover, it is also proved that if the $L$-parameter is irreducible, it corresponds to a simple supercuspidal representation of $\GL_{2n}(F)$ which is then explicit, and so is the parameter of $\pi$.
Then, using results of Kaletha \cite{Kal13-wild, Kal15} (pertaining to Method \eqref{method:1}), one can determine the parameter of $\pi$ when $p$ is large.

When $\G$ is $\Sp_{2n}$ and $p$ is odd, the factor $\gamma(s,\pi\times\tau,\psi)$ is computed for tame characters $\tau$ in \cite{AK19}.
Based on this computation, the parameter is explicitly determined when $p$ does not divide $n$, and, adding the use of (a conjectured extension of) \cite{BHS18} (Method \eqref{method:2}), for any odd $p$.
When $p$ is $2$, $\gamma(s,\pi\times\tau,\psi)$ is computed in \cite{AK19} only when $F=\Q_2$, in which case there is a unique simple supercuspidal representation of $\Sp_{2n}(\Q_2)$.
There is no tame character in the parameter when $F=\Q_{2}$, and the parameter is explicitly described in \cite{AK19} provided it is irreducible.
Adding Method \eqref{method:1} (and inspired by the work \cite{Oi24} for odd $p$), the second named author subsequently proved that for $F=\Q_2$, the parameter is indeed irreducible, hence explicit (\cite{Hen23}, written in 2021).

Finally when $\G=\SO_{2n}$, for a simple supercuspidal representation $\pi$, the factor $\gamma(s,\pi\times\tau,\psi)$ for tame quadratic characters $\tau$ (and any $p$) is computed and the parameter of $\pi$ is predicted in \cite{AK21}. Here we supplement the result of \cite{AK21} by computing $\gamma(s,\pi\times\tau,\psi)$ for any tame $\tau$, not necessarily  quadratic. See the discussion after Theorem \ref{thm:main-intro} below, and Section \ref{sec:AK}.

Method \eqref{method:1} was used by the fourth named author in his thesis and subsequent works (\cite{Oi19-AJM,Oi19-PRIMS,Oi24}), to get a complete determination of the parameter of a simple supercuspidal representation $\pi$ of any of the following classical groups, provided $p$ is odd:
\begin{itemize}
\item
$\SO_{2n+1}$ (\cite{Oi19-AJM}),
\item
unramified quasi-split unitary groups (\cite{Oi19-PRIMS}), and
\item
$\Sp_{2n}$ and $\SO_{2n}$ (not necessarily split but quasi-split) (\cite{Oi24}).
\end{itemize}
(In Sections \ref{subsec:parametrization} and \ref{subsec:comparison}, we compare these results with those of \cite{Adr16,AK21} in the case of split special orthogonal groups).
Any odd $p$ can be covered uniformly by this approach, but the dyadic case (i.e., $p=2$) still remains.
Based on this motivation, the second and fourth named authors investigated the case of $\Sp_{2n}$ over a dyadic field in \cite{HO22} by elaborating on the method of \cite{Oi19-AJM,Oi19-PRIMS,Oi24}.
(As mentioned above, the second named author had already obtained the result for $F=\Q_{2}$ in \cite{Hen23} prior to \cite{HO22}.)

Now let us state the main result of this paper:
\begin{thm}[Theorems \ref{thm:main-1}, \ref{thm:main-2-a}, \ref{thm:main-2-b}]\label{thm:main-intro}
Let $\SO_{N}$ be the split special orthogonal group of degree $N$.
Let $\pi^{\SO_{N}}$ be a simple supercuspidal representation of $\SO_{N}(F)$ with $L$-parameter $\phi$.
Then, $\phi$ is trivial on $\SL_{2}(\C)$ and described as a representation of $W_{F}$ as follows:
\begin{enumerate}
\item
When $N=2n+1$, $\phi$ is a $2n$-dimensional irreducible symplectic representation of $W_{F}$.
Moreover, $\phi$ corresponds to a simple supercuspidal representation $\pi$ of $\GL_{2n}(F)$ under the local Langlands correspondence for $\GL_{2n}$.
\item
When $N=2n$ and $p=2$, $\phi$ is of the form $\phi=\phi_{0}\oplus\phi_{1}$, where
\begin{itemize}
\item
$\phi_{0}$ is a $(2n-1)$-dimensional irreducible orthogonal representation of $W_{F}$, which is the $L$-parameter of a simple supercuspidal representation $\pi$ of $\GL_{2n-1}(F)$, and
\item
$\phi_{1}$ is the determinant character of $\phi_{0}$.
\end{itemize}
\item
When $N=2n$ and $p\neq2$, $\phi$ is of the form $\phi=\phi_{0}\oplus\phi_{1}\oplus\phi_{2}$, where
\begin{itemize}
\item
$\phi_{0}$ is a $(2n-2)$-dimensional irreducible orthogonal representation of $W_{F}$, which is the $L$-parameter of a simple supercuspidal representation $\pi$ of $\GL_{2n-2}(F)$,
\item
$\phi_{1}$ is an unramified quadratic character of $F^{\times}$, and
\item
$\phi_{2}$ is a ramified quadratic character of $F^{\times}$.
\end{itemize}
\end{enumerate}
Furthermore, in each case, the quadratic characters and $\pi$ can be determined exactly from $\pi^{\SO_{N}}$ in terms of explicit parametrizing sets of simple supercuspidal representations (see Theorems \ref{thm:main-1},  \ref{thm:main-2-a}, \ref{thm:main-2-b} and Section \ref{sec:SSC}).
\end{thm}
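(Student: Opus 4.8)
The plan is to reduce the problem, in all three cases, to the corresponding statement for general linear groups via Arthur's endoscopic classification, and then to pin down the $L$-parameter using the $\gamma$-factor characterization of simple supercuspidal representations of $\GL_{d}(F)$ due to Bushnell--Henniart. Concretely, for a simple supercuspidal $\pi^{\SO_{N}}$ one first shows that its $L$-parameter $\phi$ is trivial on the $\SL_{2}(\C)$-factor: this follows because a simple supercuspidal representation is in particular supercuspidal, hence its Arthur parameter has trivial restriction to the Arthur $\SL_{2}$, and one checks (using depth considerations, since a simple supercuspidal representation has depth $1/N$) that no nontrivial Deligne $\SL_{2}$ can occur; a discrete $L$-parameter with trivial $\SL_{2}$ is a direct sum of distinct irreducible representations of $W_{F}$ of the appropriate self-duality type (symplectic for $\SO_{2n+1}$, orthogonal for $\SO_{2n}$). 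The shape of the decomposition in each case -- one irreducible piece of dimension $2n$ in the odd case, and a big orthogonal piece plus one or two quadratic characters in the even case -- is then forced by a dimension and parity count together with the constraint that the total determinant be trivial (for $\SO_{2n}$ the determinant of $\phi$ must be the trivial character, which in the $p=2$ case means $\phi_{1}=\det\phi_{0}$, and in the $p\neq 2$ case forces the product of the two quadratic characters to equal $\det\phi_{0}$, hence they are one unramified and one ramified quadratic character).

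The main input is then the computation of the twisted/standard $\gamma$-factors $\gamma(s,\pi^{\SO_{N}}\times\tau,\psi)$ for tame characters $\tau$ of $F^{\times}$, which were carried out in \cite{Adr16} for $\SO_{2n+1}$ and in \cite{AK21} for $\SO_{2n}$. Using that Arthur's lift from $\SO_{N}$ to $\GL_{N}$ preserves $\gamma$-factors -- this is exactly what is established in Appendices \ref{sec:lift} and \ref{sec:unram} for generic supercuspidal $\pi$, and simple supercuspidal representations are generic -- one transfers these $\gamma$-factor identities to the $\GL$ side: $\gamma(s,\pi^{\SO_{N}}\times\tau,\psi)$ equals the product of $\gamma(s,\phi_{i}\otimes\tau,\psi)$ over the constituents. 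Since the $\gamma$-factors of the quadratic characters $\phi_{1}$ (and $\phi_{2}$) are elementary, one divides them out and is left with a product formula for $\gamma(s,\phi_{0}\otimes\tau,\psi)$ that matches the $\gamma$-factor signature of a simple supercuspidal representation of $\GL_{d}(F)$ with $d = 2n$, $2n-1$, or $2n-2$. By the Bushnell--Henniart characterization \cite{BH14}, a representation of $\GL_{d}(F)$ whose twisted $\gamma$-factors by all tame characters agree with those of a given simple supercuspidal representation is that simple supercuspidal representation; hence $\phi_{0}$ (resp.\ $\phi$ in the odd case) is the $L$-parameter of an explicit simple supercuspidal representation $\pi$ of the relevant general linear group, whose parameter is in turn described explicitly by the results of \cite{BH14, IT15}.

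The last point is to make the correspondence $\pi^{\SO_{N}}\mapsto(\pi,\text{quadratic characters})$ fully explicit in terms of the combinatorial parametrizing data of simple supercuspidal representations on both sides, which is the content of Theorems \ref{thm:main-1}, \ref{thm:main-2-a}, \ref{thm:main-2-b}: one reads off from the $\gamma$-factor computation exactly which affine generic character, central character value, and uniformizer-type parameter on the $\GL$ side corresponds to the given data of $\pi^{\SO_{N}}$. The step I expect to be the main obstacle is ensuring that the $\gamma$-factor computations and the Bushnell--Henniart characterization are valid uniformly, and in particular at $p=2$: when $p=2$ there are no tame characters of $F^{\times}$ beyond the unramified ones that can be used as twists in the naive way, so one must instead exploit the full strength of the stability/characterization results and the explicit description of the relevant $\gamma$-factors from \cite{AK21}, together with the endoscopic input, to compensate for the scarcity of available tame twists. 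Handling the subtle normalization issues in the local Langlands correspondence for $\GL_{d}$ at $p=2$ (signs, Langlands constants, the precise form of the parameter of a simple supercuspidal representation) is where the bulk of the careful bookkeeping lies; this is essentially why the dyadic case was open and is the heart of the present paper.
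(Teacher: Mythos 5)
Your proposal is incomplete in two places where the paper's argument is actually nontrivial, and the second of these is precisely what makes the dyadic case tractable.

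First, on triviality of $\phi$ on $\SL_{2}(\C)$: you appeal to supercuspidality plus "depth considerations," but this is not a rigorous argument and is not what the paper does. A discrete $L$-parameter with nontrivial Deligne $\SL_{2}$-part can still have a packet containing supercuspidal representations, so supercuspidality alone does not suffice, and the depth-preservation heuristic would require a form of the depth-preservation conjecture that is not available here. The paper instead checks that $\pi^{\SO_{N}}$ is $\mfw$-generic (Proposition~\ref{prop:generic}) and then invokes the M{\oe}glin--Xu criterion (Proposition~\ref{prop:Moeglin-Xu}): a discrete $\phi$ has an $L$-packet containing a $\mfw$-generic supercuspidal iff $\phi|_{\SL_{2}(\C)}$ is trivial iff every member of the packet is supercuspidal. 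This is the correct and actually-used step.

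Second, and more seriously: you claim the shape $\phi=\phi_{0}\oplus\phi_{1}[\oplus\phi_{2}]$ is "forced by a dimension and parity count together with the constraint that the total determinant be trivial." This is false. The twisted $\gamma$-factor input (Corollary~\ref{cor:Swan}) only gives $\Swan(\phi)=1$ and identifies the tamely ramified quadratic characters among the constituents; it does not rule out additional irreducible constituents of Swan conductor $0$ and dimension $\geq 2$ (these are tamely ramified self-dual representations induced from unramified extensions, which exist in abundance and have the right parity). The paper's key new input --- entirely absent from your proposal --- is the formal degree conjecture of Hiraga--Ichino--Ikeda, established by Ichino--Lapid--Mao for $\SO_{2n+1}$ and announced by Beuzart-Plessis for $\SO_{2n}$. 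This is used (Lemma~\ref{lem:SSC-FD}, Proposition~\ref{prop:SSC-FDC}) to compute $|\gamma(0,\Ad\circ\phi,\psi_{0})|$ as $2^{r'}q^{M}$, and then the analysis of $\Sym^{2}$/$\wedge^{2}$ $L$-factors in Section~\ref{sec:L} (Proposition~\ref{prop:par-L}, Lemma~\ref{lem:tame-rep}) shows any extra tame even-dimensional constituent would contribute an odd prime factor $(1+q^{e_{i}})$ to this value, forcing $L(s,\Ad\circ\phi)=1$ and pinning down $r$ (Corollary~\ref{cor:L-factor-trivial}). Without the formal degree input your argument cannot conclude, even in the odd case. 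As a smaller point, your remark that at $p=2$ there are no tame characters beyond the unramified ones is also incorrect: there are $q-1$ tamely ramified characters of $\mcO^{\times}$, all of odd order; what is scarce at $p=2$ is tame \emph{quadratic} characters, and the paper's Section~\ref{sec:AK} extends the $\gamma$-factor computation of \cite{AK21} from quadratic $\tau$ to all tame $\tau$ precisely because these nonquadratic tame twists are needed in the final matching step.
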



As explained above, this result is new only when $p=2$; the case where $N=2n+1$ and $p\neq2$ is treated in \cite{Adr16} and \cite{Oi19-AJM}, and the case where $N=2n$ and $p\neq2$ is treated in \cite{AK19} and \cite{Oi24}.
However, we emphasize that our method provides an alternative proof even in these cases.  When $p$ is odd, the present method uses much less of the theory of endoscopy.
In particular, we do not need, for example, a complicated computation of twisted characters as performed in \cite{Oi19-AJM,Oi24}.

Let us explain the outline of the proof of Theorem \ref{thm:main-intro}.
We focus only on the case (2) in the following since the other cases can be treated in a similar manner.
Suppose that $\pi^{\SO_{N}}$ is a simple supercuspidal representation of $\SO_{N}(F)$ with $L$-parameter $\phi$.
Then $\phi$ is regarded as an orthogonal representation of $W_{F}\times\SL_{2}(\C)$.

The first step is the same as in \cite{Oi24}.
We can easily check that any simple supercuspidal representation of $\SO_{N}(F)$ is generic (with respect to a fixed Whittaker datum).
By combining this fact with a result of M{\oe}glin (\cite{Moeg11}) and Xu (\cite{Xu17-MM}), we see that $\phi$ is trivial on $\SL_{2}(\C)$ and decomposes into the direct sum of pairwise inequivalent orthogonal representations of $W_{F}$.
Let us write $\phi=\phi_{0}\oplus\cdots\oplus\phi_{r}$.

The second step is crucial; we prove that $\phi$ is of the form $\phi_{0}\oplus\phi_{1}$, where $\phi_{0}$ has Swan exponent one and $\phi_{1}$ is a tamely ramified quadratic character.
For this, we utilize the information of two different kinds of $\gamma$-factors attached to $\phi$; (1) the (standard) $\gamma$-factor $\gamma(s,\pi^{\SO_{N}}\times\chi,\psi)$ twisted by a tamely ramified character $\chi$ of $F^{\times}$, and (2) the special value of the adjoint $\gamma$-factor $\gamma(0,\Ad,\pi^{\SO_{N}},\psi_{0})$ where $\psi_{0}$ is an unramified character.
\begin{enumerate}
\item
In \cite{AK21}, the $\gamma$-factor twisted by $\chi$ is completely determined for any tamely ramified quadratic character $\chi$ of $F^{\times}$.
From this result, we immediately see that the Swan exponent of $\phi$ is given by $1$ and that exactly one of $\phi_{0},\ldots,\phi_{r}$ is a tamely ramified quadratic character.
\item
In general, it is conjectured by Hiraga--Ichino--Ikeda (\cite{HII08}) that the formal degree of a discrete series representation of a $p$-adic reductive group is related to the special value (at $0$) of the adjoint $\gamma$-factor of the $L$-parameter corresponding to the representation under the local Langlands correspondence.
Recently, it was announced by Beuzart-Plessis (\cite{BP21-OWR}) that this conjecture is proved for even special orthogonal groups; our results are conditional on \textit{ibid.}
By specializing the Hiraga--Ichino--Ikeda conjecture to the case of simple supercuspidal representations, we can compute the special value of the adjoint $\gamma$-factor of $\phi$.
\end{enumerate}
The argument of this step is as follows.
By (2), we know that the special value of the adjoint $\gamma$-factor of $\phi$, which is nothing but the exterior square $\gamma$-factor of $\phi$ in the current situation, is given by a rational power of $2$.
By (1), any irreducible constituent $\phi_{i}$ has Swan exponent $0$ and dimension greater than $1$ if it is neither the unique irreducible constituent of $\phi$ with Swan exponent $1$ nor the unique tamely ramified quadratic character contained in $\phi$.
However, if such a constituent $\phi_{i}$ existed, then it would contribute to the special value of the exterior square $\gamma$-factor of $\phi$ by an odd prime factor, which is not allowed.
Thus $\phi$ must be of the form $\phi_{0}\oplus\phi_{1}$ with $\phi_{0}$ and $\phi_{1}$ as above.

The final step is to determine $\phi$ exactly as a representation of $W_{F}$, but this can be done by the same argument as in \cite{Adr16} using the $\gamma$-factors, which is explained above. The idea is to determine $\phi_0$ using a converse theorem which includes the family of $\gamma$-factors $\gamma(s,\phi_0\times\chi,\psi)$, where $\chi$ is a tame character of $F^{\times}$. A priori, this family is too small and one must consider twists by arbitrary characters as well as by representations of $\GL_k$, $k>1$.
But at this point $\phi_0$ will correspond to a simple supercuspidal representation, and it turns out that for such representations tame twists are enough, see Section \ref{Main Theorem}. In turn we have to look at the $\gamma$-factor of $\pi^{\SO_{N}}$ twisted by any tamely ramified character of $F^{\times}$. We remark that in \cite{AK21}, we only considered twists by quadratic characters (and in particular, by the trivial character). The reason is twofold: First, if $\gamma(s, \pi^{\SO_{N}}\times\chi,\psi)$ is not holomorphic at $s=1$, then $\chi$ is quadratic ($\chi^2 = 1)$, which is why quadratic twists were effective in finding the one-dimensional summands of $\phi$, and second, the computation for $\chi=\mathbbm{1}$ together with the central character condition on the local Langlands correspondence for general linear groups, specified $\phi_0$ up to its restriction to the wild inertia subgroup. The computation for all tame characters $\chi$, which is more subtle, is given in this paper, see
Corollary~\ref{corollary:gamma for tamely ramified} and Theorem~\ref{thm:AK}.

We would like to mention other works that have dealt with the local Langlands correspondence for simple supercuspidal (more generally, epipelagic supercuspidal) representations of general reductive groups.
In \cite{Kal13-wild,Kal15}, Kaletha has constructed a correspondence for epipelagic supercuspidal representations when $p$ does not divide the order of the Weyl group.
Moreover, by assuming a further condition on $p$, he also proved the stability property for simple supercuspidal representations.
For large $p$, this tells us the size of any $L$-packet consisting of simple supercuspidal representations.
We also mention the work of Tam \cite{Tam23}, which is valid for all $p>2$, and treats epipelagic representations by a method similar to \cite{BHS18}.

In April 2023, Beth Romano sent us her preprint (now published, \cite{Rom23}) where she
classifies the epipelagic supercuspidal representations by their stable vectors and additive characters (see \cite{RY14}).
Subject to two hypotheses, one of which is the formal degree conjecture, she obtains some properties of the $L$-parameter $\phi$ of a simple supercuspidal representation of a split adjoint simple group: $\phi$ is trivial on $\SL_{2}(\C)$, has trivial $L$-factor and known Swan exponent.
The intersection of her work with ours is the case of $\SO_{2n+1}$.

Finally, we would like to briefly remark on some pending results that our work still depends on.  First, Arthur's result depends on future references in his bibliography, but most of those points have now been addressed in a preprint of Atobe--Gan--Ichino--Kaletha--Minguez--Shin \cite{AGIKMS24}; only the weighted fundemental lemma is pending. Similarly, the proof of the results announced by Beuzart-Plessis (\cite{BP21-OWR}) has not been posted yet.

Before we finish this introduction, we give one more comment on another subtlety concerning the $\gamma$-factor.
In the above arguments, we freely use the fact that Arthur's local Langlands correspondence preserves the twisted $\gamma$-factors (or more generally, the Rankin--Selberg local factors for the product of a classical group and a general linear group).
Although we believe that this fact should be well-known to experts, we give a justification in this paper (Appendix \ref{sec:lift}) because we could not find a suitable reference.
Since our argument is based on a global method, it is crucially important that the unramified case of Arthur's construction of local $A$-packets is consistent with the classical one by means of the Satake isomorphism.
However, somehow we were not able to find even this fact in any literature.
Hence we decided to also give a proof of this consistency in the unramified case (Appendix \ref{sec:unram}).
The proof we present in Appendix \ref{sec:unram} is due to Jean-Loup Waldspurger.
We would like to clarify that Appendix \ref{sec:unram} is constructed based on a letter from him (but, of course, we completely owe the responsibility for it).

\medbreak
\noindent{\bfseries Acknowledgments.}\quad
We would like to express our sincere gratitude to Jean-Loup Waldspurger for his detailed explanation of the content of Appendix \ref{sec:unram}.
We would also like to thank Beth Romano for sending us her preprint and for her comments on the first posted version of this paper.
The final draft of this paper was written while Henniart enjoyed the hospitality
of the Graduate School of Mathematical Sciences at the University of Tokyo.
Finally, we thank the anonymous referees for their thorough reading of this paper and for their suggestions.
One of the referees asked us about the image of wild and tame inertia in the $L$-parameter $\phi$ of Theorem \ref{thm:main-intro}. We think that \cite{IT23} can be used to gain some information, but leave this to future work.
Adrian was supported by a grant from the Simons Foundation \#422638 and by a PSC-CUNY award, jointly funded by the Professional Staff Congress and The City University of New York.
Kaplan was supported by the ISRAEL SCIENCE FOUNDATION (grant No.\ 376/21).
Oi was supported by JSPS KAKENHI Grant Number 20K14287.

\medbreak
\noindent
\textbf{Notation.}\quad
Let $F$ be a $p$-adic field (i.e., a finite extension of $\Q_{p}$; especially, of characteristic $0$), $\mcO$ its ring of integers, $\mfp$ its maximal ideal, and $k$ its residue field $\mcO/\mfp$.
We write $q$ for the cardinality of $k$.
We often regard $k^{\times}$ as the subgroup of $F^{\times}$ consisting of elements of finite prime-to-$p$ order via the Teichm\"uller lift.
We fix a uniformizer $\varpi$ of $F$.
For any element $x\in\mcO$, we write $\overline{x}$ for its image in $k$.

We fix a nontrivial additive character $\psi_{\F_{p}}\colon\F_{p}\rightarrow\C^{\times}$ and let $\psi\colon k\rightarrow\C^{\times}$ be the nontrivial additive character defined by $\psi=\psi_{\F_{p}}\circ\Tr_{k/\F_{p}}$.
Note that $\psi$ is invariant under the Frobenius, i.e., $\psi(x^{p})=\psi(x)$ for any $x\in k$.
We fix a nontrivial additive character of $F$ whose restriction to $\mcO$ lifts $\psi$ and again write $\psi$ for it by abuse of notation.

When $p\neq2$, we fix an element $\epsilon\in k^{\times}\smallsetminus k^{\times2}$.
When $p=2$, we simply put $\epsilon:=1$.

We let $I_{N}$ denote the identity matrix of size $N$ and $J_{N}$ denote the anti-diagonal matrix of size $N$ whose $(i, N+1-i)$-th entry is given by $(-1)^{i-1}$:
\[
J_{N} = \begin{pmatrix}
 &&&1\\
 &&-1&\\
 &\adots&&\\
 (-1)^{N-1}&&&
\end{pmatrix}.
\]

For an algebraic group $\bfG$ defined over $F$, we let $G$ denote the group $\bfG(F)$ of $F$-points of $\bfG$.

\section{Simple supercuspidal representations}\label{sec:SSC}

In this section, we summarize a classification of simple supercuspidal representations of $\SO_{N}(F)$ and also those of $\GL_{N}(F)$ which are self-dual.
See \cite[Sections 2.1 and 2.2]{Oi24} for a general recipe and the definition of simple supercuspidal representations.
The classification given here is basically the same as the one of \cite{Oi19-AJM, Oi24, HO22}, but requires a minor modification since we treat any $p$.

\subsection{Self-dual simple supercuspidal representations of \texorpdfstring{$\GL_{N}$}{GL(N)}}\label{subsec:SSC-GL}

Let us consider the case of $\GL_{N}$.
Let $I_{\GL_{N}}$ be the upper-triangular Iwahori subgroup of $\GL_{N}$:
\[
I_{\GL_{N}}
=
\begin{pmatrix}
 \mcO^{\times}&&\mcO\\
 &\ddots&\\
 \mfp&&\mcO^{\times}
\end{pmatrix}.
\]
We let $I^{+}_{\GL_{N}}$ be the pro-unipotent radical of $I_{\GL_{N}}$.
These subgroups can be thought of as the first two steps of the Moy--Prasad filtration of the parahoric subgroup of $\GL_{N}(F)$ associated to the barycenter of an alcove of the Bruhat--Tits building of $\GL_{N}(F)$.
We define $I^{++}_{\GL_{N}}$ to be the next step of this Moy--Prasad filtration.
Explicitly, these subgroups are given as follows:
\[
I_{\GL_{N}}^{+} = \begin{pmatrix}
 1+\mfp&&\mcO\\
 &\ddots&\\
 \mfp&&1+\mfp
\end{pmatrix}
\supset
I_{\GL_{N}}^{++} = \begin{pmatrix}
 1+\mfp&\mfp&&\mcO\\
 &\ddots&\ddots&\\
 &\mfp&\ddots&\mfp\\
\mfp^2&&&1+\mfp
\end{pmatrix}.
\]
Then,
\[
I_{\GL_{N}}^{+}/I_{\GL_{N}}^{++} \cong k^{\oplus N}\colon
(x_{ij})_{ij} \mapsto \bigl(\ol{x_{1,2}}, \ldots, \ol{x_{N-1, N}}, \ol{x_{N,1}\varpi^{-1}}\bigr).
\]
Let $Z_{\GL_{N}}$ be the center of $\GL_{N}(F)$ and $Z_{\GL_{N},0}$ the maximal compact subgroup of $Z_{\GL_{N}}$.
For any character $\omega$ of $k^{\times}$ and $a\in k^{\times}$, we define $\chi^{\GL_{N}}_{\omega,a}$ to be the character on $Z_{\GL_{N},0}I_{\GL_{N}}^{+}$ such that
\begin{itemize}
\item
$\chi^{\GL_{N}}_{\omega,a}|_{Z_{\GL_{N},0}}$ is the pull back of $\omega$ via the map $Z_{\GL_{N},0}\cong\mcO^{\times}\twoheadrightarrow k^{\times}$, and
\item
$\chi^{\GL_{N}}_{\omega,a}|_{I_{\GL_{N}}^{+}}$ is the pull back of the character on $k^{\oplus N}$ given by
\[
(x_{1},\ldots,x_{N-1},x_{N})\mapsto \psi(x_{1}+\cdots+x_{N-1}+ax_{N})
\]
via the map $I^{+}_{\GL_{N}}\twoheadrightarrow I^{+}_{\GL_{N}}/I^{++}_{\GL_{N}}\cong k^{\oplus N}$.
\end{itemize}
This character is \textit{affine generic} in the sense of Gross--Reeder (see \cite[Definition 2.3]{Oi24}).
Then the stabilizer group $N_{\GL_{N}(F)}(I_{\GL_{N}}^{+}; \chi_{\omega,a}^{\GL_{N}}):=\{n\in N_{\GL_{N}(F)}(I_{\GL_{N}}^{+}) \mid (\chi^{\GL_{N}}_{\omega,a})^{n}=\chi^{\GL_{N}}_{\omega,a}\}$ of $\chi^{\GL_{N}}_{\omega,a}$ is given by $Z_{\GL_{N}} I_{\GL_{N}}^{+} \lan\varphi_{a^{-1}}^{\GL_{N}}\ran$, where we put
\[
\varphi_{a^{-1}}^{\GL_{N}} :=
\begin{pmatrix}
0 & I_{N-1} \\
\varpi a^{-1} & 0
\end{pmatrix} \in \GL_{N}(F)
\]
(note that $(\varphi_{a^{-1}}^{\GL_{N}})^{N}=\varpi a^{-1} I_{N}$).
Thus, for any $\zeta\in\C^{\times}$, we can extend $\chi^{\GL_{N}}_{\omega,a}$ to a character $\tilde{\chi}^{\GL_{N}}_{\omega,a,\zeta}$ on $Z_{\GL_{N}}I_{\GL_{N}}^{+}\lan\varphi_{a^{-1}}^{\GL_{N}}\rangle$ by putting $\tilde{\chi}^{\GL_{N}}_{\omega,a,\zeta}(\varphi_{a^{-1}}^{\GL_{N}}):=\zeta$.

Let $\pi_{\omega,a,\zeta}^{\GL_{N}}$ be the representation of $\GL_{N}(F)$ defined by
\[
\pi_{\omega,a,\zeta}^{\GL_{N}}:=\cInd^{\GL_{N}(F)}_{Z_{\GL_{N}}I_{\GL_{N}}^{+}\lan\varphi_{a^{-1}}^{\GL_{N}}\rangle} \tilde{\chi}^{\GL_{N}}_{\omega,a,\zeta}.
\]
Then
\[
\{
\pi^{\GL_{N}}_{\omega,a,\zeta}
\mid
(\omega,a,\zeta)\in (k^{\times})^{\vee}\times k^{\times}\times\C^{\times}
\}
\]
is a complete set of representatives of the set of equivalence classes of simple supercuspidal representations of $\GL_{N}(F)$ by \cite[Proposition 2.7]{Oi24}.

Let us investigate when $\pi^{\GL_{N}}_{\omega,a,\zeta}$ is self-dual, or equivalently by the well-known result of Gelf'and and Kazhdan (see e.g., \cite[7.3]{BZ1}), $\theta$-stable, where $\theta$ is the involution of $\GL_{N}$ defined by
\[
\theta(g):=J_{N}{}^{t}g^{-1}J_{N}^{-1}.
\]

For a subgroup $H<\GL_N(F)$ which is stabilized by $\theta$, i.e., $\theta(H)=H$, and a representation $\varrho$ of $H$, denote
$\varrho^{\theta}(g)=\varrho(\theta(g))$. Observe that $\theta$ stabilizes $I_{\GL_{N}}^{+}$ and $I_{\GL_{N}}^{++}$, hence
$\theta$ acts on the quotient $I_{\GL_{N}}^{+}/I_{\GL_{N}}^{++}$. For a representative
$(x_{ij})_{ij}\in I_{\GL_{N}}^{+}/I_{\GL_{N}}^{++}$, the image of $\theta((x_{ij})_{ij})$ in $k^{\oplus N}$ is given by
\[\theta((x_{ij}))\mapsto \bigl(\ol{x_{N-1,N}}, \ldots, \ol{x_{1, 2}}, (-1)^{N}\ol{x_{N,1}\varpi^{-1}}\bigr).\]
Thus $(\chi^{\GL_{N}}_{\omega,a})^{\theta}=\chi^{\GL_{N}}_{\omega^{-1},(-1)^Na}$.
In addition $\theta(\varphi_{a}^{\GL_{N}})=-(\varphi_{(-1)^Na}^{\GL_{N}})^{-1}$, hence
\[(\tilde{\chi}^{\GL_{N}}_{\omega,a,\zeta})^{\theta}(\varphi_{(-1)^Na^{-1}}^{\GL_{N}})=
\tilde{\chi}^{\GL_{N}}_{\omega,a,\zeta}(-(\varphi_{a^{-1}}^{\GL_{N}})^{-1})=\omega(-1)\zeta^{-1}.\]
Therefore
\[
(\pi_{\omega,a,\zeta}^{\GL_{N}})^{\theta}
\cong
\pi_{\omega^{-1},(-1)^{N}a,\omega(-1)\zeta^{-1}}^{\GL_{N}}.
\]
Hence, $(\pi_{\omega,a,\zeta}^{\GL_{N}})^{\theta}\cong\pi_{\omega,a,\zeta}^{\GL_{N}}$ if and only if
\[
\omega=\omega^{-1},\quad
a=(-1)^{N}a,\quad
\zeta=\omega(-1)\zeta^{-1}.
\]

We first consider the case where $p\neq2$.
By the condition that $a=(-1)^{N}a$, there exists a self-dual simple supercuspidal representation only when $N$ is even.
In this case,
\[
\{
\pi^{\GL_{N}}_{\omega,a,\zeta}
\mid
(\omega,a,\zeta)\in (k^{\times})^{\vee}\times k^{\times}\times\C^{\times},\,
\omega^{2}=\mathbbm{1},\,
\zeta^{2}=\omega(-1)
\}
\]
gives a complete set of representatives of the set of equivalence classes of self-dual simple supercuspidal representations of $\GL_{N}(F)$.

We next consider the case where $p=2$.
In this case, the condition $\omega=\omega^{-1}$ implies that $\omega$ is the trivial character of $k^{\times}$.
Thus the condition $\zeta=\omega(-1)\zeta^{-1}$ means that $\zeta$ is a sign.
Moreover, the condition $a=(-1)^{N}a$ is always satisfied.
Therefore we see that
\[
\{
\pi^{\GL_{N}}_{\mathbbm{1},a,\zeta}
\mid
a\in k^{\times}, \zeta\in\{\pm1\}
\}
\]
is a complete set of representatives of the set of equivalence classes of self-dual simple supercuspidal representations of $\GL_{N}(F)$.

\subsection{Simple supercuspidal representations of \texorpdfstring{$\SO_{2n+1}$}{SO(2n+1)}}\label{subsec:SSC-odd}
%
We next consider the case of
\[
\SO_{2n+1}:=\{g\in\SL_{2n+1} \mid {}^{t}\!gJ_{2n+1}g=J_{2n+1}\}.
\]

We let $I_{\SO_{2n+1}}$ be the Iwahori subgroup of $\SO_{2n+1}(F)$ consisting of the elements of $\SO_{2n+1}(F)$ belonging to
\[
\begin{pmatrix}
\mcO^{\times}&&\multicolumn{1}{c:}{\mcO}&\multicolumn{1}{c:}{\mcO}&&&\\
 &\ddots&\multicolumn{1}{c:}{}&\multicolumn{1}{c:}{\vdots}&&\frac{1}{2}\mcO&\\
 \mfp&&\multicolumn{1}{c:}{\mcO^{\times}}&\multicolumn{1}{c:}{\mcO}&&&\\
 \cdashline{1-10}
 2\mfp&\cdots&\multicolumn{1}{c:}{2\mfp}&\multicolumn{1}{c:}{\mcO^{\times}}&\mcO&\cdots&\mcO\\
 \cdashline{1-10}
 &&\multicolumn{1}{c:}{}&\multicolumn{1}{c:}{2\mfp}&\mcO^{\times}&&\mcO\\
 &2\mfp&\multicolumn{1}{c:}{}&\multicolumn{1}{c:}{\vdots}&&\ddots&\\
 &&\multicolumn{1}{c:}{}&\multicolumn{1}{c:}{2\mfp}&\mfp&&\mcO^{\times}\\
\end{pmatrix}.
\]
(we give some details in Section \ref{subsec:Iwahori}).
Similarly to the case of $\GL_{N}$, we let $I^{+}_{\SO_{2n+1}}$ be the pro-unipotent radical of $I_{\SO_{2n+1}}$ and $I^{++}_{\SO_{2n+1}}$ the next step of the Moy--Prasad filtration with respect to the barycenter of the alcove corresponding to $I_{\SO_{2n+1}}$.
Then,
\[
I^{+}_{\SO_{2n+1}}/I^{++}_{\SO_{2n+1}}\cong k^{\oplus n+1}\colon
(g_{ij})_{ij}
\mapsto
(\overline{g_{12}},\ldots,\overline{g_{n,n+1}},\overline{g_{2n,1}\cdot2^{-1}\varpi^{-1}}).
\]
For any $a\in k^{\times}$, we define an affine generic character $\chi_{a}^{\SO_{2n+1}}$ of $I^{+}_{\SO_{2n+1}}$ by pulling back the character
\[
k^{\oplus n+1}\rightarrow \C^{\times}
\colon
(x_{1},\ldots,x_{n+1})\mapsto \psi(x_{1}+\cdots+x_{n}+ax_{n+1})
\]
via the map $I^{+}_{\SO_{2n+1}}\twoheadrightarrow I^{+}_{\SO_{2n+1}}/I^{++}_{\SO_{2n+1}}\cong k^{\oplus n+1}$.
Then the stabilizer group
of $\chi^{\SO_{2n+1}}_{a}$ is given by $I^{+}_{\SO_{2n+1}}\langle\varphi_{a^{-1}}^{\SO_{2n+1}}\rangle$, where $\varphi_{a^{-1}}^{\SO_{2n+1}}$ is an element of order $2$ given by
\[
\varphi_{a^{-1}}^{\SO_{2n+1}}:=
-\begin{pmatrix}
&&a2^{-1}\varpi^{-1}\\
&I_{2n-1}&\\
a^{-1}2\varpi&&
\end{pmatrix}
\in\SO_{2n+1}(F).
\]
Hence, for any $\zeta\in\{\pm1\}$, we can extend $\chi^{\SO_{2n+1}}_{a}$ to a character $\tilde{\chi}^{\SO_{2n+1}}_{a,\zeta}$ on $I^{+}_{\SO_{2n+1}}\langle\varphi_{a^{-1}}^{\SO_{2n+1}}\rangle$ by putting $\tilde{\chi}^{\SO_{2n+1}}_{a,\zeta}(\varphi_{a^{-1}}^{\SO_{2n+1}}):=\zeta$.

Let $\pi_{a,\zeta}^{\SO_{2n+1}}$ be the representation of $\SO_{2n+1}(F)$ defined by
\[
\pi^{\SO_{2n+1}}_{a,\zeta}:=\cInd^{\SO_{2n+1}(F)}_{I^{+}_{\SO_{2n+1}}\langle\varphi_{a^{-1}}^{\SO_{2n+1}}\rangle} \tilde{\chi}^{\SO_{2n+1}}_{a,\zeta}.
\]
Then
\[
\{
\pi^{\SO_{2n+1}}_{a,\zeta}
\mid
a\in k^{\times}, \zeta\in\{\pm1\}
\}
\]
gives a complete set of representatives of the set of equivalence classes of simple supercuspidal representations of $\SO_{2n+1}(F)$ by \cite[Proposition 2.7]{Oi24}.

\begin{rem}\label{rem:ssc-odd-2}
We caution that the above parametrization of simple supercuspidal representations differs from the one in \cite{Oi19-AJM}, where $p$ is supposed to be odd, by an extra factor $2$.
To be more precise, for any $a\in k^{\times}$, the simple supercuspidal representation denoted by ``$\pi_{a,\zeta}'$'' in \cite[Section 2.4]{Oi19-AJM} is equal to $\pi_{2a,\zeta}^{\SO_{2n+1}}$ defined in this paper.
\end{rem}

\subsection{Simple supercuspidal representations of \texorpdfstring{$\SO_{2n}$}{SO(2n)}}\label{subsec:SSC-even}
%
We finally consider the case of
\[
\SO_{2n}:=\{g\in\SL_{2n} \mid {}^{t}\!gJ'_{2n}g=J'_{2n}\},
\]
where $J'_{2n}$ is the anti-diagonal matrix of size $2n$ whose anti-diagonal entries are given by $1$.
Here, we suppose that $n\geq2$ because $\SO_{2}$ is abelian.

We let $I_{\SO_{2n}}$ be the Iwahori subgroup of $\SO_{2n}(F)$ consisting of the elements of $\SO_{2n}(F)$ belonging to
\[
\begin{pmatrix}
\mcO^{\times}&&\multicolumn{1}{c:}{\mcO}&\mcO&\multicolumn{1}{c:}{\mcO}&&&\\
 &\ddots&\multicolumn{1}{c:}{}&\vdots&\multicolumn{1}{c:}{\vdots}&&\mcO&\\
 \mfp&&\multicolumn{1}{c:}{\mcO^{\times}}&\mcO&\multicolumn{1}{c:}{\mcO}&&&\\
 \cdashline{1-10}
 \mfp&\cdots&\multicolumn{1}{c:}{\mfp}&\mcO^{\times}&\multicolumn{1}{c:}{\mfp}&\mcO&\cdots&\mcO\\
\mfp&\cdots&\multicolumn{1}{c:}{\mfp}&\mfp&\multicolumn{1}{c:}{\mcO^{\times}}&\mcO&\cdots&\mcO\\
 \cdashline{1-10}
 &&\multicolumn{1}{c:}{}&\mfp&\multicolumn{1}{c:}{\mfp}&\mcO^{\times}&&\mcO\\
 &\mfp&\multicolumn{1}{c:}{}&\vdots&\multicolumn{1}{c:}{\vdots}&&\ddots&\\
 &&\multicolumn{1}{c:}{}&\mfp&\multicolumn{1}{c:}{\mfp}&\mfp&&\mcO^{\times}\\
\end{pmatrix}
\]
(we give some details in Section \ref{subsec:Iwahori}).
Similarly to the case of $\GL_{N}$, we let $I^{+}_{\SO_{2n}}$ be the pro-unipotent radical of $I_{\SO_{2n}}$ and $I^{++}_{\SO_{2n}}$ the next step of the Moy--Prasad filtration with respect to the barycenter of the alcove corresponding to $I_{\SO_{2n}}$.
Then,
\[
I^{+}_{\SO_{2n}}/I^{++}_{\SO_{2n}}\cong k^{\oplus n+1}\colon
(g_{ij})_{ij}
\mapsto
(\overline{g_{12}},\ldots,\overline{g_{n-1,n}},\overline{g_{n-1,n+1}},\overline{g_{2n-1,1}\varpi^{-1}}).
\]
Let $Z_{\SO_{2n}}=\{\pm I_{2n}\}$ be the center of $\SO_{2n}(F)$.
For $(\xi,\kappa,a)\in\{\pm1\}\times\{0,1\}\times k^{\times}$, we define an affine generic character $\chi^{\SO_{2n}}_{\xi,\kappa,a}$ on $Z_{\SO_{2n}}I_{\SO_{2n}}^{+}$ by
\begin{itemize}
\item
$\chi^{\SO_{2n}}_{\xi,\kappa,a}(-I_{2n})=\xi$, and
\item
$\chi^{\SO_{2n}}_{\xi,\kappa,a}|_{I_{\SO_{2n}}^{+}}$ is the pull back of the character on $k^{\oplus n+1}$ given by
\[
(x_{1},\ldots,x_{n-1},x_{n},x_{n+1})\mapsto \psi(x_{1}+\cdots+x_{n-1}+\epsilon^{\kappa}x_{n}+ax_{n+1})
\]
via the map $I^{+}_{\SO_{2n}}\twoheadrightarrow I^{+}_{\SO_{2n}}/I^{++}_{\SO_{2n}}\cong k^{\oplus n+1}$.
\end{itemize}
Then the stabilizer group
of $\chi^{\SO_{2n}}_{\xi,\kappa,a}$ is given by $Z_{\SO_{2n}} I_{\SO_{2n}}^{+} \lan\varphi_{\epsilon^{\kappa},-a^{-1}}^{\SO_{2n}}\ran$, where we put
\[
\varphi_{\alpha,\beta}^{\SO_{2n}}:=
\begin{pmatrix}
&&&&&(\beta\varpi)^{-1}\\
&I_{n-2}&&&&\\
&&&\alpha^{-1}&&\\
&&\alpha&&&\\
&&&&I_{n-2}&\\
\beta\varpi&&&&&
\end{pmatrix}
\]
for any $\alpha,\beta\in k^{\times}$.
Thus, for any $\zeta\in\{\pm1\}$, we can extend $\chi^{\SO_{2n}}_{\xi,\kappa,a}$ to a character $\tilde{\chi}_{\xi,\kappa,a,\zeta}^{\SO_{2n}}$ on $Z_{\SO_{2n}} I_{\SO_{2n}}^{+} \lan\varphi_{\epsilon^{\kappa},-a^{-1}}^{\SO_{2n}}\ran$ by putting $\tilde{\chi}_{\xi,\kappa,a,\zeta}^{\SO_{2n}}(\varphi_{\epsilon^{\kappa},-a^{-1}}^{\SO_{2n}}):=\zeta$.

Let $\pi^{\SO_{2n}}_{\xi,\kappa,a,\zeta}$ be the representation of $\SO_{2n}(F)$ defined by
\[
\pi^{\SO_{2n}}_{\xi,\kappa,a,\zeta}
:=
\cInd^{\SO_{2n}}_{Z_{\SO_{2n}} I_{\SO_{2n}}^{+}\lan\varphi_{\epsilon^{\kappa},-a^{-1}}^{\SO_{2n}}\ran} \tilde{\chi}^{\SO_{2n}}_{\xi,\kappa,a,\zeta}.
\]

When $p\neq2$,
\[
\{
\pi^{\SO_{2n}}_{\xi,\kappa,a,\zeta}
\mid
\xi\in\{\pm1\}, \kappa\in\{0,1\}, a\in k^{\times}, \zeta\in\{\pm1\}
\}
\]
gives a complete set of representatives of the set of equivalence classes of simple supercuspidal representations of $\SO_{2n}(F)$.
Moreover, any $\pi^{\SO_{2n}}_{\xi,\kappa,a,\zeta}$ is stable under the action of $\mathrm{O}_{2n}(F)$ (see \cite[Section 2.6]{Oi24}).

When $p=2$, since $Z_{\SO_{2n}}$ is contained in $I^{++}_{\SO_{2n}}$, the parameter $\xi$ must be $1$.
Furthermore, since $\epsilon=1$, obviously $\pi^{\SO_{2n}}_{\xi,1,a,\zeta}=\pi^{\SO_{2n}}_{\xi,0,a,\zeta}$.
By noting these points, we can check that
\[
\{
\pi^{\SO_{2n}}_{1,0,a,\zeta}
\mid
a\in k^{\times}, \zeta\in\{\pm1\}
\}
\]
gives a complete set of representatives of the set of equivalence classes of simple supercuspidal representations of $\SO_{2n}(F)$.
In the following, when $p=2$, we write $\pi^{\SO_{2n}}_{a,\zeta}$ (resp.\ $\tilde{\chi}^{\SO_{2n}}_{a,\zeta}$) instead of $\pi^{\SO_{2n}}_{1,0,a,\zeta}$ (resp.\ $\tilde{\chi}^{\SO_{2n}}_{1,0,a,\zeta}$), for short.
Note that any $\pi^{\SO_{2n}}_{a,\zeta}$ is stable under the action of $\mathrm{O}_{2n}(F)$ (the same argument as in \cite[Section 2.6]{Oi24} works also in the case where $p=2$).

\section{Local Langlands correspondence for \texorpdfstring{$\SO_{N}$}{SO(N)}}\label{sec:LLC}

\subsection{Local Langlands correspondence for \texorpdfstring{$\SO_{N}$}{SO(N)}}\label{subsec:LLC}

For any connected reductive group $\G$ over $F$, we let $\hat{\G}$ denote the Langlands dual group and put $\L\G:=\hat{\G}\rtimes W_{F}$.
We say that a homomorphism $\phi\colon W_{F}\times\SL_{2}(\C)\rightarrow \L\G$ is an \textit{$L$-parameter} of $\G$ if $\phi$ is smooth on $W_{F}$, $\phi$ is compatible with the projections to $W_{F}$, and the restriction $\phi|_{\SL_{2}(\C)}\colon\SL_{2}(\C)\rightarrow\hat{\G}$ is algebraic.
We let
\begin{itemize}
\item
$\Pi_{\temp}(\G)$ be the set of equivalence classes of irreducible tempered representations of $\G(F)$, and
\item
$\Phi_{\temp}(\G)$ be the set of $\hat{\G}$-conjugacy classes of tempered (i.e., the image of $W_{F}$ is bounded in $\hat{\G}$) $L$-parameters of $\G$.
\end{itemize}

We are interested in the case where $\G$ is the split special orthogonal group $\SO_{N}$.
The Langlands dual group of $\SO_{N}$ is given by
\[
\begin{cases}
\Sp_{2n}(\C)& \text{if $N=2n+1$,}\\
\SO_{2n}(\C)& \text{if $N=2n$.}
\end{cases}
\]
In both cases, the Galois action on $\hat{\G}$ is trivial.

Hence, an $L$-parameter of $\SO_{2n+1}$ can be regarded as a $2n$-dimensional symplectic representation of $W_{F}\times\SL_{2}(\C)$.
It is known that two $L$-parameters of $\SO_{2n+1}$ are conjugate by $\Sp_{2n}(\C)$ if and only if they are equivalent as $2n$-dimensional representations of $W_{F}\times\SL_{2}(\C)$ (\cite[Theorem 8.1 (ii)]{GGP12-1}).
Thus the set $\Phi_{\temp}(\SO_{2n+1})$ can be identified with the set of isomorphism classes of $2n$-dimensional symplectic representations of $W_{F}\times\SL_{2}(\C)$ which are bounded on $W_{F}$.

Similarly, an $L$-parameter of $\SO_{2n}$ can be regarded as a $2n$-dimensional orthogonal representation of $W_{F}\times\SL_{2}(\C)$ with trivial determinant.
It is known that two $L$-parameters of $\SO_{2n}$ are conjugate by $\mathrm{O}_{2n}(\C)$ if and only if they are equivalent as $2n$-dimensional representations of $W_{F}\times\SL_{2}(\C)$ (\cite[Theorem 8.1 (ii)]{GGP12-1}).
By noting this, we let $\tilde{\Phi}_{\temp}(\SO_{2n})$ be the set of $\mathrm{O}_{2n}(\C)$-conjugacy classes of tempered $L$-parameters of $\SO_{2n}$.
Thus the set $\tilde{\Phi}_{\temp}(\SO_{2n})$ can be identified with the set of isomorphism classes of $2n$-dimensional orthogonal representations of $W_{F}\times\SL_{2}(\C)$ which are bounded on $W_{F}$ and have trivial determinant.
We also define $\tilde{\Pi}_{\temp}(\SO_{2n})$ to be the set of $\mathrm{O}_{2n}(F)$-orbits in $\Pi_{\temp}(\SO_{2n})$.

For the notational convenience, let us put $\tilde{\Phi}_{\temp}(\SO_{2n+1}):=\Phi_{\temp}(\SO_{2n+1})$ and $\tilde{\Pi}_{\temp}(\SO_{2n+1}):=\Pi_{\temp}(\SO_{2n+1})$ in the case where $N=2n+1$.

For any $\phi\in\tilde{\Phi}_{\temp}(\SO_{N})$, we define a finite group $\overline{S}_{\phi}$ as follows:
\begin{align*}
S_{\phi}&:=\Cent_{\hat{\G}}(\mathrm{Im}(\phi)),\\
\overline{S}_{\phi}&=S_{\phi}/(S_{\phi}^{\circ}Z_{\hat{\G}}),
\end{align*}
where $S_{\phi}^{\circ}$ denotes the identity component of $S_{\phi}$ and $Z_{\hat{\G}}$ denotes the center of $\hat{\G}$.
Here, we implicitly fix a representative of the equivalence class $\phi$ and again write $\phi$ for it by abuse of notation.

Recall that a Whittaker datum of $\G$ is a pair $\mfw=(\bfB,\lambda)$ of an $F$-rational Borel subgroup $\bfB$ of $\G$ and a generic character $\lambda$ of $U=\bfU(F)$, where $\bfU$ is the unipotent radical of $\bfB$.
In the following, let us fix a Whittaker datum $\mfw$ of $\G=\SO_{N}$ defined as follows:
\begin{enumerate}
\item
When $N=2n+1$, we take $\bfB$ to be the upper-triangular Borel subgroup of $\SO_{2n+1}$:
\[
\bfB
=
\begin{pmatrix}
\ast&\cdots&\ast\\
&\ddots&\vdots\\
&&\ast
\end{pmatrix}.
\]
We define a generic character $\lambda$ of $U$ by
\[
\lambda(y)=
\psi(y_{1,2}+\cdots+y_{n-1,n}+y_{n,n+1})
\]
for any $y=(y_{ij})\in U$.
\item
When $N=2n$, we take $\bfB$ to be the upper-triangular Borel subgroup of $\SO_{2n}$:
\[
\mathbf{B}:=\begin{pmatrix}
\ast&\cdots&\multicolumn{1}{c:}{\ast}&\ast&\multicolumn{1}{c:}{\ast}&&&\\
 &\ddots&\multicolumn{1}{c:}{\vdots}&\vdots&\multicolumn{1}{c:}{\vdots}&&\ast&\\
 &&\multicolumn{1}{c:}{\ast}&\ast&\multicolumn{1}{c:}{\ast}&&&\\
 \cdashline{1-8}
 &&\multicolumn{1}{c:}{}&\ast&\multicolumn{1}{c:}{\ast}&\ast&\cdots&\ast\\
 &&\multicolumn{1}{c:}{}&\ast&\multicolumn{1}{c:}{\ast}&\ast&\cdots&\ast\\
 \cdashline{1-8}
 &&\multicolumn{1}{c:}{}&&\multicolumn{1}{c:}{}&\ast&\cdots&\ast\\
 &&\multicolumn{1}{c:}{}&&\multicolumn{1}{c:}{}&&\ddots&\vdots\\
 &&\multicolumn{1}{c:}{}&&\multicolumn{1}{c:}{}&&&\ast
\end{pmatrix}.
\]
We define a generic character $\lambda$ of $U$ by
\[
\lambda(y)=
\psi(y_{1,2}+\cdots+y_{n-1,n}+y_{n-1,n+1})
\]
for any $y=(y_{ij})\in U$.
\end{enumerate}

The \textit{local Langlands correspondence for tempered representations of $\SO_{N}$}, which was established by Arthur (\cite[Theorems 1.5.1 and 2.2.1]{Art13}), asserts that there exists a natural map
\[
\LLC_{\SO_{N}}\colon\tilde{\Pi}_{\temp}(\SO_{N})\rightarrow\tilde{\Phi}_{\temp}(\SO_{N}),
\]
which is surjective and with finite fibers.
In other words, by letting $\tilde{\Pi}_{\phi}^{\SO_{N}}$ be the fiber of the map $\LLC_{\SO_{N}}$ at an $L$-parameter $\phi$, we obtain a natural partition
\[
\tilde{\Pi}_{\temp}(\SO_{N})
=
\bigsqcup_{\phi\in\tilde{\Phi}_{\temp}(\SO_{N})}\tilde{\Pi}_{\phi}^{\SO_{N}},
\]
where each $\tilde{\Pi}_{\phi}^{\SO_{N}}$ is finite.
For any $\phi\in\tilde{\Phi}_{\temp}(\SO_{N})$, the finite set $\tilde{\Pi}_{\phi}^{\SO_{2n}}$ is called an \textit{$L$-packet} and is equipped with a bijective map depending on the chosen Whittaker datum $\mfw$ of $\SO_{N}$:
\[
\iota_{\mfw}\colon \tilde{\Pi}_{\phi}^{\SO_{N}}\xrightarrow{1:1}\overline{S}_{\phi}^{\vee}
\]
 to the set $\overline{S}_{\phi}^{\vee}$ of irreducible characters of $\overline{S}_{\phi}$.

Note that there exists a unique representation in each tempered $L$-packet which is mapped to the trivial character of $\overline{S}_{\phi}$ under the map $\iota_{\mfw}$.
In general, Shahidi's \textit{generic packet conjecture} predicts that each tempered $L$-packet contains a unique $\mfw$-generic representation $\pi_{\mfw}$ and that $\iota_{\mfw}(\pi_{\mfw})=\mathbbm{1}$.
In the present setting, Shahidi's conjecture has been verified by \cite[Proposition 8.3.2]{Art13}, \cite{Var17}, and \cite{Ato17}.

\subsection{Result of M{\oe}glin and Xu}\label{subsec:Moeglin-Xu}
We say that a tempered $L$-parameter $\phi\in\tilde{\Phi}_{\temp}(\SO_{N})$ is \textit{discrete} if its centralizer group $S_{\phi}$ is finite, or equivalently (see \cite[Section 4]{GGP12-1}), $\phi$ is the direct sum of pairwise inequivalent irreducible symplectic (resp.\ orthogonal) representations as a $2n$-dimensional representation of $W_{F}\times\SL_{2}(\C)$ when $N=2n+1$ (resp.\ when $N=2n$).
Arthur's result assures that $\phi$ is discrete if and only if $\tilde{\Pi}_{\phi}^{\SO_{N}}$ contains a discrete series representation of $\SO_{N}(F)$, and that, in this case, every member of $\tilde{\Pi}_{\phi}^{\SO_{N}}$ is a discrete series.
(We refer the reader to \cite[Section 2]{Xu17-MM} for details.)

In general, it is possible that the $L$-packet $\tilde{\Pi}_{\phi}^{\SO_{N}}$ for a discrete $L$-parameter $\phi$ contains both a supercuspidal representation and a non-supercuspidal discrete series representation.
In \cite{Moeg11} and \cite{Xu17-MM}, M{\oe}glin and Xu gave a parametrization of the supercuspidal members in a given discrete series $L$-packet in terms of $\overline{S}_{\phi}^{\vee}$.

As an easy consequence of their result, we deduce the following (see \cite[Corollary 4.9]{Oi24} for the details):

\begin{prop}\label{prop:Moeglin-Xu}
For any discrete $L$-parameter $\phi\in\tilde{\Phi}_{\temp}(\SO_{N})$, the following are equivalent:
\begin{enumerate}
\item
$\phi$ is trivial on $\SL_{2}(\C)$;
\item
the unique $\mfw$-generic representation in $\tilde{\Pi}_{\phi}^{\SO_{N}}$ is supercuspidal;
\item
every member of $\tilde{\Pi}_{\phi}^{\SO_{N}}$ is supercuspidal.
\end{enumerate}
\end{prop}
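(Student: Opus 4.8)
The plan is to deduce Proposition~\ref{prop:Moeglin-Xu} from the explicit description of supercuspidal members of a discrete series $L$-packet given by M{\oe}glin and refined by Xu, exactly as recorded in \cite[Corollary 4.9]{Oi18-ssc}. Recall that for a discrete $\phi\in\tilde\Phi_{\temp}(\SO_N)$ we may write $\phi=\bigoplus_{i}\phi_{i}\boxtimes S_{d_{i}}$, where the $\phi_{i}$ are pairwise distinct irreducible representations of $W_{F}$ of the appropriate self-duality type, $S_{d_{i}}$ is the $d_{i}$-dimensional irreducible representation of $\SL_{2}(\C)$, and each summand $\phi_{i}\boxtimes S_{d_{i}}$ is of orthogonal type when $N=2n$ (resp. symplectic type when $N=2n+1$). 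The M{\oe}glin--Xu parametrization attaches to each character $\eta\in\overline S_{\phi}^{\vee}$ a member $\pi_{\eta}\in\tilde\Pi_{\phi}^{\SO_{N}}$, and tells us precisely for which $\eta$ the representation $\pi_{\eta}$ is supercuspidal: the combinatorial condition involves, for each Jordan block $(\phi_{i},d_{i})$, a comparison of $\eta$ on adjacent blocks $(\phi_{i},d_{i})$ and $(\phi_{i},d_{i}-2)$, and forces $d_{i}=1$ for every $i$ whenever a supercuspidal member exists. I would quote this as the single input and organize the proof around the implications $(1)\Rightarrow(2)\Rightarrow(3)\Rightarrow(1)$.

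First I would prove $(1)\Rightarrow(3)$: if $\phi$ is trivial on $\SL_{2}(\C)$, then every Jordan block has $d_{i}=1$, so there are no ``adjacent'' blocks $(\phi_{i},d_{i})$ and $(\phi_{i},d_{i}-2)$ at all, and the M{\oe}glin--Xu supercuspidality criterion is vacuously satisfied for every $\eta\in\overline S_{\phi}^{\vee}$; hence every member $\pi_{\eta}$ of $\tilde\Pi_{\phi}^{\SO_{N}}$ is supercuspidal. Next, $(3)\Rightarrow(2)$ is immediate from Arthur's construction of the $L$-packet for a $\mfw$-generic discrete parameter: the packet $\tilde\Pi_{\phi}^{\SO_{N}}$ is nonempty, and under the bijection $\iota_{\mfw}$ the trivial character of $\overline S_{\phi}$ corresponds to the $\mfw$-generic member (this is part of Arthur's normalization, \cite[Theorems 1.5.1, 2.2.1]{Art13}); since by $(3)$ that member is supercuspidal, $(2)$ holds. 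Finally, for $(2)\Rightarrow(1)$ I would argue contrapositively: if $\phi$ is nontrivial on $\SL_{2}(\C)$, then some Jordan block has $d_{i}\geq 2$. The key point is that the $\mfw$-generic member of the packet corresponds to $\eta=\mathbbm 1$, and for the trivial character the M{\oe}glin--Xu criterion fails as soon as some $d_{i}\geq 2$ — concretely, the condition that $\pi_{\mathbbm 1}$ be supercuspidal requires $\mathbbm 1$ to ``alternate'' across consecutive blocks $(\phi_{i},d_{i}),(\phi_{i},d_{i}-2),\ldots$, which is impossible for a block of length $\geq 2$. Hence $\pi_{\mathbbm 1}$, the unique $\mfw$-generic member, is not supercuspidal, contradicting $(2)$; therefore $\phi$ must be trivial on $\SL_{2}(\C)$.

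The one genuine subtlety — the step I expect to be the main obstacle — is pinning down exactly which member of $\tilde\Pi_{\phi}^{\SO_{N}}$ the supercuspidality criterion must be tested on in the proof of $(2)\Rightarrow(1)$, and matching the normalizations. M{\oe}glin's parametrization and Xu's refinement use a particular basis of $\overline S_{\phi}$ (indexed by Jordan blocks), while the statement $(2)$ singles out the $\mfw$-generic representation; one must check that the $\mfw$-generic member is the one attached to the trivial character in that basis, and that ``trivial character'' in Arthur's normalization agrees with ``trivial'' in M{\oe}glin--Xu's. This compatibility of Whittaker normalizations is precisely what is packaged in \cite[Corollary 4.9]{Oi18-ssc}, so in practice I would simply invoke that corollary rather than reprove it; the remaining work is the short, essentially formal deduction of the three implications above. (A small additional check: $(2)$ presupposes $\phi$ is $\mfw$-generic as a parameter, i.e. admits a generic member; for discrete $\phi$ this is automatic by Arthur, so no genericity hypothesis on $\phi$ need be imposed separately.)
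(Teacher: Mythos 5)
Your proposal is correct and takes essentially the same route as the paper: the paper gives no independent argument for Proposition~\ref{prop:Moeglin-Xu} but deduces it directly from the M{\oe}glin--Xu parametrization as recorded in \cite[Corollary 4.9]{Oi18-ssc}, which is exactly the single input you invoke. The only cosmetic slip is that you announce the cycle $(1)\Rightarrow(2)\Rightarrow(3)\Rightarrow(1)$ and then actually prove $(1)\Rightarrow(3)\Rightarrow(2)\Rightarrow(1)$; either cycle establishes equivalence, so this does not affect the argument.
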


\begin{rem}\label{rem:Moeglin-Xu}
\begin{enumerate}
\item
What follows from \cite{Moeg11} and \cite{Xu17-MM} directly is the equivalence between (1), (3) and
\begin{quote}
(2') the member of $\tilde{\Pi}_{\phi}^{\SO_{N}}$ which is mapped to $\mathbbm{1}$ under $\iota_{\mfw}$ is supercuspidal.
\end{quote}
Hence, in order to rephrase this condition as in (2), we need to appeal to Shahidi's conjecture.
\item
The logical relationship between \cite{Moeg11} and \cite{Xu17-MM} is as follows.
In \cite{Moeg11}, M{\oe}glin established an explicit construction of discrete series $L$-packets (more generally, $A$-packets) of classical groups modulo the local Langlands correspondence for $L$-packets consisting only of supercuspidal representations.
In particular, her result gives the above-mentioned parametrization of supercuspidal members in each discrete series $L$-packet.
However, a priori, it is not obvious at all whether her construction is consistent with Arthur's one \cite{Art13}.
What Xu did in \cite{Xu17-MM} is to prove that these two constructions indeed coincide.
\end{enumerate}
\end{rem}

\subsection{Formal degree conjecture of Hiraga--Ichino--Ikeda}\label{subsec:FDC}

In \cite[Conjecture 1.4]{HII08}, Hiraga--Ichino--Ikeda proposed the following conjecture (here we state the conjecture according to a reformulation by Gross--Reeder, \cite[Conjecture 7.1 (5)]{GR10}):

\begin{conj}[Formal degree conjecture]\label{conj:FDC}
Let $\phi\in\tilde{\Phi}_{\temp}(\SO_{N})$ be a discrete $L$-parameter.
Then, for any $\pi\in\tilde{\Pi}_{\phi}^{\SO_{N}}$,
\[
|\deg_{\mu}(\pi)|
=
\frac{1}{|\overline{S}_{\phi}|}\cdot\frac{|\gamma(0,\Ad\circ\phi,\psi_{0})|}{|\gamma(0,\Ad\circ\phi_{\mathrm{pr}},\psi_{0})|}.
\]
Here,
\begin{itemize}
\item
$\deg_{\mu}(\pi)$ is the formal degree of $\pi$ with respect to the Euler--Poincare measure $\mu$ of $\SO_{N}(F)$ (see \cite[Section 7.1]{GR10}),
\item
$\Ad$ is the adjoint representation of $\hat{\G}$ on its Lie algebra $\Lie\hat{\G}$,
\item
$\gamma(s,-,\psi_{0})$ is the $\gamma$-factor for representations of $W_{F}\times\SL_{2}(\C)$ (see \cite[Section 2.2]{GR10}) with respect to a nontrivial additive character $\psi_{0}$ of $F$ of level $0$, i.e., $\psi_{0}$ is trivial on $\mcO$ but not on $\mfp^{-1}$, and
\item
$\phi_{\mathrm{pr}}$ denotes the principal parameter in the sense of Gross--Reeder (see \cite[Section 3.3]{GR10}).
\end{itemize}
\end{conj}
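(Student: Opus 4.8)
The plan is not to attempt Conjecture \ref{conj:FDC} in full generality: it is a deep statement, known for general linear groups from \cite{HII08} and, for classical groups, through endoscopic methods and the recent work of Beuzart-Plessis, the even orthogonal case being announced in \cite{BP21-OWR}. For the present paper only even special orthogonal groups and simple supercuspidal representations enter, so we take the conjecture as an input; nonetheless it is worth recording the shape of the argument and the point of difficulty. The left-hand side is essentially elementary for the representations that concern us: a simple supercuspidal representation of $\SO_N(F)$ is compactly induced from a character of an open subgroup compact modulo the center, so $\deg_\mu(\pi)$ is read off from the index of that subgroup together with the normalization of the Euler--Poincar\'e measure, and it is an explicit rational power of $q$. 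Thus the whole content of the conjecture is the identification of this quantity with the ratio of adjoint $\gamma$-factors, and the difficulty is that the latter cannot be accessed by any purely local computation without already knowing the parameter $\phi$.

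The established route for classical groups is global. One globalizes $\SO_N$ and $\pi$ to a cuspidal automorphic representation $\Pi$ that is $\pi$ at the place of interest, is Steinberg at one or two auxiliary finite places, and is unramified elsewhere. The product of the local formal degrees is then matched against a special value of the adjoint automorphic $L$-function --- either through the Ichino--Ikeda refinement of the Gan--Gross--Prasad period formula, in which formal degrees occur as local constants, or through the residue of an Eisenstein intertwining operator together with the Langlands--Shahidi normalization, which produces precisely the adjoint $\gamma$-factors. At the auxiliary places the relevant local identity is already known (the Steinberg formal degree is classical, and the unramified contributions are handled by the Satake isomorphism), so comparing local terms isolates the wanted identity at the place of interest. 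The hard part is genuinely this global comparison: one must globalize while keeping rigid control of every other place and match the ramified and archimedean contributions on the nose, which forces a careful bookkeeping of the delicate normalizations --- the Euler--Poincar\'e measure on $\SO_N(F)$ and the level-zero additive character $\psi_0$ in the $\gamma$-factor --- with no room for slack.

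For our purposes none of this needs to be redone: for $\SO_{2n}$ and simple supercuspidal $\pi$, Conjecture \ref{conj:FDC} holds by \cite{BP21-OWR}. In the next section we merely specialize it. Since the formal degree of a simple supercuspidal $\pi^{\SO_{2n}}$ is an explicit rational power of $q$ and $\Ad\circ\phi$ is the exterior square of the $2n$-dimensional orthogonal representation $\phi$, the formula yields the value of $|\gamma(0,\Ad\circ\phi,\psi_0)|$, and this numerical constraint is exactly what feeds into the second step of the proof of Theorem \ref{thm:main-intro}.
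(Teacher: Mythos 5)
Your treatment matches the paper's: Conjecture \ref{conj:FDC} is not proved in the text, but stated and then taken as input, with the odd orthogonal case credited to Ichino--Lapid--Mao \cite{ILM17} and the even case to the announcement of Beuzart-Plessis \cite{BP21-OWR}. Your sketch of the global mechanism behind such formal-degree identities is reasonable background, but the paper itself contains no such argument; it simply cites these references and, in the following section, specializes the identity to simple supercuspidals via Lemma \ref{lem:SSC-FD} and Proposition \ref{prop:SSC-FDC}, exactly as you anticipate.
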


\begin{rem}
In \cite{HII08}, the formal degree conjecture is formulated for any quasi-split connected reductive group $\G$.
In general, the right-hand side of the identity of Conjecture \ref{conj:FDC} must contain one more term ``$\langle1,\pi\rangle$'' (see \cite[Conjecture 1.4]{HII08}).
Here $\langle-,\pi\rangle$ denotes the irreducible character of $\overline{S}_{\phi}$ corresponding to $\pi$ under the map $\iota_{\mfw}\colon \tilde{\Pi}_{\phi}^{\SO_{N}}\rightarrow\overline{S}_{\phi}^{\vee}$.
In fact, the group $\overline{S}_{\phi}$ is always abelian when $\G=\SO_{N}$.
Accordingly, $\langle1,\pi\rangle$ is always given by $1$.
\end{rem}

The formal degree conjecture for the odd special orthogonal group $\SO_{2n+1}$ was proved by Ichino--Lapid--Mao \cite{ILM17}.
For the even special orthogonal group $\SO_{2n}$, recently Beuzart-Plessis announced that he has proved the formal degree conjecture (\cite{BP21-OWR}).

\section{Analysis of symmetric and exterior square $L$-factors}\label{sec:L}

In this section, we prove several results on the symmetric and exterior square $L$-factors of self-dual irreducible Galois representations, which will be needed later.

The following lemma is proved in \cite[Lemma 4.11]{HO22}.

\begin{lem}\label{lem:HO-4.1}
Let $\rho$ be a finite-dimensional irreducible smooth representation of $W_{F}$.
\begin{enumerate}
\item
The number of irreducible components of $\rho|_{I_{F}}$ is equal to the degree of the maximal unramified extension $E$ of $F$ from which $\rho$ is induced, where $I_{F}$ denotes the inertia subgroup of $W_{F}$.
\item
If we let $\sigma$ be a representation of $W_{E}$ such that $\rho\cong\Ind_{W_{E}}^{W_{F}}\sigma$, then the restriction of $\sigma$ to $I_{F}$ is irreducible and $\Gal(E/F)$-regular, i.e., $(\sigma|_{I_{F}})^{\gamma}\ncong\sigma|_{I_{F}}$ for any $\gamma\in\Gal(E/F)\smallsetminus\{1\}$.
\item
An unramified character $\omega$ of $F^{\times}$ satisfies $\rho\otimes\omega\cong\rho$ if and only if $\omega^{d}=\mathbbm{1}$ for $d:=[E:F]$, or equivalently, $\omega|_{W_{E}}=\mathbbm{1}$.
\end{enumerate}
\end{lem}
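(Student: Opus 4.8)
The plan is to deduce all three parts from Clifford theory for the normal profinite subgroup $I_{F}\triangleleft W_{F}$, using throughout that $W_{F}/I_{F}\cong\Z$ (topologically generated by a Frobenius lift $\Frob$) and that the subgroups of $W_{F}$ containing $I_{F}$ are precisely the $W_{E}$ with $E/F$ unramified. First I would set up the basic structure: since $\rho$ is smooth and $I_{F}$ is compact, $\rho(I_{F})$ is finite, so $\rho|_{I_{F}}$ is semisimple, and by Clifford's theorem its irreducible constituents form a single orbit under $W_{F}$, an action that factors through $W_{F}/I_{F}\cong\Z$. Fixing a constituent $\tau$ with stabilizer $T$, we get $I_{F}\subseteq T\subseteq W_{F}$ with $T/I_{F}$ a nonzero subgroup of $\Z$ (nonzero, since the orbit is finite), so $T=W_{E}$ for the unramified $E/F$ with $d:=[E:F]$ equal to the orbit size. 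Since $H^{2}(\Z,-)$ vanishes, $\tau$ extends to $W_{E}$ and the Clifford correspondence gives $\rho\cong\Ind_{W_{E}}^{W_{F}}\sigma$ with $\sigma$ an irreducible representation of $W_{E}$ whose restriction to $I_{F}$ is $\tau$ with multiplicity one; thus $\sigma|_{I_{F}}=\tau$ is irreducible, and it is $\Gal(E/F)$-regular precisely because $W_{E}=T$ is the \emph{full} stabilizer of $\tau$, i.e.\ $\tau^{\Frob^{j}}\not\cong\tau$ for $0<j<d$. This proves (2), and also yields $\rho|_{I_{F}}\cong\bigoplus_{j=0}^{d-1}\tau^{\Frob^{j}}$, a multiplicity-free sum of $d$ pairwise inequivalent irreducibles.

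For (1) it remains to see that this $E$ is the \emph{maximal} unramified extension from which $\rho$ is induced; such extensions are linearly ordered and the set of their degrees is a nonempty subset of the divisors of $\dim\rho$, so maximality makes sense. Given any isomorphism $\rho\cong\Ind_{W_{E'}}^{W_{F}}\sigma'$ with $E'/F$ unramified of degree $d'$, restriction to $I_{F}$ (legitimate since $I_{F}\triangleleft W_{F}$) gives $\rho|_{I_{F}}\cong\bigoplus_{j=0}^{d'-1}(\sigma'|_{I_{F}})^{\Frob^{j}}$. Any irreducible constituent $\mu$ of $\sigma'|_{I_{F}}$ lies in the $W_{F}$-orbit of $\tau$, which has period exactly $d$, so $(\sigma'|_{I_{F}})^{\Frob^{j}}$ contains $\mu$ for every $j$ with $0\le j<d'$ and $d\mid j$. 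If $d'>d$ the indices $j=0$ and $j=d$ both qualify, so $\mu$ would occur with multiplicity $\ge2$ in $\rho|_{I_{F}}$, contradicting multiplicity-freeness; hence $d'\le d$, so $E'\subseteq E$. Consequently the number of irreducible constituents of $\rho|_{I_{F}}$ is $d=[E:F]$, which is (1).

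For (3) I would first handle the primitive case $E=F$, i.e.\ $\rho|_{I_{F}}$ irreducible. If $\omega$ is an unramified character of order $m$ with $\rho\otimes\omega\cong\rho$, then $W_{E_{m}}:=\ker\omega$ is an unramified extension of degree $m$ (as $\omega|_{I_{F}}=\mathbbm{1}$), and the projection formula gives $\Ind_{W_{E_{m}}}^{W_{F}}(\rho|_{W_{E_{m}}})\cong\rho\otimes\Ind_{W_{E_{m}}}^{W_{F}}\mathbbm{1}\cong\bigoplus_{k=0}^{m-1}\rho\otimes\omega^{k}\cong m\rho$; hence $\dim\End_{W_{E_{m}}}(\rho|_{W_{E_{m}}})=m$, which for $m>1$ forces $\rho|_{W_{E_{m}}}$, and so $\rho|_{I_{F}}$ by further restriction, to be reducible, contradicting the irreducibility of $\rho|_{I_{F}}$. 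So in the primitive case $\rho\otimes\omega\cong\rho$ implies $\omega=\mathbbm{1}$. In general, $\rho\cong\Ind_{W_{E}}^{W_{F}}\sigma$ with $\sigma$ primitive over $W_{E}$ (as $\sigma|_{I_{F}}=\tau$ is irreducible); for unramified $\omega$ we have $\omega^{d}=\mathbbm{1}$ iff $\omega|_{W_{E}}=\mathbbm{1}$ (an unramified character factors through $W_{F}/I_{F}\cong\Z$, and $W_{E}=\langle I_{F},\Frob^{d}\rangle$), in which case the projection formula gives $\rho\otimes\omega\cong\Ind_{W_{E}}^{W_{F}}(\sigma\otimes\omega|_{W_{E}})\cong\rho$. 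Conversely, if $\rho\otimes\omega\cong\rho$, then by Mackey's intertwining formula and the normality of $W_{E}$ in $W_{F}$ there is a nonzero $W_{E}$-map $\sigma^{\Frob^{j}}\to\sigma\otimes\omega|_{W_{E}}$ for some $0\le j<d$; restricting it to $I_{F}$ gives a nonzero map $\tau^{\Frob^{j}}\to\tau$, forcing $j=0$ and hence $\sigma\otimes\omega|_{W_{E}}\cong\sigma$, whence $\omega|_{W_{E}}=\mathbbm{1}$ by the primitive case applied to $\sigma$; that is, $\omega^{d}=\mathbbm{1}$.

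I expect the main subtlety to be organizational rather than computational. Part (3) for a general $\rho$ reduces, via Mackey's formula, to part (3) for the primitive representation $\sigma$, so the primitive case genuinely has to be proved on its own to avoid circularity, and the maximality assertion in (1) rests on the multiplicity-freeness of $\rho|_{I_{F}}$, which in turn uses the vanishing of $H^{2}(\Z,-)$. Once these points are in place, every remaining step is a direct application of Clifford's theorem, the Mackey restriction and intertwining formulas, and the projection formula.
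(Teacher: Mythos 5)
The paper does not give its own proof of this lemma; it simply cites \cite[Lemma 4.11]{HO22}, so there is no in-text argument to compare against. Your proof is correct, and it is exactly the standard Clifford-theoretic argument one would expect (and which, to the best of our reconstruction, is the one used in the cited reference): smoothness plus compactness of $I_{F}$ gives semisimplicity of $\rho|_{I_{F}}$; the conjugation action on constituents factors through $W_{F}/I_{F}\cong\Z$, so the stabilizer of a constituent $\tau$ is some $W_{E}$; vanishing of $H^{2}(\Z,\C^{\times})$ lets $\tau$ extend, yielding $\rho\cong\Ind_{W_{E}}^{W_{F}}\sigma$ with $\sigma|_{I_{F}}=\tau$ and multiplicity-freeness of $\rho|_{I_{F}}$; maximality of $E$ then falls out of multiplicity-freeness, and part (3) reduces via Mackey and the projection formula to the primitive case, where the $\End$-dimension count via $\Ind(\rho|_{W_{E_m}})\cong m\rho$ does the job. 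Two small stylistic remarks: in the step deducing $E'\subseteq E$ you only prove $d'\le d$, but since both are degrees of unramified extensions from which $\rho$ is induced and these extensions form a chain, $d'\le d$ already gives $E'\subseteq E$ (in fact $d'\mid d$ as the constituent count of $\rho|_{I_F}$ is $d=d'\cdot(\text{\# constituents of }\sigma'|_{I_F})$); and in part (2) you establish the conclusion only for the particular $\sigma$ produced by Clifford theory, but any other $\sigma'$ with $\rho\cong\Ind_{W_E}^{W_F}\sigma'$ is $\Frob$-conjugate to $\sigma$ by Mackey, so the conclusion transfers — worth one line to close the loop.
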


\begin{prop}\label{prop:par-L}
Let $\rho$ be a self-dual finite-dimensional irreducible smooth representation of $W_{F}$.
Let $E$ and $\sigma$ be as in Lemma \ref{lem:HO-4.1}, thus $\rho\cong\Ind_{W_{E}}^{W_{F}}\sigma$.
We put $d:=[E:F]$.
\begin{enumerate}
\item
When $\rho$ is orthogonal,
\[
L(s,\wedge^{2}\rho)
=
\begin{cases}
1 & \text{if $\sigma$ is self-dual,} \\
(1+q^{-es})^{-1} &  \text{if $\sigma$ is not self-dual.}
\end{cases}
\]
\item
When $\rho$ is symplectic,
\[
L(s,\Sym^{2}\rho)
=
\begin{cases}
1 & \text{if $\sigma$ is self-dual,} \\
(1+q^{-es})^{-1} &  \text{if $\sigma$ is not self-dual.}
\end{cases}
\]
\end{enumerate}
Here, the degree $d$ must be even when $\sigma$ is not self-dual in both cases, hence we put $d=2e$.
\end{prop}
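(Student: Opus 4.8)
The plan is to reduce everything to the structure theory of induced representations recalled in Lemma \ref{lem:HO-4.1}, together with the standard compatibility of exterior/symmetric squares with induction (Mackey / Frobenius). First I would recall the general decomposition: writing $\rho\cong\Ind_{W_E}^{W_F}\sigma$ with $E/F$ unramified of degree $d$ and $\sigma|_{I_F}$ irreducible and $\Gal(E/F)$-regular, one has, for any $\gamma$ generating $\Gal(E/F)$, the decomposition $\rho|_{W_E}\cong\bigoplus_{i=0}^{d-1}\sigma^{\gamma^i}$, and the standard formula
\[
\wedge^2\Ind_{W_E}^{W_F}\sigma
\cong
\Ind_{W_E}^{W_F}\bigl(\wedge^2\sigma\bigr)
\oplus
\Ind_{W_{F}}^{W_F}\Bigl(\bigoplus_{0\le i<j\le d-1,\ \text{suitable orbit reps}}\Ind_{W_E}^{W_F}(\sigma\otimes\sigma^{\gamma^{j-i}})\Bigr)^{!},
\]
— more precisely, $\wedge^2\Ind\sigma$ and $\Sym^2\Ind\sigma$ each split as $\Ind_{W_E}^{W_F}(\wedge^2\sigma)$ (resp.\ $\Ind_{W_E}^{W_F}(\Sym^2\sigma)$) plus a sum over the $\langle\gamma\rangle$-orbits on the off-diagonal pairs $\{i,j\}$ of induced pieces $\Ind_{W_E}^{W_F}(\sigma\otimes\sigma^{\gamma^{r}})$, with a shift between the $\wedge^2$ and $\Sym^2$ allocation of the ``diagonal orbit'' piece when $d$ is even. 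Since $L$-factors only see the inertia-invariants of Frobenius, I would then compute $L(s,-)$ of each summand: an induced representation $\Ind_{W_E}^{W_F}\tau$ contributes a nontrivial $L$-factor exactly when $\tau$ contains the trivial representation of $W_E$, and then contributes $(1-q_E^{-s})^{-1}=(1-q^{-ds})^{-1}$ up to the precise normalization.

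Next I would isolate which summands can possibly contribute. The off-diagonal pieces $\sigma\otimes\sigma^{\gamma^{r}}$ with $r\not\equiv 0,\,d/2$: these never contain the trivial character of $W_E$, because $\sigma^{\gamma^r}\not\cong\sigma^\vee$ would have to hold, and $\sigma^\vee\cong\sigma^{\gamma^{s}}$ for at most one value of $s\bmod d$ by the $\Gal(E/F)$-regularity of $\sigma|_{I_F}$; one pins down that $s$ using self-duality of $\rho$. The key dichotomy is whether $\sigma$ itself is self-dual. If $\sigma$ is self-dual, then the only pair contributing is the ``diagonal'' $\Ind_{W_E}^{W_F}(\wedge^2\sigma)$ or $\Ind_{W_E}^{W_F}(\Sym^2\sigma)$ piece, and whether $\sigma$ is orthogonal or symplectic (forced by the sign of the duality on $\rho$ together with the parity of $d$ — this is exactly where one invokes the Fröhlich--Queyrut / Deligne-type computation of the induced form's type) determines that $\wedge^2\sigma$ or $\Sym^2\sigma$ does \emph{not} contain the trivial character, giving $L=1$. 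If $\sigma$ is \emph{not} self-dual, then $d$ must be even (since $\rho=\Ind\sigma$ self-dual forces $\sigma^\vee\cong\sigma^{\gamma^{d/2}}$, and $d/2\ne 0$), write $d=2e$; now the orbit of the pair $\{0,d/2\}$ contributes $\Ind_{W_E}^{W_F}(\sigma\otimes\sigma^{\gamma^{e}})\cong\Ind_{W_E}^{W_F}(\sigma\otimes\sigma^\vee)$, which contains the trivial character of $W_E$ with multiplicity one, and — crucially — sits in $\wedge^2$ when $\rho$ is orthogonal and in $\Sym^2$ when $\rho$ is symplectic (the sign flips because passing the duality through one ``half-period'' $\gamma^{e}$ of the induction introduces the sign $(-1)^{e}\cdot(\text{type of }\rho)$; one has to track this sign carefully). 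This single trivial subrepresentation of $W_E$, after descending along $\Ind_{W_{F}}^{W_F}$ of the remaining $W_F/W_E$-piece, produces the factor $(1+q^{-es})^{-1}$ rather than $(1-q^{-es})^{-1}$ because the relevant Frobenius eigenvalue on the one-dimensional inertia-invariant space is $-1$, not $+1$ — and verifying that the eigenvalue is $-1$ is the heart of the matter.

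I would organize the write-up as: (i) the induction formulas for $\wedge^2$ and $\Sym^2$ of $\Ind_{W_E}^{W_F}\sigma$, with the orbit bookkeeping; (ii) the computation, via Lemma \ref{lem:HO-4.1} and the self-duality of $\rho$, of exactly which internal twist $\sigma\otimes\sigma^{\gamma^r}$ contains $\mathbbm{1}_{W_E}$; (iii) the sign/parity analysis determining whether that piece lands in $\wedge^2$ or $\Sym^2$ and what the Frobenius eigenvalue on the invariant line is; (iv) assembling the $L$-factors. I expect step (iii) — tracking the Gauss-sum/Fröhlich--Queyrut sign that governs both the orthogonal-versus-symplectic alternative for $\sigma$ and the $+1$-versus-$-1$ Frobenius eigenvalue — to be the main obstacle; everything else is formal manipulation of induced representations and the elementary observation that $L(s,\Ind_{W_E}^{W_F}\tau)$ depends only on $\tau^{W_E}$ as a Frobenius module. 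A clean way to handle (iii) is to reduce to the one-dimensional case over $E$ (replace $\sigma$ by $\det\sigma$ on the relevant rank-one piece) and quote the known value of the root number / the explicit description of the induced bilinear form, rather than recomputing from scratch.
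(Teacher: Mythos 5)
Your overall plan---decompose $\wedge^{2}(\Ind_{W_E}^{W_F}\sigma)$ and $\Sym^{2}(\Ind_{W_E}^{W_F}\sigma)$ into orbit pieces, then locate the unramified characters---is, in spirit, the paper's strategy, and your steps (i)--(ii) are fine. The problem is in step (iii), the one you yourself flag as ``the heart of the matter.'' When $\sigma$ is not self-dual and $d=2e$, the claim that the diagonal-orbit piece $\Ind_{W_E}^{W_F}(\sigma\otimes\sigma^{\gamma^{e}})$ ``sits in $\wedge^{2}$ when $\rho$ is orthogonal'' is wrong, and the argument cannot proceed as written. Because $\gamma^{e}$ has order two, this piece is invariant under the flip $v\otimes w\mapsto w\otimes v$ (it does not pair off with a distinct orbit), so it necessarily splits nontrivially between $\Sym^{2}\rho$ and $\wedge^{2}\rho$; moreover the trivial $W_E$-character occurs in $(\sigma\otimes\sigma^{\gamma^{e}})\oplus(\sigma^{\gamma^{e}}\otimes\sigma)$ with multiplicity two, and exactly one copy lands on each side. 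Concretely, \emph{all} $d=2e$ unramified characters $\omega$ of $W_F$ with $\omega^{d}=\mathbbm{1}$ occur in $\rho\otimes\rho$, and the question is how they distribute between $\Sym^{2}\rho$ and $\wedge^{2}\rho$. If the entire piece went into $\wedge^{2}\rho$ you would instead get $L(s,\wedge^{2}\rho)=(1-q^{-2es})^{-1}$, not $(1+q^{-es})^{-1}$. Your secondary claim of a one-dimensional inertia-invariant space with Frobenius eigenvalue $-1$ is also off: $(\wedge^{2}\rho)^{I_F}$ is $e$-dimensional, and the Frobenius eigenvalues are the $e$-th roots of $-1$ (which accounts for the factor $1+q^{-es}$), so the eigenvalue is $-1$ only over an intermediate field, which you have not introduced. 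The sign formula ``$(-1)^{e}\cdot(\text{type of }\rho)$'' is not what governs the split; in fact, which half goes where does not depend on $e$ at all.

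What closes the gap is the introduction of the index-two unramified subfield $E'$, the fixed field of $\gamma^{e}$, together with the degree-two induction $\sigma':=\Ind_{W_E}^{W_{E'}}\sigma$, which is irreducible and self-dual (hence orthogonal, since $\rho=\Ind_{W_{E'}}^{W_F}\sigma'$ is). Realizing $\sigma'$ on $X=V\oplus\gamma V$, one checks directly that on the summand $\Ind_{W_E}^{W_{E'}}(V\otimes\gamma V)=(V\otimes\gamma V)\oplus(\gamma V\otimes V)$ of $\sigma'\otimes\sigma'$, the $\wedge^{2}$-part is precisely the twist of the $\Sym^{2}$-part by the nontrivial unramified quadratic character $\omega'$ of $W_{E'}$. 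Since $\sigma'$ is orthogonal, $\mathbbm{1}_{W_{E'}}$ sits in the $\Sym^{2}$-part, so $\omega'$ sits in the $\wedge^{2}$-part, and inducing up gives $L(s,\wedge^{2}\rho)=L(s,\Ind_{W_{E'}}^{W_F}\omega')=L(s,\omega')=(1+q^{-es})^{-1}$; the unramified $\omega$ with $\omega^{e}=\mathbbm{1}$ all land in $\Sym^{2}\rho$, and those with $\omega|_{W_{E'}}=\omega'$ all land in $\wedge^{2}\rho$. This is entirely elementary and replaces your proposed reduction to $\det\sigma$ and Fr\"ohlich--Queyrut, which is not the relevant input here: the issue is $\Sym^{2}$-versus-$\wedge^{2}$ bookkeeping on a rank-two induced module, not a root-number computation.
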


\begin{proof}
The proof can proceed in the same way in both cases (1) and (2) (by swapping $\Sym^{2}$ and $\wedge^{2}$), so let us consider only the case (1).

We assume that $\rho$ is orthogonal.
Recall that
\[
L(s,\wedge^{2}\rho)
=
\det\bigl(1-q^{-s}\cdot\Frob \,\big\vert\, (\wedge^{2}\rho)^{I_{F}}\bigr)^{-1}.
\]
We note that $(\wedge^{2}\rho)^{I_{F}}\subset (\rho\otimes\rho)^{I_{F}}$.
Let us investigate the unramified characters appearing in $\rho\otimes\rho$.
If we let $\omega$ be such a character of $F^{\times}$, then
\[
\Hom_{W_{F}}(\rho\otimes\omega,\rho)
\cong
\Hom_{W_{F}}(\omega,\rho\otimes\rho)\neq0
\]
by the self-duality of $\rho$.
Thus $\omega$ must satisfy $\omega^{d}=\mathbbm{1}$, or equivalently, $\omega|_{W_{E}}=\mathbbm{1}$ by Lemma \ref{lem:HO-4.1} (3).
Note that such a character $\omega$ occurs only once in $\rho\otimes\rho$ since $\Hom_{W_{F}}(\rho\otimes\omega,\rho)$ is at most one-dimensional by the irreducibility of $\rho$.

We first consider the case where $\sigma$ is self-dual.
Note that then $\sigma$ must be orthogonal since its induction $\rho\cong\Ind_{W_{E}}^{W_{F}}\sigma$ is orthogonal.
If $\omega$ is an unramified character of $W_{F}$ satisfying $\omega|_{W_{E}}=\mathbbm{1}$, then
\[
\Hom_{W_{F}}(\omega,\Ind_{W_{E}}^{W_{F}}(\Sym^{2}\sigma))
\cong\Hom_{W_{E}}(\mathbbm{1},\Sym^{2}\sigma)
\neq0
\]
as $\sigma$ is irreducible and orthogonal.
Since $\Ind_{W_{E}}^{W_{F}}(\Sym^{2}\sigma)$ is contained in $\Sym^{2}\rho$, we get $\Hom_{W_{F}}(\omega,\Sym^{2}\rho)\neq0$ and thus $\Hom_{W_{F}}(\omega,\wedge^{2}\rho)=0$.

We next consider the case where $\sigma$ is not self-dual.
In this case, there is a nontrivial element $\gamma$ of $\Gal(E/F)$ such that $\sigma^\gamma$ is equivalent to the contragredient of $\sigma$ (compare the Mackey decompositions of $\rho$ and $\rho^{\vee}$).
This element $\gamma$ necessarily has order $2$. In particular, this implies that $d$ is even and we put $d=2e$.
Let $E'$ be the subextension of $E/F$ fixed by $\gamma$.
We put $\sigma':=\Ind_{W_{E}}^{W_{E'}}\sigma$.
Then $\sigma'$ is irreducible and self-dual since $\sigma^{\gamma}$ is the contragredient of $\sigma$ and not equivalent to $\sigma$.
Note that this implies that $\sigma'$ is orthogonal since so is its induction $\rho$ to $W_{F}$.
Furthermore, by noting that $\sigma|_{I_{F}}$ is $\Gal(E/F)$-regular, $\sigma'|_{I_{F}}$ is $\Gal(E'/F)$-regular.

Now let $\omega$ be an unramified character of $W_{F}$ which occurs in $\rho\otimes\rho$.
If $\omega$ satisfies $\omega^{e}=\mathbbm{1}$ (or, equivalently, $\omega|_{W_{E'}}=\mathbbm{1}$), then the same argument as in the case where $\sigma$ is self-dual shows that $\omega$ must occur in $\Ind_{W_{E'}}^{W_{F}}(\Sym^{2}\sigma')$.
Since $\Ind_{W_{E'}}^{W_{F}}(\Sym^{2}\sigma')$ is contained in $\Sym^{2}\rho$, we get $\Hom_{W_{F}}(\omega,\Sym^{2}\rho)\neq0$ and $\Hom_{W_{F}}(\omega,\wedge^{2}\rho)=0$.

Let us consider the case where $\omega^{e}\neq\mathbbm{1}$.
In this case, the restriction $\omega|_{W_{E'}}=:\omega'$ is necessarily the nontrivial quadratic character of $W_{E'}$ with kernel $W_{E}$.
Let us write $V$ for the representation space of $\sigma$, and $\gamma V$ for that of $\sigma^\gamma$; more precisely, we choose a lift $\gamma$ in $W_{E'}$ of the nontrivial element of $\Gal(E/E')$, and the action of $w\in W_E$ on $\gamma v$ in $\gamma V$ gives $w\gamma v=\gamma(\gamma^{-1}w\gamma)v$.
That gives the action of $W_E$ on the space $X=V\oplus\gamma V$ of $\sigma'=\Ind_{W_{E}}^{W_{E'}}\sigma$, and $\gamma$ acts on that space via $\gamma(v,\gamma w)=(\gamma^{2}w,\gamma v)$.
The action of $W_{E'}$ on $X\otimes X$ is the direct sum of
\[
(V\otimes V)\oplus(\gamma V\otimes\gamma V)= \Ind_{W_{E}}^{W_{E'}}(V\otimes V)
\]
and
\[(
V\otimes\gamma V)\oplus(\gamma V\otimes V)=\Ind_{W_{E}}^{W_{E'}}(V\otimes\gamma V).
\]
The first summand cannot contain $\omega'$ since $\sigma$ is not self-dual.
We investigate the second summand $(V\otimes\gamma V)\oplus(\gamma V\otimes V)$.
On the space $(V\otimes\gamma V)\oplus(\gamma V\otimes V)$ we have the involution sending $(v_{1}\otimes\gamma v_{2},\gamma v'_{1}\otimes v'_{2})$ to $(v'_{2}\otimes\gamma v'_{1}, \gamma v_{2}\otimes v_{1})$. The decomposition $X\otimes X=\Sym^{2}(X)\oplus \wedge^{2}(X)$ induces a decomposition of $(V\otimes\gamma V)\oplus(\gamma V\otimes V)$: under this decomposition, the $\Sym^{2}$-part is the subspace where that involution acts trivially, and the $\wedge^{2}$-part is the subspace where it acts as $-1$.
From this, it can be easily checked that the $\wedge^{2}$-part of $(V\otimes\gamma V)\oplus(\gamma V\otimes V)$ is, as a representation of $W_{E'}$, the twist of the $\Sym^{2}$-part by the character
$\omega'$.
Indeed, as representations of $W_{E}$, both parts are isomorphic to $V \otimes \gamma V$ (by projecting onto that component).  However, in the symmetric part, $\gamma$ acts by sending $v \otimes \gamma w$ to $\gamma^2 w \otimes \gamma v$, whereas in the alternating part it acts further multiplying by $-1$. The assertion follows.
Since $\sigma'$ is orthogonal, the trivial character occurs in the $\Sym^{2}$-part.
Thus the character $\omega'$ must occur in the $\wedge^{2}$-part.

In summary, we see that an unramified character $\omega$ of $W_{F}$ is contained in $\wedge^{2}\rho$ if and only if $\omega|_{W_{E'}}\neq\mathbbm{1}$.
By noting that $\Ind_{W_{E'}}^{W_{F}}\omega'$ is isomorphic to the direct sum of all such characters of $W_{F}$, we conclude that
\[
L(s,\wedge^{2}\rho)
=L(s,\Ind_{W_{E'}}^{W_{F}}\omega')
=L(s,\omega')
=(1+q^{-es})^{-1}.\qedhere
\]
\end{proof}

Recall that, for any finite-dimensional irreducible smooth representation $\rho$ of $W_{F}$, we can consider its \textit{Swan exponent} $\Swan(\rho)$.
We refer the reader to \cite[Section 2]{GR10} for the definition of the Swan exponent; note that $\Swan(\rho)$ is denoted by $b(\rho)$ in \cite{GR10}.
We also need the following:

\begin{lem}\label{lem:tame-rep}
Let $\rho$ be a self-dual finite-dimensional irreducible smooth representation of $W_F$ which is not a character and satisfies $\Swan(\rho)=0$. Denote the dimension of $\rho$ by $d$. Then $d$ is even and $\rho\cong\Ind_{W_{E}}^{W_{F}}\chi$.
Here,
\begin{itemize}
\item
$E$ is the degree $d$ unramified extension of $F$, and
\item
$\chi$ is a $\Gal(E/F)$-regular character of $E^{\times}$ satisfying $\chi^{2}\neq\mathbbm{1}$ and
\[
\chi|_{E^{\prime\times}}
=
\begin{cases}
\mathbbm{1} & \text{if $\rho$ is orthogonal,}\\
\omega' & \text{if $\rho$ is symplectic,}
\end{cases}
\]
where $E'$ is the subextension of $E/F$ such that $[E:E']=2$ and $\omega'$ is the nontrivial quadratic unramified character of $E'^{\times}$.
\end{itemize}
\end{lem}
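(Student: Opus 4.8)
The plan is to analyze the structure of a self-dual irreducible $\rho$ with $\Swan(\rho)=0$ directly through the theory of tame (tamely ramified) representations of $W_F$, exploiting the machinery already set up in Lemma \ref{lem:HO-4.1} and Proposition \ref{prop:par-L}. Since $\Swan(\rho)=0$, the representation $\rho$ is tamely ramified, so it factors through the quotient $W_F/P_F$, where $P_F$ is the wild inertia subgroup. A tamely ramified irreducible representation of $W_F$ is necessarily induced from a character: by Lemma \ref{lem:HO-4.1}(1)--(2), writing $\rho\cong\Ind_{W_E}^{W_F}\sigma$ with $E$ the maximal unramified subextension from which $\rho$ is induced, $\sigma|_{I_F}$ is irreducible; but $\sigma$ is again tame (its Swan conductor is also $0$, induction from an unramified extension not changing wildness), and a tame irreducible representation of $W_E$ whose restriction to inertia is irreducible must be one-dimensional because the tame quotient of $I_E$ is procyclic (hence abelian). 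So $\sigma=\chi$ is a character of $E^\times$, $E/F$ is unramified of degree $d:=\dim\rho$, and $\chi|_{I_F}$ is $\Gal(E/F)$-regular by Lemma \ref{lem:HO-4.1}(2).

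Next I would pin down the self-duality and the parity conditions. Self-duality of $\rho=\Ind_{W_E}^{W_F}\chi$ means $\rho\cong\rho^\vee=\Ind_{W_E}^{W_F}\chi^{-1}$; comparing Mackey decompositions (exactly as in the proof of Proposition \ref{prop:par-L}), there is $\gamma\in\Gal(E/F)$ with $\chi^\gamma=\chi^{-1}$. If $\chi$ were itself self-dual, i.e.\ $\chi^2=\mathbbm{1}$, then since $\chi$ is tame and $E/F$ is unramified, $\chi$ would be an unramified character of $E^\times$ — but then $\chi|_{I_F}$ is trivial, contradicting $\Gal(E/F)$-regularity unless $d=1$, which is excluded by hypothesis. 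Hence $\chi^2\neq\mathbbm{1}$ and $\chi$ is not self-dual, so (again as in Proposition \ref{prop:par-L}) the element $\gamma$ with $\chi^\gamma=\chi^{-1}$ has order exactly $2$; this forces $d=[E:F]$ to be even, say $d=2e$, and $E'$ is the subextension fixed by $\gamma$, with $[E:E']=2$. To see $d$ is genuinely even (not just that $\gamma$ has order $2$ in a possibly larger group), note that $\chi\neq\chi^{-1}$ means $\gamma\neq 1$ in $\Gal(E/F)$, and $\chi^{\gamma^2}=\chi$ together with $\Gal(E/F)$-regularity forces $\gamma^2=1$; since $\gamma$ generates (a subgroup of) the cyclic group $\Gal(E/F)$ and has order $2$, and since $\rho$ is irreducible the stabilizer of $\chi|_{I_F}$ in $\Gal(E/F)$ is trivial, one checks $\Gal(E/F)=\langle\gamma\rangle$ has order $2$... wait — more carefully, $E$ was chosen maximal with $\rho$ induced from $W_E$, so irreducibility of $\rho$ forces $\chi^\delta\neq\chi$ for all $\delta\in\Gal(E/F)\smallsetminus\{1\}$; combined with $\chi^\gamma=\chi^{-1}$ this is consistent with any even $d$, and $d$ is even simply because $\gamma\in\Gal(E/F)$ has order $2$, so $2\mid |\Gal(E/F)|=d$.

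Finally I would determine $\chi|_{E'^\times}$ according to the parity of $\rho$. Set $\sigma':=\Ind_{W_E}^{W_{E'}}\chi$; as in the proof of Proposition \ref{prop:par-L}, $\sigma'$ is irreducible (since $\chi^\gamma\neq\chi$) and self-dual (since $\chi^\gamma=\chi^{-1}$). The restriction $\chi|_{E'^\times}$ is a character whose induction to $W_{E'}$ is... rather, $\chi|_{W_E}$ extends to $W_{E'}$ iff $\chi$ is $\Gal(E/E')$-invariant, which it is not, so instead I use: $\sigma'=\Ind_{W_E}^{W_{E'}}\chi$ being self-dual and two-dimensional, $\det\sigma'=\chi|_{E'^\times}\cdot\eta_{E/E'}$ where $\eta_{E/E'}$ is the quadratic character of $E'^\times$ with kernel $\Nm_{E/E'}(E^\times)$, i.e.\ the unramified quadratic character $\omega'$ since $E/E'$ is unramified; and $\det\sigma'=\mathbbm{1}$ when $\sigma'$ is orthogonal while $\det\sigma'=\omega'$ (the nontrivial unramified quadratic character, the unique such of order $2$) when $\sigma'$ is symplectic — because a self-dual irreducible $2$-dimensional representation is symplectic iff it has trivial determinant, orthogonal iff its determinant is the order-$2$ character cutting out the field over which it becomes reducible. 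Then $\sigma'$ has the same parity as $\rho$: $\rho=\Ind_{W_{E'}}^{W_F}\sigma'$ with $E'/F$ unramified (hence "orthogonal" induction preserving the bilinear form type), so $\rho$ orthogonal $\iff\sigma'$ orthogonal. Unwinding: if $\rho$ is orthogonal then $\det\sigma'=\mathbbm{1}$, hence $\chi|_{E'^\times}=\omega'^{-1}=\omega'$... I must be careful with which is trivial; the correct bookkeeping (which I would carry out carefully in the writeup, using that $\chi|_{E'^\times}=\det(\Ind_{W_E}^{W_{E'}}\chi)\cdot\omega'$) yields $\chi|_{E'^\times}=\mathbbm{1}$ when $\rho$ is orthogonal and $\chi|_{E'^\times}=\omega'$ when $\rho$ is symplectic, which is the claimed statement; this last determinant-chasing step, keeping straight the transfer-of-structure conventions, is the main obstacle, though it is essentially the computation already performed in the "$\omega^e\neq\mathbbm{1}$" case of the proof of Proposition \ref{prop:par-L}.
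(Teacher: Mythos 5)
Your proposal follows essentially the same route as the paper's proof (reduce to $\rho\cong\Ind_{W_E}^{W_F}\chi$ via tameness, use self-duality to produce $g\in\Gal(E/F)$ with $\chi^g=\chi^{-1}$, show $g$ has order $2$, then compute $\det(\Ind_{W_E}^{W_{E'}}\chi)$ via the formula $\det\sigma'=\eta_{E/E'}\cdot(\chi|_{E'^\times})$). There is, however, a genuine error in your argument that $\chi^2\neq\mathbbm{1}$. You assert that if $\chi^2=\mathbbm{1}$ then, since $\chi$ is tame and $E/F$ is unramified, $\chi$ is an unramified character of $E^\times$. This is false when $p$ is odd: $k_E^\times$ then has even order, so there is a nontrivial quadratic character of $k_E^\times$, and inflating it along $\mcO_E^\times\twoheadrightarrow k_E^\times$ gives a ramified tame quadratic character of $E^\times$. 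The correct observation, and the one the paper makes, is weaker but suffices: any quadratic character of $k_E^\times$ (trivial or the unique nontrivial one) is Frobenius-invariant, so $\chi|_{\mcO_E^\times}$ would be $\Gal(E/F)$-invariant, contradicting the $\Gal(E/F)$-regularity of $\chi|_{I_F}$ from Lemma \ref{lem:HO-4.1}(2) (the regularity of the restriction to inertia, not merely of $\chi$ itself, is what must be invoked here). So you reach the right conclusion through a false intermediate step.

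A smaller issue: in the final paragraph you write both ``$\det\sigma'=\mathbbm{1}$ when $\sigma'$ is orthogonal while $\det\sigma'=\omega'$ when $\sigma'$ is symplectic'' and, one clause later, the (correct) statement ``symplectic iff trivial determinant, orthogonal iff its determinant is the order-$2$ character cutting out the splitting field'' --- these contradict each other. You acknowledge the confusion and state the correct final answer, but the computation as written is internally inconsistent. The clean chain is: $\rho$ orthogonal $\iff$ $\sigma'$ orthogonal $\iff$ $\det\sigma'$ is nontrivial; and since $\chi\cdot\chi^g=\mathbbm{1}$ forces $\chi|_{E'^\times}\in\{\mathbbm{1},\omega'\}$ (so $\det\sigma'=\omega'\cdot\chi|_{E'^\times}\in\{\omega',\mathbbm{1}\}$), ``nontrivial'' means exactly $\det\sigma'=\omega'$, i.e.\ $\chi|_{E'^\times}=\mathbbm{1}$. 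You would also want to justify, not merely state, that $\rho$ and $\sigma'$ have the same parity (induction of a self-dual irreducible preserves the type of the invariant form when the induced representation is again irreducible); the paper leaves this implicit as well, so making it explicit is an improvement.
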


\begin{proof}
Since $\Swan(\rho)=0$, $\rho$ is trivial on the wild inertia subgroup $P_{F}$.
Thus, by noting that $I_{F}/P_{F}$ is abelian, the restriction $\rho|_{I_{F}}$ of $\rho$ to the inertia subgroup $I_{F}$ decomposes into the direct sum of $d$ characters.
Hence, by Lemma \ref{lem:HO-4.1}, we obtain $\rho\cong\Ind_{W_{E}}^{W_{F}}\chi$, where $E$ is the degree $d$ unramified extension of $F$ and $\chi$ is a $\Gal(E/F)$-regular character of $E^{\times}$ (recall that even its restriction to $\mcO_{E}^{\times}$ is $\Gal(E/F)$-regular).
Note that $\chi$ is tamely ramified by the assumption that $\Swan(\rho)=0$.

Since $\rho$ is self-dual, $\Hom_{W_{F}}(\rho,\rho^{\vee})\neq0$.
Hence, by the Frobenius reciprocity and the Mackey decomposition, $\chi^{-1}$ has to equal $\chi^{g}$ for some $g$ in $\Gal(E/F)$.
Then $g^{2}$ fixes $\chi$, so either $g$ is trivial or $g$ has order $2$ by the $\Gal(E/F)$-regularity of $\chi$.
If $g$ is trivial, then $\chi^{2}=\mathbbm{1}$.
Since $\chi$ is tamely ramified, this implies that $\chi|_{\mcO_{E}^{\times}}$ is fixed by $\Gal(E/F)$, which contradicts the $\Gal(E/F)$-regularity of $\chi|_{\mcO_{E}^{\times}}$.
Thus $g$ must have order $2$, hence $d$ is even.
Note that then $\chi^{2}\neq\mathbbm{1}$.

Let $E'$ be the fixed field of $g$, so that $E/E'$ is a quadratic extension.
Then $\Ind_{W_{E}}^{W_{E'}}\chi$ is self-dual.
Moreover, since $\rho\cong\Ind_{W_{E'}}^{W_{F}}(\Ind_{W_{E}}^{W_{E'}}\chi)$, $\Ind_{W_{E}}^{W_{E'}}\chi$ is irreducible.
In particular, $\Ind_{W_{E}}^{W_{E'}}\chi$ is either symplectic or orthogonal.
Note that $\wedge^{2}(\Ind_{W_{E}}^{W_{E'}}\chi)\cong\det(\Ind_{W_{E}}^{W_{E'}}\chi)$ as $\Ind_{W_{E}}^{W_{E'}}\chi$ is two dimensional.
Hence, $\Ind_{W_{E}}^{W_{E'}}\chi$ is symplectic if and only if $\det(\Ind_{W_{E}}^{W_{E'}}\chi)$ is trivial.
By \cite[29.2, Proposition]{BH06},
\[
\det(\Ind_{W_{E}}^{W_{E'}}\chi)
\cong
\det(\Ind_{W_{E}}^{W_{E'}}\mathbbm{1})\otimes(\chi|_{E^{\prime\times}}).
\]
The character $\det(\Ind_{W_{E}}^{W_{E'}}\mathbbm{1})$ equals the nontrivial unramified quadratic character of $W_{E'}$.
Indeed, $\det(\Ind_{W_{E}}^{W_{E'}}\mathbbm{1})$ is obviously trivial on $W_{E}$, hence equals either the trivial character or the nontrivial unramified quadratic character of $W_{E'}$.
If we take a realization of the representation space of $\Ind_{W_{E}}^{W_{E'}}\mathbbm{1}$ as in the proof of Prop \ref{prop:par-L}, we can see that the action of any $\gamma\in W_{E'}\smallsetminus W_{E}$ is represented by a matrix $\begin{pmatrix}0&1\\1&0\end{pmatrix}$, whose determinant equals $-1$.
Hence $\det(\Ind_{W_{E}}^{W_{E'}}\mathbbm{1})$ is not the trivial character.
On the other hand, since $\chi\cdot\chi^{g}=\mathbbm{1}$ (i.e., $\chi$ is trivial on norms from $E^{\times}$ to $E^{\prime\times}$), the restriction $\chi|_{E^{\prime\times}}$ is either trivial or the nontrivial unramified quadratic character.
Therefore, $\det(\Ind_{W_{E}}^{W_{E'}}\chi)$ is trivial if and only if $\chi|_{E^{\prime\times}}$ is the nontrivial unramified quadratic character.
This completes the proof.
\end{proof}

\section{Twisted gamma factor for simple supercuspidal representations of \texorpdfstring{$\SO_{2n}$}{SO(2n)}}\label{sec:AK}

This section concerns the case where $\bfG:=\SO_{2n}$, $n>1$.
Let $\pi$ be a simple supercuspidal representation of $G=\bfG(F)$, and $\tau$ a tamely ramified character of $F^{\times}$.
We compute the Rankin--Selberg $\gamma$-factor $\gamma(s,\pi\times\tau, \psi)$ defined in \cite{Kap13-MM, Kap15,AK21}, thus completing \cite{AK21} which treated the case where $\tau$ is quadratic.
Then we deduce consequences for the parameter of $\pi$, as in \cite[Section 5]{AK21} (see \cite{Adr16,AK19} for similar results in the $\SO_{2n+1}$ and $\Sp_{2n}$ cases).

\subsection{Simple supercuspidal representations of \texorpdfstring{$\SO_{2n}$}{SO(2n)}}\label{subsec:AK-ssc}

Following \cite{AK21}, we let $\pi$ be the simple supercuspidal representation of $G$ given as follows.
Fix $\alpha \in \padicfield^{\times}$ and a uniformizer $\varpi'$ for the ring of integers $\mcO$ of $F$.
(Here, we use the symbol $\varpi'$ rather than $\varpi$ because we want to reserve the symbol $\varpi$ for ``the'' uniformizer fixed at the beginning of this paper.) In addition let $\omega$ be a character of $Z_{\SO_{2n}}$ and $\zeta\in\{\pm1\}$.
Let us write $I$ and $I^{+}$ instead of $I_{\SO_{2n}}$ and $I_{\SO_{2n}}^{+}$ for short, respectively.
Similarly, we write $Z=Z_{\SO_{2n}}$.
Define an affine generic character $\chi$ of $I^{+}$ by
\begin{align*}
\chi(y)= \psi(\sum_{i=1}^{n-1}y_{i,i+1} + \alpha y_{n-1,n+1} + \varpi^{\prime-1}y_{2n-1,1}).
\end{align*}
Then the stabilizer of $\chi$ in $I$ is given by $Z\langle g_{\chi}\rangle I^+$, where
\begin{align*}
g_{\chi} =
\left(\begin{smallmatrix}
 & & & & & -\varpi^{\prime-1} \\
& I_{n-2} &  &  & \\
&  &   & \alpha^{-1} & &\\
&  & \alpha &  & \\
&  &  &  & I_{n-2}\\
-\varpi' & & & & &
\end{smallmatrix}\right)\in G.
\end{align*}
Extend $\chi$ to a character of $Z \langle g_{\chi} \rangle I^+$, also denoted by $\chi$,
by defining $\chi(g_{\chi})=\zeta$ ($g_{\chi}$ is of order $2$) and for $z\in Z$, $\chi(z)=\omega(z)$.
We put $\pi := \mathrm{Ind}_{Z \langle g_{\chi} \rangle I^+}^{G} \chi$.

Note that this construction exhausts all (equivalence classes of) simple supercuspidal representations of $G$.
See Section \ref{subsubsec:SO-even} for the comparison of this parametrization of simple supercuspidal representations with the one given in Section \ref{subsec:SSC-even}.

\subsection{The twisted \texorpdfstring{$\gamma$}{gamma}-factors}\label{twisted gamma factors}\label{subsec:AK-twisted-gamma}
Let $\tau$ be any quasi-character of $F^{\times}$.
We recall the definition of the $\gamma$-factor $\gamma(s,\pi\times\tau,\psi)$ of \cite{Kap13-MM} via the theory of Rankin--Selberg integrals, with the minor change in conventions introduced in \cite{AK21} (see Remark \ref{remark difference Kap15 vs AK21} below).
Fix Haar measures $dx$ on $F$ and $d^{\times}x$ on $F^{\times}$ by requiring $\int_{\mcO}\,dx=q^{1/2}$ and
$d^{\times}x=\frac{q^{1/2}}{q-1}|x|^{-1}dx$. For a measurable subset $X\subset F$ put $\vol(X)=\int_X\,dx$ and similarly
for $X\subset F^{\times}$, $\vol^{\times}(X)=\int_X\,d^{\times}x$. In order to define the integral we introduce an auxiliary classical group.
Let $\Param\in F^{\times}$ be a given element\footnote{The parameter $\Param$ was denoted by $\gamma$ in \cite{AK21}.}. Define
\begin{align*}
\bfH=\{g\in\SL_{3} \mid {}^tgJ_{3,\Param}g=J_{3,\Param}\},\quad J_{3,\Param}=\begin{pmatrix}&&1\\ &\Param/2&\\ 1&&\end{pmatrix}.
\end{align*}

Let $\bfB_{\bfH}=\bfT_{\bfH}\ltimes \bfU_{\bfH}$ denote the Borel subgroup of upper triangular invertible matrices in $\bfH$, and
similarly denote by $\overline{\bfB}_{\bfH}$ the Borel subgroup of lower triangular invertible matrices.
For $s \in \C$, let $V(\tau,s)$ be the space of the representation $\nInd_{B_{H}}^{H} (|\det|^{s-1/2}\tau)$ (normalized parabolic induction).
The vectors in $V(\tau,s)$ are regarded as complex-valued functions $H\times F^{\times}\to\C$ and the $H$-action on $f_s\in V(\tau,s)$ is denoted by $h\cdot f_s$ ($h\in H$).
Using the Iwasawa decomposition, the representations $\nInd_{B_{H}}^{H} (|\det|^{s-1/2}\tau)$ can be realized on the same space $V(\tau)=V(\tau,0)$, then a standard section $f_s\in V(\tau,s)$ is the image of $f\in V(\tau)$. A holomorphic (resp., meromorphic) section is then an element in $\C[q^{\mp s}]\otimes V(\tau)$ (resp., $\C(q^{-s})\otimes V(\tau)$).

We embed $\bfH$ in $\bfG$ as the stabilizer of the vectors \[e_1,\ldots,e_{n-2},\tfrac{1}{4}e_n-\Param e_{n+1},e_{n+3},\ldots,e_{2n}\]
(for $n=2$, the stabilizer of $\tfrac{1}{4}e_n-\Param e_{n+1}$), where $(e_1,\ldots,e_{2n})$ is the standard basis of the column space $F^{2n}$. In coordinates, an element $(h_{ij})_{1\leq i,j\leq 3}\in \bfH$ is mapped to
\begin{align*}
\diag(I_{n-2}, \left(\begin{smallmatrix}1\\
&\tfrac14 & \tfrac14\\
&-\Param & \Param\\&&&1
\end{smallmatrix}\right)\left(
\begin{smallmatrix}
h_{11} &  & h_{12} & h_{13}\\
& 1 & & \\
h_{21} & & h_{22} & h_{23}\\
h_{31} & & h_{32} & h_{33}
\end{smallmatrix}\right)\left(\begin{smallmatrix}1\\
&2 & -\tfrac12\Param^{-1}\\
&2 & \tfrac12\Param^{-1}\\&&&1
\end{smallmatrix}\right), I_{n-2})\in \bfG
\end{align*}
(cf.\ \cite{Kap15}, see \cite{AK21}).

Also define
\begin{align*}
R^{n,1} =\left\{\left(\begin{smallmatrix}
1 & & & & \\
r & I_{n-2} & & & \\
& & I_2 & &\\
& & & I_{n-2} &\\
& & & r' & 1
\end{smallmatrix}\right) \in \bfG\right\},\qquad
w^{n,1}=\left(\begin{smallmatrix}
 &1 & & & \\
 I_{n-2} & & & \\
& & I_2 & &\\
& & & & I_{n-2} \\
& & & 1 &
\end{smallmatrix}\right) \in \bfG.
\end{align*}

Recall that $n>1$. Let $\bfU$ be the unipotent radical of the upper triangular Borel subgroup $\bfB$ of $\bfG$ (see Section \ref{subsec:LLC}), and let $\bfT<\bfB$ be the diagonal torus, then $\bfB=\bfT\ltimes\bfU$.
Define a generic character $\psi_U$ of $U$ by
\begin{align}\label{def:character of U for integral}
\psi_U(u)=\psi (\sum_{i=1}^{n-2} u_{i,i+1} + \tfrac{1}{4} u_{n-1,n} - \Param u_{n-1,n+1}).
\end{align}
(This generic character is different from $\lambda$ fixed in Section \ref{subsec:LLC}.)
Any generic character of $U$ is of the form \eqref{def:character of U for integral} up to conjugation by an element of $T$, since $\Param$ is arbitrary.

Now we can define the Rankin--Selberg integral.
Let $\pi$ be an irreducible $\psi_U^{-1}$-generic representation of $G$, and denote the corresponding Whittaker model of $\pi$
by $\mathcal{W}(\pi, \psi_U^{-1})$. For any $W\in\mathcal{W}(\pi, \psi_U^{-1})$ and a holomorphic section $f_s$, the integral is defined for $\Re(s)\gg0$ by
\begin{align}\label{zeta}
\Psi(W, f_s)=\int_{U_{H}\backslash H}\int_{R^{n,1}}W(rw^{n,1} h) f_s(h,1)  \,dr\,dh.
\end{align}
This integral extends to a meromorphic function in $\C(q^{-s})$.

In order to define the $\gamma$-factor we consider the intertwining operator $M(\tau,s):V(\tau,s)\rightarrow V(\tau^{-1},1-s)$
defined in $\Re(s)\gg0$ by
\begin{align*}
M(\tau,s)f_s(h,a) = \int_{U_H} f_s(w_1 u h, -a^{-1}) d u,\qquad w_1=\left(\begin{smallmatrix}
&  & 1 \\& -1 &  \\1 &  & \end{smallmatrix}\right),
\end{align*}
and in general by meromorphic continuation. Here the measure is defined by identifying the coordinate $u_{1,2}$ of $u\in U_H$ with $F$ and then $du$ is our fixed measure $dx$.
The intertwining operator is normalized by the Langlands--Shahidi local coefficient which we denote by $C(s,\tau,\psi)$.
This factor was computed in \cite{AK21}, but the fourth named author found a typo in the computation, which we hereby correct:
the factor $\tau(4/\Param)|4/\Param|^s$ in equation~\cite[(6.4)]{AK21} should be replaced by its inverse (the effect of the change $z\mapsto\tfrac{\Param}4z$ was computed incorrectly). Consequently the formula reads
\begin{align}\label{eq:C factor}
C(s,\tau,\psi)&=\tau(\Param)|\Param|^{s-1}\gamma(2s-1,\tau^2,\psi).
\end{align}
Here $\gamma(s,\tau^2,\psi)$ is Tate's $\gamma$-factor \cite{Tat67}.
Then we define
\[
M^*(\tau,s)=C(s,\tau,\psi)M(\tau,s)
\quad\text{and}\quad
\Psi^*(W, f_s)=\Psi(W, M^*(\tau,s)f_s)
\]
(which is absolutely convergent for $\Re(s)\ll0$).
The $\gamma$-factor is now defined by the functional equation
\begin{align}\label{gamma def}
\gamma_{\Param}(s,\pi\times\tau,\psi)\Psi(W, f_s)=\pi(-I_{2n})\tau(-1)^{n}\tau(4\Param^{-2})|\Param/2|^{-2s+1}\Psi^*(W, f_s).
\end{align}
When $\Param$ is fixed, we denote 
\begin{align*}
\gamma(s,\pi\times\tau,\psi)=\gamma_{\Param}(s,\pi\times\tau,\psi).
\end{align*}
\begin{rem}
The correction to $C(s,\tau,\psi)$ does not affect \cite[Theorem~1.1]{AK21} because it does not affect the poles of $\gamma(s,\pi\times\tau,\psi)$, and
it also has no impact on \cite[Corollary~4.7]{AK21} and \cite[Proposition~5.1]{AK21} because $\tau$ was taken to be quadratic and
$|\Param/4|=1$ (see \cite[Section 3]{AK21} and recall that $\Param$ was denoted by $\gamma$ in \textit{ibid.}).
\end{rem}
\begin{rem}\label{remark difference Kap15 vs AK21}
The Rankin--Selberg integral is defined in \cite{Kap13-MM,Kap15} for a pair of generic representations of $\SO_{2n}\times\GL_k$ for any $n$ and $k$. However, the definition in \textit{ibid.}~ depends on the character $\psi_U$ in the sense that when $\SO_{2n}$ is split, it is assumed that $2\Param$ is a square in $F^{\times}$. The definition in \cite{AK21} relaxes the construction by removing this assumption, but only for $n>k$ and details are given only for $k=1$. While it is not clear that for the case $n\leq k$ the definition can also be given independently of the square class of $\Param$, this is in fact not needed, not here and not in \cite{AK21}. We only need to know that the $\gamma$-factor defined above coincides with Shahidi's $\gamma$-factor (defined in \cite{Sha90}) for generic supercuspidal representations. This is guaranteed by Corollary \ref{cor:gamma factor OK} below.
\end{rem}

For the following theorem and corollary we replace $F$ by an arbitrary local field $F_0$, $p$-adic or Archimedean (of characteristic $0$).
For details specific to the Archimedean integrals see \cite{Kap15}.
Let $\mathrm{Rep}(\GL_n(F_0))$ denote the set of isomorphism classes of (complex, smooth) representations $\sigma$ of $\GL_n(F_0)$ with the following properties:
\begin{itemize}
\item $\sigma$ has a unique Whittaker model.
\item If $F_0$ is $p$-adic: $\sigma$ is either irreducible, or reducible but then $\sigma$ is a quotient of a representation parabolically induced from an irreducible supercuspidal representation of a Levi subgroup (this includes the case that $\sigma$ is a reducible principal series).
\item If $F_0$ is Archimedean: $\sigma$ is either irreducible, admissible, Fr\'{e}chet, of moderate growth, or $\sigma$ is a principal series representation.
\end{itemize}

\begin{thm}\label{theorem:properties of the gamma factor}
Let $\pi$ be an irreducible admissible (when $F_0$ is Archimedean: Fr\'{e}chet, of moderate growth) $\psi_U^{-1}$-generic representation of $\SO_{2n}(F_0)$, $n>1$, and let $\tau$ be a quasi-character of $F_0^{\times}$. The $\gamma$-factor $\gamma(s,\pi\times\tau,\psi)$ satisfies the following properties.
\begin{enumerate}[label=(\alph*)]
\item 
(unramified twisting) For $s_0\in\C$, $\gamma(s,\pi\times\tau|\cdot|^{s_0},\psi)=\gamma(s+s_0, \pi\times\tau,\psi)$.
\item 
(multiplicativity) Let $\sigma\in \mathrm{Rep}(\GL_n(F_0))$ and assume that $\pi$ is the irreducible $\psi_U^{-1}$-generic quotient of a representation parabolically induced from $\sigma$. Then,
    \[\gamma(s,\pi\times\tau,\psi)=\gamma(s,\sigma\times\tau,\psi)\gamma(s,\sigma^{\vee}\times\tau,\psi).\]
    Here the $\gamma$-factors on the right-hand side are the Rankin--Selberg $\GL_n\times\GL_1$ $\gamma$-factors of \cite{JPSS83,JS3,Jac5}.
\item \label{enum:unramified factors} (unramified factors) When all data are unramified, in particular $F_0$ is $p$-adic and $|2|=|\Param|=1$ in $F_0$,
    \[ \gamma(s,\pi\times\tau,\psi)=\frac{L(1-s,\pi\times\tau^{-1})}{L(s,\pi\times\tau)}.    \]
\item
(dependence on $\psi$) For any $b\in F_0^{\times}$, let $\psi_b$ be the character of $F_0$ given by $\psi_b(x)=\psi(bx)$. Then,
    \[
    \gamma(s,\pi\times\tau,\psi_b)=\tau(b)^{2n}|b|^{2n(s-1/2)}\gamma(s,\pi\times\tau,\psi). \]
\item \label{enum:dependence on psi}
(smooth with respect to $\Param$) Assume that $F_0$ is $p$-adic. Let $y\in F_0^{\times}$ be such that $|y\Param^{-1}-1|$ is sufficiently small with respect to $\tau$. Then,
    \[    \gamma_{\Param}(s,\pi\times\tau,\psi)=\gamma_{y}(s,\pi\times\tau,\psi). \]
\item 
(Archimedean property) When $F_0$ is Archimedean, $\gamma(s,\pi\times\tau,\psi)$ is the Artin $\gamma$-factor $\gamma^{\mathrm{Artin}}(s,\pi\times\tau,\psi)$ under the local Langlands correspondence (\cite{Lan89}).
\item 
(global property) Let $\mff$ be a number field with adeles ring $\bbA_{\mff}$, and $\eta$ be a nontrivial character of $\mff\backslash\bbA_{\mff}$. Let $\Pi$ be a globally generic cuspidal automorphic representation of $\bfG(\bbA_{\mff})$. Then, $\Pi$ is
(necessarily) globally generic with respect to a character of $U(\bbA_{\mff})$ trivial on $U(\mff)$. We can assume that this character is given by \eqref{def:character of U for integral} where $(\psi,\Param)$ is replaced with $(\eta,y)$, $y\in \mff^{\times}$. Let $\Xi$ be an automorphic character of $\bbA_{\mff}^{\times}$. Let $S$ be a finite set of places of $\mff$ such that for any $v\notin S$, all data are unramified. Then
    \begin{align*}
    L_S(s,\Pi\times\Xi)=\prod_{v\in S}\gamma_{y_v}(s,\Pi_v\times\Xi_v,\eta_v)L_S(1-s,\Pi\times\Xi^{-1}).
    \end{align*}
    Here $L_S(s,\Pi\times\Xi)$ is the partial $L$-function with respect to $S$.
\end{enumerate}
\end{thm}
\begin{proof}
For the proofs of all of these properties except \ref{enum:dependence on psi} refer to \cite{Kap15} and the references therein. In particular, the multiplicativity property is proved in \cite[Section 5.4]{Kap13-MM} and the proof is adapted to the conventions of \cite{AK21} (i.e., no assumption on $\Param$) in \cite[Section 6]{AK21}. Note that there is no need to assume that $\sigma$ is irreducible (cf.\ \cite[Theorem 1]{Kap15}), but if $F_0$ is Archimedean and $\sigma$ is reducible, then by our definition $\sigma$ is a principal series. This is the generality for which the $\GL_n\times\GL_1$ $\gamma$-factor was treated in \cite{Jac5}.

The important observation is the following. Only one part of the theory of \cite{Kap13-MM,Kap15} of the Rankin--Selberg integrals for representations of $\SO_{2n}\times\GL_m$ with $n>m$ depends on the theory for $n\leq m$. This part is the multiplicativity property when $\pi$ is a quotient of a representation parabolically induced from a representation of $\GL_r\times\SO_{2(n-r)}$ with $0<r<n$ (see \cite[Section 5.1]{Kap15}). Other than that, the theory for $n>m$ can be developed independently of the theory for $n\leq m$. (Here $m=1$.)

Regarding the smoothness property denote the character \eqref{def:character of U for integral} by $\psi_{U,\Param}$, to explicate the dependence on $\Param$. Let $t=\diag(dI_n,d^{-1}I_n)\in T$ with $d\in F_0^{\times}$. By the assumption on $y$, we can take $d$ such that $\Param d^{2}=y$. For
$W\in\mathcal{W}(\pi, \psi_{U,\Param}^{-1})$, the function $[\ell(t)W](g)=W(tg)$ on $G$ belongs to $\mathcal{W}(\pi, \psi_{U,y}^{-1})$.
Let $\Psi_{y}(\ell(t)W,f_s)$ denote the Rankin--Selberg integral constructed using $y$ instead of $\Param$, and let $t\cdot W$ denote the right-translation of $W$ by $t$. We claim that
\[\Psi_{y}^*(\ell(t)W,f_s)/\Psi_{y}(\ell(t)W,f_s)=\Psi^*(t\cdot W,f_s)/\Psi(t\cdot W,f_s).\]
To see this observe the following:
\begin{itemize}
\item $t$ commutes with $w^{n,1}$, normalizes $R^{n,1}$, and $\psi_{U,y}(t^{-1}ut)=\psi_{U,\Param}(u)$.
\item The conjugation of $R^{n,1}$ by $t$ changes the measure, but this change is cancelled when we consider the quotient of integrals.
\item We may write the $dh$-integral of $\Psi_y(\ell(t)W,f_s)$ over $\overline{B}_{H}$. For $b\in \overline{B}_{H}$, whose image in $G$ is defined using the embedding with respect to $y$, $tbt^{-1}$ is the image of $b$ in $G$ under the embedding of $\bfH$ in $\bfG$ defined with $\Param$ (direct verification).
\end{itemize}
Looking at the definition of the $\gamma$-factor \eqref{gamma def}, it remains to note that under the given condition on $y$, we can assume that $|y|=|\Param|$ and $\tau(y)=\tau(\Param)$. This completes the proof of \ref{enum:dependence on psi}.
\end{proof}
\begin{rem}
The multiplicativity property in Theorem \ref{theorem:properties of the gamma factor} is only partial, cf.\ \cite[Theorem 1]{Kap15}, but suffices for what we require.
\end{rem}
\begin{cor}\label{cor:gamma factor OK}
Let $F_0$ be a local $p$-adic field. Let $\pi$ be an irreducible $\psi_U^{-1}$-generic supercuspidal representation of $\SO_{2n}(F_0)$, $n>1$, and $\tau$ be a quasi-character of $F_0^{\times}$. Then
$\gamma(s,\pi\times\tau,\psi)$ and Shahidi's $\gamma$-factor $\gamma^{\mathrm{Sh}}(s,\pi\times\tau,\psi)$ of \cite{Sha90} coincide.
\end{cor}
\begin{proof}
Since $\pi$ is irreducible supercuspidal, we can argue using globalization. Briefly, we can choose the following data (see \cite[Section 5]{Sha90} for more details):
\begin{itemize}
  \item A number field $\mff$ such that for some place $v$ of $\mff$, $\mff_v=F_0$.
  \item A nontrivial character $\eta$ of $\mff\backslash\bbA_{\mff}$ and an element $y\in \mff^{\times}$, such that in $F_0$, $|y_v\Param^{-1}-1|$ is sufficiently small with respect to $\tau$. Denote by $\eta_U$ the character of $U(\bbA_{\mff})$ that is trivial on $U(\mff)$, which is defined by \eqref{def:character of U for integral} with $(\eta,y)$ instead of $(\psi,\Param)$.
  \item A globally $\eta_U^{-1}$-generic cuspidal automorphic representation $\Pi$ of $\bfG(\bbA_{\mff})$ with $\Pi_v=\pi$, and such that for all places $v'\ne v$, $\Pi_{v'}$ is a quotient of a principal series representation.
  \item An automorphic character $\Xi$ of $\bbA_{\mff}^{\times}$ such that $\Xi_v=\tau$.
\end{itemize}
Note that $\bfG((\bbA_{\mff})_{v'})$ is split over $(\bbA_{\mff})_{v'}$ for all places $v'$, because we defined the orthogonal group using $J_{2n}'$ (see Section \ref{subsec:SSC-even}), and $\bfH((\bbA_{\mff})_{v'})$ is also split.

By transitivity of induction we can assume that for each $v'\ne v$, $\Pi_{v'}$ is a quotient of some $\sigma\in\mathrm{Rep}(\GL_n((\bbA_{\mff})_{v'}))$.
Thus by the multiplicativity and the Archimedean properties of Theorem \ref{theorem:properties of the gamma factor}, and the similar properties of Shahidi's $\gamma$-factor, for all $v'\ne v$,
\[\gamma_{y_{v'}}(s,\Pi_{v'}\times\Xi_{v'},\eta_{v'})=\gamma^{\mathrm{Sh}}(s,\Pi_{v'}\times\Xi_{v'},\eta_{v'}).\]
Note that the right-hand side does depend on $y_{v'}$ through the Whittaker datum of $\Pi_{v'}$. 
Now, the unramified property, the dependence on $\psi$ property, and the global property imply that
\[\gamma_{y_{v}}(s,\pi\times\tau,\psi)=\gamma^{\mathrm{Sh}}(s,\pi\times\tau,\psi).\]
Then, by the smoothness property \ref{enum:dependence on psi} of $\gamma(s,\pi\times\tau,\psi)$ and the analogous property of Shahidi's $\gamma$-factor (which is straightforward to check), we deduce that the same equality holds with $\Param$ instead of $y_{v}$.
\end{proof}

\subsection{The computation of \texorpdfstring{$\gamma(s,\pi\times\tau,\psi)$}{the gamma factor of pi x tau}}\label{subsec:AK-easyside}
Until the end of this section, we put $\pi = \mathrm{Ind}_{Z \langle g_{\chi} \rangle I^+}^{G} \chi$ and $\tau$ is a tamely ramified character of $F^{\times}$.
Recall that
the only difference here compared to \cite{AK21} is that in \textit{ibid.}\ we assume $\tau$ is also quadratic. Because of this,
most of the computations from \textit{ibid.}\ extend trivially to the case here; for the remaining arguments we provide full details.

Recall that $\alpha\in k^{\times}$ was taken in Section \ref{subsec:AK-ssc} in as a parameter of the simple supercuspidal representation $\pi$.
We define
\begin{align*}
\Param=-4\alpha.
\end{align*}
The parameter $\Param$ is now used for the definition of the $\gamma$-factor as in Section \ref{twisted gamma factors}.
We define a generic character $\psi_{\alpha}$ of $U$ by
\begin{align*}
\psi_{\alpha}(u) = \psi(\sum_{i=1}^{n-2}u_{i,i+1} + u_{n-1,n} + \alpha u_{n-1,n+1}).
\end{align*}
In particular $\psi_{1}=\lambda$ of Section \ref{subsec:LLC}.
\begin{lem}\label{lem:generic}
The simple supercuspidal representation $\pi$ is generic with respect to $\psi_{\alpha}$ and also $\psi_U^{-1}$.
\end{lem}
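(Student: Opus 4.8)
The statement to be proved is Lemma~\ref{lem:generic}, which asserts that the simple supercuspidal representation $\pi=\mathrm{Ind}_{Z\langle g_\chi\rangle I^+}^{G}\chi$ is generic with respect to the two characters $\psi_\alpha$ and $\psi^{-1}$ of $U$ (the latter being the character defined in \eqref{def:character of U for integral}). The plan is to reduce the genericity question to a comparison of $U$-characters, exploiting the general machinery for simple supercuspidal representations: a simple supercuspidal $\pi=\cInd$ of an affine generic character is $\lambda$-generic for a character $\lambda$ of $U$ precisely when $\lambda$ lies in a single orbit under the torus $T(\mcO)$ (or the relevant normalizer) acting on affine generic characters, and moreover this orbit is the one that is ``aligned'' with the affine generic character $\chi$ defining $\pi$. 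So the first step is to recall (from \cite[Sections 2.1, 2.2]{Oi18-ssc}, see also the discussion in Section~\ref{sec:SSC}) the criterion: $\pi_\chi$ is $\lambda$-generic if and only if $\lambda$ is conjugate, under the action of the stabilizing torus, to the ``restriction'' of $\chi$ along the affine simple root subgroups — equivalently, the multiset of values $\{\lambda \text{ on simple affine root groups}\}$ matches $\{\chi \text{ on simple affine root groups}\}$ up to the torus action. For $\SO_{2n}$ the affine Dynkin diagram has the relevant automorphism/torus orbit structure, and the parameter $a\in k^\times$ (here $\alpha$) is precisely the invariant distinguishing the orbits.

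Second, I would make the character matching explicit. The affine generic character $\chi$ is given (Section~\ref{subsec:AK-ssc}) by $\chi(y)=\psi\!\left(\sum_{i=1}^{n-1}y_{i,i+1}+\alpha y_{n-1,n+1}+\varpi'^{-1}y_{2n-1,1}\right)$; the datum on the \emph{finite} simple root groups is $(1,\ldots,1,\alpha)$ (the last coordinate being the coefficient of the $\omega$-type fork vertex $y_{n-1,n+1}$), and on the affine vertex it is $\varpi'^{-1}$. On the other hand $\psi_\alpha(u)=\psi\!\left(\sum_{i=1}^{n-2}u_{i,i+1}+u_{n-1,n}+\alpha u_{n-1,n+1}\right)$ has datum $(1,\ldots,1,\alpha)$ on the finite simple roots — which matches $\chi$ on the nose — so $\pi$ is $\psi_\alpha$-generic by the criterion, with essentially nothing to check beyond identifying the coordinates. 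For $\psi^{-1}$, the character $u\mapsto\psi(\sum_{i=1}^{n-2}u_{i,i+1}+\tfrac14 u_{n-1,n}-\gamma u_{n-1,n+1})$ inverted gives datum $(-1,\ldots,-1,-\tfrac14,\gamma)$ on the finite simple roots. Here I would use the torus action: conjugating by a suitable $t=\diag(t_1,\ldots)\in T(\mcO)$ rescales the simple-root coordinates by ratios $t_i/t_{i+1}$ (and the fork coordinate by the appropriate character), and one checks that the \emph{product-type invariant} of $(-1,\ldots,-1,-\tfrac14,\gamma)$ — the quantity that is torus-invariant and that, together with the affine coordinate, determines the orbit — equals that of $(1,\ldots,1,\alpha)$. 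Plugging in $\gamma=-4\alpha$ is exactly what makes this work: the sign $(-1)^{\text{something}}$ coming from the $-1$'s and the $-\tfrac14$ combine with $\gamma=-4\alpha$ to recover $\alpha$. This is the concrete computation, but it is short and purely a matter of tracking how $T(\mcO)$ permutes the affine generic characters.

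Third, I would package the conclusion: since both $\psi_\alpha$ and $\psi^{-1}$ lie in the same $T(\mcO)$-orbit of affine generic characters as $\chi$ (after the identifications above), and since $\cInd_{Z\langle g_\chi\rangle I^+}^G\chi$ is $\mfw'$-generic exactly for Whittaker data $\mfw'$ in that orbit, $\pi$ is generic with respect to both. The main obstacle — really the only non-formal point — is the $\psi^{-1}$ case: one must correctly compute the effect of the embedding-related twists (the factors $\tfrac14$, the minus signs, and $\gamma=-4\alpha$) on the character of $U$ and verify the torus-orbit invariant is unchanged; in particular one should be careful that the relevant invariant for the $D_n$-type fork is multiplicative in the right way and that the overall normalization of $\psi$ on $F$ (fixed so that $\psi|_{\mcO}$ lifts the residual character) does not introduce an unwanted valuation shift. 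Once the orbit invariant is checked, genericity is immediate from the standard theory, so I would not belabor the finite computation beyond exhibiting the matching datum.
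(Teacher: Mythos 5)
Your proposal is correct and follows essentially the same route as the paper's proof: the genericity of $\pi$ reduces, via Frobenius reciprocity, to the agreement of the $U$-character with $\chi$ on $U\cap Z\langle g_\chi\rangle I^+$, and the passage from $\psi_\alpha$ to $\psi^{-1}$ is by torus conjugation, with $\gamma=-4\alpha$ being what makes the orbit match. The difference is only one of presentation: where you invoke a general ``$\lambda$-generic iff $\lambda$ lies in the right $T(\mcO)$-orbit'' criterion and an unspecified orbit invariant, the paper simply writes down a nonzero Whittaker functional explicitly (the function $W$ supported on $U\cdot Z\langle g_\chi\rangle I^+$) for the $\psi_\alpha$ case, and then exhibits the concrete diagonal element
\[
t=\diag\bigl(-1,1,-1,\ldots,(-1)^{n-1},(-1)^{n}4,(-1)^{n}4^{-1},(-1)^{n-1},\ldots,-1,1,-1\bigr)
\]
with $\psi_\alpha^{t}=\psi^{-1}$. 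This avoids any appeal to a black-box criterion or to a well-definedness argument for the ``product-type invariant'' you allude to, and it sidesteps the normalization concerns you flag at the end; but the underlying mechanism is identical, and your sketch would be complete once you produce this $t$ (or carry out the invariant computation in detail).
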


\begin{proof}
Recall that $\pi$ is said to be generic with respect to $\psi_{\alpha}$ of $U$ if $\Hom_{U}(\pi,\psi_{\alpha})\neq0$.
By the Frobenius reciprocities for the compact and smooth inductions,
\begin{align*}
\Hom_{U}(\pi,\psi_{\alpha})
\cong
\Hom_{G}(\pi, \Ind_{U}^{G}\psi_{\alpha})
\cong
\Hom_{Z \langle g_{\chi} \rangle I^{+}}(\chi, \Ind_{U}^{G}\psi_{\alpha}).
\end{align*}
Thus it suffices to construct a nonzero homomorphism from $\chi$ to $\Ind_{U}^{G}\psi_{\alpha}$ which is $Z \langle g_{\chi} \rangle I^{+}$-equivariant.
Since the character $\psi_{\alpha}$ of $U$ coincides with $\chi$ on $U\cap Z \langle g_{\chi} \rangle I^{+}$, we can define a non-zero element $W$ of $\Ind_{U}^{G}\psi_{\alpha}$ by
\[
W(g)=
\begin{cases}
 \psi_{\alpha}(u)\chi(x) & \text{if $g=ux$ for $u\in U, x\in Z\langle g_{\chi} \rangle I^{+}$,}\\
 0 & \text{otherwise}.
\end{cases}
\]
We define a $\C$-linear map $f\colon\chi\rightarrow\Ind_{U}^{G}\psi_{\alpha}$ by $f(1):=W$.
Then $f$ is $Z\langle g_{\chi} \rangle I^{+}$-equivariant.
Hence $\pi$ is $\psi_{\alpha}$-generic.

We note that $\psi_{\alpha}$ is rationally conjugate to $\psi_U^{-1}$.
Indeed, for example, by putting
\[
t:=\diag(\underbrace{-1,1,-1,\ldots,(-1)^{n-1}}_{n-1},(-1)^{n}4, (-1)^{n}4^{-1}, \underbrace{(-1)^{n-1},\ldots,-1,1,-1}_{n-1}),
\]
we get
\begin{align*}
\psi_{\alpha}(tut^{-1})
&=\psi(-u_{12}-\cdots -u_{n-2,n-1}-4^{-1}u_{n-1,n}-4\alpha u_{n-1,n+1})\\
&=\psi^{-1}(u_{12}+\cdots+ u_{n-2,n-1}+4^{-1}u_{n-1,n}-\Param u_{n-1,n+1})\\
&=\psi_U^{-1}(u)
\end{align*}
for any $u=(u_{ij})\in U$ (recall that $\Param=-4\alpha$).
Hence the $\psi_{\alpha}$-genericity of $\pi$ is equivalent to the $\psi_U^{-1}$-genericity of $\pi$.
\end{proof}

Put $\iota=\diag(I_{n-1},1/4,4,I_{n-1})\in T$ and define for $g\in G$, ${}^{\iota}g=\iota^{-1}g\iota$. The representation
$\pi^{\iota}$ acts on the same space as $\pi$, by $\pi^{\iota}(g)=\pi({}^{\iota}g)$. Clearly $\pi$ and $\pi^{\iota}$ are isomorphic, thus
\begin{align*}
\gamma(s,\pi\times\tau,\psi)=\gamma(s,\pi^{\iota}\times\tau,\psi).
\end{align*}

We turn to compute this $\gamma$-factor using a specific choice of data $(W, f_s)$, which is the same data taken for a quadratic $\tau$.

We realize the Whittaker model $\mathcal{W}(\pi^{\iota},\psi_U^{-1})$ via the isomorphism $\mathcal{W}(\pi,\psi_{-\Param/4}^{-1})\to \mathcal{W}(\pi^{\iota},\psi_U^{-1})$ defined by $W\mapsto W^{\iota}$, where
$W^{\iota}(g)=W({}^{\iota}g)$. Thus for the Whittaker function in $\mathcal{W}(\pi^{\iota},\psi_U^{-1})$ we can take $W^{\iota}$ for
$W\in \mathcal{W}(\pi,\psi_{-\Param/4}^{-1})$.

Define $W_0\in \mathcal{W}(\pi,\psi_{-\Param/4}^{-1})$ by
\begin{align*}
W_0(g) =
\begin{cases}
\psi_{-\Param/4}^{-1}(u) \chi(x) & g = u x,\quad u \in U, x\in Z \langle g_{\chi} \rangle I^+ ,\\
0  &  \text{otherwise.}
\end{cases}
\end{align*}
Take $W=(w^{n,1})^{-1}\cdot W_0$, the right-translation of $W_0$ by $(w^{n,1})^{-1}$.
For the section, let $I_{H}^+$ be the pro-unipotent part of the Iwahori subgroup of $H$ corresponding to $B_H$.
Define $f_s$ by
\begin{align*}
 f_s(g,a)= \begin{cases}
|m|^{s} \tau(am)  & g = \diag(m,1,m^{-1})uy,\quad m\in F^{\times}, u\in U_{H}, y\in I_{H}^+,\\
0 &  \text{otherwise.}
\end{cases}
\end{align*}

The computation involves writing the integral $dh$ of \eqref{zeta} over $\overline{B}_{H}$.
Write $b\in\overline{B}_{H}$ in the form
\begin{align*}
b=
\left(\begin{smallmatrix}
    a &  &  \\
     & 1 &  \\
    && a^{-1}
  \end{smallmatrix}\right)\left(\begin{smallmatrix}
    1 &  &  \\
    x & 1 &  \\
    -\tfrac\Param4x^2 & -\tfrac\Param2x& 1
  \end{smallmatrix}\right),\qquad a\in F^{\times}, x\in F.
\end{align*}
Since $H$ is defined with respect to $J_{3,\Param}$, if $a=1$, then $b\in I_H^+$ if and only if $|x|<1$.

The computations related to the inner $dr$-integral of $W^{\iota}$ and $\Psi(W^{\iota},f_s)$ follow just as in \cite{AK21} so that we only cite the statements:

\begin{lem}[{\cite[Lemma 4.1]{AK21}}\footnote{The notation $U$ in \cite[Lemma~4.1]{AK21} means $U_{\SO_{2n}}$.}]\label{support}
Assume $b$ as above.
\begin{enumerate}
\item\label{it:support 1}
If ${}^{\iota}(rw^{n,1}b(w^{n,1})^{-1})\in UZI^+$,
then $a \in 1 + \mathfrak{p}$, $|x|<1$, $r\in\mathfrak{p}^{n-2}$.
\item\label{it:support 2}
If ${}^{\iota}(rw^{n,1}b(w^{n,1})^{-1})\in Ug_{\chi}ZI^+$, then there is some $k\geq0$ such that
$|a|=q^{2k+1}$, $|x|=q^k$ and $\tfrac{\Param}4x^2a^{-1}\in  \varpi' \cdot (1 + \mathfrak{p})$, and also $r\in\mathfrak{p}^{n-2}$.\qed
\end{enumerate}
\end{lem}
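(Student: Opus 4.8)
The statement concerns only the group $\SO_{2n}$, its Iwahori subgroup $I$ and the element $g_{\chi}$; neither $\tau$ nor the additive character enters, so it is in fact \cite[Lemma~4.1]{AK21} verbatim and may simply be quoted. Still, let me sketch how I would carry out the underlying computation. The plan is to make the element ${}^{\iota}(rw^{n,1}b(w^{n,1})^{-1})$ completely explicit and then read off membership in $UZI^{+}$ resp.\ $Ug_{\chi}ZI^{+}$ from the shape of the Iwahori recalled in Section~\ref{subsec:SSC-even}.

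First I would write $b=b(a,x)\in\overline{B}_{H}$ in the displayed form, push it into $\bfG$ through the embedding $\bfH\hookrightarrow\bfG$ (the composite change of basis with entries $\tfrac14,-\gamma,2,-\tfrac12\gamma^{-1}$), and compute the $2n\times2n$ matrix $w^{n,1}b(w^{n,1})^{-1}$; since $w^{n,1}$ merely interchanges the first two coordinate lines and correspondingly the last two, this is a harmless rearrangement. Then I would multiply on the left by a general $r\in R^{n,1}$, keeping in mind that the lower entry $r'$ is not free but is forced by the relation ${}^{t}gJ'_{2n}g=J'_{2n}$, and finally conjugate by $\iota=\diag(I_{n-1},\tfrac14,4,I_{n-1})$, which only rescales the two middle rows and columns. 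The result is an explicit matrix $M=M(a,x,r)$ whose entries are Laurent polynomials in $a,a^{-1},x$, linear in $r$.

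Next I would characterize the two supports. Using the Iwahori factorization of $I^{+}$, an element of $\SO_{2n}(F)$ lies in $UZI^{+}$ exactly when it admits a Gauss-type factorization adapted to the Iwahori: reading off the pivots from the top-left one can clear the subdiagonal so that the lower-triangular unipotent factor has entries in $\mfp$ (with the extra congruences dictated by the displayed shape of $I$) and the torus factor lies in $Z\cdot(1+\mfp)$; membership in $Ug_{\chi}ZI^{+}$ is the same condition imposed on $g_{\chi}^{-1}M$, which amounts to the identical factorization with the pivot pattern ``rotated'' by $g_{\chi}$, i.e.\ with one entry that would have been a unit instead forced into $\varpi'(1+\mfp)$. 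Imposing these conditions on $M(a,x,r)$ and solving the resulting valuation inequalities should give in the first case $a\in1+\mfp$, $|x|<1$, $r\in\mfp^{n-2}$, and in the second case $|a|=q^{2k+1}$, $|x|=q^{k}$, $\tfrac{\gamma}{4}x^{2}a^{-1}\in\varpi'(1+\mfp)$, $r\in\mfp^{n-2}$ for some $k\ge0$, with the integer $k$ recording how deep into the $g_{\chi}$-coset one sits.

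The only genuine difficulty will be the bookkeeping: tracking the composite basis change defining $\bfH\hookrightarrow\bfG$ together with the conjugations by $w^{n,1}$ and $\iota$, and correctly locating which double coset $M(a,x,r)$ falls in as $(a,x)$ varies — in particular pinning down the boundary at which $k$ jumps, which is precisely what produces the condition $\tfrac{\gamma}{4}x^{2}a^{-1}\in\varpi'(1+\mfp)$. Since no step involves $\tau$ or $\psi$, this is identical to the computation in \cite{AK21} and nothing needs to be redone for a general tamely ramified $\tau$.
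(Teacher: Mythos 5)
Your proposal is correct and matches the paper exactly: the \qed after the statement signals that the lemma is simply cited from \cite[Lemma~4.1]{AK21}, and the paper explicitly says the computations ``follow just as in \cite{AK21} so that we only cite the statements,'' since the result is purely group-theoretic in $(a,x,r)$ and never sees $\tau$ or $\psi$. One small slip in the (supererogatory) sketch: $w^{n,1}$ is not an interchange of the first two coordinate lines but rather a cyclic shift of length one within the first $n-1$ coordinates (and dually within the last $n-1$), though this does not affect the conclusion that the computation is identical to \cite{AK21}.
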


\begin{cor}[{\cite[Corollary 4.2]{AK21}}]\label{cor:psi W f}
$\Psi(W^{\iota},f_s)= \vol^{\times}(1 + \mathfrak{p})\vol(\mathfrak{p})^{n-1}$.\qed
\end{cor}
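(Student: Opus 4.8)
The plan is to evaluate the integral $\Psi(W^{\iota},f_s)$ of \eqref{zeta} directly, for the explicit data $W=(w^{n,1})^{-1}\cdot W_0$ and $f_s$ introduced above, reducing everything to the support statement of Lemma~\ref{support}. First I would rewrite the $dh$-integration over $U_H\backslash H$ as an integration over the lower triangular Borel $\overline{B}_H$, using the coordinates $(a,x)$ from the displayed parametrization of $b\in\overline{B}_H$; the quotient measure then becomes a constant multiple of $\delta_{\overline{B}_H}^{-1}(\diag(a,1,a^{-1}))\,d^{\times}a\,dx$, and in particular the modulus factor is trivial once $a\in 1+\mathfrak{p}$. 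Since $W_0$ is supported on $U\langle g_{\chi}\rangle Z I^{+}=\bigsqcup_{i=0}^{1}Ug_{\chi}^{i}ZI^{+}$, the integrand splits as a sum of two pieces according to whether ${}^{\iota}(rw^{n,1}b(w^{n,1})^{-1})$ lands in $UZI^{+}$ or in $Ug_{\chi}ZI^{+}$, and Lemma~\ref{support} pins down, for each piece, the allowed range of $(a,x,r)$.

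For the $g_{\chi}$-piece, part (2) of Lemma~\ref{support} forces $|x|=q^{k}\ge 1$ for some $k\ge 0$. I would then observe that, reading off the $(2,1)$-entry in the $3\times3$ realization of $\bfH$, the element $b=\diag(a,1,a^{-1})\cdot(\text{lower unipotent})$ has that entry equal to $x$, which is a non-unit precisely when $|x|<1$; since here $|x|\ge1$ we have $b\notin B_HI_H^{+}$, whereas $f_s$ is supported on $B_HI_H^{+}$. Hence $f_s(b,1)=0$ on this range and the $g_{\chi}$-piece contributes nothing. (This is also the structural reason the final answer is independent of $\tau$: the $g_{\chi}$-stratum is exactly the one on which $f_s$ would have contributed values of $\tau$ at elements of positive valuation.)

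For the remaining piece, part (1) of Lemma~\ref{support} confines the support to $a\in1+\mathfrak{p}$, $|x|<1$, $r\in\mathfrak{p}^{n-2}$. On this set $b\in I_H^{+}$, so $f_s(b,1)=|m|^{s}\tau(am)=1$ because $\tau$ is tamely ramified and hence trivial on $1+\mathfrak{p}$ --- this is the one point where the hypothesis on $\tau$ is used, and it holds for every tamely ramified $\tau$, not only quadratic ones, which is why the computation of \cite[Corollary~4.2]{AK21} carries over verbatim. Likewise $W_0$ evaluated at ${}^{\iota}(rw^{n,1}b(w^{n,1})^{-1})$ equals $1$: by the decomposition obtained in the proof of Lemma~\ref{support}, that element is $uzy$ with $u$ in the pro-$p$ part of $U$ (on which $\psi_{-\gamma/4}^{-1}$ is trivial), $z$ in the relevant small subgroup of $Z$, and $y\in\ker\chi$. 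Therefore $\Psi(W^{\iota},f_s)=\int_{1+\mathfrak{p}}d^{\times}a\int_{\mathfrak{p}}dx\int_{\mathfrak{p}^{n-2}}dr=\vol^{\times}(1+\mathfrak{p})\cdot\vol(\mathfrak{p})\cdot\vol(\mathfrak{p})^{n-2}=\vol^{\times}(1+\mathfrak{p})\vol(\mathfrak{p})^{n-1}$, as asserted.

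The substantive work here is not in the Corollary itself but in Lemma~\ref{support}, which controls how the ${}^{\iota}$-conjugate of $rw^{n,1}b(w^{n,1})^{-1}$ meets the stratification $I^{+},\,g_{\chi}I^{+}$ --- delicate in the dyadic case precisely because $\iota$ does not normalize $I^{+}$. Granting that lemma (which we have cited), the only genuine checks left are the vanishing of the $g_{\chi}$-contribution via incompatibility of the supports of $W_0$ and $f_s$, and the bookkeeping of the Haar-measure normalizations; I expect the support-vanishing step to be the main, though mild, obstacle.
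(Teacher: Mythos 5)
The paper gives no proof here: it cites \cite[Corollary 4.2]{AK21} and notes just above the statement that the computation of $\Psi(W^{\iota},f_s)$ ``follows just as in \cite{AK21}.'' Your reconstruction is the expected one and the structure is sound: pass from $U_H\backslash H$ to $\overline{B}_H$ with the measure $|a|^{-1}\,d^{\times}a\,dx$ (as in the proof of Corollary~\ref{cor:psi *}), split according to Lemma~\ref{support}, observe that the $g_\chi$-stratum is killed by the support of $f_s$, and read off the volume on the remaining stratum. You are also right that only tameness of $\tau$ enters --- in fact for this particular corollary $\tau$ is only evaluated at $1$ (since $b\in I_H^{+}$ on the surviving stratum), so even tameness is not really used.

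The one step whose justification needs repair is the claim that $b\notin B_H I_H^{+}$ on the $g_\chi$-stratum because the $(2,1)$-entry of $b$ equals $x$ with $|x|\ge1$. That inference is not valid as stated: for $b'\in B_H$ and $y\in I_H^{+}$ one has $(b'y)_{21}=y_{21}+b'_{23}y_{31}$, and $b'_{23}$ is unconstrained, so this entry can well be a unit or larger. The argument that does close compares the $(3,1)$- and $(3,3)$-entries. Since $b'$ is upper triangular, $(b'y)_{31}=b'_{33}\,y_{31}$ and $(b'y)_{33}=b'_{33}\,y_{33}$, so every element of $B_H I_H^{+}$ satisfies $|(3,1)\text{-entry}|<|(3,3)\text{-entry}|$ because $y_{31}\in\mathfrak{p}$ and $y_{33}\in 1+\mathfrak{p}$. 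For $b$ in case~\eqref{it:support 2} of Lemma~\ref{support}, the ratio of these entries is $|(\gamma/4)x^{2}|=|x|^{2}\ge1$ (using $|\gamma/4|=|\alpha|=1$), a contradiction; hence $f_s(b,1)=0$ there. With that fix, the vanishing of the $g_\chi$-contribution, the pointwise triviality of $W_0^{\iota}$ on case~\eqref{it:support 1} (which does rest on the explicit decomposition in the proof of Lemma~4.1 of \cite{AK21}, as you say), and the resulting volume $\vol^{\times}(1+\mathfrak{p})\vol(\mathfrak{p})^{n-1}$ are all correct.
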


\begin{lem}[{\cite[Lemma 4.5]{AK21}}]\label{W on other coset}
Assume ${}^{\iota}(rw^{n,1}b(w^{n,1})^{-1})\in Ug_{\chi}ZI^+$. Then
\begin{align*}
\int_{R^{n,1}} W_0^{\iota}(r w^{n,1} b (w^{n,1})^{-1}) \,dr = \chi(g_{\chi})\vol(\mathfrak{p})^{n-2}.\qed
\end{align*}
\end{lem}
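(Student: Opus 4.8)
The plan is to reduce the inner $dr$-integral to the integral of a constant over a small polydisc, by combining the support analysis of Lemma \ref{support}(2) with the explicit formula defining $W_0$. (This is the computation carried out in \cite[Lemma 4.5]{AK21}; here I indicate its structure.)

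First I would invoke Lemma \ref{support}(2): since $W_0$ is supported on $Ug_{\chi}^{i}ZI^{+}$ with $i\in\{0,1\}$ and we are assuming ${}^{\iota}(rw^{n,1}b(w^{n,1})^{-1})\in Ug_{\chi}ZI^{+}$, the integrand $W_0^{\iota}(rw^{n,1}b(w^{n,1})^{-1})$ vanishes unless $r\in\mathfrak{p}^{n-2}$, and on that set the parameters of $b$ satisfy $|a|=q^{2k+1}$, $|x|=q^{k}$ and $\tfrac{\gamma}{4}x^{2}a^{-1}\in\varpi'(1+\mathfrak{p})$ for some $k\geq0$. Identifying $R^{n,1}$ with the affine space $F^{n-2}$ through the entries of $r$ (with the product of our fixed measures on $F$), the integral collapses to $\int_{\mathfrak{p}^{n-2}}W_0^{\iota}(rw^{n,1}b(w^{n,1})^{-1})\,dr$.

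Next, for $r\in\mathfrak{p}^{n-2}$ I would make the coset membership effective by writing the matrix ${}^{\iota}(rw^{n,1}b(w^{n,1})^{-1})$ explicitly in the form $u\,g_{\chi}\,z\,y$ with $u\in U$, $z\in Z$, $y\in I^{+}$, via column/row reduction (the $g_{\chi}$-factor is forced by the anti-diagonal block, and then $u$ and $y$ are peeled off one entry at a time). By the defining formula for $W_0$ this gives
\[
W_0^{\iota}(rw^{n,1}b(w^{n,1})^{-1})=\psi_{-\gamma/4}^{-1}(u)\,\chi(g_{\chi})\,\omega(z)\,\chi(y),
\]
so the computation reduces to three character values. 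I would then verify that $\omega(z)=1$ (one finds $z$ may be taken to be $I_{2n}$) and that the $r$-dependence entering $u$ and $y$ only touches matrix positions on which $\psi_{-\gamma/4}$, respectively the affine generic part of $\chi$, is \emph{not} supported --- or lands in $\mathfrak{p}^{2}$ --- so that $\psi_{-\gamma/4}^{-1}(u)=\chi(y)=1$ for every $r\in\mathfrak{p}^{n-2}$. The constraint $\tfrac{\gamma}{4}x^{2}a^{-1}\in\varpi'(1+\mathfrak{p})$ is precisely what forces the entries produced by $a$ and $x$ into the congruence subgroups on which these characters are trivial. This third step is the real content of the lemma and the main obstacle: it is a bookkeeping computation inside $\SO_{2n}$ tracking how the conjugations by $w^{n,1}$ and $\iota$ distribute the contributions of $r$, $a$ and $x$ among the entries of ${}^{\iota}(rw^{n,1}b(w^{n,1})^{-1})$.

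Once the integrand has been identified as the constant $\chi(g_{\chi})$ on $\mathfrak{p}^{n-2}$ (and $0$ off it), the conclusion is immediate: $\int_{R^{n,1}}W_0^{\iota}(rw^{n,1}b(w^{n,1})^{-1})\,dr=\chi(g_{\chi})\vol(\mathfrak{p}^{n-2})=\chi(g_{\chi})\vol(\mathfrak{p})^{n-2}$. Even if a residual $r$-dependent additive character survived inside $\psi_{-\gamma/4}^{-1}(u)$, it would have conductor in $\mcO$ and hence be trivial on $\mathfrak{p}^{n-2}$, so the same answer would result.
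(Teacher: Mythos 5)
The paper itself does not prove this lemma; it quotes \cite[Lemma 4.5]{AK21} verbatim (the $\qed$ after the statement signals citation, not proof), after declaring earlier that the $dr$-integral computations "follow just as in \cite{AK21}". So the paper's "approach" is literally to defer to the matrix computation in that reference. Your outline is the right one, and it is the one [AK21] must be executing: use Lemma~\ref{support}(2) to cut the integral down to $r\in\mathfrak{p}^{n-2}$, write ${}^{\iota}(rw^{n,1}b(w^{n,1})^{-1})$ explicitly as $u\,g_\chi\,z\,y$, and read off $W_0^{\iota}$ from its defining formula.

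The gap is that you never perform the step you yourself flag as "the real content of the lemma and the main obstacle". The assertions that $z$ may be taken equal to $I_{2n}$ and that the $r$-, $a$-, $x$-dependence of $u$ and $y$ lands either off the support of $\psi_{-\gamma/4}$ and the affine-generic part of $\chi$, or inside $I^{++}$, are stated as things to be "verified" and things "one finds", not derived; phrases like "I would make the coset membership effective" and "I would then verify" mark exactly where a proof would have to supply a computation and this proposal does not. The fallback paragraph does not repair this: asserting that a residual linear character in $r$ "would have conductor in $\mcO$" is not weaker content obtained for free, it is essentially the same unproved estimate restated --- if a coefficient of some $r_i$ had negative valuation, the integral over $\mathfrak{p}^{n-2}$ would vanish and the lemma would fail, so bounding those coefficients is precisely the bookkeeping the proof must do. In short: correct strategy, correctly identified, but the concrete $\SO_{2n}$-matrix decomposition that turns the strategy into a proof is absent.
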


Next, we compute $M(\tau,s)f_s$ on the support of $W^{\iota}$.
In order to obtain uniform formulas define
\begin{align*}
A_{\tau}=\int_{\ringofintegers^{\times}}\tau^2(o)d^{\times}o=\begin{cases}1&\text{$\tau^2$ is unramified,}\\0&\text{otherwise.}\end{cases}
\end{align*}

\begin{lem}[cf.\ {\cite[Lemma 4.3]{AK21}}]\label{lem:intertwining operator when a is in 1+p}
Assume $a \in 1 + \mathfrak{p}$ and $|x|<1$.
\begin{align*}
M(\tau,s)f_s(b,1)=
|\Param|^s\tau(\Param)\tau(2^{-2})(q-1)|2|^{1-2s}
\frac{\tau^2(\varpi')q^{1/2-2s}}{1-\tau^2(\varpi')q^{1-2s}}A_{\tau}.
\end{align*}
\end{lem}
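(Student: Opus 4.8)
The plan is to redo the explicit Rankin--Selberg-style computation behind \cite[Lemma~4.3]{AK21}, the only genuinely new point being that $\tau$ is no longer assumed quadratic, so the character $\tau^{2}$ must now be carried through the calculation. The first step is to make the integral concrete. The group $U_{H}$ is one-dimensional; writing $p$ for the coordinate $u_{1,2}$ of $u\in U_{H}$ (so that, with the normalization of the introduction, $du=dp$), one has
\[
u=u(p)=\begin{pmatrix}1&p&-p^{2}/\gamma\\0&1&-2p/\gamma\\0&0&1\end{pmatrix},
\qquad
M(\tau,s)f_{s}(b,1)=\int_{F}f_{s}\bigl(w_{1}u(p)b,-1\bigr)\,dp.
\]
So the problem reduces to determining, for $a\in 1+\mfp$ and $|x|<1$, the set of $p$ for which $w_{1}u(p)b$ lies in the support $T_{H}U_{H}I_{H}^{+}$ of $f_{s}$, and, on that set, the torus parameter $m=m(p)$ modulo $1+\mfp$.

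Next I would compute $w_{1}u(p)b$ explicitly as a $3\times3$ matrix with entries rational in $a,x,p,\gamma$; its $(3,3)$-entry is $-p^{2}/(\gamma a)$, so on the support $|m(p)|=|\gamma|\,|p|^{-2}$ and $m(p)\equiv-\gamma a\,\eta/p^{2}\pmod{1+\mfp}$ for an explicit unit $\eta$ carrying a factor $2^{-2}$. Using the explicit shape of $I_{H}^{+}$ --- in particular the fact recalled just before the statement, that for $a=1$ one has $b\in I_{H}^{+}$ precisely when $|x|<1$ --- together with the Iwasawa decomposition in the rank-one group $H$, I would check that the integrand is supported on $\{\,|p|\ge q^{k_{0}}\,\}$, where $k_{0}=1$ when $p\ne2$ and $k_{0}=1-\val(2)$ in general; in particular the integral converges for $\Re(s)\gg0$ since its support is bounded below in $|p|$. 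This matrix analysis is essentially the content of the proof of \cite[Lemma~4.3]{AK21}, which I would repeat, paying extra attention to the powers of $2$: the defining form $J_{3,\gamma}$ carries a denominator $2$, and it is here that all the dyadic bookkeeping --- and the eventual factors $\tau(2^{-2})$ and $|2|^{1-2s}$ --- enters.

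On the support one has $f_{s}(w_{1}u(p)b,-1)=|m(p)|^{s}\tau(-m(p))$, and I would then integrate shell by shell. Writing $p=\varpi^{\prime-k}o$ with $|p|=q^{k}$ ($k\ge k_{0}$) and $o\in\mcO^{\times}$: the factor $|m(p)|^{s}$ gives $|\gamma|^{s}q^{-2ks}$; the factor $\tau(-m(p))$ gives $\tau(\gamma)\tau(\eta)\tau^{2}(\varpi')^{k}\tau(o)^{-2}$; and the shell integral, including $\int_{\mcO^{\times}}\tau(o)^{-2}\,dx$, contributes $q^{k-1/2}(q-1)A_{\tau}$ --- it vanishes unless $\tau^{2}$ is unramified, which is exactly the point where the non-quadratic case departs from \cite{AK21}, where that integral was automatically the volume of $\mcO^{\times}$. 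Summing the geometric series
\[
\sum_{k\ge k_{0}}\bigl(\tau^{2}(\varpi')q^{1-2s}\bigr)^{k}=\frac{\bigl(\tau^{2}(\varpi')q^{1-2s}\bigr)^{k_{0}}}{1-\tau^{2}(\varpi')q^{1-2s}}
\]
and simplifying the remaining constants --- $|\gamma|^{s}\tau(\gamma)$ from the $\gamma$-dependence of $m(p)$; $\tau(2^{-2})|2|^{1-2s}$ from the value of $\eta$ together with $k_{0}=1-\val(2)$ and the normalization $\int_{\mcO}\,dx=q^{1/2}$; and $(q-1)$ from the shell --- should produce the stated identity.

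The main obstacle is pinning down $I_{H}^{+}$ and the Iwasawa decomposition of $w_{1}u(p)b$ precisely over a dyadic field, so that both the position of the support ($k_{0}=1-\val(2)$) and the value of $m(p)$ modulo $1+\mfp$ come out correctly; the subsequent shell-by-shell geometric summation is routine, and most of it can simply be quoted from the proof of \cite[Lemma~4.3]{AK21}.
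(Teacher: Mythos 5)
Your approach is the paper's own: repeat the decomposition and support analysis of \cite[Lemma~4.3]{AK21}, note that the unit-shell integral now yields $A_\tau$ rather than $\vol^{\times}(\mcO^{\times})$ because $\tau$ is no longer assumed quadratic, and sum the resulting geometric series; the paper carries this out essentially verbatim, after taking $a=1$ (allowed since $\tau$ is tame). One bookkeeping caution: the torus parameter from the Iwasawa decomposition is exactly $m(p)\equiv-\gamma a/p^{2}\pmod{1+\mfp}$, so your unit $\eta$ equals $1$ and contributes nothing; the entire factor $\tau(2^{-2})|2|^{1-2s}$ must be charged to the shifted starting shell $k_{0}=1-\val(2)$ (equivalently, to the substitution $v\mapsto 2v$ the paper performs), and attributing it to both $\eta$ and $k_{0}$, as your wording suggests, would double-count the $2$-power.
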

\begin{proof}
The proof follows exactly as in \textit{loc.\ cit.}\ except the final step. For the convenience of the reader, we
recall the beginning of the argument. To start, we can take $a=1$ because $\tau$ is tamely ramified. Then, for $u\ne I_3$,
\begin{align*}
w_1 u=\left(\begin{smallmatrix}
&  & 1 \\& -1 &  \\1 &  & \end{smallmatrix}\right)\left(\begin{smallmatrix}
1 &  v & -\Param^{-1} v^2\\
 & 1 & -2\Param^{-1} v\\
 &  & 1
\end{smallmatrix}\right)=\left(
\begin{smallmatrix}
1 & \Param v^{-1} & -\Param v^{-2}\\&1&-2v^{-1}\\&&1
\end{smallmatrix}\right)\left(
\begin{smallmatrix}
-\Param v^{-2} \\2v^{-1}&1\\1&v&-\Param^{-1}v^2
\end{smallmatrix}\right).
\end{align*}
Substituting this into the integral defining $M(\tau,s)$ we obtain
\begin{align*}
M(\tau,s) f_s(b, 1)  &= |\Param|^s\tau(\Param)\int_{F^{\times}}
\tau(v^{-2})|v|^{-2s}f_s(
\left(\begin{smallmatrix}
1\\2v^{-1}+x&1\\-\tfrac\Param4(2v^{-1}+x)^2&-\tfrac\Param2(2v^{-1}+x)&1
\end{smallmatrix}\right), 1) \,dv.
\end{align*}
Now we have reached the step where the proof is different. Next, using the fact that $f_s$ is supported in $B_{H}I_{H}^{+}$ and the assumption $|x|<1$, we deduce
\begin{align*}
M(\tau,s)f_s(b,1)&=|\Param|^s\tau(\Param)(q-1)q^{-1/2} \tau(2^{-2}) |2|^{1-2s}\int_{\{v\in F^{\times}:|v|>1\}}\tau(v^{-2})|v|^{1-2s} \,d^{\times}v.
\end{align*}
The last integral is equal to the sum $\sum_{l=1}^{\infty}q^{l(1-2s)}\tau(\varpi^{\prime2n})\int_{\ringofintegers^{\times}}\tau^2(o)d^{\times}o$, which vanishes
when $\tau^2$ is ramified, and otherwise equals ${\tau^2(\varpi')q^{1-2s}}/(1-\tau^2(\varpi')q^{1-2s})$.
\end{proof}

\begin{lem}[cf.\ {\cite[Lemma 4.4]{AK21}}]\label{lem:operatoe on second coset}
Assume $|x|=q^k$ with $k\geq0$. Then
\begin{align*}
&M(\tau,s)f_s(b,1)=|\Param|^s\tau(\Param)|a|^{1-s}\tau^{-1}(a)|2|^{1-2s}\tau(2^{-2})\tau(x^2) q^{2k(s-1)} \vol(\mathfrak{p}).
\end{align*}
\end{lem}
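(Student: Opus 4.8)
The plan is to compute $M(\tau,s)f_{s}(b,1)=\int_{U_{H}}f_{s}(w_{1}ub,-1)\,du$ directly, following the computation of \cite[Lemma~4.4]{AK21} and carrying along the extra $\tau$-twists that survive now that $\tau$ is only tamely ramified rather than quadratic. Write $u(t)\in U_{H}$ for the unipotent element whose $(1,2)$-entry is $t$, so that $du$ is our fixed measure in the variable $t$, and write $\overline{b}(y)\in H$ for the lower-triangular unipotent with $(2,1)$-entry $y$ (an additive one-parameter subgroup). First I would reuse the matrix identity from the proof of Lemma~\ref{lem:intertwining operator when a is in 1+p}, which after pulling out the diagonal part reads, for $v\neq0$,
\[
w_{1}u(v)=u(\gamma v^{-1})\cdot\diag(-\gamma v^{-2},1,-\gamma^{-1}v^{2})\cdot\overline{b}(2v^{-1}).
\]
Since conjugation by $\diag(a,1,a^{-1})$ rescales the parameter of $\overline{b}(\cdot)$ by $a$, multiplying on the right by $b=\diag(a,1,a^{-1})\overline{b}(x)$ and using additivity of $\overline{b}(\cdot)$ gives
\[
w_{1}u(v)b=u(\gamma v^{-1})\cdot\diag(-\gamma v^{-2}a,1,-\gamma^{-1}v^{2}a^{-1})\cdot\overline{b}(2v^{-1}a+x).
\]

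Next I would pin down the support. As $f_{s}$ is left $U_{H}$-invariant and supported on $B_{H}I_{H}^{+}$, the value $f_{s}(w_{1}u(v)b,-1)$ is nonzero exactly when $\overline{b}(2v^{-1}a+x)\in I_{H}^{+}$, which by the characterization of $I_{H}^{+}$ recorded before Lemma~\ref{support} amounts to $|2v^{-1}a+x|<1$. Since $|x|=q^{k}\geq1$, this forces $|2v^{-1}a|=|x|$, so with $w:=2v^{-1}a$ the admissible $w$ range over $-x+\mathfrak{p}$; consequently $v^{-1}$ is confined to $-\tfrac{x}{2a}(1+\mathfrak{p})$ and $|v|=|2a|q^{-k}$ is constant on the support. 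On that ball one has $f_{s}(w_{1}u(v)b,-1)=|\gamma v^{-2}a|^{s}\tau(\gamma v^{-2}a)$, and here is the only place tameness of $\tau$ enters: because $v^{-1}\in-\tfrac{x}{2a}(1+\mathfrak{p})$, the value $\tau(v^{-2})$ equals the constant $\tau\!\bigl(x^{2}/(4a^{2})\bigr)$, so the whole integrand is the constant $|\gamma|^{s}|v|^{-2s}|a|^{s}\,\tau(\gamma)\tau(x^{2})\tau(4^{-1})\tau(a^{-1})$ on the support and $0$ off it.

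It then remains to integrate this constant against $\vol$ of the support. Substituting $v\mapsto w=2v^{-1}a$, whose Jacobian $|2a|\,|w|^{-2}=|2a|q^{-2k}$ is constant on $-x+\mathfrak{p}$, the support has $v$-volume $|2||a|q^{-2k}\vol(\mathfrak{p})$; inserting $|v|^{-2s}=|2|^{-2s}|a|^{-2s}q^{2ks}$ and collecting the powers of $q$, $|2|$, $|a|$, $|\gamma|$ together with the $\tau$-values (rewriting $\tau(4^{-1})=\tau(2^{-2})$ and $\tau(a^{-1})=\tau^{-1}(a)$) yields exactly
\[
M(\tau,s)f_{s}(b,1)=|\gamma|^{s}\tau(\gamma)|a|^{1-s}\tau^{-1}(a)|2|^{1-2s}\tau(2^{-2})\tau(x^{2})q^{2k(s-1)}\vol(\mathfrak{p}).
\]

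I do not anticipate a real obstacle: step by step this is the computation of \cite[Lemma~4.4]{AK21}, and the only genuinely new bookkeeping is that the factors $\tau(x^{2})$, $\tau(4^{-1})$, $\tau(a^{-2})$ no longer collapse to signs. The one point to check with care is that tameness of $\tau$ really does make each of these locally constant on the support ball — the decisive fact being that $v^{-1}$ is determined modulo $1+\mathfrak{p}$ there, which holds uniformly for every $k\geq0$ (including $k=0$). The various factors of $|2|$, trivial in the odd-residue-characteristic setting of \emph{loc.\ cit.}, simply propagate through the Jacobian and through $|v|^{-2s}$ and reassemble into the displayed $|2|^{1-2s}$.
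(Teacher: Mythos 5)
Your computation is correct and follows essentially the same route as the paper's proof: reduce $M(\tau,s)f_s(b,1)$ to a one-variable integral via the Bruhat decomposition of $w_1u(v)b$, pin down the support as an explicit ball where the $\overline{b}(\cdot)$-argument lies in $\mathfrak{p}$, exploit tameness of $\tau$ to make the $\tau$-factor constant there, and finish by computing the volume. The only cosmetic difference is that the paper performs the preliminary substitutions $v\mapsto va$ and $v\mapsto 2v$ to isolate the $|a|^{1-s}\tau^{-1}(a)$ and $|2|^{1-2s}\tau(2^{-2})$ prefactors before identifying the support, whereas you carry $a$ and $2$ through and collect them at the end via the Jacobian of $v\mapsto 2a/v$; both versions yield the same answer.
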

\begin{proof}
As in the proof of Lemma~\ref{lem:intertwining operator when a is in 1+p} but now for any $a$,
\begin{multline*}
M(\tau,s) f_s(b, 1)\\
=|\Param|^s\tau(\Param)|a|^{1-s}\tau^{-1}(a)\int_{F^{\times}}
\tau(v^{-2})|v|^{-2s}f_s(
\left(\begin{smallmatrix}
1\\2v^{-1}+x&1\\-\tfrac\Param4(2v^{-1}+x)^2&-\tfrac\Param2(2v^{-1}+x)&1
\end{smallmatrix}\right), 1) \,dv.
\end{multline*}

We change variables $v\mapsto 2v$. Then, we see that the integrand vanishes unless $v^{-1}+x\in \mathfrak{p}$, equivalently $v^{-1}\in-x(1+ x^{-1}\mathfrak{p})$. Since
$|x|\geq1$, $x^{-1}\mathfrak{p}<\mathfrak{p}$. Now because $\tau$ is tamely ramified,
$\tau(v^{-1})=\tau(-x)$. Moreover $|v|=q^{-k}$ and the $dv$-integral equals $\tau(x)^2 q^{2k(s-1)} \vol(\mathfrak{p})$.
\end{proof}
\begin{cor}[cf.\ {\cite[Corollary 4.6]{AK21}}]\label{cor:psi *}
The following identity holds:
\begin{multline*}
\frac{\Psi(W^{\iota},M(\tau,s)f_s)}{\Psi(W^{\iota},f_s)}\\
=(q-1)|2|^{1-2s} |\Param|^s \left(\frac{\tau(\Param/4)\tau^2(\varpi')q^{1/2-2s}}{1-\tau^2(\varpi')q^{1-2s}}A_{\tau}
+\frac{\chi(g_{\chi})\tau(\varpi') q^{-1/2-s}}{1-q^{-1}}\right).
\end{multline*}
\end{cor}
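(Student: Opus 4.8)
The plan is to follow the proof of \cite[Corollary 4.6]{AK21} almost verbatim, the only new ingredient being that $\tau$ is now merely tamely ramified rather than quadratic; this new input enters solely through Lemmas \ref{lem:intertwining operator when a is in 1+p} and \ref{lem:operatoe on second coset}, which replace their quadratic counterparts. Concretely, I would evaluate the numerator $\Psi(W^{\iota},M(\tau,s)f_s)$ from the definition \eqref{zeta}, writing the $dh$-integral over $\overline{B}_H$ in the coordinates $b=\diag(a,1,a^{-1})\cdot(\text{lower unipotent in }x)$ introduced before Lemma \ref{support}, and then divide by the known value $\Psi(W^{\iota},f_s)=\vol^{\times}(1+\mathfrak{p})\vol(\mathfrak{p})^{n-1}$ from Corollary \ref{cor:psi W f}. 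By Lemma \ref{support}, the integrand $W^{\iota}(rw^{n,1}b)$ is supported on two disjoint pieces: Piece~I, where ${}^{\iota}(rw^{n,1}b(w^{n,1})^{-1})\in UZI^+$ (so $a\in 1+\mathfrak{p}$, $|x|<1$, $r\in\mathfrak{p}^{n-2}$), and Piece~II, where ${}^{\iota}(rw^{n,1}b(w^{n,1})^{-1})\in Ug_{\chi}ZI^+$ (so $|a|=q^{2k+1}$, $|x|=q^k$ for some $k\geq 0$, $\tfrac{\gamma}{4}x^2a^{-1}\in\varpi'(1+\mathfrak{p})$, $r\in\mathfrak{p}^{n-2}$). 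This splits $\Psi(W^{\iota},M(\tau,s)f_s)$ into a Piece~I summand and a Piece~II summand, to be divided by $\Psi(W^{\iota},f_s)$ at the end.

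On Piece~I one has $f_s(b,1)=1$ and $f_s$ vanishes on Piece~II, so $\Psi(W^{\iota},f_s)$ is exactly the integral of $W^{\iota}(rw^{n,1}b)$ over Piece~I. On the other hand, by Lemma \ref{lem:intertwining operator when a is in 1+p} the function $M(\tau,s)f_s(b,1)$ is \emph{constant} on Piece~I, equal to $|\gamma|^s\tau(\gamma)\tau(2^{-2})(q-1)|2|^{1-2s}\tfrac{\tau^2(\varpi')q^{1/2-2s}}{1-\tau^2(\varpi')q^{1-2s}}A_{\tau}$. Pulling this constant out of the Piece~I summand, that summand equals the constant times $\Psi(W^{\iota},f_s)$; dividing and using $\tau(\gamma)\tau(2^{-2})=\tau(\gamma/4)$ yields exactly the first term of the asserted formula.

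On Piece~II, the inner $dr$-integral of the $g_{\chi}$-branch of $W_0^{\iota}$ equals $\chi(g_{\chi})\vol(\mathfrak{p})^{n-2}$ by Lemma \ref{W on other coset}, while $M(\tau,s)f_s(b,1)$ is given by Lemma \ref{lem:operatoe on second coset}. The key simplification is to feed the support relation $\tfrac{\gamma}{4}x^2a^{-1}\in\varpi'(1+\mathfrak{p})$ and the tameness of $\tau$ into Lemma \ref{lem:operatoe on second coset}: one gets $\tau(x^2)=\tau(4\gamma^{-1})\tau(a)\tau(\varpi')$, which collapses all $\tau$-factors to $\tau(\varpi')$ (because $\tau(\gamma)\tau(2^{-2})\tau(4\gamma^{-1})=1$ and $\tau^{-1}(a)\tau(a)=1$), and, after inserting $|a|=q^{2k+1}$, makes the surviving power of $q$ equal to $q^{1-s}$ \emph{independently of $k$}. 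Integrating this $k$-independent quantity against $dh$ over the $k$-slice of Piece~II and summing over $k\geq 0$ gives a convergent geometric series — the $dx$-mass of $\{|x|=q^k\}$ grows like $q^k$ but this is compensated by the $|a|^{-1}$-weight carried by the $dh$-measure in the big-cell coordinates (invisible on Piece~I since there $a\in 1+\mathfrak{p}$) — and after dividing by $\Psi(W^{\iota},f_s)$ the sum telescopes to $(q-1)|2|^{1-2s}|\gamma|^s\cdot\tfrac{\chi(g_{\chi})\tau(\varpi')q^{-1/2-s}}{1-q^{-1}}$, the second term. Adding the two contributions gives the formula.

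The main obstacle is the measure bookkeeping on Piece~II: one must carry through the normalization of $dh$ on $U_H\backslash H$ in the big-cell coordinates, the $dx$-mass of the annuli $\{|x|=q^k\}$ under $\int_{\mcO}dx=q^{1/2}$, and the powers of $\vol(\mathfrak{p})$ produced by the $R^{n,1}$-integral, so that the $n$-dependence cancels against $\vol(\mathfrak{p})^{n-1}$ in $\Psi(W^{\iota},f_s)$ and the geometric series in $k$ has ratio exactly $q^{-1}$. Since every one of these ingredients is identical to the corresponding one in \cite{AK21}, and the only genuinely new point is the tame (non-quadratic) evaluation of $\tau$ isolated in Lemmas \ref{lem:intertwining operator when a is in 1+p} and \ref{lem:operatoe on second coset}, the remaining verifications are routine.
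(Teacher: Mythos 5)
Your proposal is correct and follows essentially the same route as the paper: split over the two pieces of Lemma \ref{support}, use Lemmas \ref{lem:intertwining operator when a is in 1+p}, \ref{lem:operatoe on second coset}, and \ref{W on other coset} for the integrand, note the tame cancellation $\tau(\gamma)\tau(2^{-2})\tau(x^2)\tau^{-1}(a)=\tau(\varpi')$ on Piece II, and invoke the measure computation already done in \cite[Corollary~4.6]{AK21} for the $(a,x)$-region. Your observation that on Piece~I the remaining integral after extracting the constant is precisely $\Psi(W^{\iota},f_s)$ is a slightly cleaner packaging of what the paper does by directly inserting $\vol^{\times}(1+\mathfrak{p})\vol(\mathfrak{p})^{n-1}$, but the mathematics is identical.
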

Note that the statement is the exact analog of \cite[Corollary 4.6]{AK21} except that here $A_{\tau}$ can be $0$ and we have the values of $\tau^2$.
\begin{proof}
The proof follows as in \textit{loc.\ cit.}, by combining the above lemmas in exactly the same way. We present the computation because we have to account for all the factors involving $\tau^2$.

In order to compute the integral we write the $dh$-integral over $\overline{B}_{H}$ using the right invariant Haar measure on $\overline{B}_{H}$ given by
$db=|a|^{-1}\,d^{\times}a\,dx$. Since $W_0$ is supported in $U\langle g_{\chi}\rangle ZI^+$ and by virtue of Lemma~\ref{support},
$\Psi(W^{\iota},M(\tau,s)f_s)$ is the sum of two integrals. The first summand,  corresponding to case \eqref{it:support 1} of the lemma, is
\begin{align*}
&\int_{a\in1+\mathfrak{p}}
\int_{x\in\mathfrak{p}}
\int_{r\in\mathfrak{p}^{n-2}}W_0^{\iota}(rw^{n,1}b(w^{n,1})^{-1})M(\tau,s) f_s(b,1)\,dr\,d^{\times}a\,dx.
\end{align*}
By Lemma~\ref{lem:intertwining operator when a is in 1+p} this integral equals
\begin{align*}
\vol^{\times}(1+\mathfrak{p})  \vol(\mathfrak{p})^{n-1} |\Param|^s\tau(\Param)\tau(2^{-2})(q-1)  |2|^{1-2s}      \frac{\tau^2(\varpi')q^{1/2-2s}}{1-\tau^2(\varpi')q^{1-2s}}A_{\tau}.
\end{align*}

The second summand corresponds to Lemma~\ref{support} \eqref{it:support 2}. It is an integral over $(a,x)$ such that
\begin{align*}
|a|=q^{2k+1},\qquad |x|=q^k, \qquad \tfrac{\Param}4x^2a^{-1}\in  \varpi' \cdot (1 + \mathfrak{p}),\qquad k\geq0.
\end{align*}
(It is the same domain as in the case where $\tau$ is quadratic.)
For each such element, by Lemma~\ref{lem:operatoe on second coset} and Lemma~\ref{W on other coset} the integrand equals
\begin{multline*}
|\Param|^s\tau(\Param)\tau^{-1}(a)|2|^{1-2s} q^{1-s} \chi(g_{\chi})\tau(2^{-2})\tau(x^2)\vol(\mathfrak{p})^{n-1}\\
=|\Param|^s\tau(\varpi')|2|^{1-2s} q^{1-s} \chi(g_{\chi})\vol(\mathfrak{p})^{n-1},
\end{multline*}
where we used $\tau^{-1}(a)=\tau(2^2)\tau(\Param)^{-1}\tau(x^{-2})\tau(\varpi')$ ($\tau$ is tamely ramified). The measure of the above set of $(a,x)$ was computed in
the proof of \cite[Corollary~4.6]{AK21} and equals
$(q-1)q^{-3/2}\vol^{\times}(1+\mathfrak{p})/(1-q^{-1})$.
Therefore
\begin{multline*}
\Psi(W^{\iota},M(\tau,s)f_s)=\vol^{\times}(1+\mathfrak{p})\vol(\mathfrak{p})^{n-1}(q-1)|2|^{1-2s} |\Param|^s
\\
\times\left(\frac{\tau(\Param/4)\tau^2(\varpi')q^{1/2-2s}}{1-\tau^2(\varpi')q^{1-2s}}A_{\tau}
+\frac{\chi(g_{\chi})\tau(\varpi') q^{-1/2-s}}{1-q^{-1}}\right).
\end{multline*}
Now the result follows with the aid of Corollary~\ref{cor:psi W f}.
\end{proof}

Collecting the results above, the immediate analog of \cite[Corollary~4.7]{AK21} reads (note that $\Param/4=-\alpha$):
\begin{cor}\label{corollary:gamma for tamely ramified}
For any tamely ramified character $\tau$ of $F^{\times}$,
\begin{multline*}
\gamma(s,\pi\times\tau,\psi)=
\pi(-I_{2n})\tau(-1)^{n}
\gamma(2s-1,\tau^2,\psi)
\\
\times\left(\frac{(q-1)\tau^2(\varpi')q^{1/2-2s}}{1-\tau^2(\varpi')q^{1-2s}}A_{\tau}
+\chi(g_{\chi})\tau^{-1}(-\alpha)\tau(\varpi') q^{1/2-s}\right).
\end{multline*}
\end{cor}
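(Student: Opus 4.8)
The plan is to substitute the explicit pair $(W^{\iota},f_s)$ fixed above into the functional equation \eqref{gamma def} and then bookkeep the scalar factors. Since $\gamma(s,\pi\times\tau,\psi)=\gamma(s,\pi^{\iota}\times\tau,\psi)$ and $\Psi(W^{\iota},f_s)=\vol^{\times}(1+\mathfrak{p})\vol(\mathfrak{p})^{n-1}\neq0$ by Corollary~\ref{cor:psi W f}, we may divide \eqref{gamma def} by $\Psi(W^{\iota},f_s)$ to obtain, as an identity of meromorphic functions in $q^{-s}$,
\[
\gamma(s,\pi\times\tau,\psi)=\pi(-I_{2n})\tau(-1)^{n}\tau(4\gamma^{-2})|\gamma/2|^{-2s+1}\,C(s,\tau,\psi)\,\frac{\Psi(W^{\iota},M(\tau,s)f_s)}{\Psi(W^{\iota},f_s)},
\]
where we have used $\Psi^{*}(W^{\iota},f_s)=\Psi(W^{\iota},M^{*}(\tau,s)f_s)=C(s,\tau,\psi)\,\Psi(W^{\iota},M(\tau,s)f_s)$ together with the linearity of $\Psi$ in the section.

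The next step is to insert the two remaining ingredients, both already available without any assumption on $\tau$ beyond tame ramification: the local coefficient $C(s,\tau,\psi)=\tau(\gamma)|\gamma|^{s-1}\gamma(2s-1,\tau^{2},\psi)$ of \eqref{eq:C factor}, and the ratio $\Psi(W^{\iota},M(\tau,s)f_s)/\Psi(W^{\iota},f_s)$ computed in Corollary~\ref{cor:psi *}. After this the argument is purely formal. The absolute-value factors cancel, $|\gamma/2|^{-2s+1}\cdot|\gamma|^{s-1}\cdot|2|^{1-2s}\cdot|\gamma|^{s}=1$. On the $A_{\tau}$-summand the character factors collapse, $\tau(4\gamma^{-2})\,\tau(\gamma)\,\tau(\gamma/4)=\tau(1)=1$, leaving $(q-1)\tau^{2}(\varpi')q^{1/2-2s}(1-\tau^{2}(\varpi')q^{1-2s})^{-1}A_{\tau}$. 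On the $\chi(g_{\chi})$-summand one uses $(q-1)/(1-q^{-1})=q$ to absorb a power of $q^{-1}$, and $\tau(4\gamma^{-2})\,\tau(\gamma)=\tau(4\gamma^{-1})=\tau(-\alpha^{-1})=\tau^{-1}(-\alpha)$, where $\gamma=-4\alpha$ and $(-\alpha)^{-1}=-\alpha^{-1}$; this gives $\chi(g_{\chi})\,\tau^{-1}(-\alpha)\,\tau(\varpi')\,q^{1/2-s}$. Combining the two summands under the common factor $\pi(-I_{2n})\tau(-1)^{n}\gamma(2s-1,\tau^{2},\psi)$ yields the asserted identity.

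I do not anticipate a real obstacle: every piece of genuine analysis has already been carried out --- the support analysis of $W^{\iota}$ (Lemma~\ref{support}), the value $\Psi(W^{\iota},f_s)$ (Corollary~\ref{cor:psi W f}), the evaluation of the inner $dr$-integral on the relevant coset (Lemma~\ref{W on other coset}), the computation of $M(\tau,s)f_s$ on the two cosets (Lemmas~\ref{lem:intertwining operator when a is in 1+p} and~\ref{lem:operatoe on second coset}), and the resulting ratio (Corollary~\ref{cor:psi *}) --- and in each of these the only feature of $\tau$ that enters is tame ramification, never that $\tau^{2}=\mathbbm{1}$, so the formulas extend verbatim. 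The one point demanding care is the use of the \emph{corrected} local coefficient \eqref{eq:C factor} in place of the expression of \cite[(6.4)]{AK21}: it is the corrected factor $\tau(\gamma)$ that pairs with the normalization $\tau(4\gamma^{-2})$ of \eqref{gamma def} to produce the clean character factors in the last step, so this correction is essential for the final formula to take the stated form.
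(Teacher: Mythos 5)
Your proof is correct and matches what the paper implicitly does when it says ``Collecting the results above, the immediate analog of \cite[Corollary~4.7]{AK21} reads'': substitute the explicit data into the functional equation \eqref{gamma def}, divide by the nonzero $\Psi(W^{\iota},f_s)$ from Corollary~\ref{cor:psi W f}, insert the corrected local coefficient \eqref{eq:C factor} and the ratio from Corollary~\ref{cor:psi *}, and simplify. The bookkeeping you perform --- cancellation of the absolute-value factors, $\tau(4\gamma^{-2})\tau(\gamma)\tau(\gamma/4)=1$ on the $A_{\tau}$-term, $(q-1)/(1-q^{-1})=q$ and $\tau(4\gamma^{-1})=\tau^{-1}(-\alpha)$ on the $\chi(g_{\chi})$-term --- is accurate, and your remark about the role of the corrected $C(s,\tau,\psi)$ is exactly the point the authors flag.
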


\subsection{$L$-parameter}\label{subsec:AK-Langlands-parameter}
In this section we discuss the $L$-parameter $\Langlandsparameter$ of $\pi$ in the sense of Arthur (see Section \ref{sec:LLC}).
By Appendices \ref{sec:lift} and \ref{sec:unram}, $\gamma(s,\pi\times\tau,\psi)=\gamma(s,\phi_{\pi}\otimes\tau,\psi)$ for any character $\tau$ of $F^{\times}$, so that the poles of $\gamma(s,\pi\times\tau,\psi)$ tell us when a given tame character $\tau^{-1}$ is an irreducible component of $\phi_{\pi}$. In fact in this case $\tau$ must be quadratic (see Remark~\ref{rem:self dual for poles} below), so that $\tau^{-1}=\tau$.

Note that $\phi_{\pi}$ is the direct sum of irreducible orthogonal representations of $W_{F}\times\SL_{2}(\C)$, which are inequivalent to each other as explained in Section~\ref{subsec:LLC} (see also Section~\ref{subsec:rough-form}). Hence, if $\phi_{\pi}$ contains a character, then it must necessarily be quadratic.
Let us determine those, following \cite{AK21}.

\begin{rem}\label{rem:self dual for poles}
Since $\pi$ is supercuspidal and $n>1$, by
\cite[Corollary~4.2]{Kap13-Compos} the integral $\Psi(W,f_s)$ is holomorphic, and by
\cite[Lemma~2.2.5]{Sha81} the intertwining operator $M(\tau,s)$ is holomorphic when $\Re(s)>1/2$.
Thus, even without appealing to the local Langlands correspondence, we know that if $\gamma(s,\pi\times\tau,\psi)$ has a pole at $s=1$,
then $C(s,\tau,\psi)$ must have a pole there, and by \eqref{eq:C factor} this means $\tau$ is quadratic.
\end{rem}

Assume $\eta$ is a character of $F^{\times}$. Recall that when $\eta$ is unramified, by \cite[23.4 and 23.5]{BH06},
\begin{align}\label{eq:Tate eps factor unramified}
\gamma(s, \eta, \psi) = q^{s-1/2}\eta^{-1}(\varpi')\frac{1-\eta(\varpi')q^{-s}}{1 - \eta(\varpi')^{-1}q^{s-1}}
\end{align}
(note that $\psi$ is of level $1$, i.e., trivial on $\mfp$ but not on $\mcO$), and when $\eta$ is ramified and tamely ramified, by \cite[23.6]{BH06}
\begin{align}\label{eq:Tate eps factor ramified tamely ramified}
\gamma(s,\eta,\psi)=\varepsilon(s,\eta,\psi)=q^{-1/2}G(\eta^{-1},\psi).
\end{align}
Here $G(\eta, \psi)$ is the Gauss sum of $\eta$ with respect to $\psi$ (see \cite[23.6]{BH06}), and for a
tamely ramified $\eta$, $$G(\eta, \psi) = \displaystyle\sum_{x \in \ringofintegers^{\times} / 1 + \mathfrak{p}} \eta(x) \psi(x).$$
(We use the definition of \cite{Oi24}.)

Recall $p$ is the characteristic of the residue field.

Let $\tau_1$ be the unramified character such that $\tau_1(\varpi') = \chi(g_{\chi})$. If $p>2$ we also let
$\tau_2$ be the character which restricts to the unique nontrivial quadratic character of $\ringofintegers^{\times}$ and satisfies
$\tau_2(\varpi') = \chi(g_{\chi}) \tau_2(\Param)$. Both characters are quadratic. They are also tamely ramified: $\tau_1$ trivially, and $\tau_2$ because when
$p>2$, $1+\mathfrak{p}\subset(F^{\times})^2$. Since $\tau_1(\varpi') \ne -\chi(g_{\chi}) \tau_1(\Param)$ and for $p>2$ also
$\tau_2(\varpi') \ne -\chi(g_{\chi})\tau_2(\Param)$, by
\cite[Theorem~1.1]{AK21} $\gamma(s,\pi\times\tau_i,\psi)$ has a pole at $s=1$, for both $i$.

The characters $\tau_1$ and $\tau_2$ determine $1$-dimensional summands $\Langlandsparameternorep_i$ of $\Langlandsparameter$, hence we can write
\begin{align*}
\Langlandsparameter=\Langlandsparameternorep'\oplus\Langlandsparameternorep_1[\oplus\Langlandsparameternorep_2],
\end{align*}
where until the end of this section factors in square brackets appear only when $p>2$.
Let $\Pi'$ be the endoscopic lift of $\phi'$ to a general linear group, i.e., the irreducible tempered representation of $\GL_{2n-2}(F)$ (for $p>2$) or $\GL_{2n-1}(F)$ (when $p=2$) associated with $\Langlandsparameternorep'$ by the local Langlands correspondence for general linear groups.
We let $\gamma(s,\Pi'\times\tau,\psi)$ denote the $\gamma$-factor of \cite{JPSS83} via the theory of Rankin--Selberg integrals.

\begin{thm}\label{thm:AK}
Let $\tau$ be a tamely ramified character of $F^{\times}$. Then
\[
\gamma(s,\Pi'\times\tau,\psi)
=
\pi(-I_{2n})\tau((-1)^{n+1} \alpha^{-1} \varpi')\chi(g_{\chi})[\tau^{-1}(4)\tau_2(-1)\varepsilon(s,\tau_2,\psi)]q^{1/2-s}.
\]
In particular for $p=2$,
$\gamma(s,\Pi'\times\tau,\psi)=\tau(\alpha^{-1}\varpi')\chi(g_{\chi})q^{1/2-s}$ ($\alpha = -\frac{\Param}{4}$ was defined in Section \ref{subsec:AK-easyside}).
\end{thm}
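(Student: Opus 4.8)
The plan is to obtain $\gamma(s,\Pi'\times\tau,\psi)$ from Corollary~\ref{corollary:gamma for tamely ramified} by dividing out the contributions of the one-dimensional summands of $\phi_{\pi}$. By Appendices~\ref{sec:lift} and~\ref{sec:unram} we have $\gamma(s,\pi\times\tau,\psi)=\gamma(s,\phi_{\pi}\otimes\tau,\psi)$; since Artin $\gamma$-factors are multiplicative in direct sums and the local Langlands correspondence for general linear groups identifies the Rankin--Selberg $\gamma$-factor of \cite{JPSS83} with the Artin one, the decomposition $\phi_{\pi}=\phi'\oplus\phi_{1}[\oplus\phi_{2}]$ recalled above yields
\[
\gamma(s,\pi\times\tau,\psi)=\gamma(s,\Pi'\times\tau,\psi)\cdot\gamma(s,\tau_{1}\tau,\psi)\cdot[\gamma(s,\tau_{2}\tau,\psi)].
\]
So the first step is to solve for $\gamma(s,\Pi'\times\tau,\psi)$: it is the right-hand side of Corollary~\ref{corollary:gamma for tamely ramified} divided by $\gamma(s,\tau_{1}\tau,\psi)$ and, when $p>2$, also by $\gamma(s,\tau_{2}\tau,\psi)$, each of which I would evaluate by the Tate formulas \eqref{eq:Tate eps factor unramified}--\eqref{eq:Tate eps factor ramified tamely ramified} according to whether the character in question is unramified or (tamely) ramified.

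The remaining work is a bookkeeping computation that I would organize according to the ramification of $\tau$. Since $\tau$ is tamely ramified, $\tau^{2}$ is ramified exactly when $\tau|_{\mcO^{\times}}$ has order $>2$; for $p=2$ (where $|k^{\times}|$ is odd) this is equivalent to $\tau$ itself being ramified, whereas for $p>2$ there is also the intermediate case $\tau|_{\mcO^{\times}}=\tau_{2}|_{\mcO^{\times}}$. In each case I would substitute the appropriate Tate formula; because $\tau_{1}$ is unramified and $\tau_{2}$ (when present) restricts to the nontrivial quadratic character of $\mcO^{\times}$, the Gauss sums entering $\gamma(s,\tau_{1}\tau,\psi)$ and $\gamma(s,\tau_{2}\tau,\psi)$ reduce to $G(\tau^{-1},\psi)$ and $G(\tau^{-1}\tau_{2},\psi)$. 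The one place where a nontrivial identity is needed is to dispose of the Gauss sum $G(\tau^{-2},\psi)$ brought in by the factor $\gamma(2s-1,\tau^{2},\psi)$ of Corollary~\ref{corollary:gamma for tamely ramified}. When $p>2$ and $\tau^{2}$ is ramified I would invoke the Hasse--Davenport product relation
\[
G(\tau^{-1},\psi)\,G(\tau^{-1}\tau_{2},\psi)=\tau(4)\,G(\tau^{-2},\psi)\,G(\tau_{2},\psi),
\]
and in all $p>2$ cases the identity $G(\tau_{2},\psi)^{2}=\tau_{2}(-1)q$, which together convert the leftover $q^{1/2}/G(\tau_{2},\psi)$ into $\tau_{2}(-1)\varepsilon(s,\tau_{2},\psi)$ and account for the factor $\tau^{-1}(4)$; when $p=2$ the Frobenius-invariance $\psi(x^{2})=\psi(x)$ and the bijectivity of $x\mapsto x^{2}$ on $k^{\times}$ give $G(\tau^{-2},\psi)=G(\tau^{-1},\psi)$ at once, so these Gauss sums cancel outright. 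The rest is elementary: one rewrites $\tau(-1)^{n}\tau^{-1}(-\alpha)\tau(\varpi')=\tau((-1)^{n+1}\alpha^{-1}\varpi')$ using $\tau(-1)^{2}=1$, and in the case $A_{\tau}=1$ cancels the rational functions of $q^{-s}$ coming from the two terms of Corollary~\ref{corollary:gamma for tamely ramified} against the unramified Tate factors, leaving the asserted monomial-times-$\varepsilon$-factor expression.

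For the final assertion, the $p=2$ case collapses because then $Z_{\SO_{2n}}\subset I^{++}$ forces $\pi(-I_{2n})=1$, $-1\in 1+\mfp$ forces $\tau(-1)=1$ (so that $\tau((-1)^{n+1}\alpha^{-1}\varpi')=\tau(\alpha^{-1}\varpi')$), and the factor $\phi_{2}$, hence the bracketed term, is absent; this immediately gives $\gamma(s,\Pi'\times\tau,\psi)=\tau(\alpha^{-1}\varpi')\chi(g_{\chi})q^{1/2-s}$. I expect the main obstacle to be organizational rather than conceptual: keeping the several ramification cases, the two normalizations of Tate's $\gamma$-factor, and the Gauss-sum identities straight without introducing sign or power-of-$q$ errors. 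All the genuine mathematical content sits in the factorization displayed in the first paragraph, which rests on Appendices~\ref{sec:lift} and~\ref{sec:unram} (and hence on Arthur's endoscopic classification together with the compatibility of the unramified endoscopic lift with the Satake isomorphism).
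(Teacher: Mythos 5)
Your proposal is correct and follows essentially the same route as the paper's proof: factor the $\gamma$-factor through the parameter decomposition via Appendices~\ref{sec:lift} and~\ref{sec:unram}, divide the formula of Corollary~\ref{corollary:gamma for tamely ramified} by the Tate $\gamma$-factors of $\tau\tau_1$ and (when $p>2$) $\tau\tau_2$, split into cases according to the ramification of $\tau^2$ and $\tau$, and close the Gauss-sum computations with the Hasse--Davenport relation, $G(\tau_2,\psi)^2=\tau_2(-1)q$, and (for $p=2$) the Frobenius-invariance identity $G(\tau^{-2},\psi)=G(\tau^{-1},\psi)$. The paper's three cases ($\tau^2$ ramified; $\tau^2$ and $\tau\tau_1$ both unramified; $\tau^2$ unramified but $\tau\tau_1$ ramified) match your case split, and your justification of $\pi(-I_{2n})=1$ and $\tau(-1)=1$ when $p=2$ agrees with the paper's.
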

\begin{proof}
Since $\Langlandsparameter=\Langlandsparameternorep'\oplus\Langlandsparameternorep_1[\oplus\Langlandsparameternorep_2]$, the local Langlands correspondence implies
\begin{align*}
\gamma(s,\pi\times\tau,\psi)=\gamma(s, \Pi'\times\tau, \psi)\gamma(s, \tau_1\tau, \psi)[\gamma(s, \tau_2\tau, \psi)].
\end{align*}
Then by Corollary~\ref{corollary:gamma for tamely ramified},
\begin{align}\label{eq:first computation of gamma Pi'}
\gamma(s,\Pi'\times\tau,\psi)=&
\pi(-I_{2n})\tau(-1)^{n}
\gamma(2s-1,\tau^2,\psi)
\\&\times\left(  (q-1)\frac{\tau^2(\varpi')q^{1/2-2s}}{1-\tau^2(\varpi')q^{1-2s}}A_{\tau}\nonumber
+\chi(g_{\chi})\tau^{-1}(-\alpha)\tau(\varpi') q^{1/2-s}\right)
\\&\times\gamma(s, \tau\tau_1, \psi)^{-1}[\gamma(s, \tau\tau_2, \psi)^{-1}].\nonumber
\end{align}

For the computation we treat $3$ cases.
\begin{itemize}[leftmargin=*]
\item The character $\tau^2$ is ramified. Then $A_{\tau}=0$. Since $\tau$ must also be ramified and $\tau_1$ is unramified, we deduce that $\tau\tau_1$ is both ramified and tamely ramified. Next we observe that (for $p>2$) $\tau\tau_2$ is ramified, because otherwise $\tau|_{\ringofintegers^{\times}}=\tau_2^{-1}|_{\ringofintegers^{\times}}$, but by definition $\tau_2^2|_{\ringofintegers^{\times}} \equiv \mathbbm{1}$ contradicting the assumption that $\tau^2$ is ramified.
Applying \eqref{eq:Tate eps factor ramified tamely ramified} to $\tau\tau_1$ and $\tau^2$, and to $\tau\tau_2$, and noting that
$G(\tau^{-1}\tau_1^{-1},\psi)=G(\tau^{-1},\psi)$ because $\tau_1$ is unramified, identity~\eqref{eq:first computation of gamma Pi'} becomes
\begin{align}\label{eq:gamma case 1 of 3 uniform}
\gamma(s,\Pi'\times\tau,\psi)=&
\pi(-I_{2n})\tau(-1)^{n}\tau^{-1}(-\alpha)
G(\tau^{-2},\psi)
\\&\times\chi(g_{\chi})\tau(\varpi') q^{1/2-s}G(\tau^{-1},\psi)^{-1}[q^{1/2}G(\tau^{-1}\tau_2^{-1},\psi)^{-1}].\nonumber
\end{align}
When $p=2$ we can further simplify by noting that $G(\tau^{-2},\psi)=G(\tau^{-1},\psi)$, because $\psi$ is
invariant under $\Gal(\padicfield/\mathbb{F}_2)$ ($\mathbb{F}_2$ - the finite field of cardinality $2$), in particular under $x\mapsto x^2$.

The character $\tau_2|_{\mcO^{\times}}$ is the nontrivial quadratic character. Hence by the Hasse--Davenport product relation
(see e.g., \cite[Lemma~A.5(2)]{Oi24})
\begin{align}\label{eq:Hasse--Davenport}
G(\tau^{-1}\tau_2^{-1},\psi)G(\tau^{-1},\psi)=G(\tau^{-2},\psi)G(\tau_2^{-1},\psi)\tau(4).
\end{align}
Also by \cite[(23.6.3)]{BH06}, $G(\tau_2^{-1},\psi)=G(\tau_2^{-1},\psi)^{-1}\tau_2(-1)q$, and after applying \eqref{eq:Tate eps factor ramified tamely ramified} again,
\begin{align*}
q^{1/2}G(\tau^{-1}\tau_2^{-1},\psi)^{-1}&=q^{-1/2}G(\tau^{-1},\psi)G(\tau^{-2},\psi)^{-1}G(\tau_2^{-1},\psi)\tau_2(-1)\tau^{-1}(4)
\\&=\tau^{-1}(4)G(\tau^{-1},\psi)G(\tau^{-2},\psi)^{-1}\tau_2(-1)\varepsilon(s,\tau_2,\psi).
\end{align*}

Thus \eqref{eq:gamma case 1 of 3 uniform} becomes
\begin{align}\label{eq:gamma p>2 case 1}
\gamma(s,\Pi'\times\tau,\psi)=&
\pi(-I_{2n})\tau(-1)^{n}\tau^{-1}(-\alpha)\tau(\varpi')
\chi(g_{\chi})[\tau^{-1}(4)\tau_2(-1)\varepsilon(s,\tau_2,\psi)]q^{1/2-s}.
\end{align}

\item Both $\tau^2$ and $\tau\tau_1$ are unramified. Then $A_{\tau}=1$.
Because $\tau_1$ is unramified, we deduce that $\tau$ is unramified. Thus we can apply
\eqref{eq:Tate eps factor unramified} to $\tau\tau_1$ and to $\tau^2$. Note that by our choice of $\tau_1$, $\tau_1(\varpi')=\chi(g_{\chi})$, and also $\tau(-1)=1$ and $\tau(\alpha)=1$ ($|\alpha|=1$).
Then \eqref{eq:first computation of gamma Pi'} equals
\begin{align*}
\gamma(s,\Pi'\times\tau,\psi)=&
\pi(-I_{2n})
q^{2s-3/2}\tau^{-2}(\varpi')\frac{1-\tau^2(\varpi')q^{1-2s}}{1-\tau^{-2}(\varpi')q^{2s-2}}
\\&\times\left(  (q-1)\frac{\tau^2(\varpi')q^{1/2-2s}}{1-\tau^2(\varpi')q^{1-2s}}
+\chi(g_{\chi})\tau(\varpi') q^{1/2-s}\right)
\\&\times
q^{-s+1/2}\tau(\varpi')\chi(g_{\chi}) \frac{1 - \tau(\varpi')^{-1}\chi(g_{\chi})q^{s-1}}{1-\tau(\varpi')\chi(g_{\chi})q^{-s}}
[\gamma(s,\tau\tau_2,\psi)^{-1}].\nonumber
\end{align*}
Cancelling the numerator in the expression for $\gamma(2s-1,\tau^2,\psi)$ with the denominator in \eqref{eq:first computation of gamma Pi'}
and simplifying we obtain
\begin{align*}
\gamma(s,\Pi'\times\tau,\psi)=&
 \pi(-I_{2n})
q^{-1/2}\frac{1}{1-\tau^{-2}(\varpi)q^{2s-2}}
\\&\times(1+\tau(\varpi')\chi(g_{\chi})q^{1-s})(1-\tau(\varpi')\chi(g_{\chi})q^{-s})
\\&\times
\frac{1 - \tau(\varpi')^{-1}\chi(g_{\chi})q^{s-1}}{1-\tau(\varpi')\chi(g_{\chi})q^{-s}}
[\gamma(s,\tau\tau_2,\psi)^{-1}]\\=&\pi(-I_{2n})
q^{-1/2}\frac{(1+\tau(\varpi')\chi(g_{\chi})q^{1-s})
(1 - \tau(\varpi')^{-1}\chi(g_{\chi})q^{s-1})}{1-\tau^{-2}(\varpi')q^{2s-2}}\\
&\times[\gamma(s,\tau\tau_2,\psi)^{-1}].
\end{align*}
Recall that $\chi(g_{\chi})^2=1$. Since for any constant $c$ such that $c^2=1$,
\begin{align}\label{eq:identity for 1-a^2}
1-\tau^{-2}(\varpi')q^{2s-2}=(1-\tau^{-1}(\varpi')cq^{s-1})(1+\tau^{-1}(\varpi')cq^{s-1}),
\end{align}
and using $\tfrac{1+z}{1+z^{-1}}=z$, we can further simplify the expression for $\gamma(s,\Pi'\times\tau,\psi)$ and reach
\begin{align*}
\gamma(s,\Pi'\times\tau,\psi)=\pi(-I_{2n})\tau(\varpi')\chi(g_{\chi})[\gamma(s,\tau\tau_2,\psi)^{-1}]q^{1/2-s}.
\end{align*}

In addition for $p>2$, because $\tau$ is unramified,
$\tau\tau_2$ is tamely ramified and not unramified hence by \eqref{eq:Tate eps factor ramified tamely ramified} and \cite[(23.6.3)]{BH06} applied to $\tau\tau_2$,
\begin{align*}
\gamma(s, \tau\tau_2, \psi) &= q^{-1/2}G(\tau^{-1}\tau_2^{-1},\psi)=
q^{-1/2}G(\tau_2^{-1},\psi)=\tau_2(-1)\varepsilon(s, \tau_2, \psi)^{-1}.
\end{align*}
Thus
\begin{align}\label{eq:gamma p>2 case 2}
\gamma(s,\Pi'\times\tau,\psi)=\pi(-I_{2n})\tau(\varpi')\chi(g_{\chi})[\tau_2(-1)\varepsilon(s, \tau_2, \psi)]q^{1/2-s}.
\end{align}

\item $\tau^2$ is unramified and $\tau\tau_1$ is ramified. If $p=2$ we deduce $\tau$ is also unramified, hence $\tau\tau_1$ cannot be ramified. Thus we can now assume $p>2$. Again $A_{\tau}=1$.
Now $\tau$ is ramified (because $\tau_1$ is unramified) whence $\tau\tau_1$ is tamely ramified and not unramified.
In addition since $\tau^2$ is unramified, and since there is a unique nontrivial quadratic character of $\ringofintegers^{\times}$,
we deduce that $\tau|_{\ringofintegers^{\times}}=\tau_2|_{\ringofintegers^{\times}}$, in particular
$\tau\tau_2$ must be unramified. By \eqref{eq:Tate eps factor ramified tamely ramified} applied to $\tau\tau_1$ and \eqref{eq:Tate eps factor unramified} applied to $\tau^2$ and $\tau\tau_2$, and using $\tau_2(\varpi') = \chi(g_{\chi}) \tau_2(\Param)$,
\begin{align*}
\gamma(s,\Pi'\times\tau,\psi)=&
\pi(-I_{2n})\tau(-1)^{n}
q^{2s-3/2}\tau^{-2}(\varpi')\frac{1-\tau^2(\varpi')q^{1-2s}}{1-\tau^{-2}(\varpi')q^{2s-2}}
\\&\times\left(  (q-1)\frac{\tau^2(\varpi')q^{1/2-2s}}{1-\tau^2(\varpi')q^{1-2s}}
+\chi(g_{\chi})\tau^{-1}(-\alpha)\tau(\varpi') q^{1/2-s}\right)
\\&\times q^{1/2-s}\tau(\varpi')\chi(g_{\chi})\tau_2(\Param)\frac{1 - \tau(\varpi')^{-1}\chi(g_{\chi})\tau_2^{-1}(\Param)q^{s-1}}{1-\tau(\varpi')\chi(g_{\chi})\tau_2(\Param)q^{-s}}q^{1/2}G(\tau^{-1}\tau_1^{-1}, \psi)^{-1}.
\end{align*}
As in the previous case the numerator of $\gamma(2s-1,\tau^2,\psi)$ cancels with the denominator in \eqref{eq:first computation of gamma Pi'}.
We can further simplify using the fact that now $\tau_2(\Param)=\tau(\Param)$ (then $\tau_2(\Param)\tau^{-1}(-\alpha)=\tau(4)=1$) and also apply
\eqref{eq:identity for 1-a^2} with $c=\chi(g_{\chi})\tau(\Param)$. We obtain
\begin{align*}
&\gamma(s,\Pi'\times\tau,\psi)\\&=
\pi(-I_{2n})\tau(-1)^{n}
\frac1{1-\tau^{-2}(\varpi')q^{2s-2}}
G(\tau^{-1}\tau_1^{-1}, \psi)^{-1}
\\&\times\left(  (q-1)\tau(\Param)\chi(g_{\chi})q^{-s}\tau(\varpi')
+(1-q^{1-2s}\tau^2(\varpi'))\right)\frac{1 - \tau(\varpi')^{-1}\chi(g_{\chi})\tau^{-1}(\Param)q^{s-1}}{1-\tau(\varpi')\chi(g_{\chi})\tau(\Param)q^{-s}}
\\&=\pi(-I_{2n})\tau(-1)^{n}
G(\tau^{-1}\tau_1^{-1}, \psi)^{-1}
\\&\times\frac{(1-\tau(\varpi')\chi(g_{\chi})\tau(\Param)q^{-s})(1+\tau(\varpi')\chi(g_{\chi})\tau(\Param)q^{1-s})}
{(1 + \tau(\varpi')^{-1}\chi(g_{\chi})\tau^{-1}(\Param)q^{s-1})(1-\tau(\varpi')\chi(g_{\chi})\tau(\Param)q^{-s})}
\\&=\pi(-I_{2n})\tau(-1)^{n}\tau(\varpi')\tau(\Param)\chi(g_{\chi})G(\tau^{-1}\tau_1^{-1}, \psi)^{-1}q^{1-s}.
\end{align*}
Next we apply \eqref{eq:Hasse--Davenport} with $(\tau,\tau_2)$ replaced by $(\tau_1,\tau)$ (now $\tau$ is the nontrivial quadratic character of $\ringofintegers^{\times}$) and obtain
\begin{align*}
G(\tau^{-1}\tau_1^{-1},\psi)G(\tau_1^{-1},\psi)=G(\tau_1^{-2},\psi)G(\tau^{-1},\psi)\tau_1(4).
\end{align*}
Since $\tau_1$ is unramified, $G(\tau_1^{-2},\psi)=G(\tau_1^{-1},\psi)$ and $\tau_1(4)=1$ hence
\begin{align*}
G(\tau^{-1}\tau_1^{-1},\psi)=G(\tau^{-1},\psi)=G(\tau_2^{-1},\psi)=\tau_2(-1)\varepsilon(s,\tau_2,\psi)^{-1}q^{1/2}.
\end{align*}
Thus
\begin{align}\label{eq:gamma p>2 case 3}
&\gamma(s,\Pi'\times\tau,\psi)=\pi(-I_{2n})\tau(-1)^{n}\tau(\Param)\tau(\varpi')\chi(g_{\chi})\tau_2(-1)
\varepsilon(s,\tau_2,\psi)q^{1/2-s}
\end{align}
\end{itemize}
Finally note that \eqref{eq:gamma p>2 case 2} and \eqref{eq:gamma p>2 case 3} are particular cases of \eqref{eq:gamma p>2 case 1}, where in the former case $\tau(-\alpha)[\tau(4)]=1$ and in the latter (which only occurs when $p>2$) $\tau(\Param)=\tau(-\alpha^{-1})\tau^{-1}(4)$. We also mention that for $p = 2$, $-1 \in 1 + \mathfrak{p}$ whence $\pi(-I_{2n}) = 1$ ($\pi$ is a simple supercuspidal representation, thus its central character is tamely ramified)
and also $\tau(-1)=1$.
\end{proof}

\section{$L$-packets and $L$-parameters for simple supercuspidals of \texorpdfstring{$\SO_{N}$}{SO(N)}}\label{sec:SSC-LLC}

In the following, we consider the special orthogonal group $\SO_{N}$ for an integer $N$ greater than $2$.
Let $\pi$ be a simple supercuspidal representation of $\SO_{N}(F)$ and we write
\[
\pi=
\begin{cases}
\pi^{\SO_{2n+1}}_{a,\zeta}& \text{when $N=2n+1$}\\
\pi^{\SO_{2n}}_{\xi,\kappa,a,\zeta}& \text{when $N=2n$}
\end{cases}
\]
as in Section \ref{sec:SSC}.
Let $\phi$ be the $L$-parameter of $\pi$ in the sense of Arthur, thus $\pi$ is contained in the $L$-packet $\tilde{\Pi}_{\phi}^{\SO_{N}}$.
As discussed in Section \ref{subsec:LLC}, when $N=2n$ (resp.\ $N=2n+1$), $\phi$ is regarded as a $2n$-dimensional orthogonal (resp.\ symplectic) representation of $W_{F}\times\SL_{2}(\C)$.

Our aim is to determine the structure of $\tilde{\Pi}_{\phi}^{\SO_{N}}$ and describe $\phi$ explicitly as a $2n$-dimensional representation of $W_{F}\times\SL_{2}(\C)$. We let
\[
\phi=\phi_{0}\oplus\cdots\oplus\phi_{r}
\]
be the irreducible decomposition of $\phi$.

\subsection{Rough form of the $L$-parameters}\label{subsec:rough-form}
As explained in Section \ref{subsec:Moeglin-Xu}, the fact that $\pi$ is a discrete series representation implies that
$\phi_{i}$ is inequivalent to $\phi_{j}$ for any $i\neq j$ and moreover, when $N$ is even, each $\phi_{i}$ is irreducible orthogonal, and if $N$ is odd, each $\phi_{i}$ is irreducible symplectic.

Recall that the Whittaker datum $\mfw$ was specified in Section \ref{subsec:LLC}.
The following proposition can be proved in the same way as in Lemma \ref{lem:generic}.
\begin{prop}\label{prop:generic}
\begin{enumerate}
\item
When $N=2n+1$, the simple supercuspidal representation $\pi=\pi^{\SO_{2n+1}}_{a,\zeta}$ is $\mfw$-generic.
\item
When $N=2n$, the simple supercuspidal representation $\pi=\pi^{\SO_{2n}}_{\xi,\kappa,a,\zeta}$ is $\mfw$-generic if $\kappa=0$.
\end{enumerate}
\end{prop}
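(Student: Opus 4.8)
The plan is to reproduce the proof of Lemma~\ref{lem:generic} essentially verbatim; recall that $\pi$ being $\mfw$-generic means $\Hom_{U}(\pi,\lambda)\neq0$ for the generic character $\lambda$ of $U$ fixed in Section~\ref{subsec:LLC} as part of the Whittaker datum $\mfw$, and the only new input needed is a compatibility between $\lambda$ and the affine generic character used to define $\pi$. Write $\pi=\cInd^{G}_{K}\tilde{\chi}$ with $G=\SO_{N}(F)$, where $\tilde{\chi}$ is the extended affine generic character of Section~\ref{sec:SSC} and $K$ is the subgroup displayed there (so $K=I^{+}_{\SO_{2n+1}}\langle\varphi\rangle$ when $N=2n+1$ and $K=Z_{\SO_{2n}}I^{+}_{\SO_{2n}}\langle\varphi\rangle$ when $N=2n$, $\varphi$ being the explicit element of Section~\ref{sec:SSC}). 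By the Frobenius reciprocities for the compact and smooth inductions,
\[
\Hom_{U}(\pi,\lambda)\;\cong\;\Hom_{G}(\pi,\Ind_{U}^{G}\lambda)\;\cong\;\Hom_{K}\bigl(\tilde{\chi},\Ind_{U}^{G}\lambda\bigr),
\]
so it suffices to exhibit one nonzero $W\in\Ind_{U}^{G}\lambda$ with $W(gk)=\tilde{\chi}(k)W(g)$ for all $g\in G$, $k\in K$. As in Lemma~\ref{lem:generic} I would set $W(g)=\lambda(u)\tilde{\chi}(k)$ if $g=uk$ with $u\in U$, $k\in K$, and $W(g)=0$ otherwise; granting that $\lambda$ and $\tilde{\chi}$ agree on $U\cap K$, the checks that $W$ is well defined (two factorizations $u_{1}k_{1}=u_{2}k_{2}$ differ by $v=u_{2}^{-1}u_{1}=k_{2}k_{1}^{-1}\in U\cap K$), that $W$ lies in $\Ind_{U}^{G}\lambda$ and is right $(K,\tilde{\chi})$-equivariant (using that $\tilde{\chi}$ is a character of $K$ and that $g\notin UK\Rightarrow gk\notin UK$), and that $W(1)=1\neq0$, are all routine.

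The crux is the identity $\lambda|_{U\cap K}=\tilde{\chi}|_{U\cap K}$. First one checks that $U\cap K=U\cap I^{+}_{\SO_{N}}$: inspecting the matrix $\varphi$ of Section~\ref{sec:SSC} (it has a nonzero bottom-left entry) and, when $N=2n$, the central element $-I$ (which lies in $I^{++}_{\SO_{2n}}$ if $p=2$ and has $(1,1)$-entry outside $1+\mfp$ if $p\neq2$), one sees that no element of $K$ outside $I^{+}_{\SO_{N}}$ lies in $U$. Next, for $y\in U\cap I^{+}_{\SO_{N}}$ the ``affine'' coordinate of $I^{+}_{\SO_{N}}/I^{++}_{\SO_{N}}\cong k^{\oplus n+1}$ (the one involving $\varpi^{-1}$) vanishes because $y$ is upper triangular, while the remaining $n$ coordinates are exactly the reductions modulo $\mfp$ of the entries $y_{i,i+1}$ (and, when $N=2n$, also $y_{n-1,n+1}$) occurring in $\lambda$; since all of these entries lie in $\mcO$ and $\psi|_{\mcO}$ reduces to $\psi$ on $k$, the characters $\lambda$ and $\tilde{\chi}$ agree on $U\cap I^{+}_{\SO_{N}}$ provided their coefficients on these slots coincide. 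For $N=2n+1$, $\chi^{\SO_{2n+1}}_{a}$ carries coefficient $1$ on every such slot, matching $\lambda$. For $N=2n$, $\chi^{\SO_{2n}}_{\xi,\kappa,a}$ carries coefficient $\epsilon^{\kappa}$ on the $y_{n-1,n+1}$-slot whereas $\lambda$ carries $1$; these coincide exactly when $\kappa=0$ (and automatically when $p=2$, where $\epsilon=1$), which is where the hypothesis $\kappa=0$ is used.

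I expect the only genuine difficulty to be organizational: one must read off from the explicit Iwahori matrices of Sections~\ref{subsec:SSC-odd} and~\ref{subsec:SSC-even} and from the Whittaker data of Section~\ref{subsec:LLC} that the slots appearing in $\lambda$ are exactly the $n$ ``simple-root'' coordinates of the Moy--Prasad quotient $I^{+}_{\SO_{N}}/I^{++}_{\SO_{N}}$, with the coefficients asserted above, and note that the denominators $\tfrac12$ and the factors $2$ present in the dyadic Iwahori of $\SO_{2n+1}$ occupy long-root and below-diagonal positions, hence do not interfere. No idea beyond Lemma~\ref{lem:generic} is required.
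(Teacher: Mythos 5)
Your proof is correct and follows exactly the approach the paper intends: the paper simply states that Proposition \ref{prop:generic} "can be proved in the same way as in Lemma \ref{lem:generic}," and you have reproduced that argument—Frobenius reciprocity, the explicit Whittaker function supported on $UK$, and verification that $\lambda$ and $\tilde\chi$ agree on $U\cap K$—while correctly pinpointing that the agreement on the $y_{n-1,n+1}$-slot forces the hypothesis $\kappa=0$ in the even case.
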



\begin{cor}\label{cor:generic-supercuspidal}
The $L$-parameter $\phi$ is trivial on $\SL_{2}(\C)$ and all members of $\tilde{\Pi}_{\phi}^{\SO_{N}}$ are supercuspidal.
\end{cor}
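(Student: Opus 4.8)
The plan is to obtain the corollary as an immediate consequence of the M\oe{}glin--Xu result recorded in Proposition~\ref{prop:Moeglin-Xu}, once its hypotheses have been verified for the $L$-parameter $\phi$ of $\pi$.

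First I would record that $\phi$ is a \emph{discrete} tempered parameter. Since $\pi$ is supercuspidal and the centre of $\SO_{N}$ is finite, $\pi$ is square-integrable, hence a discrete series representation of $\SO_{N}(F)$; by the shape of Arthur's classification recalled in Section~\ref{subsec:Moeglin-Xu}, an $L$-packet $\tilde{\Pi}_{\phi}^{\SO_{N}}$ containing a discrete series consists entirely of discrete series and its parameter $\phi$ is discrete. This is exactly the rough form $\phi=\phi_{0}\oplus\cdots\oplus\phi_{r}$, with pairwise inequivalent irreducible orthogonal (resp.\ symplectic) constituents, already discussed in Section~\ref{subsec:rough-form}.

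Next, by Proposition~\ref{prop:generic} the representation $\pi$ is $\mfw$-generic; here, when $N=2n$, we use that one may assume $\kappa=0$ (which is automatic when $p=2$), the remaining case $\kappa=1$ occurring only for $p\neq2$ and being handled in the same way after replacing $\mfw$ by the $\mathrm{O}_{2n}(F)$-conjugate Whittaker datum relative to which $\pi^{\SO_{2n}}_{\xi,1,a,\zeta}$ is generic (the assertion of the corollary does not depend on this choice). In any case $\tilde{\Pi}_{\phi}^{\SO_{N}}$ contains the $\mfw$-generic supercuspidal representation $\pi$, so condition~(2) of Proposition~\ref{prop:Moeglin-Xu} holds for the discrete parameter $\phi$.

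Finally I would invoke Proposition~\ref{prop:Moeglin-Xu}: from condition~(2) it follows that condition~(1) holds, i.e.\ $\phi$ is trivial on $\SL_{2}(\C)$, and that condition~(3) holds, i.e.\ every member of $\tilde{\Pi}_{\phi}^{\SO_{N}}$ is supercuspidal, which is precisely the statement of the corollary. I do not expect a genuine obstacle here: all the substantive input---Arthur's endoscopic classification and its compatibility with M\oe{}glin's explicit construction, cf.\ Remark~\ref{rem:Moeglin-Xu}---is already packaged into Proposition~\ref{prop:Moeglin-Xu}, so the only point that requires care is the genericity of $\pi$, which is the content of Proposition~\ref{prop:generic} and is proved exactly as Lemma~\ref{lem:generic}.
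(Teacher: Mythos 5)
Your argument for $N=2n+1$ and for $N=2n$ with $\kappa=0$ (hence in particular for all $p=2$) follows the paper and is correct. The issue is in the parenthetical handling of $N=2n$, $\kappa=1$, $p\neq2$.

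You propose to replace $\mfw$ by an $\mathrm{O}_{2n}(F)$-conjugate Whittaker datum relative to which $\pi^{\SO_{2n}}_{\xi,1,a,\zeta}$ is generic. There is no such $\mathrm{O}_{2n}(F)$-conjugate: the Whittaker datum relative to which $\pi^{\SO_{2n}}_{\xi,1,a,\zeta}$ is generic is $(\bfB,\lambda_{\epsilon})$ with
\[
\lambda_{\epsilon}(y)=\psi(y_{1,2}+\cdots+y_{n-1,n}+\epsilon\, y_{n-1,n+1}),
\]
and this is \emph{not} $\mathrm{O}_{2n}(F)$-conjugate to $\mfw=(\bfB,\lambda)$. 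Indeed, for split $\SO_{2n}$ the $T(F)$-orbit of a generic character with coefficients $(c_{1},\dots,c_{n})$ on the simple root spaces is determined by the class of $c_{n}/c_{n-1}$ in $F^{\times}/(F^{\times})^{2}$, and the outer involution in $\mathrm{O}_{2n}(F)/\SO_{2n}(F)$ merely interchanges $c_{n-1}$ and $c_{n}$, replacing the invariant by its inverse, which lies in the \emph{same} square class. Thus $\mathrm{O}_{2n}(F)$-orbits of Whittaker data coincide with $\SO_{2n}(F)$-orbits, and $\lambda_{\epsilon}$ (invariant $\epsilon$, a non-square when $p\neq2$) is in a different orbit from $\lambda$ (invariant $1$).

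The paper circumvents this not by changing the Whittaker datum but by keeping $\mfw$ fixed and producing a different representation in the packet: it shows that $\pi^{\SO_{2n}}_{\xi,1,a,\zeta}$ and $\pi^{\SO_{2n}}_{\xi,0,a\epsilon^{-1},\zeta}$ lie in the same orbit under the \emph{adjoint group} of $\SO_{2n}$ --- a strictly larger group of conjugations than the image of $\mathrm{O}_{2n}(F)$ --- and then invokes the stability of Arthur's $L$-packets under the adjoint action to conclude that both lie in $\tilde{\Pi}_{\phi}^{\SO_{2n}}$. Since $\pi^{\SO_{2n}}_{\xi,0,a\epsilon^{-1},\zeta}$ is $\mfw$-generic by Proposition~\ref{prop:generic}(2), Proposition~\ref{prop:Moeglin-Xu} applies as stated. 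To salvage your route you would instead need a version of Proposition~\ref{prop:Moeglin-Xu} for the non-conjugate Whittaker datum $(\bfB,\lambda_{\epsilon})$; that is true, but it is not what the paper records, and the stability argument is the ingredient that lets the paper avoid having to make that claim.
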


\begin{proof}
When $N=2n+1$, $\tilde{\Pi}_{\phi}^{\SO_{2n+1}}$ contains $\pi$, which is a $\mfw$-generic supercuspidal representation by Proposition \ref{prop:generic} (1).
Hence the assertion follows from Proposition \ref{prop:Moeglin-Xu}.

We next consider the case where $N=2n$.
We note that the orbit of $\pi$ with respect to the action of the adjoint group of $\SO_{2n}$ is contained in $\tilde{\Pi}_{\phi}^{\SO_{2n}}$ by the stability of the $L$-packet $\tilde{\Pi}_{\phi}^{\SO_{2n}}$ (see \cite[Corollary 4.2]{Oi24}).
It is not difficult to check that $\pi^{\SO_{2n}}_{\xi,1,a,\zeta}$ and $\pi^{\SO_{2n}}_{\xi,0,a\epsilon^{-1},\zeta}$ belong to the same orbit (see Section \cite[Section 5.1]{Oi24} for the argument in the case of symplectic groups; a similar computation works).
Thus, using Proposition \ref{prop:generic} (2), we see that $\tilde{\Pi}_{\phi}^{\SO_{2n}}$ contains a $\mfw$-generic supercuspidal representation, hence the same argument as in the previous paragraph works.
\end{proof}

\subsection{From twisted \texorpdfstring{$\gamma$}{gamma}-factor to Swan exponent}\label{subsec:AK}

The following is a key input of our proof, which follows from the computation of twisted $\gamma$-factors of simple supercuspidal representations established in \cite{AL16,Adr16,AK21} and Section \ref{sec:AK}.

\begin{prop}\label{prop:gamma}
\begin{enumerate}
\item
Suppose that $N=2n+1$.
For any tamely ramified character $\tau$ of $F^{\times}$,
\[
\gamma(s,\pi^{\SO_{2n+1}}_{a,\zeta}\times\tau,\psi)
=
\zeta\cdot\tau(-a^{-1}\varpi)\cdot q^{\frac{1}{2}-s}.
\]
\item
Suppose that $N=2n$.
\begin{enumerate}
\item
When $p=2$, there exists a unique tamely ramified quadratic character contained in $\phi$, denoted by $\phi_{r}$, which is
\[
\begin{cases}
\text{the trivial character} & \text{if $\zeta=1$,}\\
\text{the nontrivial unramified quadratic character} & \text{if $\zeta=-1$.}
\end{cases}
\]
Moreover, for any tamely ramified character $\tau$ of $F^{\times}$,
\[
\gamma(s,\pi^{\SO_{2n}}_{a,\zeta}\times\tau,\psi)
=
\zeta\cdot\tau(a^{-1}\varpi)\cdot q^{\frac{1}{2}-s}\cdot\gamma(s,\phi_{r}\otimes\tau,\psi).
\]
\item
When $p\neq2$, there exist exactly two tamely ramified quadratic characters contained in $\phi$, denoted by $\phi_{r-1}$ and $\phi_{r}$, which are given as follows:
\begin{itemize}
\item
$\phi_{r-1}$ is the unique unramified quadratic character of $F^{\times}$ satisfying $\phi_{r-1}(a^{-1}\varpi)=\zeta$,
\item
$\phi_{r}$ is the unique ramified quadratic character of $F^{\times}$ satisfying $\phi_{r}(a^{-1}\varpi)=\zeta\cdot\phi_{r}(-4\epsilon^{\kappa})$.
\end{itemize}
Moreover, for any tamely ramified character $\tau$ of $F^{\times}$,
\begin{multline*}
\gamma(s,\pi^{\SO_{2n}}_{\xi,\kappa,a,\zeta}\times\tau,\psi)
=
\xi\cdot\zeta\cdot\tau((-1)^{n+1}\varpi/4\epsilon^{\kappa}a)q^{1-s}G(\phi_{r},\psi)^{-1}\\
\cdot\gamma(s,\phi_{r-1}\otimes\tau,\psi)\cdot\gamma(s,\phi_{r}\otimes\tau,\psi).
\end{multline*}
\end{enumerate}
\end{enumerate}
\end{prop}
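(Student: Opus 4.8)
All three assertions are deductions from explicit $\gamma$-factor formulas — the one for $\SO_{2n+1}$ proved in \cite{Adr16} and, for $\SO_{2n}$, Corollary~\ref{corollary:gamma for tamely ramified} and Theorem~\ref{thm:AK} of Section~\ref{sec:AK} — combined with two structural facts: that the local Langlands correspondence preserves twisted $\gamma$-factors, so that $\gamma(s,\pi\times\tau,\psi)=\gamma(s,\phi\otimes\tau,\psi)$ for every character $\tau$ of $F^\times$ (Appendices~\ref{sec:lift} and~\ref{sec:unram}), and that, by Corollary~\ref{cor:generic-supercuspidal} and Proposition~\ref{prop:Moeglin-Xu}, $\phi$ is trivial on $\SL_2(\C)$ and multiplicity-free; in particular any character occurring in $\phi$ is self-dual, hence quadratic. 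The working criterion is then: a tamely ramified character $\tau$ of $F^\times$ is a constituent of $\phi$ if and only if $\gamma(s,\pi\times\tau,\psi)$ has a pole at $s=1$. For $N=2n+1$ there is nothing new to do beyond bookkeeping: I would match the parametrization of $\pi^{\SO_{2n+1}}_{a,\zeta}$ of Section~\ref{subsec:SSC-odd} against the one of \cite{Adr16,Oi19-AJM}, keeping in mind the extra factor $2$ pointed out in Remark~\ref{rem:ssc-odd-2} (this is exactly what turns the parameter into $-a^{-1}\varpi$), and transcribe the computation of \cite{Adr16}, which is valid for tame $\tau$ and arbitrary residue characteristic, to obtain $\gamma(s,\pi^{\SO_{2n+1}}_{a,\zeta}\times\tau,\psi)=\zeta\cdot\tau(-a^{-1}\varpi)\cdot q^{1/2-s}$.

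\textbf{The case $N=2n$: the dictionary and the pole analysis.} Here everything rests on Section~\ref{sec:AK}. The first step is to reconcile the two parametrizations of simple supercuspidal representations of $\SO_{2n}(F)$: the one by $(\xi,\kappa,a,\zeta)$ from Section~\ref{subsec:SSC-even} and the one $\pi=\cInd_{Z\langle g_\chi\rangle I^+}^{G}\chi$, depending on $\alpha$ and $\varpi'$, from Section~\ref{subsec:AK-ssc}. Comparing the two affine generic characters and the two representatives chosen to extend them across their stabilizers identifies $\alpha$ with a lift of $\epsilon^{\kappa}$, relates $\varpi'$ to $a\varpi$, and expresses $\chi(g_\chi)$ through $\zeta$ (and $\xi$ through the central character $\pi(-I_{2n})$); this comparison is carried out in Section~\ref{subsubsec:SO-even}, which I would invoke rather than redo. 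The second step is the pole analysis. Inserting the dictionary into Corollary~\ref{corollary:gamma for tamely ramified} and examining where the resulting expression $\pi(-I_{2n})\tau(-1)^n\gamma(2s-1,\tau^2,\psi)\bigl(\cdots\bigr)$ acquires a pole at $s=1$: when $p=2$ the residue field has odd order, so the only tamely ramified quadratic characters of $F^\times$ are the two unramified ones, and exactly the one with $\tau(\varpi')=\chi(g_\chi)$ gives a pole — this is $\phi_r$, which under the dictionary is the trivial or the nontrivial unramified quadratic character according as $\zeta=1$ or $\zeta=-1$; when $p\neq2$ there are four tamely ramified quadratic characters (two unramified, two ramified), and precisely $\tau_1$ and $\tau_2$ from Section~\ref{subsec:AK-Langlands-parameter} give poles, which through the dictionary become the characters $\phi_{r-1}$ and $\phi_r$ normalized by $\phi_{r-1}(a^{-1}\varpi)=\zeta$ and $\phi_r(a^{-1}\varpi)=\zeta\cdot\phi_r(-4\epsilon^{\kappa})$. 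The multiplicity-freeness of $\phi$ then forces these to be the only tame character constituents, giving the uniqueness and the ``exactly two'' statements.

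\textbf{The $\gamma$-factor formulas, and the main difficulty.} For the displayed identities in (2a) and (2b) I would factor $\gamma(s,\pi\times\tau,\psi)=\gamma(s,\Pi'\times\tau,\psi)\,\gamma(s,\tau_1\tau,\psi)\,[\gamma(s,\tau_2\tau,\psi)]$ (the bracketed factor occurring only for $p\neq2$), substitute $\gamma(s,\Pi'\times\tau,\psi)$ from Theorem~\ref{thm:AK}, and identify $\gamma(s,\tau_1\tau,\psi)=\gamma(s,\phi_{r-1}\otimes\tau,\psi)$ and, for $p\neq2$, $\gamma(s,\tau_2\tau,\psi)=\gamma(s,\phi_r\otimes\tau,\psi)$. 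For $p=2$ this is immediate and yields the formula in (2a) once $\chi(g_\chi)$ and $\tau(\alpha^{-1}\varpi')$ are rewritten as $\zeta$ and $\tau(a^{-1}\varpi)$ and one uses $\pi(-I_{2n})=1$; for $p\neq2$ one further uses $\varepsilon(s,\tau_2,\psi)=q^{-1/2}G(\tau_2,\psi)$ together with the Gauss-sum relation $G(\tau_2,\psi)^2=\tau_2(-1)q$ (from \cite[(23.6.3)]{BH06}) to convert $\tau_2(-1)\varepsilon(s,\tau_2,\psi)q^{1/2-s}$ into $q^{1-s}G(\phi_r,\psi)^{-1}$, and $\pi(-I_{2n})=\xi$, recovering the formula in (2b). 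The main obstacle is not any isolated computation — each ingredient already exists — but the sustained normalization bookkeeping: pinning down the dictionary between the two sets of conventions for simple supercuspidals of $\SO_{2n}$ precisely enough to control every sign and Gauss-sum factor ($\tau(-1)^n$, $\tau((-1)^{n+1}\varpi/4\epsilon^{\kappa}a)$, $G(\phi_r,\psi)^{-1}$) through the change of parametrization, and extending the pole analysis of \cite{AK21}, which only treated quadratic $\tau$, to all tamely ramified $\tau$ so as to be certain that no further tame constituent of $\phi$ has been overlooked.
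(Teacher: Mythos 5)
Your proposal follows exactly the same route as the paper: use the explicit twisted $\gamma$-factor formulas (from \cite{Adr16}/Remark~\ref{rem:Adr16} for $N=2n+1$, and Corollary~\ref{corollary:gamma for tamely ramified} together with Theorem~\ref{thm:AK} for $N=2n$), translate through the parametrization dictionaries of Sections~\ref{subsubsec:SO-odd} and~\ref{subsubsec:SO-even}, read off the tame character constituents of $\phi$ from the poles of $\gamma(s,\pi\times\tau,\psi)$ at $s=1$ as in Section~\ref{subsec:AK-Langlands-parameter}, and rewrite the remaining Gauss-sum factor using $G(\phi_r,\psi)^2=\phi_r(-1)q$. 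One bookkeeping slip in your part~(1): the parameter $-a^{-1}\varpi$ is not produced by the factor~$2$ of Remark~\ref{rem:ssc-odd-2} (that remark only compares with the parametrization of \cite{Oi19-AJM} and plays no role in this proof); it comes from the dictionary of Section~\ref{subsubsec:SO-odd} with \cite{Adr16}'s parametrization, namely $\pi^{\SO_{2n+1}}_{a,\zeta}\cong\pi^{\zeta}_{\chi'}[\varpi']$ with $\varpi'=(-1)^{n+1}a^{-1}\varpi$, which substituted into $\zeta\,\tau((-1)^{n}\varpi')q^{1/2-s}$ gives the sign $(-1)^{n}\cdot(-1)^{n+1}=-1$. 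This misattribution does not affect the soundness of your argument, since you defer to the paper's bookkeeping in any case.
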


\begin{proof}
\begin{enumerate}
\item
By \cite[Corollary 7.3]{Adr16} and Remark \ref{rem:Adr16},
\[
\gamma(s,\pi^{\zeta}_{\chi'}[\varpi']\times\tau,\psi)
=
\zeta\cdot\tau((-1)^{n}\varpi')\cdot q^{\frac{1}{2}-s}
\]
for any tamely ramified character $\tau$ of $F^{\times}$.
Here, $\pi^{\zeta}_{\chi'}[\varpi']$ is (a modified version of) the simple supercuspidal representation constructed in \cite{Adr16}, as reviewed in Section \ref{subsubsec:SO-odd}.
By the discussion in Section \ref{subsubsec:SO-odd}, if we put $\varpi':=(-1)^{n+1}a^{-1}\varpi$, then $\pi^{\SO_{2n+1}}_{a,\zeta}\cong\pi^{\zeta}_{\chi'}[\varpi']$.
Thus we get
\begin{align*}
\gamma(s,\pi^{\SO_{2n+1}}_{a,\zeta}\times\tau,\psi)
&=\gamma(s,\pi^{\zeta}_{\chi'}[\varpi']\times\tau,\psi)\\
&=\zeta\cdot\tau((-1)^{n}\varpi')\cdot q^{\frac{1}{2}-s}\\
&=\zeta\cdot\tau(-a^{-1}\varpi)\cdot q^{\frac{1}{2}-s}.
\end{align*}

\item
\begin{enumerate}
\item
Let, $\pi^{\mathbbm{1}}_{1}[\varpi',\zeta]$ be the simple supercuspidal representation of $\SO_{2n}(F)$ constructed in \cite{AK21}, as reviewed in Section \ref{subsubsec:SO-even}.
By the discussion in Section \ref{subsec:AK-Langlands-parameter}, there exists a unique tamely ramified character $\tau_{1}$ contained in the $L$-parameter of $\pi^{\mathbbm{1}}_{1}[\varpi',\zeta]$, which is trivial when $\zeta=1$ and nontrivial quadratic unramified when $\zeta=-1$.
By the discussion in Section \ref{subsubsec:SO-even}, if we put $a:=\varpi\varpi^{\prime-1}\in k^{\times}$, then $\pi^{\SO_{2n}}_{a,\zeta}\cong\pi^{\mathbbm{1}}_{1}[\varpi',\zeta]$.
Then, by putting $\phi_{r}:=\tau_{1}$, we get $\phi_{r}$ as desired.
Moreover, by Theorem \ref{thm:AK}, for any tamely ramified character $\tau$ of $F^{\times}$,
\[
\gamma(s,\pi^{\mathbbm{1}}_{1}[\varpi',\zeta]\times\tau,\psi)
=
\zeta\cdot\tau(\varpi')\cdot q^{\frac{1}{2}-s}\cdot\gamma(s,\phi_{r}\otimes\tau,\psi).
\]
Thus we get
\begin{align*}
\gamma(s,\pi^{\SO_{2n}}_{a,\zeta}\times\tau,\psi)
&=\gamma(s,\pi_{1}^{\mathbbm{1}}[\varpi',\zeta]\times\tau,\psi)\\
&=\zeta\cdot\tau(\varpi')\cdot q^{\frac{1}{2}-s}\cdot\gamma(s,\phi_{r}\otimes\tau,\psi)\\
&=\zeta\cdot\tau(a^{-1}\varpi)\cdot q^{\frac{1}{2}-s}\cdot\gamma(s,\phi_{r}\otimes\tau,\psi).
\end{align*}

\item
Let $\pi^{\omega}_{\alpha}[\varpi',\zeta]$ be the simple supercuspidal representation of $\SO_{2n}(F)$ constructed in \cite{AK21}, as reviewed in Section \ref{subsubsec:SO-even}.
By the discussion in Section \ref{subsec:AK-Langlands-parameter}, there exist exactly two tamely ramified quadratic characters $\tau_{1}$ and $\tau_{2}$ contained in the $L$-parameter of $\pi^{\omega}_{\alpha}[\varpi',\zeta]$, which are given as follows:
\begin{itemize}
\item
$\tau_{1}$ is the unique unramified quadratic character of $F^{\times}$ satisfying $\tau_{1}(\varpi')=\zeta$,
\item
$\tau_{2}$ is the unique ramified quadratic character of $F^{\times}$ satisfying $\tau_{2}(\varpi')=\zeta\cdot\tau_{2}(-4\alpha)$.
\end{itemize}
By the discussion in Section \ref{subsubsec:SO-even}, if we let $\alpha:=\epsilon^{\kappa}$ and put $\xi:=\omega(-1)$ and $a:=\varpi\varpi'^{-1}\in k^{\times}$, then $\pi^{\SO_{2n}}_{\xi,\kappa,a,\zeta}\cong\pi^{\omega}_{\alpha}[\varpi',\zeta]$.
Then, by putting $\phi_{r-1}:=\tau_{1}$ and $\phi_{r}:=\tau_{2}$, we get $\phi_{r-1}$ and $\phi_{r}$ as desired.
Moreover, by Theorem \ref{thm:AK}, for any tamely ramified character $\tau$ of $F^{\times}$, the twisted $\gamma$-factor $\gamma(s,\pi^{\omega}_{\alpha}[\varpi',\zeta]\times\tau,\psi)$ is given by the product of
\[
\xi\cdot\zeta\cdot\tau((-1)^{n+1}\varpi'/4\alpha)q^{\frac{1}{2}-s}\varepsilon(s,\phi_{r},\psi)\phi_{r}(-1)
\]
with $\gamma(s,\tau_{1}\otimes\tau,\psi)\cdot\gamma(s,\phi_{r}\otimes\tau,\psi)$.
By the dictionary between $\pi^{\SO_{2n}}_{\xi,\kappa,a,\zeta}$ and $\pi^{\omega}_{\alpha}[\varpi',\zeta]$ and the equality \eqref{eq:Tate eps factor unramified}, this equals
\[
\xi\cdot\zeta\cdot\tau((-1)^{n+1}\varpi/4\epsilon^{\kappa}a)q^{1-s}G(\phi_{r},\psi)^{-1}.\qedhere
\]
\end{enumerate}
\end{enumerate}
\end{proof}

\begin{rem}\label{rem:gamma}
The $\gamma$-factors on the left-hand sides of the equalities in the above theorem are defined via the Rankin--Selberg integrals (or equivalently via the Langlands--Shahidi method; see \cite{Sou93,Sou95} for $N=2n+1$ and \cite{Kap13-MM} for $N=2n$; see also \cite{Kap15}).
Note that $\gamma(s,\pi\times\tau,\psi)$ is denoted by $\Gamma(s,\pi\times\tau,\psi)$ in \cite{Adr16}.
\end{rem}

\begin{rem}\label{rem:Adr16}
In fact, the result in \cite{Adr16} contains some errors.
We would like to take this opportunity to describe how to correct them.
The statement of \cite[Corollary 7.3]{Adr16} is that
\[
\gamma(s,\pi^{\zeta}_{\chi}[\varpi']\times\tau,\psi)
=
\zeta\cdot\tau((-1)^{n}\varpi')\cdot q^{\frac{1}{2}-s}.
\]
The problem is that the element $\diag(I_{n-1},\frac{1}{2},1,2,I_{n-1})$ taken at the end of \cite[Section 5]{Adr16} is incorrect and should instead be $\diag(\frac{1}{2}I_{n},1,2I_{n})$.
If we repeat the same computation as in \cite{Adr16} with this change, then we arrive at the formula
\begin{align}\label{eq:Adr1}
\gamma(s,\pi^{\zeta}_{\chi}[\varpi']\times\tau,\psi)
=
\zeta\cdot\tau((-1)^{n}4^{-1}\varpi')\cdot q^{\frac{1}{2}-s}.
\end{align}

However, there is also another issue that the definition of the simple supercuspidal representation $\pi^{\zeta}_{\chi}[\varpi']$ makes sense only when $p\neq2$, as discussed in Section \ref{subsubsec:SO-odd}.
For this reason, the better way is to adopt the affine generic character ``$\chi'$" and consider the associated simple supercuspidal representation $\pi^{\zeta}_{\chi'}[\varpi']$ (see Section \ref{subsubsec:SO-odd}).
In this case, since $\chi'$ and the generic character ``$\psi$'' taken in \cite[page 200]{Adr16} have the same restriction on $U\cap I^{+}_{\SO_{2n+1}}$, we do not need a twisting process as discussed at the end of \cite[Section 5]{Adr16} (i.e., the element $\gamma$ of \textit{loc.\ cit.}\ can be taken to be $I_{2n+1}$).
This makes the computation performed in \cite{Adr16} even simpler and enables us to get the formula
\begin{align}\label{eq:Adr2}
\gamma(s,\pi^{\zeta}_{\chi'}[\varpi']\times\tau,\psi)
=
\zeta\cdot\tau((-1)^{n}\varpi')\cdot q^{\frac{1}{2}-s}
\end{align}
for any $p$ uniformly.
The identity \eqref{eq:Adr2} is the one used in the proof of Proposition \ref{prop:gamma} (1).
We remark that the formulas \eqref{eq:Adr1} and \eqref{eq:Adr2} are consistent when $p\neq2$ since $\pi^{\zeta}_{\chi'}[\varpi']\cong\pi^{\zeta}_{\chi}[4\varpi']$ (see Section \ref{subsubsec:SO-odd}).
\end{rem}

\begin{cor}\label{cor:Swan}
The Swan exponent of $\phi$ is $1$.
\end{cor}

\begin{proof}
Let us consider the case where $N=2n$ and $p\neq2$. By Proposition \ref{prop:gamma} (2) (b) both
$\phi_{r-1}$ and $\phi_{r}$ are tamely ramified characters. Therefore $\Swan(\phi_{r-1})=\Swan(\phi_{r})=0$.
Let us check that $\sum_{i=0}^{r-2}\Swan(\phi_{i})=1$.
It is known that Arthur's local Langlands correspondence preserves the Rankin--Selberg $\gamma$-factors (we will explain a justification of this fact in Appendix \ref{sec:lift}).
In particular,
\[
\gamma(s,\pi\times\tau,\psi)
=\gamma(s,\phi\otimes\tau,\psi)
=\prod_{i=0}^{r}\gamma(s,\phi_{i}\otimes\tau,\psi)
\]
for any tamely ramified character $\tau$ of $F^{\times}$.
Thus, by Proposition \ref{prop:gamma} (2) (b),
\begin{align}\label{eq:AK}
\prod_{i=0}^{r-2}\gamma(s,\phi_{i}\otimes\tau,\psi)
=
\xi\cdot\zeta\cdot\tau((-1)^{n+1}\varpi/4\epsilon^{\kappa}a)q^{1-s}G(\phi_{r},\psi)^{-1}.
\end{align}
Recall that
\[
\gamma(s,\phi_{i}\otimes\tau,\psi)
=
\frac{L(1-s,\phi_{i}^{\vee}\otimes\tau^{\vee})}{L(s,\phi_{i}\otimes\tau)}\cdot\varepsilon(s,\phi_{i}\otimes\tau,\psi).
\]
We note that $L(s,\phi_{i}\otimes\tau)$ (and $L(1-s,\phi_{i}^{\vee}\otimes\tau^{\vee})$) can be nontrivial only when $\phi_{i}\otimes\tau$ is an unramified character since $\phi_{i}\otimes\tau$ is irreducible.
However, if $\phi_{i}\otimes\tau$ is unramified for $0\leq i \leq r-2$, then $\phi_{i}$ is a quadratic tamely ramified character, which is a contradiction since only $\phi_{r-1}$ and $\phi_{r}$ are such characters (Proposition \ref{prop:gamma} (2) (b)).
On the other hand, since $\phi_{i}$ is self-dual,
\[
|\varepsilon(s,\phi_{i},\psi)|
=
q^{\Swan(\phi_{i})(\frac{1}{2}-s)}
\]
(see \cite[(10)]{GR10}; note that $\psi$ is of level $1$ here).
Hence, by taking the absolute value of \eqref{eq:AK}, we get
\[
\prod_{i=0}^{r-2}q^{\Swan(\phi_{i})(\frac{1}{2}-s)}
=
q^{\frac{1}{2}-s}
\]
(recall that $|G(\phi_{r},\psi)|=q^{1/2}$).
This implies that $\sum_{i=0}^{r-2}\Swan(\phi_{i})=1$.

The case where $N=2n+1$ and the case where $N=2n$ and $p=2$ can be treated by a similar, but simpler, argument.
(In the case where $N=2n+1$, note that each irreducible constituent $\phi_{i}$ is symplectic, hence not a character).
\end{proof}

By Corollary \ref{cor:Swan}, we may suppose that
\begin{itemize}
\item
$\Swan(\phi_{0})=1$ and
\item
$\Swan(\phi_{i})=0$ for any $0<i\leq r$.
\end{itemize}
Moreover, by Proposition \ref{prop:gamma} and Lemma \ref{lem:tame-rep}, we see that when $N=2n$, the dimension of $\phi_{i}$ is even for $0<i<r$ when $p=2$ and for $0<i<r-1$ when $p\neq2$, and if $N=2n+1$, the dimension of $\phi_{i}$ is even for all $0<i\leq r$.
Thus the dimension of $\phi_{0}$ must be
\[
\begin{cases}
\text{even} & \text{when $N=2n+1$,}\\
\text{odd} & \text{when $N=2n$ and $p=2$,}\\
\text{even} & \text{when $N=2n$ and $p\neq2$.}
\end{cases}
\]

\subsection{Utilization of the formal degree conjecture}\label{subsec:SSC-FDC}
We next utilize the formal degree conjecture for $\tilde{\Pi}_{\phi}^{\SO_{N}}$.

\begin{lem}\label{lem:SSC-FD}
The absolute value of the formal degree $\deg_{\mu}(\pi)$ is specified as follows:
\[
|\deg_{\mu}(\pi)|
=
\gamma(0,\Ad\circ\phi_{\mathrm{pr}},\psi_{0})^{-1}\cdot q^{n^{2}}\cdot
\begin{cases}
q^{n} & \text{when $N=2n+1$,}\\
1 & \text{when $N=2n$ and $p=2$,}\\
\frac{1}{2} & \text{when $N=2n$ and $p\neq2$.}
\end{cases}
\]
\end{lem}

\begin{proof}
The formal degree of a simple supercuspidal representation with respect to the Euler--Poincare measure $\mu$ is computed in \cite[Section 9.4]{GR10}.
Although it is supposed that the connected reductive group is simply-connected in \cite[Section 7.1]{GR10}, we can easily modify their computation in our setting as follows.

Recall that the simple supercuspidal $\pi$ is given by the compact induction of a $1$-dimensional character of
\[
\begin{cases}
I^{+}_{\SO_{2n+1}}\langle\varphi_{a^{-1}}^{\SO_{2n+1}}\rangle & \text{if $N=2n+1$,}\\
Z_{\SO_{2n}} I_{\SO_{2n}}^{+} \lan\varphi_{\epsilon^{\kappa},-a^{-1}}^{\SO_{2n}}\ran & \text{if $N=2n$.}
\end{cases}
\]
(Section \ref{sec:SSC}).
Therefore, the well-known formula of the formal degree of a compactly-induced supercuspidal representation (e.g., see \cite[Theorem A.14]{BH96}) implies that
\begin{align}\label{eq:FD}
\deg_{\mu}(\pi)
=\frac{1}{2}\cdot\mu(I_{\SO_{N}}^{+})^{-1}\cdot
\begin{cases}
 1 & \text{if $N=2n+1$,}\\
1  & \text{if $N=2n$ and $p=2$,}\\
\frac{1}{2}& \text{if $N=2n$ and $p\neq2$.}
\end{cases}
\end{align}

By \cite[(55)]{GR10},
\begin{align}\label{eq:GR55}
|\mu(I_{\SO_{N}})|
=
\frac{L(1,\Ad\circ\phi_{\mathrm{pr}})}{L(0,\Ad\circ\phi_{\mathrm{pr}})\cdot|\L{Z}|}\cdot|T(q)|\cdot q^{-l}.
\end{align}
Here,
\begin{itemize}
\item
$\L{Z}$ denotes the center of the Langlands dual group of $\SO_{N}$, hence $|\L{Z}|=2$,
\item
$T(q)$ denotes the set of $F$-rational elements of the split maximal torus whose order is finite and prime to $p$,
\item
$l$ denotes the rank of $\SO_{N}$, hence $l=n$.
\end{itemize}
As explained in \cite[Section 9.4]{GR10},
\begin{align}\label{eq:GR9.4}
\gamma(0,\Ad\circ\phi_{\mathrm{pr}},\psi_{0})
=
q^{M}\cdot\frac{L(1,\Ad\circ\phi_{\mathrm{pr}})}{L(0,\Ad\circ\phi_{\mathrm{pr}})},
\end{align}
where $M$ denotes the number of positive roots in $\SO_{N}$, hence
\[
M=
\begin{cases}
n^{2} & \text{when $N=2n+1$,}\\
n(n-1) & \text{when $N=2n$.}
\end{cases}
\]
Therefore, the equalities \eqref{eq:GR55} and \eqref{eq:GR9.4} imply that
\[
|\mu(I_{\SO_{N}})|
=\frac{|T(q)|}{2}\cdot q^{-(M+l)}\cdot\gamma(0,\Ad\circ\phi_{\mathrm{pr}},\psi_{0}).
\]
Because $[I_{\SO_{N}}:I_{\SO_{N}}^{+}]=|T(q)|$, we get
\[
|\mu(I_{\SO_{N}}^{+})|
=
\frac{1}{2}\cdot q^{-(M+l)}\cdot\gamma(0,\Ad\circ\phi_{\mathrm{pr}},\psi_{0}).
\]
Thus, by the equality \eqref{eq:FD}, we get the desired equality.
\end{proof}

\begin{rem}\label{rem:Romano}
When $N=2n+1$, Lemma \ref{lem:SSC-FD} follows from \cite[Corollary 3.10 and (4.3)]{Rom23}.
\end{rem}

\begin{prop}\label{prop:SSC-FDC}
We have
\[
q^{\frac{1}{2}\Artin(\Ad\circ\phi)}\cdot\frac{|L(1,\Ad\circ\phi)|}{|L(0,\Ad\circ\phi)|}
=2^{r}\cdot q^{n^{2}}\cdot
\begin{cases}
q^{n} & \text{when $N=2n+1$,}\\
\frac12  & \text{when $N=2n$ and $p=2$,}\\
\frac14 & \text{when $N=2n$ and $p\neq2$.}
\end{cases}
\]
\end{prop}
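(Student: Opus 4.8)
The plan is to combine Lemma~\ref{lem:SSC-FD} with the formal degree conjecture (Conjecture~\ref{conj:FDC}), which is a theorem for $\SO_{N}$ by \cite{ILM17} (when $N$ is odd) and \cite{BP21-OWR} (when $N$ is even), then to compute the order of the component group $\overline{S}_{\phi}$ using the structure of $\phi$ established so far, and finally to unwind the definition of the adjoint $\gamma$-factor. Concretely, I would first apply Conjecture~\ref{conj:FDC} to $\pi$ (legitimate since $\phi$ is a discrete parameter by Section~\ref{subsec:rough-form}), take absolute values there and in Lemma~\ref{lem:SSC-FD}, and equate the two resulting expressions for $|\deg_{\mu}(\pi)|$. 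The nonzero common factor $|\gamma(0,\Ad\circ\phi_{\mathrm{pr}},\psi_{0})|$ then cancels, leaving
\[
|\gamma(0,\Ad\circ\phi,\psi_{0})|
=
|\overline{S}_{\phi}|\cdot
\begin{cases}
q^{n^{2}+n}, & N=2n+1,\\
q^{n^{2}}, & N=2n,\ p=2,\\
\tfrac12\,q^{n^{2}}, & N=2n,\ p\neq2.
\end{cases}
\]

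Next I would rewrite the left-hand side. Since $\phi$, hence $\Ad\circ\phi$, is self-dual, we have $\gamma(0,\Ad\circ\phi,\psi_{0})=\varepsilon(0,\Ad\circ\phi,\psi_{0})\cdot L(1,\Ad\circ\phi)/L(0,\Ad\circ\phi)$, and because $\psi_{0}$ has level $0$ one gets $|\varepsilon(0,\Ad\circ\phi,\psi_{0})|=q^{\frac12\Artin(\Ad\circ\phi)}$. Thus the left-hand side equals $q^{\frac12\Artin(\Ad\circ\phi)}\cdot|L(1,\Ad\circ\phi)|/|L(0,\Ad\circ\phi)|$, which is exactly the quantity appearing in the statement.

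It then remains to evaluate $|\overline{S}_{\phi}|$, and here I would feed in the output of Sections~\ref{subsec:rough-form} and \ref{subsec:AK}. Because $\phi=\phi_{0}\oplus\cdots\oplus\phi_{r}$ is multiplicity free with each $\phi_{i}$ irreducible and self-dual of the appropriate type, the centralizer of $\mathrm{Im}(\phi)$ in $\mathrm{GL}_{2n}(\C)$ preserving the relevant bilinear form is $\prod_{i=0}^{r}\mathrm{O}(1,\C)\cong(\Z/2)^{r+1}$, where the element acting by $\epsilon_{i}$ on the $\phi_{i}$-component has determinant $\prod_{i}\epsilon_{i}^{\dim\phi_{i}}$. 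For $N=2n+1$ the dual group is $\Sp_{2n}(\C)$, there is no determinant constraint, and $S_{\phi}\cong(\Z/2)^{r+1}$. For $N=2n$ the dual group is $\SO_{2n}(\C)$, so $S_{\phi}$ is the subgroup defined by $\prod_{i}\epsilon_{i}^{\dim\phi_{i}}=1$; by the parity analysis at the end of Section~\ref{subsec:AK} exactly two of the $\phi_{i}$ are odd-dimensional ($\phi_{0}$ and $\phi_{r}$ when $p=2$, and $\phi_{r-1},\phi_{r}$ when $p\neq2$), so this is one nontrivial $\F_{2}$-linear condition and $|S_{\phi}|=2^{r}$. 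In every case $S_{\phi}$ is finite and contains $Z_{\hat{\G}}=\{\pm I_{2n}\}$, and $\overline{S}_{\phi}=S_{\phi}/Z_{\hat{\G}}$; hence $|\overline{S}_{\phi}|=2^{r}$ when $N=2n+1$ and $|\overline{S}_{\phi}|=2^{r-1}$ when $N=2n$ (for both parities of $p$). Substituting these values into the displayed identity yields the three asserted formulas.

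The main obstacle is this last step: one must invoke precisely the right conclusions from the preceding subsections — which constituents $\phi_{i}$ are odd-dimensional, equivalently which ones are characters — and take care that the centralizer in the even case is computed inside $\SO_{2n}(\C)$ rather than $\mathrm{O}_{2n}(\C)$, so that the determinant relation genuinely cuts the count in half, before passing to the quotient by $Z_{\hat{\G}}$. Everything else is routine manipulation of $\varepsilon$- and $L$-factors.
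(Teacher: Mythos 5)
Your proof is correct and follows essentially the same route as the paper: combine Lemma \ref{lem:SSC-FD} with the formal degree conjecture, cancel the common $\gamma(0,\Ad\circ\phi_{\mathrm{pr}},\psi_{0})$ factor, and unwind $|\gamma(0,\Ad\circ\phi,\psi_0)|$ into the $\varepsilon$- and $L$-factors. The only real difference is that you supply a careful justification of the group orders $|\overline{S}_\phi|=2^r$ (odd case) and $2^{r-1}$ (even case), which the paper simply asserts. Your observation is worth emphasizing: in the even orthogonal case the passage from $\Cent_{\mathrm{O}_{2n}(\C)}$ to $\Cent_{\SO_{2n}(\C)}$ cuts the order in half only because there exist odd-dimensional constituents, and this is precisely what the Swan conductor analysis at the end of Section \ref{subsec:AK} guarantees ($\phi_0$ odd-dimensional and $\phi_r$ a character when $p=2$; $\phi_{r-1},\phi_r$ characters when $p\neq 2$). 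Without that input the determinant relation could be trivial and $|\overline{S}_\phi|$ would be $2^r$ rather than $2^{r-1}$, so you are right to flag this as the genuinely non-routine step.
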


\begin{proof}
Recall that the formal degree conjecture (Conjecture \ref{conj:FDC}) gives the identity
\[
|\deg_{\mu}(\pi)|
=
\frac{1}{|\overline{S}_{\phi}|}\cdot\frac{|\gamma(0,\Ad\circ\phi,\psi_{0})|}{|\gamma(0,\Ad\circ\phi_{\mathrm{pr}},\psi_{0})|}.
\]
According to the definition of $\overline{S}_{\phi}$ (see e.g., \cite[Section 4]{Oi24}),
\[
\overline{S}_{\phi}
\cong
\Bigl\{(s_i)_{i=0}^r\in\prod_{i=0}^{r}\{\pm1\} \,\Big\vert\, \prod_{i=0}^{r}s_i^{\dim\phi_i}=1\Bigr\} \Big/ \langle(-1,\ldots,-1)\rangle
.\]
Since the only representations $\phi_i$ with odd dimension (including characters) are important for the
condition $\prod_{i=0}^{r}s_i^{\dim\phi_i}=1$, the description of the dimensions from the end of Section \ref{subsec:AK} and the determination of the one-dimensional representations $\phi_i$ from Proposition \ref{prop:gamma} (2) imply that
\[
|\overline{S}_{\phi}|
=
\begin{cases}
2^{r} & \text{when $N=2n+1$,}\\
2^{r-1} & \text{when $N=2n$.}
\end{cases}
\]
Thus, Lemma \ref{lem:SSC-FD} implies that
\[
|\gamma(0,\Ad\circ\phi,\psi_{0})|
=
2^{r}\cdot q^{n^{2}}
\begin{cases}
q^{n} & \text{when $N=2n+1$,}\\
\frac{1}{2} & \text{when $N=2n$ and $p=2$,}\\
\frac{1}{4} & \text{when $N=2n$ and $p\neq2$.}
\end{cases}
\]
By recalling that
\begin{itemize}
\item
$|\gamma(0,\Ad\circ\phi,\psi_{0})|=\varepsilon(0,\Ad\circ\phi,\psi_{0})\cdot\frac{L(1,\Ad\circ\phi)}{L(0,\Ad\circ\phi)}$, and
\item
$|\varepsilon(0,\Ad\circ\phi,\psi_{0})|=q^{\frac{1}{2}\Artin(\Ad\circ\phi)}$ (here $\psi_{0}$ is taken to be of level $0$),
\end{itemize}
we get the desired equality.
\end{proof}

\begin{cor}\label{cor:L-factor-trivial}
We have $L(s,\Ad\circ\phi)=1$ and
\[
r=
\begin{cases}
0 & \text{when $N=2n+1$,}\\
1 & \text{when $N=2n$ and $p=2$,}\\
2 & \text{when $N=2n$ and $p\neq2$.}
\end{cases}
\]
\end{cor}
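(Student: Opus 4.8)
\emph{Proof plan.} The aim is to read off both assertions from Proposition \ref{prop:SSC-FDC}, feeding it the analysis of symmetric and exterior square $L$-factors from Section \ref{sec:L}. First I identify the adjoint representation: since $\hat{\SO}_{2n+1}=\Sp_{2n}(\C)$ and $\hat{\SO}_{2n}=\SO_{2n}(\C)$, the adjoint action of $\hat{\G}$ on $\Lie\hat{\G}$ pulls back under $\phi$ to $\Sym^{2}\phi$ when $N=2n+1$ and to $\wedge^{2}\phi$ when $N=2n$, where $\phi=\phi_{0}\oplus\cdots\oplus\phi_{r}$ is viewed as a $2n$-dimensional representation of $W_{F}$ (it is trivial on $\SL_{2}(\C)$ by Corollary \ref{cor:generic-supercuspidal}). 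Thus
\[
\Ad\circ\phi=\bigoplus_{i}\wedge^{2}\phi_{i}\ \oplus\ \bigoplus_{i<j}\phi_{i}\otimes\phi_{j}
\]
(and likewise with $\Sym^{2}$ in the odd case), so that $L(s,\Ad\circ\phi)$ and $\varepsilon(s,\Ad\circ\phi,\psi_{0})$ factor through this decomposition.

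Next I exploit the structure already established: $\Swan(\phi_{0})=1$ while $\Swan(\phi_{i})=0$ for $i\geq1$, and by Proposition \ref{prop:gamma} the $1$-dimensional constituents of $\phi$ are exactly the prescribed tamely ramified quadratic characters — none when $N=2n+1$, one ($\phi_{r}$) when $N=2n$ and $p=2$, two ($\phi_{r-1},\phi_{r}$) when $N=2n$ and $p\neq2$. Suppose for contradiction that $\phi$ has a further constituent $\phi_{i}$ (so $i\geq1$ and $\phi_{i}$ is not one of the prescribed characters). Then $\dim\phi_{i}\geq2$, and Lemma \ref{lem:tame-rep} applies: $\phi_{i}\cong\Ind_{W_{E_{i}}}^{W_{F}}\chi_{i}$ with $E_{i}/F$ unramified of degree $\dim\phi_{i}=2e_{i}$ and $\chi_{i}$ a $\Gal(E_{i}/F)$-regular character satisfying $\chi_{i}^{2}\neq\mathbbm{1}$; in particular the inducing datum $\chi_{i}$ is \emph{not} self-dual, so Proposition \ref{prop:par-L} gives $L(s,\wedge^{2}\phi_{i})=(1+q^{-e_{i}s})^{-1}$ (respectively $L(s,\Sym^{2}\phi_{i})=(1+q^{-e_{i}s})^{-1}$ in the odd case). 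Hence $L(s,\Ad\circ\phi)$ contains the factor $(1+q^{-e_{i}s})^{-1}$.

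Now I plug into Proposition \ref{prop:SSC-FDC}. The factor $(1+q^{-e_{i}s})^{-1}$ contributes $\tfrac{2q^{e_{i}}}{q^{e_{i}}+1}$ to $|L(1,\Ad\circ\phi)/L(0,\Ad\circ\phi)|$, i.e.\ it forces the integer $q^{e_{i}}+1$ into the denominator of the left-hand side of the identity of Proposition \ref{prop:SSC-FDC}; the right-hand side is $2^{\bullet}q^{\bullet}$, an integer supported only at the primes $2$ and $p$, whose dependence on $r$ supplies exactly one further power of $2$ per extra constituent. After cancelling the powers of $2$ against each other, one is left with an equality in which the non-$2$-part of the left side is a power of $p$ (up to the ambiguous factor $q^{\frac12\Artin(\Ad\circ\phi)}$), while $q^{e_{i}}+1>1$ is coprime to $q$ and $\equiv1\pmod p$, hence cannot divide any power of $p$ — a contradiction. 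The one point requiring care is that the remaining factors of $L(s,\Ad\circ\phi)$ cannot undo this: the cross terms $L(s,\phi_{i}\otimes\phi_{j})$, if nontrivial, only introduce further factors $(1-\alpha q^{-s})^{-1}$ with $|\alpha|=1$, hence only enlarge the denominator, and $L(s,\wedge^{2}\phi_{0})$ is by Proposition \ref{prop:par-L} either $1$ or $(1+q^{-e_{0}s})^{-1}$, which again only adds a $q^{e_{0}}+1$ to the denominator. Therefore $\phi$ has no constituent besides $\phi_{0}$ and the prescribed tame characters, i.e.\ $r=0$, $1$, $2$ in the three cases. Finally, with the shape of $\phi$ now fixed, all cross terms $\phi_{i}\otimes\phi_{j}$ are trivial for dimension reasons (one factor is a small wildly ramified irreducible, the other a character, so the tensor is an irreducible of dimension $\geq2$, and the two tame characters are inequivalent), so the only possibly nontrivial factor of $L(s,\Ad\circ\phi)$ is $L(s,\wedge^{2}\phi_{0})$ (resp.\ $L(s,\Sym^{2}\phi_{0})$); applying the equation of Proposition \ref{prop:SSC-FDC} once more, with $r$ now fixed, the same arithmetic obstruction shows this factor must be $1$, and hence $L(s,\Ad\circ\phi)=1$.

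The main obstacle is thus not a single hard fact but the arithmetic bookkeeping of the last paragraph: one must verify that no cancellation among the $L$-factors or with the $\varepsilon$-factor $q^{\frac12\Artin(\Ad\circ\phi)}$ can absorb the prime factors of $q^{e_{i}}+1$. This is transparent when $p=2$ — where $q^{e_{i}}+1$ is odd and $>1$ while the right-hand side of Proposition \ref{prop:SSC-FDC} is a pure power of $2$ — and for odd $p$ it is closed by the elementary observation above, that $q^{e_{i}}+1$ is coprime to $q$ and not $\equiv0\pmod p$.
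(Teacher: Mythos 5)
Your plan follows the same overall strategy as the paper: decompose $\Ad\circ\phi$ as $\bigoplus_{i}\wedge^{2}\phi_{i}\oplus\bigoplus_{i<j}\phi_{i}\otimes\phi_{j}$ (resp.\ with $\Sym^{2}$ in the odd case), identify the shape of each piece via Proposition \ref{prop:par-L} and Lemma \ref{lem:tame-rep}, and read off a contradiction from the $p$-adic arithmetic of the formal-degree identity. The final step, checking cross-terms once $r$ is fixed, is handled correctly.

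The gap is in the step that pins down $r$. You suppose an extra constituent $\phi_{i}$ exists, note that $\wedge^{2}\phi_{i}$ contributes $(1+q^{-e_{i}s})^{-1}$, and then assert that the remaining factors of $L(s,\Ad\circ\phi)$ --- in particular the cross-terms $L(s,\phi_{j}\otimes\phi_{k})$ --- ``only introduce further factors $(1-\alpha q^{-s})^{-1}$ with $|\alpha|=1$, hence only enlarge the denominator.'' This is not a rigorous argument and, as stated, is not even clearly true: the contribution of such a factor to $|L(1)/L(0)|$ is $|1-\alpha|\,/\,|1-\alpha q^{-1}|$, which for a general root of unity $\alpha$ is an algebraic number (e.g.\ $\sqrt{3}q/\sqrt{q^{2}+q+1}$ for a primitive cube root of unity) that is not obviously ``in the denominator'' and whose interaction with the factors $q^{e_{i}}+1$ is not controlled. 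Without knowing the cross-terms exactly, you cannot square the identity of Proposition \ref{prop:SSC-FDC}, clear denominators, and apply the coprimality of $q^{e_{i}}+1$ to $q$ as you propose. The paper closes this gap by proving directly that \emph{every} cross-term $L$-factor equals $1$ before invoking the arithmetic: the case $\phi_{0}\otimes\phi_{j}$ via the Swan-conductor mismatch on $P_{F}$, the case $j\in\{r-1,r\}$ via irreducibility of $\phi_{i}\otimes\phi_{j}$ in dimension $\geq2$, and --- the nontrivial one, which your sketch omits entirely --- the case of two tame constituents $\phi_{i},\phi_{j}$ with $1\leq i<j\leq r-2$, which requires examining the inducing characters $\chi_{i},\chi_{j}$ on $W_{E}$ provided by Lemma \ref{lem:tame-rep}, using that both are trivial on $E'^{\times}$ to force any unramified twist relating $\phi_{i}$ and $\phi_{j}$ to be trivial and hence derive a contradiction. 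To repair your argument you would need to reproduce this analysis of the cross-terms (or at least of the $\phi_{i}\otimes\phi_{j}$ with $i,j\geq1$); once you have $L(s,\Ad\circ\phi)=\prod_{i}(1+q^{-e_{i}s})^{-1}$ (over the indices with nontrivial $\wedge^{2}$), the coprimality argument goes through exactly as you describe.
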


\begin{proof}
Let us consider only the case where $N=2n$ and $p\neq2$ because similar, but simpler, arguments work in the case where $N=2n+1$ and also in the case where $N=2n$ and $p=2$.

Since $\Ad\circ\phi=\wedge^{2}\phi$ when $N=2n$,
\[
\Ad\circ\phi
=
\Bigl(\bigoplus_{i=0}^{r}\wedge^{2}\phi_{i}\Bigr)
\oplus
\Bigl(\bigoplus_{0\leq i<j\leq r}\phi_{i}\otimes\phi_{j}\Bigr).
\]

We first investigate the terms $\wedge^{2}\phi_{i}$ for $0\leq i\leq r$.
\begin{enumerate}
\item
We consider the case where $i=0$.
By Proposition \ref{prop:par-L}, $L(s,\wedge^{2}\phi_{0})$ is given by either $1$ or $(1+q^{-e_{0}s})^{-1}$ for some non-negative integer $e_{0}$.
\item
We consider the case where $1\leq i\leq r-2$.
Note that since $\phi_{i}$ is tamely ramified and self-dual, $\phi_{i}$ cannot be a character by Proposition \ref{prop:gamma} (2) (b).
Thus, by Lemma \ref{lem:tame-rep}, $\phi_{i}$ is induced from a non-self-dual character of the Weil group of an unramified extension of $F$.
This implies that, by Proposition \ref{prop:par-L}, $L(s,\wedge^{2}\phi_{i})$ is given by $(1+q^{-e_{i}s})^{-1}$ for some non-negative integer $e_{i}$.
\item
We consider the case where $i=r-1,r$.
Since $\phi_{r-1}$ and $\phi_{r}$ are characters, $\wedge^{2}\phi_{r-1}=\wedge^{2}\phi_{r}=0$.
Hence $L(s,\wedge^{2}\phi_{r-1})=L(s,\wedge^{2}\phi_{r})=1$.
\end{enumerate}

We next investigate the terms $\phi_{i}\otimes\phi_{j}$ for $0\leq i<j\leq r$.
\begin{enumerate}
\item[(4)]
We consider the case where $i=0$ and $0<j\leq r$.
Since $\Swan(\phi_{0})=1$ and $\Swan(\phi_{j})=0$ ($0<j\leq r$), $\phi_{0}|_{P_{F}}$ cannot be isomorphic to $\phi_{j}|_{P_{F}}$.
In other words, $(\phi_{0}\otimes\phi_{j})|_{P_{F}}$ cannot contain the trivial character of $P_{F}$.
In particular, $\phi_{0}\otimes\phi_{j}$ does not contain any unramified character, hence $L(s,\phi_{0}\otimes\phi_{j})=1$.
\item[(5)]
We consider the case where $0<i<j\leq r-2$.
As discussed in the case (2), Lemma \ref{lem:tame-rep} implies that $\phi_{i}\cong\Ind_{W_{E_{i}}}^{W_{F}}\chi_{i}$ for an unramified extension $E_{i}$ of $F$ and a character $\chi_{i}$ of $E_{i}$ satisfying $\chi_{i}|_{E_{i}^{\prime\times}}=\mathbbm{1}$ ($E'_{i}$ is the unramified subextension of $E_{i}/F$ such that $[E_{i}:E'_{i}]=2$).
Similarly, $\phi_{j}\cong\Ind_{W_{E_{j}}}^{W_{F}}\chi_{j}$ for an unramified extension $E_{j}$ of $F$ and a character $\chi_{j}$ of $E_{j}$ satisfying $\chi_{j}|_{E_{j}^{\prime\times}}=\mathbbm{1}$.
Suppose that there exists an unramified character $\omega$ of $W_{F}$ which is contained in $\phi_{i}\otimes\phi_{j}$.
Then $\phi_{j}\cong\omega\otimes\phi_{i}$.
In particular, $\phi_{i}$ and $\phi_{j}$ have the same dimension, hence $E_{i}=E_{j}$ and $E'_{i}=E'_{j}$.
We put $E:=E_{i}=E_{j}$ and $E':=E'_{i}=E'_{j}$.
By looking at the restriction of $\phi_{j}\cong\omega\otimes\phi_{i}$ to $W_{E}$, we see that tensoring $\omega|_{W_{E}}$ maps the set $\{\chi_{i}^{w}\mid w\in\Gal(E/F)\}$ to the set $\{\chi_{j}^{w}\mid w\in\Gal(E/F)\}$.
Since both $\chi_{i}$ and $\chi_{j}$ are trivial on $E^{\prime\times}$, so is any element of $\{\chi_{i}^{w}\mid w\in\Gal(E/F)\}$ or $\{\chi_{j}^{w}\mid w\in\Gal(E/F)\}$.
In particular, the character $\omega|_{W_{E}}$ is, when regarded as a character of $E^{\times}$, trivial on $E'^{\times}$.
Since $\omega$ is an unramified character of $F^{\times}$, $\omega|_{W_{E}}$ is also an unramified character of $E^{\times}$.
As $E/E'$ is unramified, being trivial on $E'^{\times}$ implies that $\omega|_{W_{E}}$ is trivial.
Hence $\chi_{i}$ and $\chi_{j}$ are in the same $\Gal(E/F)$-orbit, but this implies that $\phi_{i}$ and $\phi_{j}$ are isomorphic.
Thus we get a contradiction.
Hence we get $L(s,\phi_{i}\otimes\phi_{j})=1$.
\item[(6)]
We consider the case where $0<i\leq r-2$ and $j=r-1,r$.
In this case $L(s,\phi_{i}\otimes\phi_{j})=1$ by the same argument as in the proof of Corollary \ref{cor:Swan}.
\item[(7)]
We consider the case where $i=r-1$ and $j=r$.
In this case, $\phi_{r-1}\otimes\phi_{r}$ is a ramified character by Proposition \ref{prop:gamma} (2) (b).
Thus the $L$-factor $L(s,\phi_{r-1}\otimes\phi_{r})$ is trivial.
\end{enumerate}

In summary, we see that $L(s,\Ad\circ\phi)$ is given by the product of $L(s,\wedge^{2}\phi_{0})$ (which equals either $1$ or $(1+q^{-e_{0}s})^{-1}$) and $(1+q^{-e_{i}s})^{-1}$'s ($1\leq i\leq r-2$).
Let us suppose that $L(s,\wedge^{2}\phi_{0})=(1+q^{-e_{0}s})^{-1}$ for the sake of contradiction.
Then
\[
\frac{|L(1,\Ad\circ\phi)|}{|L(0,\Ad\circ\phi)|}
=
\prod_{i=0}^{r-2}
\frac{(1+q^{-e_{i}\cdot1})^{-1}}{(1+q^{-e_{i}\cdot0})^{-1}}
=
\prod_{i=0}^{r-2}
\frac{2q^{e_{i}}}{1+q^{e_{i}}}.
\]
Thus, by Proposition \ref{prop:SSC-FDC}, we get
\[
\prod_{i=0}^{r-2}
\frac{2q^{e_{i}}}{1+q^{e_{i}}}
=
2^{r-2}\cdot q^{n^{2}-\frac{1}{2}\Artin(\Ad\circ\phi)},
\]
or equivalently,
\[
2^{2}
\cdot q^{\sum_{i=0}^{r-2}2e_{i}+\Artin(\Ad\circ\phi)}
=
q^{2n^{2}}\cdot\prod_{i=0}^{r-2}(1+q^{e_{i}})^{2}.
\]
By noting that $q$ is odd and prime to $1+q^{e_{i}}$, we must have $2^{2}=\prod_{i=0}^{r-2}(1+q^{e_{i}})^{2}$.
However, this cannot happen since $1+q^{e_{0}}>2$.
Thus we get $L(s,\wedge^{2}\phi_{0})=1$.

Now, again by using Proposition \ref{prop:SSC-FDC},
\[
\prod_{i=1}^{r-2}
\frac{2q^{e_{i}}}{1+q^{e_{i}}}
=
2^{r-2}\cdot q^{n^{2}-\frac{1}{2}\Artin(\Ad\circ\phi)},
\]
or equivalently,
\[
q^{\sum_{i=1}^{r-2}2e_{i}+\Artin(\Ad\circ\phi)}
=
q^{2n^{2}}\cdot\prod_{i=1}^{r-2}(1+q^{e_{i}})^{2}.
\]
Since $q$ is prime to $1+q^{e_{i}}$, we necessarily have $r=2$ so that this equality holds.
Accordingly, we get $L(s,\Ad\circ\phi)=1$.
\end{proof}

\subsection{Main Theorem}\label{Main Theorem}

Now let us determine the $L$-parameter $\phi$ as a $2n$-dimensional representation of $W_{F}$.

\subsubsection{The case of $\SO_{2n+1}$}
We first consider the case where $N=2n+1$.

\begin{thm}\label{thm:main-1}
Let $\pi^{\SO_{2n+1}}_{a,\zeta}$ be the simple supercuspidal representation of $\SO_{2n+1}(F)$ with $a\in k^{\times}$ and $\zeta\in\{\pm1\}$.
Then the $L$-parameter $\phi$ of $\pi^{\SO_{2n+1}}_{a,\zeta}$ is an irreducible symplectic representation of $W_{F}$ of dimension $2n$, which is the $L$-parameter of the simple supercuspidal representation $\pi^{\GL_{2n}}_{\mathbbm{1},a,\zeta}$ of $\GL_{2n}(F)$.
\end{thm}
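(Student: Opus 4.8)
The plan is to combine the structural information already gathered about $\phi$ with the twisted $\gamma$-factor computation of Proposition~\ref{prop:gamma}~(1) and the characterization of simple supercuspidal representations of $\GL_{2n}(F)$ via $\varepsilon$-factors from \cite{BH14}. By Corollary~\ref{cor:generic-supercuspidal} the parameter $\phi$ is trivial on $\SL_{2}(\C)$, so it is a $2n$-dimensional symplectic representation of $W_{F}$; by Corollary~\ref{cor:Swan} we have $\Swan(\phi)=1$, and by the normalization after Corollary~\ref{cor:Swan}, writing $\phi=\phi_{0}\oplus\cdots\oplus\phi_{r}$ into pairwise inequivalent irreducible symplectic constituents, we have $\Swan(\phi_{0})=1$ and $\Swan(\phi_{i})=0$ for $i>0$; and by Corollary~\ref{cor:L-factor-trivial} we have $r=0$, i.e.\ $\phi=\phi_{0}$ is irreducible. (Note in the symplectic case no constituent can be a character, which is why $r=0$ forces irreducibility outright.) So the only remaining task is to identify this irreducible $2n$-dimensional symplectic $\phi$ explicitly.

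First I would recall the $\GL_{2n}$ side: by Section~\ref{subsec:SSC-GL} the representation $\pi^{\GL_{2n}}_{\mathbbm{1},a,\zeta}$ is self-dual, and its central character is the quadratic character of $F^{\times}$ determined by the data $(\mathbbm{1},a,\zeta)$; since $\Swan$ of its parameter is $1$ and it has dimension $2n$, the parameter is irreducible. By \cite{BH14}, a simple supercuspidal representation of $\GL_{2n}(F)$ — equivalently, its $L$-parameter — is pinned down among all representations of $\GL_{2n}(F)$ with the same central character and conductor by the collection of twisted $\varepsilon$-factors $\varepsilon(s,-\otimes\tau,\psi)$, equivalently the twisted $\gamma$-factors $\gamma(s,-\otimes\tau,\psi)$, as $\tau$ ranges over tamely ramified characters of $F^{\times}$. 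So it suffices to check that the irreducible parameter $\phi$ of $\pi^{\SO_{2n+1}}_{a,\zeta}$ has the same twisted $\gamma$-factors (for tame $\tau$), the same dimension $2n$, the same Swan conductor $1$, and is symplectic — hence self-dual of the correct type — matching a simple supercuspidal representation of $\GL_{2n}(F)$, and then to read off which one from the $\gamma$-factor formula.

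The computation: by Appendices~\ref{sec:lift} and \ref{sec:unram}, Arthur's lift preserves the Rankin--Selberg $\gamma$-factors, so $\gamma(s,\pi^{\SO_{2n+1}}_{a,\zeta}\times\tau,\psi)=\gamma(s,\phi\otimes\tau,\psi)$ for every character $\tau$ of $F^{\times}$; combined with Proposition~\ref{prop:gamma}~(1), for tamely ramified $\tau$ we get
\[
\gamma(s,\phi\otimes\tau,\psi)
=
\zeta\cdot\tau(-a^{-1}\varpi)\cdot q^{\frac{1}{2}-s}.
\]
On the other hand, a direct $\varepsilon$-factor computation for the simple supercuspidal representation $\pi^{\GL_{2n}}_{\mathbbm{1},a,\zeta}$ — or equivalently the formula recalled in \cite{BH14} together with the parametrization recipe of Section~\ref{subsec:SSC-GL}, which in this tame-twist situation reduces to Gauss sums that collapse because $\tau$ is tame and the conductor is minimal — gives exactly $\gamma(s,\pi^{\GL_{2n}}_{\mathbbm{1},a,\zeta}\times\tau,\psi)=\zeta\cdot\tau(-a^{-1}\varpi)\cdot q^{\frac{1}{2}-s}$ for all tamely ramified $\tau$. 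Since $\phi$ and the parameter of $\pi^{\GL_{2n}}_{\mathbbm{1},a,\zeta}$ are both irreducible $2n$-dimensional, both of Swan conductor $1$, both self-dual with the same determinant (read off from $\varepsilon(s,\phi,\psi)$ and the central character, respectively), and have the same twisted $\gamma$-factors against all tame $\tau$, the characterization theorem of \cite{BH14} forces $\phi\cong\LLC_{\GL_{2n}}(\pi^{\GL_{2n}}_{\mathbbm{1},a,\zeta})$. This completes the proof.

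\textbf{Main obstacle.}
The one step that needs genuine care is matching up the normalizations: the twisted $\gamma$-factor on the $\SO_{2n+1}$ side is defined via Rankin--Selberg/Langlands--Shahidi (see Remark~\ref{rem:gamma}), whereas \cite{BH14} and the $\GL$ computation use $\varepsilon$-factors of Weil representations with a fixed additive character of prescribed level; one must check that the level-$1$ normalization of $\psi$ used throughout is consistent on both sides and that the relation $\gamma=\varepsilon\cdot L(1-s,-)^{\vee}/L(s,-)$ contributes only trivial $L$-factors here (which it does, since the relevant twists $\phi\otimes\tau$ are irreducible of dimension $2n>1$, hence never contain an unramified character). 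Granting that bookkeeping — which is exactly the kind of argument carried out in \cite{Adr16} and in Section~\ref{sec:AK} for the even case — the identification is immediate from \cite{BH14}.
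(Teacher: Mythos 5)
Your proposal is correct and follows essentially the same route as the paper's proof: after using Corollaries~\ref{cor:generic-supercuspidal}, \ref{cor:Swan}, \ref{cor:L-factor-trivial} to see that $\phi$ is an irreducible $2n$-dimensional symplectic representation of $W_F$ with Swan conductor $1$, both you and the paper invoke \cite{BH14} to realize $\phi$ as the parameter of a self-dual simple supercuspidal $\pi^{\GL_{2n}}_{\mathbbm{1},a',\zeta'}$ with trivial central character, and then pin down $(a',\zeta')=(a,\zeta)$ by matching the tame-twist $\gamma$-factors from Proposition~\ref{prop:gamma}(1) against the $\GL_{2n}$-side formula (which the paper cites from \cite[Corollary~3.12]{AL16} rather than re-deriving). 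Your normalization caveat is exactly the bookkeeping the paper handles via Appendices~\ref{sec:lift}--\ref{sec:unram} and the equality of Rankin--Selberg and Langlands--Shahidi factors, so no gap remains.
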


\begin{proof}
We have shown that the $L$-parameter $\phi$ of the simple supercuspidal representation $\pi^{\SO_{2n+1}}_{a,\zeta}$ is an irreducible symplectic representation of $W_{F}$ of dimension $2n$ and Swan exponent $1$.
This implies that if we regard $\phi$ as an $L$-parameter of $\GL_{2n}$, then a simple supercuspidal representation of $\GL_{2n}(F)$ corresponds to $\phi$ (see \cite{BH14}).
Since the determinant of $\phi$ is trivial, the simple supercuspidal representation of $\GL_{2n}(F)$ has trivial central character, hence we may write $\pi^{\GL_{2n}}_{\mathbbm{1},a',\zeta'}$ for it.

For any tamely ramified character $\tau$ of $F^{\times}$, the Rankin--Selberg $\gamma$-factor for $(\pi^{\GL_{2n}}_{\mathbbm{1},a',\zeta'},\tau)$ is given by
\[
\gamma(s,\pi^{\GL_{2n}}_{\mathbbm{1},a',\zeta'}\times\tau,\psi)
=
\tau(-1)^{2n-1}\cdot\tau(\varpi a'^{-1})\cdot\zeta'\cdot q^{\frac{1}{2}-s}
\]
according to \cite[Corollary 3.12]{AL16} (see also Section \ref{subsubsec:GL}).
Therefore, by noting that the local Langlands correspondence for the general linear groups preserves the Rankin--Selberg local factors, Proposition \ref{prop:gamma} (1) implies that
\[
\tau(-1)^{2n-1}\cdot\tau(\varpi a'^{-1})\cdot\zeta'\cdot q^{\frac{1}{2}-s}
=
\zeta\cdot\tau(-a^{-1}\varpi)\cdot q^{\frac{1}{2}-s}.
\]
Since this identity holds for any tamely ramified character $\tau$, we conclude that $\zeta'=\zeta$ and $a'=a$.
\end{proof}

\subsubsection{The case of $\SO_{2n}$ with $p=2$}
We next consider the case where $N=2n$ and $p=2$.

\begin{thm}\label{thm:main-2-a}
Suppose that $N=2n$ and $p=2$.
Let $\pi^{\SO_{2n}}_{a,\zeta}$ be the simple supercuspidal representation of $\SO_{2n}(F)$ with $a\in k^{\times}$ and $\zeta\in\{\pm1\}$.
Then the $L$-parameter $\phi$ of $\pi^{\SO_{2n}}_{a,\zeta}$ is of the form $\phi=\phi_{0}\oplus\phi_{1}$, where
\begin{itemize}
\item
$\phi_{0}$ is an irreducible orthogonal representation of $W_{F}$ of dimension $2n-1$, which is the $L$-parameter of the simple supercuspidal representation $\pi^{\GL_{2n-1}}_{\mathbbm{1},a,\zeta}$ of $\GL_{2n-1}(F)$, and
\item
$\phi_{1}$ is the determinant character of $\phi_{0}$, which is the trivial character if $\zeta=1$ and the unique nontrivial unramified quadratic character if $\zeta=-1$.
\end{itemize}
\end{thm}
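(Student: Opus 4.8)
The plan is to combine the structural information already extracted in Sections \ref{subsec:rough-form}--\ref{subsec:SSC-FDC} with the explicit twisted $\gamma$-factors of Proposition \ref{prop:gamma}, proceeding exactly as in the proof of Theorem \ref{thm:main-1}. First I would record what is already known for $\pi=\pi^{\SO_{2n}}_{a,\zeta}$ with $p=2$: by Corollary \ref{cor:generic-supercuspidal} the parameter $\phi$ is trivial on $\SL_{2}(\C)$; by Section \ref{subsec:rough-form} it is a sum $\phi=\phi_{0}\oplus\cdots\oplus\phi_{r}$ of pairwise inequivalent irreducible orthogonal representations of $W_{F}$; by Corollary \ref{cor:Swan} (with the normalization following it) $\Swan(\phi_{0})=1$ and $\Swan(\phi_{i})=0$ for $i>0$; and by Corollary \ref{cor:L-factor-trivial} we have $r=1$. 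Hence $\phi=\phi_{0}\oplus\phi_{1}$, where $\phi_{1}=\phi_{r}$ is the unique tamely ramified quadratic character occurring in $\phi$, equal by Proposition \ref{prop:gamma} (2)(a) to the trivial character if $\zeta=1$ and to the nontrivial unramified quadratic character if $\zeta=-1$. Since $\dim\phi=2n$ and $\dim\phi_{1}=1$, the constituent $\phi_{0}$ is a $(2n-1)$-dimensional irreducible orthogonal representation of $W_{F}$ with $\Swan(\phi_{0})=1$; moreover, since $\det\phi$ is trivial and $\phi_{1}$ is a quadratic (hence self-dual) character, $\phi_{1}=(\det\phi_{0})^{-1}=\det\phi_{0}$, so $\phi_{1}$ is indeed the determinant character of $\phi_{0}$.

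Next I would identify the general linear group side of $\phi_{0}$. An irreducible $(2n-1)$-dimensional representation of $W_{F}$ with Swan conductor $1$ is precisely the $L$-parameter of a simple supercuspidal representation of $\GL_{2n-1}(F)$ (this is the characterizing property on the Galois side; see \cite{BH14}). Since $\phi_{0}$ is self-dual, the corresponding simple supercuspidal representation is self-dual, and as it is $(2n-1)$-dimensional it cannot be symplectic, consistently with $\phi_{0}$ being orthogonal. By the classification of self-dual simple supercuspidal representations of $\GL_{N}(F)$ recalled in Section \ref{subsec:SSC-GL} in the case $p=2$, this representation has the form $\pi^{\GL_{2n-1}}_{\mathbbm{1},a',\zeta'}$ for some $a'\in k^{\times}$ and $\zeta'\in\{\pm1\}$.

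It then remains to show $a'=a$ and $\zeta'=\zeta$, which I would do by comparing twisted $\gamma$-factors. By the compatibility of Arthur's correspondence with Rankin--Selberg $\gamma$-factors (Appendices \ref{sec:lift} and \ref{sec:unram}) and their multiplicativity in the parameter, for every tamely ramified character $\tau$ of $F^{\times}$,
\begin{align*}
\gamma(s,\pi^{\SO_{2n}}_{a,\zeta}\times\tau,\psi)
&=\gamma(s,\phi\otimes\tau,\psi)
=\gamma(s,\phi_{0}\otimes\tau,\psi)\,\gamma(s,\phi_{1}\otimes\tau,\psi).
\end{align*}
On the other hand, Proposition \ref{prop:gamma} (2)(a) gives
\begin{align*}
\gamma(s,\pi^{\SO_{2n}}_{a,\zeta}\times\tau,\psi)
=\zeta\cdot\tau(a^{-1}\varpi)\cdot q^{\frac{1}{2}-s}\cdot\gamma(s,\phi_{r}\otimes\tau,\psi),
\end{align*}
and since $\phi_{1}=\phi_{r}$ we may cancel $\gamma(s,\phi_{1}\otimes\tau,\psi)$ to get $\gamma(s,\phi_{0}\otimes\tau,\psi)=\zeta\cdot\tau(a^{-1}\varpi)\cdot q^{\frac{1}{2}-s}$. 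Finally, since $\phi_{0}$ is the $L$-parameter of $\pi^{\GL_{2n-1}}_{\mathbbm{1},a',\zeta'}$ and the local Langlands correspondence for $\GL_{2n-1}$ preserves Rankin--Selberg $\gamma$-factors, \cite[Corollary 3.12]{AL16} gives
\begin{align*}
\gamma(s,\phi_{0}\otimes\tau,\psi)
=\tau(-1)^{2n-2}\cdot\tau(\varpi a'^{-1})\cdot\zeta'\cdot q^{\frac{1}{2}-s}
=\tau(\varpi a'^{-1})\cdot\zeta'\cdot q^{\frac{1}{2}-s}.
\end{align*}
Equating the two expressions for $\gamma(s,\phi_{0}\otimes\tau,\psi)$ yields $\zeta\cdot\tau(a^{-1})=\zeta'\cdot\tau(a'^{-1})$ for all tamely ramified $\tau$; taking $\tau=\mathbbm{1}$ gives $\zeta=\zeta'$, and then letting $\tau$ run through all characters of $\mcO^{\times}/(1+\mfp)\cong k^{\times}$ forces $a=a'$ in $k^{\times}$.

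Essentially all of the substance lies upstream of this statement: in the reduction $r=1$ via the formal degree conjecture (Corollary \ref{cor:L-factor-trivial}) and in the evaluation of the twisted $\gamma$-factors (Proposition \ref{prop:gamma}, built on Section \ref{sec:AK}). Given those, the only point demanding care in the present proof is the bookkeeping of signs and of the power of $\tau(-1)$ when transporting the $\GL_{2n-1}$-side formula of \cite{AL16} through the local Langlands correspondence; I do not expect a genuine obstacle here.
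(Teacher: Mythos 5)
Your argument is correct and follows essentially the same route as the paper: it collects the structural reductions from Sections \ref{subsec:rough-form}--\ref{subsec:SSC-FDC} (triviality on $\SL_2(\C)$, $\Swan(\phi)=1$, $r=1$ from Corollary \ref{cor:L-factor-trivial}), identifies $\phi_1$ via Proposition \ref{prop:gamma} (2)(a), notes that $\phi_1=\det\phi_0$ since $\det\phi=\mathbbm{1}$, invokes the self-dual classification of Section \ref{subsec:SSC-GL} to write the lift of $\phi_0$ as $\pi^{\GL_{2n-1}}_{\mathbbm{1},a',\zeta'}$, and then pins down $(a',\zeta')=(a,\zeta)$ by matching the $\gamma$-factor from \cite[Corollary~3.12]{AL16} against the one from Proposition \ref{prop:gamma} (2)(a) across all tamely ramified $\tau$. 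This is the paper's proof, with the upstream references made explicit rather than summarized by the phrase ``we have shown.''
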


\begin{proof}
We have shown that the $L$-parameter $\phi$ of a simple supercuspidal representation $\pi^{\SO_{2n}}_{a,\zeta}$ is of the form $\phi=\phi_{0}\oplus\phi_{1}$, where
\begin{itemize}
\item
$\phi_{0}$ is an irreducible orthogonal representation of $W_{F}$ of dimension $2n-1$ and Swan exponent one, and
\item
$\phi_{1}$ is a tamely ramified character of $W_{F}$, hence equals the determinant character of $\phi_{0}$.
(Recall that $\phi_{1}$ is the trivial character if $\zeta=1$ and the unique nontrivial unramified quadratic character if $\zeta=-1$.)
\end{itemize}
Since the simple supercuspidal representation of $\GL_{2n-1}(F)$ corresponding to $\phi_{0}$ is self-dual, it is of the form $\pi^{\GL_{2n-1}}_{\mathbbm{1},a',\zeta'}$ (Section \ref{subsec:SSC-GL}).
Then, for any tamely ramified character $\tau$ of $F^{\times}$,
\[
\gamma(s,\pi^{\GL_{2n-1}}_{\mathbbm{1},a',\zeta'}\times\tau,\psi)
=
\tau(-1)^{2n-2}\cdot\tau(\varpi a'^{-1})\cdot\zeta'\cdot q^{\frac{1}{2}-s}.
\]
according to \cite[Corollary 3.12]{AL16}.
Therefore, by Proposition \ref{prop:gamma} (2) (a), we get
\[
\tau(-1)^{2n-2}\cdot\tau(\varpi a'^{-1})\cdot\zeta'\cdot q^{\frac{1}{2}-s}
=
\zeta\cdot\tau(a^{-1}\varpi)\cdot q^{\frac{1}{2}-s}.
\]
Since this identity holds for any tamely ramified character $\tau$, we conclude that $\zeta'=\zeta$ and $a'=a$.
\end{proof}

\subsubsection{The case of $\SO_{2n}$ with $p\neq2$}
We finally consider the case where $N=2n$ and $p\neq2$.

\begin{thm}\label{thm:main-2-b}
Let $\pi^{\SO_{2n}}_{\xi,\kappa,a,\zeta}$ be the simple supercuspidal representation of $\SO_{2n}(F)$ with $\xi\in\{\pm1\}$, $\kappa\in\{0,1\}$, $a\in k^{\times}$, and $\zeta\in\{\pm1\}$.
Then the $L$-parameter $\phi$ of a $\pi^{\SO_{2n}}_{\xi,\kappa,a,\zeta}$ is of the form $\phi=\phi_{0}\oplus\phi_{1}\oplus\phi_{2}$, where
\begin{itemize}
\item
$\phi_{1}$ is the unique unramified quadratic character of $F^{\times}$ satisfying $\phi_{1}(a^{-1}\varpi)=\zeta$,
\item
$\phi_{2}$ is the unique ramified quadratic character of $F^{\times}$ satisfying $\phi_{2}(a^{-1}\varpi)=\zeta\cdot\phi_{2}(-4\epsilon^{\kappa})$, and
\item
$\phi_{0}$ is an irreducible orthogonal representation of dimension $2n-2$, which is the $L$-parameter of the simple supercuspidal representation $\pi^{\GL_{2n-2}}_{\omega_{0},a',\zeta'}$ of $\GL_{2n-2}(F)$, where $\omega_{0}$ is the nontrivial quadratic character of $k^{\times}$, $a'=(-1)^{n}4a\epsilon^{\kappa}$, and $\zeta'=\xi\cdot\zeta\cdot q^{1/2}G(\phi_{2},\psi)^{-1}$.
\end{itemize}
\end{thm}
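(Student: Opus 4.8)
The plan is to feed the structural information on $\phi$ already accumulated into the explicit local Langlands correspondence for general linear groups, and then to pin down the remaining parameters by a computation of twisted $\gamma$-factors on both sides of the endoscopic lift. Recall what has been proved so far when $N=2n$ and $p\neq2$: by Corollary \ref{cor:generic-supercuspidal} the parameter $\phi$ is trivial on $\SL_{2}(\C)$; by Corollaries \ref{cor:Swan} and \ref{cor:L-factor-trivial} we have $r=2$, so $\phi=\phi_{0}\oplus\phi_{1}\oplus\phi_{2}$ with $\Swan(\phi_{0})=1$ and $\dim\phi_{0}=2n-2$ (even, as noted after Corollary \ref{cor:Swan}), while $\phi_{1}$ and $\phi_{2}$ are exactly the two tamely ramified quadratic characters identified in Proposition \ref{prop:gamma} (2)(b): $\phi_{1}$ is unramified with $\phi_{1}(a^{-1}\varpi)=\zeta$, and $\phi_{2}$ is ramified with $\phi_{2}(a^{-1}\varpi)=\zeta\cdot\phi_{2}(-4\epsilon^{\kappa})$. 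Thus the theorem amounts to identifying $\phi_{0}$.

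Since $\phi_{0}$ is an irreducible representation of $W_{F}$ of dimension $2n-2$ with Swan conductor $1$, the explicit local Langlands correspondence for $\GL_{2n-2}$ of Bushnell--Henniart \cite{BH14} shows that $\phi_{0}$ is the $L$-parameter of a simple supercuspidal representation of $\GL_{2n-2}(F)$; because $\phi_{0}$ is self-dual (it is orthogonal), this representation is self-dual, hence of the form $\pi^{\GL_{2n-2}}_{\omega_{0},a',\zeta'}$ for some quadratic character $\omega_{0}$ of $k^{\times}$, some $a'\in k^{\times}$ and some $\zeta'\in\C^{\times}$ with $\zeta'^{2}=\omega_{0}(-1)$ (Section \ref{subsec:SSC-GL}; note $2n-2$ is even, so such representations exist). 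To determine $\omega_{0}$, I use that $\det\phi=\mathbbm{1}$ and $\det\phi_{i}=\phi_{i}$ for $i=1,2$ (being one-dimensional), so $\det\phi_{0}=\phi_{1}\phi_{2}$; restricting to the inertia subgroup and transporting through local class field theory, $\omega_{0}$ is the character of $k^{\times}$ through which $(\phi_{1}\phi_{2})|_{\mcO^{\times}}=\phi_{2}|_{\mcO^{\times}}$ factors, namely the nontrivial quadratic character of $k^{\times}$.

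It remains to determine $a'$ and $\zeta'$. Compatibility of the local Langlands correspondence with Rankin--Selberg $\gamma$-factors --- classical for $\GL$, and proved for $\SO_{2n}$ in Appendix \ref{sec:lift} --- together with the decomposition $\phi=\phi_{0}\oplus\phi_{1}\oplus\phi_{2}$ gives, for every tamely ramified character $\tau$ of $F^{\times}$,
\[
\gamma(s,\pi^{\SO_{2n}}_{\xi,\kappa,a,\zeta}\times\tau,\psi)
=
\gamma(s,\pi^{\GL_{2n-2}}_{\omega_{0},a',\zeta'}\times\tau,\psi)\cdot\gamma(s,\phi_{1}\otimes\tau,\psi)\cdot\gamma(s,\phi_{2}\otimes\tau,\psi).
\]
By Proposition \ref{prop:gamma} (2)(b) the left-hand side equals $\xi\zeta\,\tau((-1)^{n+1}\varpi/4\epsilon^{\kappa}a)\,q^{1-s}\,G(\phi_{2},\psi)^{-1}\,\gamma(s,\phi_{1}\otimes\tau,\psi)\,\gamma(s,\phi_{2}\otimes\tau,\psi)$, so after cancelling the $\gamma$-factors of $\phi_{1}$ and $\phi_{2}$,
\[
\gamma(s,\pi^{\GL_{2n-2}}_{\omega_{0},a',\zeta'}\times\tau,\psi)
=
\xi\zeta\,\tau\bigl((-1)^{n+1}\varpi/4\epsilon^{\kappa}a\bigr)\,q^{1-s}\,G(\phi_{2},\psi)^{-1}.
\]
On the other hand, the twisted $\gamma$-factor of a self-dual simple supercuspidal representation of a general linear group against a tamely ramified character is computed by Adrian--Liu \cite{AL16} (see Section \ref{subsubsec:GL}); it has the shape $\zeta'$ times an explicit product of $\tau(\varpi a'^{-1})$, a Gauss sum built from $\omega_{0}$ and $\tau$, and elementary root-of-unity factors. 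Matching the two expressions for every tamely ramified $\tau$ --- separating the unramified and ramified cases of $\tau$, and using the Hasse--Davenport relation and \cite[(23.6.3)]{BH06} to compare the Gauss sums exactly as in the proof of Theorem \ref{thm:AK} --- forces $a'=(-1)^{n}4a\epsilon^{\kappa}$ and $\zeta'=\xi\zeta\,q^{1/2}G(\phi_{2},\psi)^{-1}$; the identity $G(\phi_{2},\psi)^{2}=\phi_{2}(-1)q$ together with $\omega_{0}(-1)=\phi_{2}(-1)$ confirms that the latter value indeed satisfies $\zeta'^{2}=\omega_{0}(-1)$.

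The main obstacle is precisely this final matching: one must carry out the case distinction on $\tau$, keep careful track of the Gauss-sum identities relating $G(\omega_{0}\tau,\psi)$, $G(\omega_{0},\psi)$ and $G(\tau,\psi)$, and reconcile the several normalizations at play --- the level of $\psi$, the sign $(-1)^{n}$ versus $(-1)^{n+1}$, and the factor $4\epsilon^{\kappa}$ coming from the dictionary of Section \ref{subsubsec:SO-even} between $\pi^{\SO_{2n}}_{\xi,\kappa,a,\zeta}$ and the representation $\pi^{\omega}_{\alpha}[\varpi',\zeta]$ of \cite{AK21}. None of this is conceptually deep, but it is where every constant in the statement is produced, and it is the only place where the precise answer, rather than merely the fact that $\phi_{0}$ is simple supercuspidal, gets used.
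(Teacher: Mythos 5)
Your proposal follows essentially the same route as the paper: reuse the structural decomposition $\phi=\phi_0\oplus\phi_1\oplus\phi_2$ from Corollaries \ref{cor:Swan} and \ref{cor:L-factor-trivial} and Proposition \ref{prop:gamma}~(2)(b), identify the $\GL_{2n-2}$-lift of $\phi_0$ as a self-dual simple supercuspidal $\pi^{\GL_{2n-2}}_{\omega',a',\zeta'}$ via \cite{BH14}, pin down $\omega'$ from $\det\phi_0=\phi_1\phi_2$, and pin down $a'$, $\zeta'$ by comparing twisted $\gamma$-factors. The only place where you diverge from the paper is in your assessment of the final matching step, and there you overestimate the remaining work: you assert that the Adrian--Liu formula for $\gamma(s,\pi^{\GL_{2n-2}}_{\omega',a',\zeta'}\times\tau,\psi)$ ``has the shape $\zeta'$ times \dots a Gauss sum built from $\omega_0$ and $\tau$'' and that one must redo the Hasse--Davenport case analysis of Theorem \ref{thm:AK}. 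In fact \cite[Corollary 3.12]{AL16} gives the clean monomial
\[
\gamma(s,\pi^{\GL_{2n-2}}_{\omega',a',\zeta'}\times\tau,\psi)
=\tau(-1)^{2n-3}\,\tau(\varpi a'^{-1})\,\zeta'\,q^{\frac12-s}
\]
for every tamely ramified $\tau$, with no residual Gauss sum; all the Gauss-sum bookkeeping was already absorbed into the proof of Theorem \ref{thm:AK} and its output Proposition \ref{prop:gamma}~(2)(b), whose right-hand side (after dividing by $\gamma(s,\phi_1\otimes\tau,\psi)\gamma(s,\phi_2\otimes\tau,\psi)$) is likewise the monomial $\xi\zeta\,\tau((-1)^{n+1}\varpi/4\epsilon^{\kappa}a)\,q^{1-s}\,G(\phi_2,\psi)^{-1}$. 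Matching the $\tau$-dependence and the constant then reads off $a'=(-1)^{n}4a\epsilon^{\kappa}$ and $\zeta'=\xi\zeta\,q^{1/2}G(\phi_2,\psi)^{-1}$ directly, with no further case split on $\tau$. Your reordering (determining $\omega_0$ before $a',\zeta'$ rather than after) is harmless, and your consistency check that $\zeta'^2=\omega_0(-1)$ via $G(\phi_2,\psi)^2=\phi_2(-1)q$ is a useful addition the paper relegates to Section \ref{subsec:comparison}.
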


\begin{proof}
We have shown that the $L$-parameter $\phi$ of a simple supercuspidal representation $\pi^{\SO_{2n}}_{\xi,\kappa,a,\zeta}$ is of the form $\phi=\phi_{0}\oplus\phi_{1}\oplus\phi_{2}$, where
\begin{itemize}
\item
$\phi_{0}$ is an irreducible orthogonal representation of dimension $2n-2$ and Swan exponent one, and
\item
$\phi_{1}$ and $\phi_{2}$ are tamely ramified characters as described in Proposition \ref{prop:gamma} (2) (b).
\end{itemize}
We let $\pi^{\GL_{2n-2}}_{\omega',a',\zeta'}$ be the self-dual simple supercuspidal representation of $\GL_{2n-2}(F)$ corresponding to $\phi_{0}$.
Then, for any tamely ramified character $\tau$ of $F^{\times}$,
\[
\gamma(s,\pi^{\GL_{2n-2}}_{\omega',a',\zeta'}\times\tau,\psi)
=
\tau(-1)^{2n-3}\cdot\tau(\varpi a'^{-1})\cdot\zeta'\cdot q^{\frac{1}{2}-s}.
\]
according to \cite[Corollary 3.12]{AL16}.
Therefore, by Proposition \ref{prop:gamma} (2) (b), we get
\[
\tau(-1)^{2n-3}\cdot\tau(\varpi a'^{-1})\cdot\zeta'\cdot q^{\frac{1}{2}-s}
=
\xi\cdot\zeta\cdot\tau((-1)^{n+1}\varpi/4\epsilon^{\kappa}a)q^{1-s}G(\phi_{2},\psi)^{-1}.
\]
Since this identity holds for any tamely ramified character $\tau$, we conclude that $\zeta'=\xi\cdot\zeta\cdot q^{1/2}G(\phi_{2},\psi)^{-1}$ and $a'=(-1)^{n}4a\epsilon^{\kappa}$.
Finally, we note the determinant of $\phi$ is trivial, hence the determinant of $\phi_{0}$ is equal to (the inverse of) the product of $\phi_{1}$ and $\phi_{2}$.
By the description of $\phi_{1}$ and $\phi_{2}$ in Proposition \ref{prop:gamma} (2) (b), the product $\phi_{1}\phi_{2}$ is a nontrivial ramified quadratic character, hence so is the central character of $\pi^{\GL_{2n-2}}_{\omega',a',\zeta'}$.
This implies that $\omega'$ is the nontrivial quadratic character $\omega_{0}$ of $k^{\times}$.
\end{proof}


\appendix

\section{Several remarks on simple supercuspidal representations}

\subsection{Iwahori subgroups}\label{subsec:Iwahori}
In Section \ref{subsec:SSC-odd}, we explained that the Iwahori subgroup $I_{\SO_{2n+1}}$ of $\SO_{2n+1}$ can be thought of as matrices of the form
\[\label{matrix:Iwahori}
\begin{pmatrix}
\mcO^{\times}&&\multicolumn{1}{c:}{\mcO}&\multicolumn{1}{c:}{\mcO}&&&\\
 &\ddots&\multicolumn{1}{c:}{}&\multicolumn{1}{c:}{\vdots}&&\frac{1}{2}\mcO&\\
 \mfp&&\multicolumn{1}{c:}{\mcO^{\times}}&\multicolumn{1}{c:}{\mcO}&&&\\
 \cdashline{1-10}
 2\mfp&\cdots&\multicolumn{1}{c:}{2\mfp}&\multicolumn{1}{c:}{\mcO^{\times}}&\mcO&\cdots&\mcO\\
 \cdashline{1-10}
 &&\multicolumn{1}{c:}{}&\multicolumn{1}{c:}{2\mfp}&\mcO^{\times}&&\mcO\\
 &2\mfp&\multicolumn{1}{c:}{}&\multicolumn{1}{c:}{\vdots}&&\ddots&\\
 &&\multicolumn{1}{c:}{}&\multicolumn{1}{c:}{2\mfp}&\mfp&&\mcO^{\times}\\
\end{pmatrix}.
\tag{$\dagger$}
\]
We explain how this intuitive description of the Iwahori subgroup can be derived from \cite[Section 10]{BT72}.

In \cite[Section 10.1]{BT72}, the Bruhat--Tits theory is investigated thoroughly in the case of classical groups.
The description of Bruhat--Tits starts with taking the following data (\cite[(10.1.1)]{BT72}):
\begin{itemize}
\item
a field $K$ (not necessarily commutative),
\item
an involution $\sigma$ of $K$,
\item
a sign $\varepsilon\in K^{\times}$,
\item
a finite-dimensional right $K$-vector space $X$, and
\item
a $\sigma$-sesqui-linear form $f$ on $X$ satisfying
\begin{itemize}
\item
$f(y,x)=\varepsilon f(x,y)^{\sigma}$ for any $x,y\in X$ and
\item
$f(x,x)=0$ for any $x\in X$ when $(\sigma,\varepsilon)=(\id,-1)$.
\end{itemize}
\end{itemize}
We put $K_{\sigma,\varepsilon}:=\{t-\varepsilon t^{\sigma}\mid t\in K\}$ and associate a pseudo-quadratic form $q\colon X\rightarrow K/K_{\sigma,\varepsilon}$ to $f$, which satisfies
\begin{itemize}
\item
$q(xk)=k^{\sigma}q(x)k$ for any $k\in K$ and $x\in X$, and
\item
$q(x+y)=q(x)+q(y)+f(x,y)+K_{\sigma,\varepsilon}$ for any $x,y\in X$.
\end{itemize}
These data give rise to classical groups such as $\mathrm{Is}(f,q)$, which consists of isometries of $X$ with respect to $(f,q)$, or $\mathrm{Sim}(f,q)$, which consists of similitudes of $X$ with respect to $(f,q)$ (see \cite[(10.1.4-5)]{BT72}).

In order to realize the odd special orthogonal group, we choose $(K,\sigma,\varepsilon,X,f)$ as follows:
\begin{itemize}
\item
$K=F$,
\item
$\sigma=\id$,
\item
$\varepsilon=1$ (note that then $K_{\sigma,\varepsilon}=0$),
\item
$X=F^{\oplus 2n+1}$; we let $\{e_{-n},\ldots,e_{-1},e_{0},e_{1},\ldots,e_{n}\}$ be the canonical basis of $X$ and put $X_{i}:=Fe_{i}$),
\item
$f\colon X\times X\rightarrow F$ is the symmetric bilinear form satisfying
\begin{itemize}
\item
$f(e_{i},e_{j})=0$ if $i\neq-j$,
\item
$f(e_{i},e_{-i})=1$ for $i\in\{\pm1,\ldots,\pm n\}$,
\item
$f(e_{0},e_{0})=2$.
\end{itemize}
\end{itemize}
Then $G:=\mathrm{Is}(f,q)^{\circ}$ gives the odd special orthogonal group.
Here, we note that the matrix representation of the symmetric bilinear form $f$ with respect to the basis $\{e_{-n},\ldots,e_{-1},e_{0},e_{1},\ldots,e_{n}\}$ is given by
\[
J_{2n+1}^{\BT}
:=
\begin{pmatrix}
&&J'_{n}\\
&2&\\
J'_{n}&&
\end{pmatrix},
\]
where $J'_{n}$ denotes the anti-diagonal matrix of size $n$ whose anti-diagonal entries are given by $1$.
In particular, when regarded as matrices of size $2n+1$ via the basis $\{e_{-n},\ldots,e_{-1},e_{0},e_{1},\ldots,e_{n}\}$, the group $G$ is given by
\[
\SO(J_{2n+1}^{\BT})
:=\{g\in\SL_{2n+1} \mid {}^{t}\!gJ_{2n+1}^{\BT}g=J_{2n+1}^{\BT}\}.
\]

In the body of the paper we preferred to work with the bilinear form $f$ represented by $J_{2n+1}$. However in this section, it is more convenient to use the bilinear form $f$ represented by $J_{2n+1}^{\BT}$, because then $1\in q(X_{0})$, which makes the description of Bruhat--Tits simpler (cf.\ \cite[Remarque (10.1.3)]{BT72}). We note that the translation between $\SO(J_{2n+1}^{\BT})$ and $\SO_{2n+1}$ is given as follows.
Recall that
\[
\SO_{2n+1}:=\{g\in\SL_{2n+1} \mid {}^{t}\!gJ_{2n+1}g=J_{2n+1}\}.
\]
For example, by letting $X$ be the diagonal matrix
\[
\diag(\underbrace{(-1)^{n}2,(-1)^{n-1}2,...,-2}_{n},\underbrace{1,...,1}_{n+1})
\]
(the first $n$ entries of $X$ are given by $2$ and $-2$ alternatively so that the $n$-th entry is given by $-2$), $(-1)^{n}2\cdot J_{2n+1}={}^{t}XJ_{2n+1}^{\BT}X$.
Thus the conjugation by $X$ gives a group isomorphism between $\SO_{2n+1}$ and $\SO(J_{2n+1}^{\BT})$:
\[
\SO_{2n+1}\xrightarrow{\cong}\SO(J_{2n+1}^{\BT})\colon g\mapsto XgX^{-1}.
\]

It is not difficult to see that the matrices of the form \eqref{matrix:Iwahori} are mapped to matrices of the form
\[\label{matrix:Iwahori-BT}
\begin{pmatrix}
\mcO^{\times}&&\multicolumn{1}{c:}{\mcO}&\multicolumn{1}{c:}{2\mcO}&&&\\
 &\ddots&\multicolumn{1}{c:}{}&\multicolumn{1}{c:}{\vdots}&&\mcO&\\
 \mfp&&\multicolumn{1}{c:}{\mcO^{\times}}&\multicolumn{1}{c:}{2\mcO}&&&\\
 \cdashline{1-10}
 \mfp&\cdots&\multicolumn{1}{c:}{\mfp}&\multicolumn{1}{c:}{\mcO^{\times}}&\mcO&\cdots&\mcO\\
 \cdashline{1-10}
 &&\multicolumn{1}{c:}{}&\multicolumn{1}{c:}{2\mfp}&\mcO^{\times}&&\mcO\\
 &\mfp&\multicolumn{1}{c:}{}&\multicolumn{1}{c:}{\vdots}&&\ddots&\\
 &&\multicolumn{1}{c:}{}&\multicolumn{1}{c:}{2\mfp}&\mfp&&\mcO^{\times}\\
\end{pmatrix}
\tag{$\dagger^{\BT}$}
\]
under the conjugation by $X$.
Thus, in the following, let us check that the matrices of $G$ of the form \eqref{matrix:Iwahori-BT} constitute an Iwahori subgroup.

We let $T$ be the subgroup of $G$ given by
\[
T:=\{t\in G \mid \text{$t(X_{i})\subset X_{i}$ for any $i\in\{-n,\ldots,0,\ldots,n\}$}\},
\]
which is a maximal torus.
To describe the root system of $G$ with respect to $T$, we introduce a real vector space $V^{\ast}:=\R^{n}$ equipped with a canonical basis $\{a_{1},\ldots,a_{n}\}$ and the standard inner product.
We put $a_{-i}:=-a_{i}$ for $i\in\{1,\ldots,n\}$ and $a_{0}:=0$.
We also put $a_{ij}:=a_{i}+a_{j}$ for $i,j\in\{-n,\ldots,0,\ldots,n\}$.
Then the root system of $G$ is given by the set
\[
\Phi
:=\{a_{i}\mid i\in\{\pm1,\ldots,\pm n\}\}\cup \{a_{ij}\mid i,j\in\{\pm1,\ldots,\pm n\}, i\neq j\}.
\]
For each root $a\in\Phi$ of $G$, the corresponding root subgroup $U_{a}$ of $G$ is given as follows (\cite[(10.1.2)]{BT72}):
\begin{enumerate}
\item
When $a=a_{i}$ for $i\in\{\pm1,\ldots,\pm n\}$, we obtain
\[
U_{a_{i}}=\{u_{i}(z) \mid z\in X_{0}\},
\]
where $u_{i}(z)\in \Hom_{F}(X,X)$ is the isometry defined by
\[
\begin{cases}
e_{0}\mapsto e_{0}-f(z,e_{0})e_{-i}\\
e_{i}\mapsto e_{i}+z-q(z)e_{-i}\\
e_{j}\mapsto e_{j} & \text{for $j\in\{\pm1,\ldots,\pm n\}\smallsetminus\{i\}$.}
\end{cases}
\]
Note that, if we write $z=xe_{0}$ with $x'\in F$, then
\[
\begin{cases}
e_{0}-f(z,e_{0})e_{-i}=e_{0}-2xe_{-i}\\
e_{i}+z-q(z)e_{-i}=e_{i}+xe_{0}-x^{2}e_{-i}
\end{cases}
\]
by our choice of $f$ and $q$.
\item
When $a=a_{ij}$ for $i,j\in\{\pm1,\ldots,\pm n\}$ satisfying $i\neq j$,
\[
U_{a_{ij}}=\{u_{ij}(x) \mid x\in F\},
\]
where $u_{ij}(x)\in \Hom_{F}(X,X)$ is the isometry defined by
\[
\begin{cases}
e_{0}\mapsto e_{0}\\
e_{i}\mapsto e_{i}+xe_{-j}\\
e_{j}\mapsto e_{j}-xe_{-i}\\
e_{k}\mapsto e_{k} & \text{for $j\in\{\pm1,\ldots,\pm n\}\smallsetminus\{i,j\}$.}
\end{cases}
\]
\end{enumerate}
We consider the function $\varphi_{a}\colon U_{a}\rightarrow\R\cup\{\infty\}$ for each root $a\in\Phi$ as follows (\cite[(10.1.13)]{BT72}):
\begin{enumerate}
\item
When $a=a_{i}$ for $i\in\{\pm1,\ldots,\pm n\}$, we put
\[
\varphi_{a_{i}}(u_{i}(z)):=\frac{1}{2}\val_{F}(q(z)).
\]
Note that, if we write $z=xe_{0}$ with $x'\in F$, then $\varphi_{a_{i}}(u_{i}(z))=\val_{F}(x)$ by our choice of $q$.
\item
When $a=a_{ij}$ for $i,j\in\{\pm1,\ldots,\pm n\}$ satisfying $i\neq j$, we put
\[
\varphi_{a_{ij}}(u_{ij}(x)):=\val_{F}(x).
\]
\end{enumerate}
Then $\{\varphi_{a}\}_{a\in\Phi}$ defines a valuation of root datum of $G$ (\cite[Th\'eor\`eme(10.1.15)]{BT72}), hence determines a point $\bfo$ of an apartment $\mcA$ of the Bruhat--Tits building of $G$.

We put
\[
T_{0}:=\{t\in T \mid \text{$t(e_{i})\in \mcO e_{i}$ for any $i\in\{-n,\ldots,0,\ldots,n\}$}\}.
\]
If we choose a subset $\Delta\subset\Phi$ of simple roots, then the set $\Phi^{+}$ (resp.\ $\Phi^{-}$) of positive roots (resp.\ negative roots) is determined.
Accordingly, we get an Iwahori subgroup $I_{\Delta}$ of $G$ given by
\[
I_{\Delta}
=
\langle T_{0}, U_{a}(\mcO), U_{b}(\mfp) \mid a\in\Phi^{+}, b\in\Phi^{-} \rangle,
\]
where we put $U_{a}(\mcO):=\varphi_{a}^{-1}([0,\infty])\subset U_{a}$ and $U_{b}(\mfp):=\varphi_{b}^{-1}([1,\infty])\subset U_{b}$.

Now let us consider the matrix representation of the Iwahori subgroup.
For an element $g\in \End_{F}(X)$, we let $c_{ij}(g)$ be the element of $\Hom_{F}(X_{j},X_{i})$ given by composing $g$ with the injection $X_{j}\hookrightarrow X$ and the projection $X\twoheadrightarrow X_{i}$. We then write
\[
c_{ij}(g)(e_{j})=c'_{ij}(g)\cdot e_{i},\qquad c'_{ij}(g)\in F,\qquad \forall i,j\in\{-n,\ldots,0,\ldots,n\}.
\]
In other words, the matrix representation of $g$ with respect to the ordered basis $\{e_{-n},\ldots,e_{-1},e_{0},e_{1},\ldots,e_{n}\}$ is given by $(c'_{ij}(g))_{ij}$.
Then each subspace $\Hom_{F}(X_{j},X_{i})$ of $\End_{F}(X)$ is regarded as a root space for the root $a_{-i,j}$ with respect to the maximal torus $T$.
Therefore, by choosing $\Delta$ so that the upper-triangular part of the matrix representation of $\End_{F}(X)$ corresponds to the root spaces for $\Phi^{+}$ (i.e., $a_{-i,j}\in\Phi^{+}$ if and only if $i<j$), we see that any element of the associated Iwahori subgroup $I_{\Delta}$ has the matrix representation of the form \eqref{matrix:Iwahori-BT}.

Conversely, we can also see that any element of $G$ of the form \eqref{matrix:Iwahori-BT} indeed belongs to $I_{\Delta}$ by using the following proposition.

\begin{prop}[{\cite[(10.1.32)]{BT72}}]\label{prop:BT72}
We take a pair $(\x,E)$ of
\begin{itemize}
\item
a point $\x$ of the apartment $\mcA$ and
\item
a vectorial facet $E$ of the root system of $G$
\end{itemize}
and consider the subgroups $P_{\x,E}$ (``parahoric subgroup'') and $\hat{P}_{\x,E}$ associated to $(\x,E)$ according to (7.1.8) and (7.2.4) of \cite{BT72}.
Then $\hat{P}_{\x,E}$ consists of elements $g\in G$ satisfying the inequality
\[\label{BT72}
\omega_{ij}(c_{ij}(g))-\frac{1}{2}\val_{F}(c(g))\geq a_{i,-j}(\x)
\]
for any $i,j\in\{-n,\ldots,0,\ldots,n\}$, where the equality does not hold when $(a_{i,-j})(E)\subset\R_{>0}$.
Here, the meaning of the symbols used in the above inequality are as follows:
\begin{itemize}
\item
$c(g)$ denotes the similitude of $g$ (hence we always have $c(g)=1$ under our choice of $G$);
\item
for any $i,j\in\{-n,\ldots,0,\ldots,n\}$, we define $\omega_{ij}\colon \Hom_{F}(X_{j},X_{i})\rightarrow \R\cup\{\pm\infty\}$ by
\[
\omega_{ij}(\alpha)
=
\inf_{x_{j}\in X_{j}}\{\omega_{i}(\alpha(x_{j}))-\omega_{j}(x_{j})\}.
\]
for $\alpha\in\Hom_{F}(X_{j},X_{i})$.
Here, for any $i\in\{-n,\ldots,0,\ldots,n\}$, we define $\omega_{i}\colon X_{i}\rightarrow \R\cup\{\infty\}$ by
\[
\omega_{i}(x\cdot e_{i})
=
\begin{cases}
\frac{1}{2}\val_{F}(q(x\cdot e_{0})) & i=0,\\
\val_{F}(x) & i\neq0,
\end{cases}
\]
for $x\in F$ (hence $x\cdot e_{i}\in Fe_{i}=X_{i}$).
\end{itemize}
\end{prop}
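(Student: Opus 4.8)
The plan is to deduce the statement from \cite[(10.1.32)]{BT72} by reconciling the abstract Bruhat--Tits framework of \emph{loc.\ cit.}\ with the explicit data $(K,\sigma,\varepsilon,X,f)=(F,\id,1,F^{\oplus 2n+1},f)$ fixed above (with $q$ the associated pseudo-quadratic form) and then unwinding the group-theoretic definitions in play. First I would recall from \cite[(7.1.8) and (7.2.4)]{BT72} that, attached to the pair $(\x,E)$, there is a concave function $f_{\x,E}$ on the affine roots with $f_{\x,E}(a)=-a(\x)$ when $a(E)\subseteq\R_{\geq0}$, this value being read in the ``open'' sense exactly when $a(E)\subseteq\R_{>0}$; and that $\hat P_{\x,E}$ is the subgroup of $G$ generated by the bounded subgroup $\hat T_{b}$ of $T$ together with the filtration steps $U_{a}\bigl(f_{\x,E}(a)\bigr)$ of the root subgroups. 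This already accounts for the shape of the inequality in the statement, including the clause forbidding equality when $(a_{i,-j})(E)\subseteq\R_{>0}$.

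Next I would translate the root-subgroup filtrations into conditions on matrix entries. Writing $g\in G$ as the matrix $(c'_{ij}(g))$ in the ordered basis $\{e_{-n},\dots,e_{n}\}$, the subspace $\Hom_{F}(X_{j},X_{i})\subset\End_{F}(X)$ is the root space for $a_{-i,j}$, and $-a_{-i,j}(\x)=a_{i,-j}(\x)$. Using the explicit formulas for $u_{i}(z)$, $u_{ij}(x)$ and the valuations $\varphi_{a}$ recalled before the statement, one identifies the filtration index of the $U_{a_{-i,j}}$-component of $g$ with $\omega_{ij}(c_{ij}(g))$, where the $\omega_{i}$ are normalized so that $\omega_{i}(xe_{i})=\val_{F}(x)$ for every $i$; for $i=0$ this forces the convention $\omega_{0}(xe_{0})=\tfrac12\val_{F}(q(xe_{0}))$, since $f(e_{0},e_{0})=2$ makes $q(e_{0})=1$ and hence $q(xe_{0})=x^{2}$. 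For a similitude group the same computation produces the correction term $-\tfrac12\val_{F}(c(g))$, with $c(g)=1$ in our orthogonal case, because $g$ preserves $f$ and $q$ only up to the scalar $c(g)$. Thus the displayed inequality says precisely that every root component of $g$ lies in the prescribed step $U_{a}(f_{\x,E}(a))$ and that the diagonal part of $g$ lies in $\hat T_{b}$.

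The substantive point is then why membership in $\hat P_{\x,E}$ can be tested entry by entry. Here I would invoke the schematization of \cite[Section 6]{BT72}: the pair $(\x,E)$ defines a smooth affine $\mcO$-group scheme $\hat{\mathfrak{P}}_{\x,E}$ with generic fibre $G$ and $\hat{\mathfrak{P}}_{\x,E}(\mcO)=\hat P_{\x,E}$, which admits an Iwahori-type factorization: the product map from $\prod_{a\in\Phi^{-}}U_{a}(f_{\x,E}(a))$, the bounded torus, and $\prod_{a\in\Phi^{+}}U_{a}(f_{\x,E}(a))$ is a bijection onto $\hat P_{\x,E}$. A direct factorization argument --- successively sweeping out the below-diagonal entries using the negative root groups, then the above-diagonal entries using the positive ones, then reading off the diagonal, each step constrained to the relevant $U_{a}(f_{\x,E}(a))$ by the entrywise bounds --- then identifies the set of $g\in G$ satisfying the displayed inequalities with the image of this product, hence with $\hat P_{\x,E}$.

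I expect the main obstacle to be the consistent handling of the pseudo-quadratic form, especially when $p=2$: it is $q$ (valued in $K/K_{\sigma,\varepsilon}$, which here is $0$), not the bilinear form $f$ alone, that governs the root subgroups $U_{a_{i}}$ and the valuation $\varphi_{a_{i}}$, and the half-integer normalizations --- the $\tfrac12\val_{F}(q(\cdot))$ in $\omega_{0}$ and the $-\tfrac12\val_{F}(c(g))$ in the similitude term --- must be tracked coherently through the factorization. Since \cite[Section 10.1]{BT72} is written precisely to accommodate this, most of the work is a faithful translation of \emph{loc.\ cit.}\ rather than a new argument; the one genuinely local check is that our choice of $\Delta$, made so that the upper-triangular entries correspond to $\Phi^{+}$, matches the orientation conventions used in \cite[(10.1.32)]{BT72}.
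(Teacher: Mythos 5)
The paper does not prove this proposition: it is stated as a direct citation of Bruhat--Tits \cite[(10.1.32)]{BT72}, merely transcribed into the notation (and the specific data $(K,\sigma,\varepsilon,X,f)$) set up just before it, and the surrounding text immediately proceeds to \emph{use} it rather than to establish it. What you have written is instead a sketch of how one would \emph{derive} (10.1.32) from the more primitive pieces of \cite{BT72} --- the concave functions of (7.1.8) and (7.2.4), the schematization/Iwahori factorization of Section 6, and the explicit root-group and valuation formulas of (10.1.2) and (10.1.13). That is a genuinely different route from the paper's: the paper treats (10.1.32) itself as the cited black box, while you push the citation one level deeper and reassemble the result.

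As a proof sketch your proposal is broadly in the right shape, and the remarks about $q$ versus $f$, the $\tfrac12\val_F(q(\cdot))$ normalization of $\omega_0$, and the similitude term $-\tfrac12\val_F(c(g))$ are all correct and show you understand what is delicate. But the central step --- the ``direct factorization argument, successively sweeping out below-diagonal then above-diagonal entries'' --- is stated as though it were routine, when it is exactly the content of \cite[(10.1.31)--(10.1.32)]{BT72}. What needs to be proved is that the subset of $G$ cut out by the entrywise valuation inequalities is stable under multiplication by the generators $\hat T_b$ and $U_a(f_{\x,E}(a))$, so that the sweeping process stays inside the set and terminates in the identity; this rests on the concavity of $f_{\x,E}$ together with nontrivial matrix computations constrained by the pseudo-quadratic form (the clearing operations for an orthogonal group are not free). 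Your sketch assumes this closure rather than arguing it, so, read as a proof, it has a gap precisely where Bruhat--Tits do the work. That said, since the paper itself proves nothing here, your account is best read as an (incomplete but reasonable) gloss on why the citation applies, rather than as a competing proof.
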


Let us explain how this proposition can be utilized.
By choosing $\x$ to be the point $\bfo$ of the apartment introduced above and $E$ to be the dominant chamber $E_{\Delta}$ corresponding to $\Delta$ (and also $\x$), the group $P_{\x,E_{\Delta}}$ realizes the Iwahori subgroup $I_{\Delta}$ and $P_{\x,E_{\Delta}}=\hat{P}_{\x,E_{\Delta}}$.
Since $a_{i,-j}(E_{\Delta})\subset\R_{>0}$ if and only if $i>j$ by our choice of $E_{\Delta}$, Proposition \ref{prop:BT72} implies that $I_{\Delta}$ consists of the elements $g\in G(F)$ satisfying the inequality
\[
\omega_{ij}(c_{ij}(g))
\begin{cases}
\geq 0 & \text{if $i\leq j$,}\\
> 0 & \text{if $i>j$,}
\end{cases}
\]
for any $i,j\in\{-n,\ldots,0,\ldots,n\}$.
We note that, by our choice of $f$, we see that $q(x\cdot e_{0})=x^{2}$ for any $x\in F$.
Thus we simply have $\omega_{i}(x\cdot e_{i})=\val_{F}(x)$ for any $i\in\{-n,\ldots,0,\ldots,n\}$.
This implies that $\omega_{ij}(c_{ij}(g))=\val_{F}(c'_{ij}(g))$.
In other words, $I_{\Delta}$ exactly consists of elements of $G$ whose matrix representations with respect to the basis $\{e_{-n},\ldots,e_{-1},e_{0},e_{1},\ldots,e_{n}\}$ are of the form
\[\label{matrix:Iwahori2}
\begin{pmatrix}
 \mcO^{\times}&&\mcO\\
 &\ddots&\\
 \mfp&&\mcO^{\times}
\end{pmatrix}.
\tag{$\dagger'$}
\]
Thus it is enough to check that any element of $G$ of the form \eqref{matrix:Iwahori2} in fact belongs to \eqref{matrix:Iwahori-BT}.
Let $g\in G$ be an element of $G$ of the form \eqref{matrix:Iwahori2}.
We write
\[
g=
\begin{pmatrix}
g_{11}&g_{12}&g_{13}\\
g_{21}&g_{22}&g_{23}\\
g_{31}&g_{32}&g_{33}
\end{pmatrix},
\]
where $g_{11},g_{13}, g_{31},g_{33}\in M_{n,n}(F)$, $g_{12},g_{32}\in M_{n,1}(F)$, and $g_{21},g_{23}\in M_{1,n}(F)$.
Our task is to show that $\frac{1}{2}g_{12}\in M_{n,1}(\mcO)$ and $\frac{1}{2}g_{32}\in M_{n,1}(\mfp)$.
Since $g$ is an element of $G=\SO(J^{\BT}_{2n+1})$, ${}^{t}gJ^{\BT}_{2n+1}g=J^{\BT}_{2n+1}$, equivalently, $J^{\BT,-1}_{2n+1}{}^{t}gJ^{\BT}_{2n+1}=g^{-1}$.
Observe that
\begin{align*}
J^{\BT,-1}_{2n+1}{}^{t}gJ^{\BT}_{2n+1}
&=
\begin{pmatrix}
&&J'_{n}\\
&\frac{1}{2}&\\
J'_{n}&&
\end{pmatrix}
\begin{pmatrix}
{}^{t}g_{11}&{}^{t}g_{21}&{}^{t}g_{31}\\
{}^{t}g_{12}&{}^{t}g_{22}&{}^{t}g_{32}\\
{}^{t}g_{13}&{}^{t}g_{23}&{}^{t}g_{33}
\end{pmatrix}
\begin{pmatrix}
&&J'_{n}\\
&2&\\
J'_{n}&&
\end{pmatrix}\\
&=
\begin{pmatrix}
&&J'_{n}\\
&\frac{1}{2}&\\
J'_{n}&&
\end{pmatrix}
\begin{pmatrix}
{}^{t}g_{31}J'_{n}&2{}^{t}g_{21}&{}^{t}g_{11}J'_{n}\\
{}^{t}g_{32}J'_{n}&2{}^{t}g_{22}&{}^{t}g_{12}J'_{n}\\
{}^{t}g_{33}J'_{n}&2{}^{t}g_{23}&{}^{t}g_{13}J'_{n}
\end{pmatrix}\\
&=
\begin{pmatrix}
J'_{n}{}^{t}g_{33}J'_{n}&2J'_{n}{}^{t}g_{23}&J'_{n}{}^{t}g_{13}J'_{n}\\
\frac{1}{2}{}^{t}g_{32}J'_{n}&{}^{t}g_{22}&\frac{1}{2}{}^{t}g_{12}J'_{n}\\
J'_{n}{}^{t}g_{31}J'_{n}&2J'_{n}{}^{t}g_{21}&J'_{n}{}^{t}g_{11}J'_{n}
\end{pmatrix}.
\end{align*}
Since this equals $g^{-1}$, which again belongs to \eqref{matrix:Iwahori2}, we necessarily have $\frac{1}{2}{}^{t}g_{32}J'_{n} \in M_{1,n}(\mfp)$ and $\frac{1}{2}{}^{t}g_{12}J'_{n} \in M_{1,n}(\mcO)$.
Hence we get $\frac{1}{2}g_{12}\in M_{n,1}(\mcO)$ and $\frac{1}{2}g_{32}\in M_{n,1}(\mfp)$.

Thus we conclude that Iwahori subgroup $I_{\Delta}$ of $G=\SO(J_{2n+1}^{\BT})(F)$ exactly consists of elements of the form \eqref{matrix:Iwahori-BT}.

We finally give a comment on the Iwahori subgroup of $\SO_{2n}(F)$.
In this case, we choose the data $(K,\sigma,\varepsilon,X,f)$ as follows:
\begin{itemize}
\item
$K=F$,
\item
$\sigma=\id$,
\item
$\varepsilon=1$ (note that then $K_{\sigma,\varepsilon}=0$),
\item
$X=F^{\oplus 2n}$; we let $\{e_{-n},\ldots,e_{-1},e_{1},\ldots,e_{n}\}$ be the canonical basis of $X$ and put $X_{i}:=Fe_{i}$),
\item
$f\colon X\times X\rightarrow F$ is the symmetric bilinear form satisfying
\begin{itemize}
\item
$f(e_{i},e_{j})=0$ if $i\neq-j$,
\item
$f(e_{i},e_{-i})=1$ for $i\in\{\pm1,\ldots,\pm n\}$.
\end{itemize}
\end{itemize}
Then we can check that the matrices of the form
\[
\begin{pmatrix}
\mcO^{\times}&&\multicolumn{1}{c:}{\mcO}&\mcO&\multicolumn{1}{c:}{\mcO}&&&\\
 &\ddots&\multicolumn{1}{c:}{}&\vdots&\multicolumn{1}{c:}{\vdots}&&\mcO&\\
 \mfp&&\multicolumn{1}{c:}{\mcO^{\times}}&\mcO&\multicolumn{1}{c:}{\mcO}&&&\\
 \cdashline{1-10}
 \mfp&\cdots&\multicolumn{1}{c:}{\mfp}&\mcO^{\times}&\multicolumn{1}{c:}{\mfp}&\mcO&\cdots&\mcO\\
\mfp&\cdots&\multicolumn{1}{c:}{\mfp}&\mfp&\multicolumn{1}{c:}{\mcO^{\times}}&\mcO&\cdots&\mcO\\
 \cdashline{1-10}
 &&\multicolumn{1}{c:}{}&\mfp&\multicolumn{1}{c:}{\mfp}&\mcO^{\times}&&\mcO\\
 &\mfp&\multicolumn{1}{c:}{}&\vdots&\multicolumn{1}{c:}{\vdots}&&\ddots&\\
 &&\multicolumn{1}{c:}{}&\mfp&\multicolumn{1}{c:}{\mfp}&\mfp&&\mcO^{\times}\\
\end{pmatrix}
\]
constitute an Iwahori subgroup in a similar manner to above.
Note that, compared to the case of $\SO_{2n+1}$, every argument is even simpler because the factor $X_{0}$ does not exist.
For example, there is no root of the form $a_{i}$ in this case.
This explains why ``$2$'' does not appear in the above matrix description in contrast to \eqref{matrix:Iwahori} or \eqref{matrix:Iwahori-BT}.

\subsection{Another parametrization of simple supercuspidal representations}\label{subsec:parametrization}

The results of \cite{AL16,Adr16,AK21} (and also Section \ref{sec:AK}), which are needed for Proposition \ref{prop:gamma}, are stated based on a different parametrization of simple supercuspidal representations.
For this reason, in this section, we compare our parametrization of simple supercuspidal representations (Section \ref{sec:SSC}) with those of \cite{AL16,Adr16,AK21}.
The main difference is that the choice of a uniformizer $\varpi'$ can vary in the parametrizations in \cite{AL16,Adr16,AK21} while a uniformizer $\varpi$ is fixed and the parameter ``$a$'' can vary in our parametrization.

\subsubsection{The case of $\GL_{N}$}\label{subsubsec:GL}
Let us first look at the case of $\GL_{N}$.
In \cite[Section 3.1]{AL16}, a simple supercuspidal representation $\sigma(\varpi',\zeta,\omega)$ is associated to each tuple consisting of a uniformizer $\varpi'$, a tamely ramified character $\omega$ of $F^{\times}$, and an $n$-th root $\zeta$ of $\omega(\varpi')$.
By putting $a:=\varpi\varpi^{\prime-1}\in k^{\times}$, we obtain
\[
\pi^{\GL_{N}}_{\omega|_{k^{\times}},a,\zeta}
\cong
\sigma(\varpi',\zeta,\omega).
\]

\subsubsection{The case of $\SO_{2n+1}$}\label{subsubsec:SO-odd}
We next compare the parametrization given in Section \ref{subsec:SSC-odd} with the one of \cite{Adr16}.
Firstly, we must be careful that the odd special orthogonal group is realized as $\SO(J^{\BT}_{2n+1})$ in \cite{Adr16}.
Let $I_{\SO(J^{\BT}_{2n+1})}$ be the Iwahori subgroup $I_{\Delta}$ of $\SO(J^{\BT}_{2n+1})(F)$ as described in Section \ref{subsec:Iwahori} and $I^{+}_{\SO(J^{\BT}_{2n+1})}$ its pro-unipotent radical.
Secondly, we must be also careful that the construction of simple supercuspidal representations given in \cite{Adr16} contains a minor error.
Let us describe the error and how it can be fixed.

In \cite[page 205]{Adr16}, a simple supercuspidal representation $\pi^{\zeta}_{\chi}$ of $\SO(J_{2n+1}^{\BT})(F)$ is associated to each pair $(\varpi',\zeta)$ of a uniformizer $\varpi'$ of $F$ and a sign $\zeta\in\{\pm1\}$.
Let us write $\pi^{\zeta}_{\chi}[\varpi']$ for this simple supercuspidal representation $\pi^{\zeta}_{\chi}$ in order to emphasize that it depends on the choice of a uniformizer $\varpi'$.
This simple supercuspidal representation $\pi^{\zeta}_{\chi}[\varpi']$ is associated to a character $\chi$ of $I^{+}_{\SO(J^{\BT}_{2n+1})}$ given by
\[
\chi\colon
g=(g_{ij})
\mapsto
\psi\bigl(\overline{g_{12}}+\cdots +\overline{g_{n-1,n}}+\overline{g_{n,n+1}}+\overline{g_{2n,1}\varpi'^{-1}}\bigr).
\]
The problem is that $\chi$ is not affine generic when $p=2$.
This is because the $(n,n+1)$-entry of any element $g\in I^{+}_{\SO(J^{\BT}_{2n+1})}$ always belongs to $2\mcO$ (see the description \eqref{matrix:Iwahori-BT}).

This issue can be fixed by modifying the definition of $\chi$ as follows (let $\chi'$ denote the modified character):
\[
\chi'\colon
g=(g_{ij})
\mapsto
\psi\bigl(\overline{g_{12}}+\cdots +\overline{g_{n-1,n}}+\overline{g_{n,n+1}2^{-1}}+\overline{g_{2n,1}\varpi'^{-1}}\bigr).
\]
Then $\chi'$ is affine generic for any $p$ including $p=2$, hence we can produce a simple supercuspidal representation of $\SO(J^{\BT}_{2n+1})(F)$ by using $\chi'$ instead of $\chi$.
We write $\pi^{\zeta}_{\chi'}[\varpi']$ for this simple supercuspidal representation.
We remark that when $p\neq2$ (so that the construction of $\pi^{\zeta}_{\chi}[\varpi']$ makes sense), $\pi^{\zeta}_{\chi'}[\varpi']\cong\pi^{\zeta}_{\chi}[4\varpi']$.

Now let us go back to comparing the two parametrizations of simple supercuspidal representations.
By putting $a:=\varpi\varpi^{\prime-1}\in k^{\times}$, we obtain
\[
\pi^{\SO_{2n+1}}_{(-1)^{n+1}a,\zeta}
\cong
\pi^{\zeta}_{\chi'}[\varpi']
\]
under the identification between $\SO_{2n+1}$ and $\SO(J^{\BT}_{2n+1})$ via the conjugation by $X$ (see Section \ref{subsec:Iwahori}).

\subsubsection{The case of $\SO_{2n}$}\label{subsubsec:SO-even}
We finally consider the case of $\SO_{2n}$.
In \cite[Section 3]{AK21}, a simple supercuspidal representation $\pi^{\omega}_{\alpha}$ is associated to each tuple consisting of a uniformizer $\varpi'$ of $F$, $\alpha\in k^{\times}$, a character $\omega$ of the center of $\SO_{2n}(F)$, and a sign $\zeta\in\{\pm1\}$.
Similarly to the previous case, let us write $\pi^{\omega}_{\alpha}[\varpi',\zeta]$ for the associated simple supercuspidal representation of \cite{AK21}.
(Note that $\pi^{\omega}_{\alpha}[\varpi',\zeta]$ is denoted simply by ``$\pi$'' in Section \ref{sec:AK}).
If we let $\alpha$ be $\epsilon^{\kappa}$ for $\kappa\in\{0,1\}$ and put $\xi:=\omega(-1)$ and $a:=\varpi\varpi^{\prime-1}\in k^{\times}$, then
\[
\pi^{\SO_{2n}}_{\xi,\kappa,a,\zeta}
\cong
\pi^{\omega}_{\alpha}[\varpi',\zeta].
\]

\subsection{Comparison of our approach with others}\label{subsec:comparison}

We remark that our main results in the case where $p$ is odd (Theorems \ref{thm:main-1} and \ref{thm:main-2-b}) are not new:
\begin{itemize}
\item
when $N=2n+1$, the result of the same type has been obtained in \cite{Adr16} and \cite{Oi19-AJM};
\item
when $N=2n$, the result of the same type has been obtained in \cite{Oi24}.
\end{itemize}
In this section, we verify the consistency of Theorems \ref{thm:main-1} and \ref{thm:main-2-b} with those according to the dictionary given in Section \ref{subsec:parametrization}.

We note in passing that the results of \cite{Oi19-AJM,Oi24} (as well as \cite{Oi19-PRIMS}) were reproduced by Tam as part of his work \cite{Tam23} on epipelagic representations.

\subsubsection{The case of $\SO_{2n+1}$}

In \cite[Corollary 8.4]{Adr16} (with a modification explained in Remark \ref{rem:Adr16}), it is proved that the endoscopic lift of $\pi^{\zeta}_{\chi'}[\varpi']$ from $\SO_{2n+1}$ to $\GL_{2n}$ is given by $\sigma((-1)^{n+1}\varpi',\zeta,\mathbbm{1})$ when $p$ is sufficiently large (or, more generally, provided that the $L$-parameter of $\pi^{\zeta}_{\chi'}[\varpi']$ is irreducible).

By Sections \ref{subsubsec:GL} and \ref{subsubsec:SO-odd}, this amounts to saying that the endoscopic lift of $\pi^{\SO_{2n+1}}_{(-1)^{n+1}a,\zeta}$ is given by $\pi^{\GL_{2n}}_{\mathbbm{1},(-1)^{n+1}a,\zeta}$ by putting $a:=\varpi\varpi^{\prime-1}$.

On the other hand, in \cite[Theorem 5.15]{Oi19-AJM}, it is proved that the endoscopic lift of the simple supercuspidal representation of $\SO_{2n+1}(F)$ denoted by ``$\pi'_{a,\zeta}$'' is given by $\pi^{\GL_{2n}}_{\mathbbm{1},2a,\zeta}$ for any $a\in k^{\times}$ when $p$ is odd.
Recall that $\pi^{\SO_{2n+1}}_{2a,\zeta}$ in this paper is equal to $\pi'_{a,\zeta}$ in \cite{Oi19-AJM}; see Remark \ref{rem:ssc-odd-2}.

Thus the results of \cite{Adr16} and \cite{Oi19-AJM} are consistent with Theorem \ref{thm:main-1}.

\subsubsection{The case of split $\SO_{2n}$}
In \cite[Theorem 8.8]{Oi24}, it is proved that the endoscopic lift of $\pi^{\SO_{2n}}_{\xi,\kappa,a,\zeta}$ to $\GL_{2n}$ is given by
the parabolic induction of
\[
\begin{cases}
\pi_{\omega_{0},a',\eta}^{\GL_{2n-2}}\boxtimes\omega_{\omega_{0},a',\eta}^{\GL_{2n-2}}\boxtimes\mathbbm{1} & \text{if $\zeta=1$},\\
\pi_{\omega_{0},a',\zeta\eta}^{\GL_{2n-2}}\boxtimes\omega_{\omega_{0},a',\zeta\eta}^{\GL_{2n-2}}\cdot\mu_{\ur}\boxtimes\mu_{\ur} & \text{if $\zeta=-1$}
\end{cases}
\]
under the assumption that $p\neq2$.
Here, $\boxtimes$ denotes the external tensor product, $\mu_{\ur}$ is the unique nontrivial quadratic unramified character of $F^{\times}$, $\omega_0$ is the nontrivial quadratic character of $k^{\times}$, $\omega_{\omega_{0},a',\pm\eta}^{\GL_{2n-2}}$ is the central character of $\pi_{\omega_{0},a',\pm\eta}^{\GL_{2n-2}}$,
\[
\eta=q^{-\frac{1}{2}}G(\omega_{0},\psi)\omega_{0}(-1)\xi
\quad\text{and}\quad
a'=(-1)^{n}4a\epsilon^{\kappa},
\]
where $G(\omega_{0},\psi)$ denotes the Gauss sum.
Since $\pi_{\omega_{0},a',\pm\eta}^{\GL_{2n-2}}$ is self-dual, the character $\omega_{\omega_{0},a',\pm\eta}^{\GL_{2n-2}}$ must be quadratic.
Moreover, as the restriction of $\omega_{\omega_{0},a',\pm\eta}^{\GL_{2n-2}}$ to $\mcO^{\times}$ is the lift of $\omega_{0}$, $\omega_{\omega_{0},a',\pm\eta}^{\GL_{2n-2}}$ is a ramified character.

Let us check that this description is consistent with Theorem \ref{thm:main-2-b}.

Firstly, by the condition $\phi_{1}(a^{-1}\varpi)=\zeta$, the character $\phi_{1}$ equals $\mathbbm{1}$ or $\mu_{\ur}$ according to $\zeta=1$ or $\zeta=-1$.
Secondly, we check that $\phi_{2}$ is equal to $\omega_{\omega_{0},a',\zeta\eta}^{\GL_{2n-2}}$ or $\omega_{\omega_{0},a',\zeta\eta}^{\GL_{2n-2}}\cdot\mu_{\ur}$ according to $\zeta=1$ or $\zeta=-1$.
For this, by the characterization of $\phi_{2}$, it is enough to check that $\omega_{\omega_{0},a',\zeta\eta}^{\GL_{2n-2}}(a^{-1}\varpi)=\omega_{\omega_{0},a',\zeta\eta}^{\GL_{2n-2}}(-4\epsilon^{\kappa})$ (note that $\omega_{\omega_{0},a',\zeta\eta}^{\GL_{2n-2}}$ is a ramified quadratic character).
Since
\[
\omega_{\omega_{0},a',\zeta\eta}^{\GL_{2n-2}}(a'^{-1}\varpi)
=(\zeta\eta)^{2n-2}
=(q^{-\frac{1}{2}}G(\omega_{0},\psi))^{2n-2},
\]
we get
\begin{align*}
\omega_{\omega_{0},a',\zeta\eta}^{\GL_{2n-2}}(a^{-1}\varpi)
&=\omega_{0}(a^{-1}a')\cdot\omega_{\omega_{0},a',\zeta\eta}^{\GL_{2n-2}}(a'^{-1}\varpi)\\
&=\omega_{0}((-1)^{n}4\epsilon^{\kappa})\cdot(q^{-\frac{1}{2}}G(\omega_{0},\psi))^{2n-2}.
\end{align*}
By noting that $q^{-1}\cdot G(\omega_{0},\psi)^{2}=\omega_{0}(-1)$ (see \cite[Lemma A.5 (1)]{Oi24}), we get
\[
\omega_{\omega_{0},a',\zeta\eta}^{\GL_{2n-2}}(a^{-1}\varpi)
=
\omega_{0}(-4\epsilon^{\kappa}).
\]
On the other hand, as the restriction of $\omega_{\omega_{0},a',\zeta\eta}^{\GL_{2n-2}}$ to $\mcO^{\times}$ is the unique nontrivial quadratic character, we obtain $\omega_{\omega_{0},a',\zeta\eta}^{\GL_{2n-2}}(-4\epsilon^{\kappa})=\omega_{0}(-4\epsilon^{\kappa})$.
Finally, let us consider $\phi_{0}$; our task is to show that $\zeta\eta$ is equal to $\zeta'$ as in Theorem \ref{thm:main-2-b} (i.e., $\zeta'=\xi\cdot\zeta\cdot q^{1/2}G(\phi_{2},\psi)^{-1}$).
This follows from the identity $G(\phi_{2},\psi)^{2}=\phi_{2}(-1)q$ (see \cite[(23.6.3)]{BH06}) by noting that $\phi_{2}(-1)=\omega_{0}(-1)$.

\section{On lifting from classical groups to \texorpdfstring{$\GL_N$}{GL(N)}}\label{sec:lift}

\vspace{3mm}
\noindent
\ref{sec:lift}.1.\ \quad
Let $p$ be a prime number and $F$ a finite extension of the field of $p$-adic numbers.
(Thus, in particular, the characteristic of $F$ is $0$.)
Choose an algebraic closure of $F$ and let $W_F$ be the corresponding Weil group.
Let $\bfG$ be a split classical group, say over $\Z$, either $\Sp_{2n}$, $\SO_{2n+1}$ or $\SO_{2n}$ for a positive integer $n$.
Let $\hat{\bfG}$ be the (complex) dual group of $\bfG$ and $N$ the dimension of its standard representation, so that $\hat{\bfG}$ is $\SO_{2n+1}(\C)$, $\Sp_{2n}(\C)$, $\SO_{2n}(\C)$ and $N=2n+1$, $2n$, $2n$ accordingly.
Let $\pi$ be a smooth irreducible supercuspidal representation of $\bfG(F)$, and $\phi$ its $L$-parameter, a conjugacy class of morphisms of $W_F \times \SL_{2}(\C)$ to $\hat{\bfG}$ as given by Arthur (\cite[Theorem 1.5.2]{Art13}).
Composing with the standard representation of $\hat{\bfG}$, we get a parameter for $\GL_{N}(F)$, and a corresponding isomorphism class $\Pi$ of smooth irreducible representations of $\GL_{N}(F)$, sometimes called an endoscopic lift of $\pi$.
Now we fix Whittaker data of $\GL_{N}$ and $\bfG$ and assume that $\pi$ is generic with respect to the fixed Whittaker datum of $\bfG$.
Then Cogdell et al.\ \cite{CKPSS04} also associate to $\pi$ an isomorphism class $\Pi'$ of smooth irreducible representations of $\GL_{N}(F)$.
The goal of the present appendix is to prove that $\Pi=\Pi'$.
As a consequence, we deduce that for each positive integer $r$ and each smooth irreducible generic representation $\tau$ of $\GL_{r}(F)$, the Rankin--Selberg $\gamma$-factor $\gamma(s,\pi\times\tau,\psi)$ is equal to the Rankin--Selberg $\gamma$-factor $\gamma(s,\Pi\times\tau,\psi)$, for any choice of nontrivial additive character $\psi$ of $F$.
That is used in the main text when $\pi$ is a simple supercuspidal representation, to be able to apply the computation of the $\gamma$-factors $\gamma(s,\pi\times\tau,\psi)$ when $\tau$ is a tame character of $F^{\times}$.

Here and throughout this appendix, by the Rankin--Selberg $\gamma$-factor for representations of $\bfG(F)\times\GL_r(F)$ we mean the $\gamma$-factor defined in \cite{Kap15}, which is valid only for certain choices of Whittaker data for $\bfG$ (given explicitly in \textit{ibid.}).
The condition on the Whittaker datum, when $\pi$ is a simple supercuspidal representation and $r=1$, is removed in Section \ref{subsec:AK-twisted-gamma}.

\vspace{3mm}
\noindent
\ref{sec:lift}.2.\ \quad
The result has to be well-known to the experts, in fact almost obvious to them; indeed it is behind the scene in \cite[page 482--485]{Art13}.
It is only for completeness of our own results, because we have not found published our exact statement, that we write the proof below.
Note that the result has been used in the result \cite{Hen23} by the second named author.
Both $\Pi$ and $\Pi'$ are obtained via a local-global method, using trace formulas for $\Pi$ and converse theorems for $\Pi'$.
Thus the proof starts with a global part, and the local result is a consequence of the strong multiplicity one theorem for $\GL_N$.
See below \ref{sec:lift}.3 to \ref{sec:lift}.5 for the global results, and \ref{sec:lift}.6 for the local consequence.
Our reference for Arthur's results is of course Arthur's book \cite{Art13}, but the reader might benefit from the more expository papers on Arthur's webpage.
We note however that as far as we know the full twisted weighted fundamental lemma announced in \cite{CL10} has not been proved in print, and similarly the references [A24] to [A26] in \cite{Art13} have not appeared yet (reference [A27] refers to non-quasi-split groups, which do not concern us here)
\footnote{As mentioned in Introduction, those points have been addressed in a recent preprint of Atobe--Gan--Ichino--Kaletha--Minguez--Shin \cite{AGIKMS24}.}.
Our reference for the lifting via converse theorems is \cite{CKPSS04}.
The $\gamma$-factors there are obtained via the Langlands--Shahidi method, whereas we use the Rankin--Selberg version.
For $\GL_N\times\GL_r$, Shahidi proved that the two versions coincide (\cite{Sha85,Sha84}); for $\bfG\times\GL_r$ that was done by Kaplan (\cite[Theorem 1 and Corollary 1]{Kap15}).

\vspace{3mm}
\noindent
\ref{sec:lift}.3.\ \quad
Let $k$ be a number field and $\bbA_{k}$ its adele ring.
Let $\pi$ be a globally generic cuspidal automorphic representation of $\bfG(\bbA_{k})$.
There are two ways to associate to $\pi$ an automorphic representation of $\GL_{N}(\bbA_k)$.
The first one \cite[Theorem 1.1]{CKPSS04} uses converse theorems and produces ``a functorial lift'' of $\pi$.
A functorial lift is an automorphic representation $\Pi'$ of $\GL_{N}(\bbA_k)$ such that for all Archimedean places $v$ of $k$ and almost all finite places $v$ where $\pi_v$ is unramified, the local component $\Pi'_{v}$ is obtained via the local Langlands correspondences:
\begin{itemize}
\item
for Archimedean $v$, $\pi_{v}$ corresponds to a morphism of the local Weil group $W_{k_{v}}$ to $\hat{\bfG}$ and $\Pi'_{v}$ to the morphism into $\GL_{N}(\C)$ obtained by composing with the standard representation of $\hat{\bfG}$.
\item
similarly for a finite place $v$ where $\pi_v$ is unramified, $\pi_{v}$ corresponds to an unramified morphism of the local Weil group $W_{k_{v}}$ to $\hat{\bfG}$ and $\Pi'_{v}$ is the unramified representation corresponding to the morphism into $\GL_{N}(\C)$ obtained by composing with the standard representation of $\hat{\bfG}$.
\end{itemize}

\cite{CKPSS04} describe the image of the global lift in their Theorems 7.1 and 7.2, in particular showing that it is a full induced from a self-dual unitary cuspidal automorphic representation of a Levi subgroup of $\GL_N$. Consequently $\Pi'$ is isobaric and all components $\Pi'_{v}$ are generic.
Moreover (\textit{loc.\ cit.}\ Proposition 7.2), for any finite place $v$ of $k$, $\Pi'_{v}$ is the unique irreducible smooth generic representation of $\GL_{N}(k_v)$ such that, for any positive integer $r$ and any smooth irreducible supercuspidal representation $\tau$ of $\GL_{r}(k_v)$, one has, for any nontrivial additive character $\psi_{v}$ of $k_{v}$, $\gamma(s,\pi_{v}\times\tau,\psi_{v})=\gamma(s,\Pi'_{v}\times\tau,\psi_{v})$.
In fact $\Pi'_{v}$ is a ``local functorial lift'' of $\pi_{v}$ (\textit{loc.\ cit.}\ Definition 7.1): we also have $L(s,\pi_{v}\times \tau)=L(s, \Pi'_{v}\times\tau)$, where the $L$-factors are obtained by the Langlands--Shahidi method (\cite{Sha90}); for the right-hand side they can equally be defined via the Rankin--Selberg method (compare \textit{loc.\ cit.}\  Section 10 and \cite[Introduction]{JPSS83}).

\vspace{3mm}
\noindent
\ref{sec:lift}.4.\ \quad
Let us now turn to the lift $\Pi$ of $\pi$ obtained by Arthur.
Note first that $\bfG$ belongs to the set $\widetilde{\mcE}_{\mathrm{sim}}(N)$ (\cite[Chapter 1, page 12]{Art13}), so that Theorems 1.5.1 and 1.5.2 of \cite{Art13} apply to $\bfG$.
Theorem 1.5.2 implies that $\pi$, or more generally any automorphic representation of $\bfG(\bbA_k)$ occurring in the discrete spectrum, is obtained in the following manner: there is a parameter $\psi$ in the global set $\widetilde{\Psi}_{2}(G)$ (\cite[page 33]{Art13}) giving rise to a local parameter $\psi_{v}$ for any place $v$ of $k$, such that $\pi_v$ belongs to the local packet $\Pi_{\psi_{v}}$ associated to $\psi_{v}$ by Theorem 1.5.1.
Now the parameter $\psi$ is in the set $\widetilde{\Psi}_{\mathrm{ell}}(G)$ (\textit{loc.\ cit.}\ page 33) and in particular in the set $\Psi(N)$ (\textit{loc.\ cit.}\ page 28), so is a multiset of pairs $(\pi_{i}, m_{i})$, where $\pi_{i}$ is a cuspidal automorphic representation of $\GL_{N_{i}}(\bbA_k)$ and $m_{i}$ is a positive integer (or the class of irreducible representations of $\SU(2)$ of dimension $m_{i}$), with $N=\sum_{i}m_{i}N_{i}$.
A pair $(\pi_{i}, m_{i})$ gives an essentially discrete automorphic representation of $\GL_{N_{i}m_{i}}(\bbA_k)$, with cuspidal support $\pi_{i}(m_{i})$ made out of $\pi_{i}$'s shifted by powers of the norm, and by parabolic induction from all the components of $\pi_{i}(m_{i})$ (for all $i$) we obtain an isobaric automorphic representation $\Pi$ of $\GL_{N}(\bbA_k)$.
All that is explained in (\cite[Sections 1.2 and 1.4]{Art13}).
As stated above for any place $v$ the component $\pi_v$ belongs to the local packet attached to $\psi_v$.
What is not stated explicitly in \cite[Theorem 1.5.1]{Art13} but appears behind (\textit{loc.\ cit.}\ Foreword, page x) is that at almost all finite places $v$, where $\pi_v$ and $\Pi_{v}$ are unramified, the local unramified parameter of $\Pi_{v}$ is indeed obtained by composing the local parameter of $\pi_v$ by the standard representation of $\hat{\bfG}$ into $\GL_{N}(\C)$.
We have not been able to locate a precise statement, thus we give a justification in Section \ref{sec:unram}.

\vspace{3mm}
\noindent
\ref{sec:lift}.5.\ \quad
Since $\Pi$ and $\Pi'$ are both isobaric automorphic representations of $\GL_{N}(\bbA_k)$, proving they are equal is equivalent to proving that their components at almost all finite places are equal, by the strong multiplicity one theorem of Jacquet and Shalika (cf.\ \cite[Theorem 1.3.2]{Art13}).

But at almost all finite places where both $\Pi_{v}$ and $\Pi'_{v}$ are unramified, $\Pi'_{v}$, by construction, is given by the unramified local Langlands correspondence, and it is also the case for $\Pi_{v}$, as we have seen in \ref{sec:lift}.4.
Thus $\Pi=\Pi'$, that is $\Pi_{v}=\Pi'_{v}$ for all places $v$ of $k$.

\vspace{3mm}
\noindent
\ref{sec:lift}.6.\ \quad
If $F$ is a $p$-adic field as in \ref{sec:lift}.1, one can see it as a completion $k_v$ of some number field $k$, and a smooth irreducible generic supercuspidal representation $\rho$ of $\bfG(F)$ can be seen as the component $\pi_v$ at $v$ of a globally generic cuspidal automorphic representation $\pi$ of $\bfG(\bbA_k)$ (\cite[Proposition 5.1]{Sha90}).
Thus $\rho$ has a local functorial lift $R$ to $\GL_{N}( F)$, viz.\ (the class of) $\Pi'_{v}$, where $\Pi'$ is the global lift of $\pi$ to $\GL_N$ obtained by the converse theorems.
But we have seen in \ref{sec:lift}.5 that $\Pi'$ is also the global lift $\Pi$ given by Arthur.
By the compatibility of Arthur's local and global lifts, indeed by the fact that the global lift in (\cite[Theorem 1.5.2]{Art13}) is expressed in terms of the local one (\textit{loc.\ cit.}\ Theorem 1.5.1), we get the desired result that $R$ is also the local lift to $\GL_{N}( F)$ given by Arthur.

\vspace{3mm}
\noindent
\ref{sec:lift}.7.\ \quad
For ease of reference, let us restate our results in this appendix.

\begin{thm}\label{thm:lift-1}
Let $k$ be a number field.
Let $\bfG$ be a symplectic group $\Sp_{2n}$ or a split special orthogonal group $\SO_{n}$ over $k$; write $N$ for the dimension of the natural representation of $\hat{\bfG}$.
Let $\pi$ be a globally generic cuspidal automorphic representation of $\bfG$ over $k$.
Write $\Pi$ for the automorphic representation of $\GL_N$ over $k$ associated to the Arthur parameter of $\pi$, and $\Pi'$ for the automorphic representation of $\GL_N$ over $k$ associated to $\pi$ by the lifting of Cogdell et al.
Then $\Pi=\Pi'$, and for each place $v$ of $k$, $\Pi_{v}$ is the local lifting $\Pi'_{v}$ associated to $\pi_{v}$ by Cogdell et al.
\end{thm}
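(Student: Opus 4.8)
The plan is to establish the global equality $\Pi=\Pi'$ by comparing the two representations at almost all finite places and then invoking the strong multiplicity one theorem for isobaric automorphic representations of $\GL_N$; the local assertion then follows by a globalization argument. Recall first the structure of the converse-theorem lift: by \cite[Theorems 7.1 and 7.2 and Proposition 7.2]{CKPSS04}, $\Pi'$ is isobaric, being fully induced from a self-dual unitary cuspidal automorphic representation of a Levi subgroup of $\GL_N$; all of its local components are generic; and at every Archimedean place, as well as at almost all finite places where $\pi_v$ is unramified, $\Pi'_v$ is the representation of $\GL_N(k_v)$ attached to the composite of the local parameter of $\pi_v$ with the standard embedding $\hat{\bfG}\hookrightarrow\GL_N(\C)$.

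Next I would recall Arthur's lift. Since $\pi$ is cuspidal it occurs in the discrete spectrum of $\bfG$, so by \cite[Theorems 1.5.1 and 1.5.2]{Art13} (which apply because $\bfG\in\widetilde{\mcE}_{\mathrm{sim}}(N)$) it is attached to a global parameter $\psi\in\widetilde{\Psi}_2(\bfG)\subset\Psi(N)$, that is, a multiset of pairs $(\pi_i,m_i)$ with $\pi_i$ a cuspidal automorphic representation of $\GL_{N_i}(\bbA_k)$ and $\sum_i m_iN_i=N$; each pair $(\pi_i,m_i)$ gives an essentially discrete automorphic representation of $\GL_{N_im_i}(\bbA_k)$, and parabolic induction from all of these produces the isobaric automorphic representation $\Pi$ of $\GL_N(\bbA_k)$, with the property that at every place $v$ the component $\pi_v$ lies in the local packet attached to $\psi_v$.

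The crucial step is the unramified compatibility: at almost all finite places $v$ at which $\pi_v$ and $\Pi_v$ are both unramified, the unramified parameter of $\Pi_v$ is obtained from that of $\pi_v$ by composition with the standard representation of $\hat{\bfG}$. This is precisely the assertion proved in Appendix \ref{sec:unram}, and it is the step I expect to be the main obstacle, since Arthur's construction of unramified local ($A$-)packets must be reconciled with the classical Satake parametrization — a point for which we could locate no reference. Granting it, $\Pi$ and $\Pi'$ are isobaric automorphic representations of $\GL_N(\bbA_k)$ whose local components agree at almost all finite places, both being given there by the unramified local Langlands correspondence applied to $\pi_v$; hence, by the strong multiplicity one theorem of Jacquet--Shalika (see \cite[Theorem 1.3.2]{Art13}), $\Pi=\Pi'$, and in particular $\Pi_v=\Pi'_v$ at every place $v$.

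Finally, for the local statement over a $p$-adic field $F$ I would realize $F$ as a completion $k_v$ of a number field $k$ and, by \cite[Proposition 5.1]{Sha90}, exhibit a given generic supercuspidal representation $\rho$ of $\bfG(F)$ as the component $\pi_v$ of a globally generic cuspidal automorphic representation $\pi$ of $\bfG(\bbA_k)$. By construction $\Pi'_v$ is the local functorial lift of $\rho$ in the sense of \cite[Definition 7.1 and Proposition 7.2]{CKPSS04}, while the equality $\Pi=\Pi'$, combined with the fact that Arthur's global lift in \cite[Theorem 1.5.2]{Art13} is built from the local lift of \cite[Theorem 1.5.1]{Art13}, identifies $\Pi_v=\Pi'_v$ with Arthur's local lift of $\rho$. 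The remaining ingredients — the shape of the converse-theorem lift, the description of Arthur parameters of globally generic cuspidal representations, and strong multiplicity one — are all available in the cited references, so the only genuinely new input is the unramified consistency of Appendix \ref{sec:unram}.
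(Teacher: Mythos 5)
Your proposal is correct and follows essentially the same route as the paper's Appendix~\ref{sec:lift}: recall the isobaric structure and unramified behavior of the Cogdell--Kim--Piatetski-Shapiro--Shahidi lift, recall that Arthur's global parameter produces an isobaric $\Pi$, invoke the unramified Satake compatibility (the genuinely new input, supplied in Appendix~\ref{sec:unram}), and conclude by strong multiplicity one. Your final paragraph about globalizing a supercuspidal $\rho$ via Shahidi is not needed for Theorem~\ref{thm:lift-1} itself (the equality $\Pi_v=\Pi'_v$ at every $v$ is immediate from $\Pi=\Pi'$); that step belongs to the derivation of the local Theorem~\ref{thm:lift-2}, but including it does no harm.
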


\begin{thm}\label{thm:lift-2}
Let $F$ be a $p$-adic field.
Let $\bfG$ be a symplectic group $\Sp_{2n}$ or a split special orthogonal group $\SO_{n}$ over $F$; write $N$ for the dimension of the natural representation of $\hat{\bfG}$.
Let $\pi$ be a generic supercuspidal representation of $\bfG(F)$.
Write $\Pi$ for the irreducible smooth representation of $\GL_{N}(F)$ associated to the Arthur parameter of $\pi$, and $\Pi'$ for the smooth irreducible representation of $\GL_{N}(F)$ associated to $\pi$ by the (local) lifting of Cogdell et al.
Then $\Pi=\Pi'$.
For any positive integer $r$ and any generic irreducible smooth representation $\tau$ of $\GL_{r}( F)$ the Rankin--Selberg (or Langlands--Shahidi) $\gamma$-factor $\gamma(s,\pi\times\tau,\psi)$ is equal to the Rankin--Selberg $\gamma$-factor $\gamma(s,\Pi\times\tau,\psi)$, for any choice of a nontrivial additive character $\psi$ of $F$.
The same is true for the $L$ and $\varepsilon$-factors.
\end{thm}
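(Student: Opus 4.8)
The plan is to reduce the local statement to the global Theorem~\ref{thm:lift-1} by a standard globalization, and then to propagate the resulting equality $\Pi=\Pi'$ to an identity of Rankin--Selberg factors. First I would realize $F$ as a completion $k_v$ of a number field $k$ and, invoking \cite[Proposition~5.1]{Sha90}, choose a globally generic cuspidal automorphic representation $\widetilde{\pi}$ of $\bfG(\bbA_k)$ whose component at $v$ is $\pi$ (such a $\widetilde{\pi}$ is automatically unramified at almost all finite places). Applying Theorem~\ref{thm:lift-1} to $\widetilde{\pi}$ gives $\widetilde{\Pi}=\widetilde{\Pi}'$ as isobaric automorphic representations of $\GL_N(\bbA_k)$, hence $\widetilde{\Pi}_v=\widetilde{\Pi}'_v$. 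By the compatibility of Arthur's local and global classifications recalled in \ref{sec:lift}.6 (and the unramified consistency of Appendix~\ref{sec:unram}), the left-hand side is the representation $\Pi$ attached to the local Arthur parameter of $\pi$ composed with the standard representation of $\hat{\bfG}$; by the construction of the local Cogdell et al.\ lift, the right-hand side is $\Pi'$. This yields $\Pi=\Pi'$.

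Next I would pass to the $\gamma$-, $L$- and $\varepsilon$-factors. By \cite[Proposition~7.2]{CKPSS04}, $\Pi'$ is the unique irreducible generic smooth representation of $\GL_N(F)$ with $\gamma(s,\pi\times\tau,\psi)=\gamma(s,\Pi'\times\tau,\psi)$ for all $r$, all supercuspidal $\tau$ of $\GL_r(F)$ and all $\psi$, and it is moreover a local functorial lift, so the $L$-factors agree for supercuspidal $\tau$ as well; the identity for $\varepsilon$-factors then follows from the functional equation relating $\gamma$, $L$ and $\varepsilon$. Here the factors are the Langlands--Shahidi ones, but by Shahidi \cite{Sha84,Sha85} for $\GL_N\times\GL_r$ and by Kaplan \cite[Theorem~1 and Corollary~1]{Kap15} for $\bfG\times\GL_r$ they coincide with the Rankin--Selberg factors; together with $\Pi=\Pi'$, this gives the theorem for supercuspidal $\tau$. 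To reach an arbitrary irreducible generic smooth $\tau$ of $\GL_r(F)$, I would write $\tau$ as a full induced representation from supercuspidal representations $\tau_1\otimes\cdots\otimes\tau_m$ of a Levi subgroup and use multiplicativity of the Rankin--Selberg $\gamma$-, $L$- and $\varepsilon$-factors in the $\GL_r$-variable (for $\GL_N\times\GL_r$ via \cite{JPSS83}, for $\bfG\times\GL_r$ via the corresponding property of the integrals of \cite{Kap15}) to factor both sides and conclude from the supercuspidal case.

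The one genuinely delicate point is the identification $\widetilde{\Pi}_v=\Pi$ used in the first paragraph: that Arthur's global lift, localized at $v$, is the representation of $\GL_N(F)$ attached to the \emph{local} Arthur parameter of $\pi$ via the standard representation of $\hat{\bfG}$. This is exactly the compatibility between \cite[Theorem~1.5.1]{Art13} and \cite[Theorem~1.5.2]{Art13}, and at the unramified places it requires knowing that Arthur's unramified local packets agree with the classical Satake parametrization --- which is what Appendix~\ref{sec:unram} is designed to supply. The remaining ingredients (globalization, strong multiplicity one, comparison of Langlands--Shahidi with Rankin--Selberg factors, multiplicativity) are all available in the literature, so I expect no further obstruction.
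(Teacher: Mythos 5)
Your proof follows exactly the same route as the paper (Appendix~\ref{sec:lift}.6): globalize $\pi$ via Shahidi's Proposition~5.1, apply Theorem~\ref{thm:lift-1} to the global lift, localize at $v$, and identify the two local components using the compatibility of Arthur's local and global classifications on one side and the construction of Cogdell et al.\ on the other. The only difference is that you spell out the reduction from supercuspidal $\tau$ to general generic $\tau$ via multiplicativity of the Rankin--Selberg factors, a step the paper leaves implicit.
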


\begin{rem}
We have restrained here to the framework that is useful to us in the main part of the paper, but the approach obviously works much more generally.
\end{rem}

\section{Unramified case of Arthur's classification theorem}\label{sec:unram}

The aim of this section is to justify the compatibility of Arthur's local classification theorem (construction of local $A$-packets) in the unramified case with the classical Satake parametrization.
The idea of the arguments we present here is due to Jean-Loup Waldspurger.

\subsection{Classical groups as twisted endoscopy of \texorpdfstring{$\GL_{N}$}{GL(N)}}

Let $\bfG'$ be an unramified quasi-split classical (i.e., symplectic, orthogonal, or unitary) group over a $p$-adic field $F$.
Then we may regard $\bfG'$ as a twisted endoscopic group of a suitable general linear group $\bfG=\GL_{N}$ (or the Weil restriction $\bfG=\Res_{E/F}\GL_{N}$ for an unramified quadratic extension $E/F$) with respect to an outer automorphism $\theta$ of $\bfG$.
In particular, we have a natural $L$-embedding $\iota\colon \L\bfG'\hookrightarrow\L\bfG$.
(See \cite[Section 1.2]{Art13} and \cite[Section 2.1]{Mok15} for the standard realization of $\theta$, $\iota$, and so on.)
We put $\tilde{G}:=G\rtimes\theta$, which is a bi-$G$-torsor whose right and left actions of $G=\bfG(F)$ are given by $g_{1}\cdot(g\rtimes\theta)\cdot g_{2}=g_{1}g\theta(g_{2})\rtimes\theta$.

We fix a $\theta$-stable $F$-splitting of $\bfG$.
Note that a $\theta$-stable Whittaker datum $\mathfrak{w}$ of $\bfG$ is determined by this choice.
Similarly, we also fix an $F$-splitting of $\bfG'$.

We let $\mcH$ denote the Hecke algebra of $G$, i.e., the set of compactly supported locally constant $\C$-valued functions on $G$ equipped with the convolution product denoted by ``$\ast$''.
We let $\tilde{\mcH}$ denote the set of compactly supported locally constant $\C$-valued functions on $\tilde{G}$.
Similarly, we let $\mcH'$ denote the Hecke algebra of $G'$.
Then we can define the notion of \textit{a (Langlands--Kottwitz--Shelstad) transfer} between $\tilde{\mcH}$ and $\mcH'$; we say that $f'\in\mcH'$ is a transfer of $\tilde{f}\in\tilde{\mcH}$ if they satisfy a certain identity between the twisted orbital integrals of $\tilde{f}$ and the stable orbital integrals of $f'$.
See \cite[Section 2.1]{Art13} for the details.

\subsection{Fundamental lemma of Lemaire--M{\oe}glin--Waldspurger}
We next review a deep result of Lemaire--M{\oe}glin--Waldspurger (\cite{LW17,LMW18}) on the transfer for spherical Hecke algebras.

The fixed $\theta$-stable $F$-splitting of $\bfG$ gives rise to a $\theta$-stable hyperspecial open compact subgroup of $G$ (see \cite[Section 2.5]{LMW18}); we write $K$ for it.
We let $\mcH_{K}$ (resp.\ $\tilde{\mcH}_{K}$) be the subalgebra of $\mcH$ (resp.\ subspace of $\tilde{\mcH}$) consisting of bi-$K$-invariant functions.
Note that then $\tilde{\mcH}_{K}$ has right and left actions of $\mcH_{K}$ and $\tilde{\mcH}_{K}=\mcH_{K}\ast\mathbbm{1}_{\tilde{K}}$, where $\mathbbm{1}_{\tilde{K}}$ denotes the characteristic function of $\tilde{K}:=K\rtimes\theta$.
Similarly, we write $K'$ for the hyperspecial open compact subgroup of $G'$ determined by the fixed $F$-splitting of $\bfG'$ and let $\mcH'_{K'}$ be the subalgebra of $\mcH'$ consisting of bi-$K'$-invariant functions.
(When $\bfG'=\SO_{2n}$, we suppose that $K'$ is invariant under the conjugation given by an element of $\mathrm{O}_{2n}(F)\smallsetminus\SO_{2n}(F)$.)

Let $\hat{\mcH}$ denote the algebra of polynomial functions on $\hat{\bfG}\rtimes\Frob\subset\L\bfG$ invariant under the $\hat{\bfG}$-conjugation, where $\Frob$ is a fixed lift of the Frobenius.
Then $\mcH_{K}$ can be identified with $\hat{\mcH}$ via the Satake isomorphism $S$ for $\bfG$.
Similarly, $\mcH'_{K'}$ can be identified with the algebra $\hat{\mcH}'$ of polynomial functions on $\hat{\bfG}'\rtimes\Frob\subset\L\bfG'$ invariant under the $\hat{\bfG}'$-conjugation via the Satake isomorphism $S'$ for $\bfG'$.
We let $\hat{b}\colon\hat{\mcH}\rightarrow\hat{\mcH}'$ be the $\C$-algebra homomorphism given by the restriction along the $L$-embedding $\iota\colon\L\bfG'\hookrightarrow\L\bfG$.
We define $b\colon\mcH_{K}\rightarrow\mcH'_{K'}$ to be the unique $\C$-algebra homomorphism which makes the following diagram commutative:
\[
\xymatrix{
\mcH_{K} \ar_-{b}[d] \ar^-{S}_-{\cong}[r] & \hat{\mcH} \ar^-{\hat{b}}[d]\\
\mcH'_{K'} \ar_-{S'}^-{\cong}[r]& \hat{\mcH}'
}
\]

\begin{thm}[{\cite[Th\'eor\`eme 1, 2]{LMW18}}]\label{thm:FL}
For any $\tilde{f}\in\tilde{\mcH}_{K}$, if we write $\tilde{f}=f\ast\mathbbm{1}_{\tilde{K}}$ with $f\in\mcH_{K}$, then $b(f)\in\mcH'_{K'}$ is a transfer of $\tilde{f}$.
In particular, $\mathbbm{1}_{K'}\in\mcH'_{K'}$ is a transfer of $\mathbbm{1}_{\tilde{K}}\in\tilde{\mcH}_{K}$.
\end{thm}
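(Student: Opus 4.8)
The statement is the twisted fundamental lemma for the full spherical Hecke algebra, and the plan is to obtain it from two ingredients: the twisted fundamental lemma for the \emph{unit element} (the ``in particular'' assertion), and a bootstrapping argument upgrading the unit-element case to all of $\tilde{\mcH}_{K}$. For the first ingredient I would invoke Ng\^o's geometric stabilization of the Hitchin fibration, in the twisted and weighted form of Chaudouard--Laumon, together with Waldspurger's reduction techniques (the non-standard fundamental lemma and the passage between positive and zero residue characteristic); for the twisted endoscopic data $\bfG'$ of $(\GL_{N},\theta)$ that concern us (including the Weil-restriction case) this is precisely what is isolated in \cite{LW17}, and I would take it as a black box.

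For the second ingredient the point to exploit is that $\tilde{\mcH}_{K}=\mcH_{K}\ast\mathbbm{1}_{\tilde{K}}$ is free of rank one over $\mcH_{K}$, so, granting that $\mathbbm{1}_{K'}$ transfers $\mathbbm{1}_{\tilde{K}}$, the theorem is equivalent to the ``$b$-linearity'' of transfer: whenever $f'$ is a transfer of $f\ast\mathbbm{1}_{\tilde{K}}$, then $b(g)\ast f'$ is a transfer of $(g\ast f)\ast\mathbbm{1}_{\tilde{K}}$ for all $g\in\mcH_{K}$. Using the standard basis $\{\mathbbm{1}_{\tilde{K}\mu}\}$ of $\tilde{\mcH}_{K}$ indexed by $\theta$-stable dominant cocharacters $\mu$, I would prove by induction on $\mu$ in the dominance order that the image under $b$ of the $\mu$-th Satake basis vector of $\mcH_{K}$ is a transfer of $\mathbbm{1}_{\tilde{K}\mu}$. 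The leading term (the ``most regular'' part of the twisted orbital integrals, where the relevant affine Springer fibres degenerate to the torus situation) can be matched by direct computation; the lower-order corrections would be controlled by descent, namely the fact that transfer commutes with constant terms along $\theta$-stable parabolic subgroups, which reduces the matching of orbital integrals to the elliptic locus, where a Shalika-germ expansion reduces the Hecke-algebra identity to the unit-element identity on connected centralizers, closing the induction. Assembling these matchings through the Satake isomorphisms $S$ and $S'$ then produces exactly the commutative square defining $b$ together with the transfer assertion. (Alternatively one could run a global argument: realize $F$ as a completion of a number field and compare the stabilized twisted trace formula for $\GL_{N}$ with the stable trace formula for $\bfG'$, extracting the local identity at $F$ from linear independence of characters and strong multiplicity one.)

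The step I expect to be the main obstacle is the twisted bookkeeping throughout: one must track the fixed $\theta$-stable $F$-splitting and the Whittaker normalization it induces, the Kottwitz--Shelstad transfer factors in the twisted setting, and the compatibility of the Satake isomorphism with the $L$-embedding $\iota\colon {}^{L}\bfG'\hookrightarrow{}^{L}\bfG$; moreover the geometric inputs feeding the unit-element case (the twisted weighted fundamental lemma) are technically heavy and, at present, only partially available in published form. For these reasons the honest course here is to cite \cite{LMW18,LW17} rather than reproduce the argument.
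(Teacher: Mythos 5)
The paper gives no proof of this theorem: it is stated purely as a citation to Lemaire--M{\oe}glin--Waldspurger \cite[Th\'eor\`eme 1, 2]{LMW18}, which is exactly the conclusion you reach after surveying the landscape. Your supplementary thumbnail of the proof mechanism (unit-element case via the geometric/weighted fundamental lemma and Waldspurger's reductions, then bootstrapping to the full spherical Hecke algebra --- the title of \cite{LMW18} is literally ``r\'eduction aux \'el\'ements unit\'es'') is a reasonable if somewhat loose account of the literature, but since both you and the paper correctly treat the result as a black box to be cited rather than reproduced, the proposal matches the paper's treatment.
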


\begin{rem}\label{rem:transfer-factor}
When we define the notion of a transfer of test functions from $\tilde{\mathcal{H}}$ to $\mathcal{H}'$, we need to fix a normalization of the \textit{transfer factor}.
In the above theorem, we adopt a normalization determined by the fixed choice of a $\theta$-stable hyperspecial open compact subgroup $K$ of $G$ (see \cite[Section 2.6]{LMW18} and \cite[I.6.3]{MW16-1}).
\end{rem}

\subsection{Arthur's local classification theorem}

We put $L_{F}:=W_{F}\times\SL_{2}(\C)$.
We say a homomorphism $\psi\colon L_{F}\times\SL_{2}(\C)\rightarrow \L{\bfG'}$ is an \textit{$A$-parameter of $\bfG'$} if
\begin{itemize}
\item
its restriction $\psi|_{L_{F}}$ to $L_{F}$ is a tempered $L$-parameter and
\item
its restriction $\psi|_{\SL_{2}(\C)}$ to $\SL_{2}(\C)$ is algebraic.
\end{itemize}
We let $\Psi(\bfG')$ be the set of $\hat{\bfG}'$-conjugacy classes of $A$-parameters of $\bfG'$.
We define $\tilde{\Psi}(\bfG')$ to be the set of $\mathrm{O}_{2n}(\C)$-conjugacy classes of $A$-parameters of $\bfG'$ when $\bfG'=\SO_{2n}$.
When $\bfG'$ is not $\SO_{2n}$, we simply put $\tilde{\Psi}(\bfG'):=\Psi(\bfG')$.
We let $\Pi_{\mathrm{unit}}(\bfG')$ be the set of irreducible unitary representations of $G'$.
We define $\tilde{\Pi}_{\mathrm{unit}}(\bfG')$ to be the set of $\mathrm{O}_{2n}(F)$-conjugacy classes of irreducible unitary representations of $G'$ when $\bfG'=\SO_{2n}$.
When $\bfG'$ is not $\SO_{2n}$, we simply put $\tilde{\Pi}_{\mathrm{unit}}(\bfG'):=\Pi_{\mathrm{unit}}(\bfG')$.

For an $A$-parameter $\psi\in\tilde{\Psi}(\bfG')$, we define a finite group $\overline{S}_{\psi}$ as follows:
\begin{align*}
S_{\psi}&:=\Cent_{\hat{\bfG}'}(\mathrm{Im}(\psi)),\\
\overline{S}_{\psi}&=S_{\psi}/(S_{\psi}^{\circ}Z_{\hat{\bfG}'}^{W_{F}}).
\end{align*}
Here, we implicitly fix a representative of the equivalence class $\psi$ and again write $\psi$ for it by abuse of notation.
We define an element $s_{\psi}$ of $S_{\psi}$ by
\[
s_{\psi}
:=
\psi\biggl(1,\begin{pmatrix}-1&0\\ 0&-1\end{pmatrix}\biggr).
\]

Any $A$-parameter $\psi\in\tilde{\Psi}(\bfG')$ can be regarded as an $A$-parameter of $\bfG$ by composing $\psi$ with the $L$-embedding $\iota\colon\L\bfG'\hookrightarrow\L\bfG$.
Let $\pi_{\psi}$ denote the irreducible unitary representation of $G$ determined by $\psi$, i.e., $\pi_{\psi}$ corresponds to the $L$-parameter $\phi_{\psi}$ of $\bfG$ under the local Langlands correspondence for $\bfG$, where $\phi_{\psi}\colon L_{F}\rightarrow \L\bfG$ is defined by
\[
\phi_{\psi}(u)
:=
\psi\biggl(u,\begin{pmatrix}|u|^{\frac{1}{2}}&0\\ 0&|u|^{-\frac{1}{2}}\end{pmatrix}\biggr).
\]
Note that, since the representation $\pi_{\psi}$ is self-dual, we can take a canonical extension $\tilde{\pi}_{\psi}$ of $\pi_{\psi}$ to the bi-torsor $\tilde{G}$ by using the fixed $\theta$-stable Whittaker datum $\mfw$ of $\bfG$.
(See \cite[Section 2.2]{Art13} for the details of the discussion here.)

Now we state a part of Arthur's local classification theorem (see \cite[Theorems 1.5.1 and 2.2.1]{Art13} for symplectic and orthogonal groups and \cite[Theorems 2.5.1 and 3.2.1]{Mok15} for unitary groups):

\begin{thm}\label{thm:Arthur}
For any $\psi\in\tilde{\Psi}(\bfG')$, there is a finite multi-set $\underline{\tilde{\Pi}}_{\psi}$ (called an ``$A$-packet'') over $\tilde{\Pi}_{\mathrm{unit}}(\bfG')$ equipped with a map
\[
\iota_{\mfw}\colon \underline{\tilde{\Pi}}_{\psi}\rightarrow \overline{S}_{\psi}^{\vee};\quad
\underline{\pi}\mapsto \langle-,\underline{\pi}\rangle,
\]
where $\overline{S}_{\psi}^{\vee}$ denotes the set of irreducible characters of $\overline{S}_{\psi}$.
The set $\underline{\tilde{\Pi}}_{\psi}$ satisfies the following identity (called the ``twisted endoscopic character relation'') for any $\tilde{f}\in\tilde{\mcH}$ and its transfer $f'\in\mcH'$:
\begin{align}\label{eq:TECR}
\sum_{\underline{\pi}\in\underline{\tilde{\Pi}}_{\psi}}\langle s_{\psi},\underline{\pi}\rangle\Tr(\underline{\pi}(f'))
=
c\cdot\Tr(\tilde{\pi}_{\psi}(\tilde{f})),
\end{align}
where $c$ is a complex number of absolute value $1$ which depends only on the fixed $\theta$-stable $F$-splitting of $\bfG$.
Furthermore, if $\underline{\pi}\in\underline{\tilde{\Pi}}_{\psi}$ is unramified (i.e., $K'$-spherical), then $\langle-,\underline{\pi}\rangle\in\overline{S}_{\psi}^{\vee}$ is the trivial character $\mathbbm{1}$ of $\overline{S}_{\psi}$.
\end{thm}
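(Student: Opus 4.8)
The greater part of Theorem \ref{thm:Arthur}---the existence of the multi-set $\underline{\tilde\Pi}_{\psi}$, the map $\iota_{\mfw}$, and the twisted endoscopic character relation \eqref{eq:TECR} with a constant $c$ of absolute value $1$ depending only on the chosen splitting---is precisely \cite[Theorems 1.5.1 and 2.2.1]{Art13} for symplectic and orthogonal $\bfG'$ and \cite[Theorems 2.5.1 and 3.2.1]{Mok15} for unitary $\bfG'$, once the transfer between $\tilde{\mcH}$ and $\mcH'$ is normalized as in Remark \ref{rem:transfer-factor}. So the only assertion needing a separate argument is the last one, about unramified members, and the plan is to feed spherical test functions into \eqref{eq:TECR} using the fundamental lemma of Lemaire--M{\oe}glin--Waldspurger (Theorem \ref{thm:FL}) and to read off the spherical part of the $A$-packet through the Satake isomorphism.

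First I would take $\tilde f=f\ast\mathbbm{1}_{\tilde K}$ for an arbitrary $f\in\mcH_{K}$; by Theorem \ref{thm:FL} we may take $b(f)\in\mcH'_{K'}$ as a transfer. In the left-hand side of \eqref{eq:TECR} only $K'$-spherical members $\underline\pi$ contribute, and on the one-dimensional space $\underline\pi^{K'}$ the operator $\underline\pi(b(f))$ acts by the scalar $S'(b(f))(\hat{\underline\pi})=S(f)(\iota(\hat{\underline\pi}))$, where $\hat{\underline\pi}\in\hat{\bfG}'\rtimes\Frob$ is the Satake parameter of $\underline\pi$, $S(f)$ is viewed as an element of $\hat{\mcH}=\C[\hat{\bfG}\rtimes\Frob]^{\hat{\bfG}}$, and we have used the defining identity $S'\circ b=\hat b\circ S$ together with $\hat b(P)(x)=P(\iota(x))$. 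In the right-hand side, $\pi_{\psi}$ is $K$-spherical because $\psi$ is unramified, $\pi_{\psi}(f)$ acts on $\pi_{\psi}^{K}$ by $S(f)(\hat\psi)$ with $\hat\psi=\phi_{\psi}(\Frob)$, and---this is the one genuinely new local input---the canonically $\mfw$-normalized operator $\tilde\pi_{\psi}(\mathbbm{1}_{\tilde K})$ acts on $\pi_{\psi}^{K}$ as $\vol(K)$ times the identity, i.e.\ $\tilde\pi_{\psi}(\theta)$ fixes the spherical line; this expresses the compatibility of the Whittaker normalization of the twist with the unramified structure and is checked using the Casselman--Shalika formula and the $\theta$-stability of $K$ and of $\mfw$. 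With compatible Haar measures fixed, \eqref{eq:TECR} becomes the identity of linear functionals
\[
\sum_{\underline\pi\ K'\text{-spherical}}\langle s_{\psi},\underline\pi\rangle\cdot\mathrm{ev}_{\iota(\hat{\underline\pi})}
=c\cdot\mathrm{ev}_{\hat\psi}
\qquad\text{on }\hat{\mcH}.
\]

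Since $\phi_{\psi}=\iota\circ\phi_{\psi}^{\bfG'}$ for the unramified parameter $\phi_{\psi}^{\bfG'}\colon W_{F}\to{}^{L}\bfG'$ built from $\psi$ by the same recipe, we have $\hat\psi=\iota(c_{\psi})$ where $c_{\psi}$ is the Satake parameter of a unique unramified representation $\pi'_{\psi}$ of $\bfG'$. Evaluation functionals at distinct semisimple conjugacy classes are linearly independent, and $\iota$ is injective on the relevant semisimple classes---using $\mathrm{O}_{2n}(\C)$-conjugacy, consistently with the definition of $\tilde\Psi(\bfG')$, when $\bfG'=\SO_{2n}$. Comparing coefficients of $\mathrm{ev}_{\hat\psi}$, and using $c\neq0$, there is a $K'$-spherical member $\underline\pi_{0}$ with $\iota(\hat{\underline\pi_{0}})=\hat\psi$, hence $\hat{\underline\pi_{0}}=c_{\psi}$ and $\underline\pi_{0}\cong\pi'_{\psi}$; comparing coefficients of the remaining evaluations shows $\pi'_{\psi}$ is the \emph{only} $K'$-spherical member, occurring with multiplicity one, with $\langle s_{\psi},\pi'_{\psi}\rangle=c$. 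The choice of transfer factors normalized via $K$ (Remark \ref{rem:transfer-factor}) forces $c=1$, so $\langle s_{\psi},\pi'_{\psi}\rangle=1$.

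It then remains to deduce triviality of the whole character $\langle-,\pi'_{\psi}\rangle$ on $\overline S_{\psi}$. Here I would rerun the same computation with the \emph{ordinary} endoscopic character relations attached, for each semisimple $s\in S_{\psi}$, to the elliptic endoscopic datum $\bfG'_{s}$ of $\bfG'$, now using the classical (non-twisted) Langlands--Shelstad fundamental lemma for $\bfG'_{s}\to\bfG'$---known unconditionally---and an induction on $N$ to control the unramified $A$-packet of the smaller group $\bfG'_{s}$ via the statement already proved. The identical Satake bookkeeping gives $\langle ss_{\psi},\pi'_{\psi}\rangle=1$ for all such $s$, and since $S_{\psi}$ is generated by these elements together with $Z_{\hat{\bfG}'}^{W_{F}}$, the character $\langle-,\pi'_{\psi}\rangle$ is trivial. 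The main obstacle is the normalization point in the second paragraph---verifying that $\tilde\pi_{\psi}(\theta)$ acts by $+1$ on the spherical line, and hence that the constant $c$ is exactly $1$---together with keeping the transfer-factor normalizations and, for $\SO_{2n}$, the $\mathrm{O}_{2n}$-versus-$\SO_{2n}$ conjugacy bookkeeping consistent throughout.
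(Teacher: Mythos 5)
The paper does not prove Theorem \ref{thm:Arthur} at all: the entire statement, including the final assertion that unramified members of $\underline{\tilde\Pi}_{\psi}$ carry the trivial character, is imported wholesale from Arthur (\cite[Theorems 1.5.1, 2.2.1]{Art13}) and Mok (\cite[Theorems 2.5.1, 3.2.1]{Mok15}), modulo the bookkeeping of the constant $c$ arising from the change from the Whittaker normalization of the transfer factor to the $K$-normalization of \cite{MW16-1} (Remark \ref{rem:transfer-factor}). Your proposal treats the last sentence as a statement requiring independent proof, but that sentence is part of what the paper cites; the only genuinely new content in Appendix C is the \emph{subsequent} Proposition \ref{prop:unramified-A-packets} (identification of the Satake parameter of the unramified packet member with the image under $\iota$ of the Satake parameter of $\pi_{\psi}$), which is proved \emph{using} Theorem \ref{thm:Arthur} as a black box. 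So you have mistaken what is assumed for what is to be shown, and your argument largely duplicates the later Proposition \ref{prop:unramified-A-packets}.

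Beyond this structural confusion, two points in your sketch do not hold up. First, you assert that the $K$-normalization of the transfer factor ``forces $c=1$.'' The paper explicitly disavows this: in the remark following the theorem the authors write that they believe $c=1$ can be shown by comparing the two normalizations but do \emph{not} pursue it, and they only ever use $|c|=1$. Their proof of Proposition \ref{prop:unramified-A-packets} instead extracts $c\cdot\epsilon_{\psi}=1$ from positivity of the multiplicities $m_{\psi,\mathbbm{1}}(\pi)$ --- which works there because triviality of the character for unramified members is already known from Arthur, so the sum on the left is over $\tilde\Pi_{\psi,\mathbbm{1}}$ only. Without that input, your comparison of coefficients only gives
\[
\sum_{\chi\in\overline S_{\psi}^{\vee}} m_{\psi,\chi}(\pi_{0})\,\chi(s_{\psi}) = c\cdot\epsilon_{\psi},
\]
and you cannot rule out cancellation among several unramified constituents with nontrivial characters; the claimed ``uniqueness and multiplicity one'' does not follow from the twisted identity alone. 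Second, the step invoking ordinary endoscopic character relations for each semisimple $s\in S_{\psi}$, plus an induction on $N$, is doing most of the real work and is essentially a sketch of Arthur's own proof of the unramified statement; it cannot be dismissed as ``identical Satake bookkeeping'' without a careful treatment of the endoscopic groups $\bfG'_{s}$, their unramified $A$-packets, and the standard fundamental lemma in that setting. In short: cite the theorem as the paper does, and reserve the spherical-test-function argument for the proof of Proposition \ref{prop:unramified-A-packets}, where the triviality of the character may be used rather than derived.
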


Here, the precise meaning of ``a finite multi-set $\underline{\tilde{\Pi}}_{\psi}$ over $\tilde{\Pi}_{\mathrm{unit}}(\bfG')$'' is that $\underline{\tilde{\Pi}}_{\psi}$ is a finite set equipped with a surjective map $\mu_{\psi}$ to a finite subset $\tilde{\Pi}_{\psi}\subset \tilde{\Pi}_{\mathrm{unit}}(\bfG')$:
\[
\mu_{\psi}\colon
\underline{\tilde{\Pi}}_{\psi} \twoheadrightarrow \tilde{\Pi}_{\psi}
;\quad
\underline{\pi}\mapsto\pi.
\]
When $\underline{\pi}\in\underline{\tilde{\Pi}}_{\psi}$ is mapped to $\pi\in\tilde{\Pi}_{\psi}$, we put $\Tr(\underline{\pi}(f')):=\Tr(\pi(f'))$ and say that $\underline{\pi}$ is unramified if so is $\pi$.
(Note that the quantity $\Tr(\pi(f'))$ is well-defined even when $\bfG'=\SO_{2n}$ since a transfer $f'$ can be taken to be $\mathrm{O}_{2n}$-invariant.)

We can reformulate the above statement by introducing a multiplicity function as follows.
For each $\chi\in\overline{S}_{\psi}^{\vee}$, we put
\[
\underline{\tilde{\Pi}}_{\psi,\chi}
:=
\{\underline{\pi}\in\underline{\tilde{\Pi}}_{\psi} \mid \langle-,\underline{\pi}\rangle=\chi \}
\]
and define $\tilde{\Pi}_{\psi,\chi}:=\mu_{\psi}(\underline{\tilde{\Pi}}_{\psi,\chi})$.
We let $\mu_{\psi,\chi}\colon
\underline{\tilde{\Pi}}_{\psi,\chi}\twoheadrightarrow \tilde{\Pi}_{\psi,\chi}$ be the restriction of the map $\mu_{\psi}\colon
\underline{\tilde{\Pi}}_{\psi}\twoheadrightarrow \tilde{\Pi}_{\psi}$ to $\underline{\tilde{\Pi}}_{\psi,\chi}$.
We define the multiplicity function $m_{\psi,\chi}\colon \tilde{\Pi}_{\psi,\chi}\rightarrow\Z_{>0}$ by
\[
m_{\psi,\chi}(\pi)
:=
|\mu_{\psi,\chi}^{-1}(\pi)|.
\]
Then the identity \eqref{eq:TECR} is rewritten as
\begin{align}\label{eq:TECR2}
\sum_{\chi\in\overline{S}_{\psi}^{\vee}}
\sum_{\pi\in\tilde{\Pi}_{\psi,\chi}}
m_{\psi,\chi}(\pi)\chi(s_{\psi})\Tr(\pi(f'))
=
c\cdot\Tr(\tilde{\pi}_{\psi}(\tilde{f})).
\end{align}

\begin{rem}\label{rem:Moeglin}
A priori, it is possible that the multiplicity $m_{\psi,\chi}(\pi)$ is greater than $1$ or that $\tilde{\Pi}_{\psi,\chi}$ and $\tilde{\Pi}_{\psi,\chi'}$ for distinct $\chi,\chi'\in\overline{S}_{\psi}^{\vee}$ have a nonempty intersection.
However, in fact, M{\oe}glin proved that $\underline{\tilde{\Pi}}_{\psi}$ is multiplicity-free, i.e., $\mu_{\psi}$ is bijective (\cite{Moeg11}, combined with the result of Bin Xu \cite{Xu17-CJM} on comparing M{\oe}glin's $A$-packets to Arthur's; see \cite[Theorem 8.12]{Xu17-CJM}).
Thus we may regard $\underline{\tilde{\Pi}}_{\psi}$ as a subset $\tilde{\Pi}_{\psi}$ of $\tilde{\Pi}_{\mathrm{unit}}(\bfG')$.
(But we do not have to appeal to this fact in the following argument.)
\end{rem}

\begin{rem}
As noted in Remark \ref{rem:transfer-factor}, we adopt a normalization of the transfer factor determined by the $\theta$-stable hyperspecial open compact subgroup $K$ according to \cite[I.6.3]{MW16-1}.
On the other hand, in \cite{Art13}, the transfer factor is normalized by using the $\theta$-stable Whittaker datum $\mathfrak{w}$ of $\bfG$ (see \cite[Section 2.1]{Art13} and \cite[Section 5.3]{KS99}).
The point is that a priori it is not clear whether these two normalizations coincide; this is the source of the constant $c$ in the identity \eqref{eq:TECR}.
We remark that, via both normalizations, the transfer factor takes values in unitary complex numbers, hence also the constant $c$ is unitary.
(For the unitarity of the transfer factor normalized via $K$, see \cite[I.7.2]{MW16-1}.
For the unitarity of the transfer factor normalized via $\mathfrak{w}$, see, for example, the explicit formula of Waldspurger \cite[I.10]{Wal10}.)
We believe that it should be possible to show that $c=1$ by examining the definitions of the two normalizations since both $K$ and $\mathfrak{w}$ are produced from the same $\theta$-stable $F$-splitting of $\G$.
However, we do not pursue this issue further because we only need the fact that $|c|=1$ in the following argument.
\end{rem}

\subsection{Unramified representation in an $A$-packet}
Recall that, by the Satake isomorphism, any unramified representation $\pi$ of $G$ corresponds to a $\hat{\bfG}$-conjugacy class $t_{\pi}$ of semisimple elements in $\hat{\bfG}\rtimes\Frob$.
Similarly, any unramified representation $\pi'$ of $G$ corresponds to a $\hat{\bfG}$-conjugacy class $t_{\pi'}$ of semisimple elements in $\hat{\bfG}'\rtimes\Frob$.
The image $\iota(t_{\pi'})$ of $t_{\pi'}$ under the $L$-embedding $\iota\colon\L\bfG'\hookrightarrow\L\bfG$ is contained in a unique $\hat{\bfG}$-conjugacy class of semisimple elements in $\hat{\bfG}\rtimes\Frob$, for which we write $\biota(t_{\pi'})$.

\begin{rem}
Suppose $\bfG'=\SO_{2n}$ and the $\mathrm{O}_{2n}(F)$-orbit of $\pi\in\Pi_{\mathrm{unit}}(\bfG')$ consists of two elements $\pi_{1}$ and $\pi_{2}$.
Then, one of $\pi_{1}$ and $\pi_{2}$ is unramified if and only if the other is also unramified by our choice of a hyperspecial open compact subgroup $K'$.
In this case, although $\pi_{1}$ and $\pi_{2}$ correspond to distinct $\hat{\bfG}'$-conjugacy classes $t_{\pi_{1}}$ and $t_{\pi_{2}}$ in $\hat{\bfG}'\rtimes\Frob$, $\biota(t_{\pi_{1}})=\biota(t_{\pi_{2}})$.
In other words, the symbol $\biota(t_{\pi})$ is well-defined for any unramified $\pi\in\tilde{\Pi}_{\mathrm{unit}}(\bfG')$.
\end{rem}

\begin{prop}\label{prop:unramified-A-packets}
Let $\psi\in\tilde{\Psi}(\bfG')$.
Then $\pi_{\psi}\in\Pi_{\mathrm{unit}}(\bfG)$ is unramified if and only if $\tilde{\Pi}_{\psi,\mathbbm{1}}$ contains an unramified representation.
Furthermore, in this case, such an unramified representation is unique. Denote it by $\pi_0$. Then $m_{\psi,\mathbbm{1}}(\pi_{0})=1$ and $t_{\pi_{\psi}}=\biota(t_{\pi_{0}})$.
\end{prop}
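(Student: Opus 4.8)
\emph{Proof strategy.} The plan is to feed two carefully chosen pairs of spherical test functions into the twisted endoscopic character relation \eqref{eq:TECR2}, using the fundamental lemma of Lemaire--M{\oe}glin--Waldspurger (Theorem \ref{thm:FL}) to supply the needed transfers, and then to read off all three assertions from the Satake isomorphisms $S$, $S'$ and the algebra map $\hat{b}$. Throughout I normalize Haar measures so that $\vol(K)=\vol(K')=1$, and I keep the transfer factor normalized via $K$ as in Theorem \ref{thm:FL}, so that the constant $c$ of Theorem \ref{thm:Arthur} is one and the same in both applications.

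First I would apply \eqref{eq:TECR2} to $\tilde{f}=\mathbbm{1}_{\tilde{K}}$ together with its transfer $f'=\mathbbm{1}_{K'}$, which is a legitimate pair by Theorem \ref{thm:FL}. On the left, $\Tr(\pi(\mathbbm{1}_{K'}))=\dim\pi^{K'}$ vanishes unless $\pi$ is unramified, and by the last clause of Theorem \ref{thm:Arthur} every unramified member of $\underline{\tilde{\Pi}}_{\psi}$ has trivial character; hence the left side collapses to $\sum m_{\psi,\mathbbm{1}}(\pi)$, the sum over the unramified $\pi\in\tilde{\Pi}_{\psi,\mathbbm{1}}$. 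On the right, since $K$ is $\theta$-stable, $\tilde{\pi}_{\psi}(\mathbbm{1}_{\tilde{K}})$ is the projector onto $\pi_{\psi}^{K}$ followed by $\tilde{\pi}_{\psi}(\theta)$; it is $0$ when $\pi_{\psi}$ is not unramified, and otherwise $\pi_{\psi}^{K}$ is a line on which $\tilde{\pi}_{\psi}(\theta)$ acts by a scalar $\lambda$ with $|\lambda|=1$ (as $\pi_{\psi}$ is unitary). Thus \eqref{eq:TECR2} reads $\sum m_{\psi,\mathbbm{1}}(\pi)=c\cdot\Tr(\tilde{\pi}_{\psi}(\mathbbm{1}_{\tilde{K}}))$, whose left side is a nonnegative integer and whose right side is either $0$ or $c\lambda$ with $|c\lambda|=1$. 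If $\pi_{\psi}$ is not unramified the sum is $0$, so $\tilde{\Pi}_{\psi,\mathbbm{1}}$ has no unramified member; if $\pi_{\psi}$ is unramified the sum is $1$, forcing a unique unramified $\pi_{0}\in\tilde{\Pi}_{\psi,\mathbbm{1}}$ with $m_{\psi,\mathbbm{1}}(\pi_{0})=1$. This gives the equivalence, the uniqueness and the multiplicity-one assertion, and along the way records the identity $c\lambda=1$.

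Next I would rerun the computation with $\tilde{f}=f\ast\mathbbm{1}_{\tilde{K}}$ for an arbitrary $f\in\mcH_{K}$ and its transfer $f'=b(f)\in\mcH'_{K'}$ (again Theorem \ref{thm:FL}), now assuming $\pi_{\psi}$ unramified. As before $\Tr(\pi(b(f)))$ vanishes off the unramified locus, so the left side of \eqref{eq:TECR2} is $\Tr(\pi_{0}(b(f)))=(S'(b(f)))(t_{\pi_{0}})=(\hat{b}(S(f)))(t_{\pi_{0}})=(S(f))(\biota(t_{\pi_{0}}))$, using the defining square $S'\circ b=\hat{b}\circ S$, that $\hat{b}$ is restriction along $\iota$, and that $S(f)\in\hat{\mcH}$ is $\hat{\bfG}$-invariant (so the last equality and its well-definedness are exactly the remark preceding the proposition). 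The right side equals $c\cdot\Tr(\tilde{\pi}_{\psi}(f\ast\mathbbm{1}_{\tilde{K}}))=c\lambda\,(S(f))(t_{\pi_{\psi}})=(S(f))(t_{\pi_{\psi}})$, since $\tilde{\pi}_{\psi}(f\ast\mathbbm{1}_{\tilde{K}})$ has image in $\pi_{\psi}^{K}$ on which $\pi_{\psi}(f)$ is the scalar $(S(f))(t_{\pi_{\psi}})$ and $\tilde{\pi}_{\psi}(\theta)$ the scalar $\lambda$, and $c\lambda=1$. Hence $(S(f))(\biota(t_{\pi_{0}}))=(S(f))(t_{\pi_{\psi}})$ for every $f\in\mcH_{K}$; as $S$ is an isomorphism and $\hat{\mcH}$, the algebra of $\hat{\bfG}$-invariant regular functions on $\hat{\bfG}\rtimes\Frob$, separates semisimple $\hat{\bfG}$-conjugacy classes, we conclude $t_{\pi_{\psi}}=\biota(t_{\pi_{0}})$.

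I expect the genuine difficulties here to be bookkeeping rather than conceptual: one must be sure the constant $c$ appearing in Theorem \ref{thm:Arthur} is literally the same in both applications of \eqref{eq:TECR2} (this is what legitimizes transporting $c\lambda=1$ from the first computation into the second), and one must verify carefully that $\tilde{\pi}_{\psi}(\theta)$ preserves the line $\pi_{\psi}^{K}$ and that $\tilde{\pi}_{\psi}(f\ast\mathbbm{1}_{\tilde{K}})$ has image in $\pi_{\psi}^{K}$, so that the twisted traces really reduce to the scalars above. Note that no appeal to the multiplicity-freeness recalled in Remark \ref{rem:Moeglin} is needed, and the treatment of $\bfG'=\SO_{2n}$ is automatic because $\mathbbm{1}_{K'}$ and each $b(f)$ are $\mathrm{O}_{2n}$-invariant and $\biota(t_{\pi})$ is well defined on $\mathrm{O}_{2n}(F)$-orbits.
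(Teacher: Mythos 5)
Your proof is correct and follows essentially the same route as the paper's: apply the twisted endoscopic character relation to spherical test functions using the Lemaire--M{\oe}glin--Waldspurger fundamental lemma, specialize to the unit elements $\mathbbm{1}_{\tilde{K}}$, $\mathbbm{1}_{K'}$ to deduce uniqueness and multiplicity one (and to pin down the unit constant), then run general $f\in\mcH_{K}$ through the commutative Satake square to get $t_{\pi_{\psi}}=\biota(t_{\pi_{0}})$. The only cosmetic differences are the order of the two applications (the paper derives the general-$f$ identity first and then specializes, you do the reverse) and the justification for $|\lambda|=1$ (the paper notes $\tilde{\pi}_{\psi}(\theta)$ is involutive, hence a sign $\epsilon_{\psi}$, whereas you invoke unitarity); both suffice.
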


\begin{proof}
We apply the twisted endoscopic character relation \eqref{eq:TECR2}
\[
\sum_{\chi\in\overline{S}_{\psi}^{\vee}}
\sum_{\pi\in\tilde{\Pi}_{\psi,\chi}}
m_{\psi,\chi}(\pi)\chi(s_{\psi})\Tr(\pi(f'))
=
c\cdot\Tr(\tilde{\pi}_{\psi}(\tilde{f}))
\]
to a function $\tilde{f}\in\tilde{\mcH}_{K}$ given by $\tilde{f}=f\ast\mathbbm{1}_{\tilde{K}}$ with $f\in \mcH_{K}$ and its transfer $f'\in\mcH'$.
Let $V_{\pi_{\psi}}^{K}$ denote the subspace of $K$-fixed vectors in the representation space $V_{\pi_{\psi}}$ of $\pi_{\psi}$.
If $\pi_{\psi}$ is not unramified (i.e., $V_{\pi_{\psi}}^{K}=0$), then $\Tr(\tilde{\pi}_{\psi}(\tilde{f}))=0$ by the definition of the operator $\tilde{\pi}_{\psi}(\tilde{f})$.
If $\pi_{\psi}$ is unramified, the operator $\tilde{\pi}_{\psi}(\mathbbm{1}_{\tilde{K}})$ necessarily preserves the space $V_{\pi_{\psi}}^{K}$.
Furthermore, since the space $V_{\pi_{\psi}}^{K}$ is one-dimensional and the action of $\tilde{\pi}_{\psi}(\mathbbm{1}_{\tilde{K}})$ on $V_{\pi_{\psi}}^{K}$ is involutive, $\tilde{\pi}_{\psi}(\mathbbm{1}_{\tilde{K}})$ acts on $V_{\pi_{\psi}}^{K}$ via a sign $\epsilon_{\psi}\in\{\pm1\}$.
Thus $\Tr(\tilde{\pi}_{\psi}(\tilde{f}))=\epsilon_{\psi}\cdot\Tr(\pi_{\psi}(f))$.
On the other hand, recall that we can choose a transfer $f'$ of $\tilde{f}$ to be an element of $\mcH'_{K'}$ by Theorem \ref{thm:FL}.
In particular, $\Tr(\pi(f'))$ (for $\chi\in\overline{S}_{\psi}^{\vee}$ and $\pi\in\tilde{\Pi}_{\psi,\chi}$) can be nonzero only when $\pi$ is unramified, which furthermore implies that $\chi=\mathbbm{1}$ by Theorem \ref{thm:Arthur}.
Thus we get
\begin{align}\label{eq:TECR-ur}
\sum_{\begin{subarray}{c}\pi\in\tilde{\Pi}_{\psi,\mathbbm{1}}\\ V_{\pi}^{K'}\neq0 \end{subarray}} m_{\psi,\mathbbm{1}}(\pi)\Tr(\pi(f'))
=
\begin{cases}
c\cdot\epsilon_{\psi}\cdot\Tr(\pi_{\psi}(f))& \text{if $\pi_{\psi}$ is unramified,}\\
0& \text{otherwise}.
\end{cases}
\end{align}

Let us take $f$ in the identity \eqref{eq:TECR-ur} to be $\mathbbm{1}_{K}$.
Then we can choose $f'$ to be $\mathbbm{1}_{K'}$ by Theorem \ref{thm:FL}.
Since $\Tr(\pi(\mathbbm{1}_{K'}))=1$ whenever $V_{\pi}^{K'}\neq0$ (resp.\ $\Tr(\pi_{\psi}(\mathbbm{1}_{K}))=1$ whenever $V_{\pi_{\psi}}^{K}\neq0$), we get
\[
\sum_{\begin{subarray}{c}\pi\in\tilde{\Pi}_{\psi,\mathbbm{1}}\\ V_{\pi}^{K}\neq0 \end{subarray}} m_{\psi,\mathbbm{1}}(\pi)
=
\begin{cases}
c\cdot\epsilon_{\psi}& \text{if $\pi_{\psi}$ is unramified,}\\
0& \text{otherwise}.
\end{cases}
\]
Since $m_{\psi,\mathbbm{1}}(\pi)$ is positive and $|c|=1$, this implies that
\begin{itemize}
\item
if $\pi_{\psi}$ is not unramified, then the index set of the sum on the left-hand side is empty, i.e., $\tilde{\Pi}_{\psi}$ does not contain any unramified representation, and
\item
if $\pi_{\psi}$ is unramified, then $c\cdot\epsilon_{\psi}=1$ and the index set of the sum on the left-hand side consists of a unique element $\pi_{0}$ and $m_{\psi,\mathbbm{1}}(\pi_{0})=1$.
\end{itemize}

Let us finally check that $t_{\pi_{\psi}}=\biota(t_{\pi_{0}})$ by supposing that $\pi_{\psi}$ is unramified.
Now we know that the identity \eqref{eq:TECR-ur} simplifies to
\[
\Tr(\pi_{0}(f'))
=
\Tr(\pi_{\psi}(f))
\]
for any $\tilde{f}=f\ast\mathbbm{1}_{\tilde{K}}\in\tilde{\mcH}_{K}$ and its transfer $f'\in\mcH'_{K'}$.
If we take $f'\in\mcH'_{K'}$ as in Theorem \ref{thm:FL} (i.e., $f'=b(f)$), then this equality is rewritten as
\[
S'(b(f))(t_{\pi_{0}})
=
S(f)(t_{\pi_{\psi}})
\]
(recall that $S$ and $S'$ denote the Satake isomorphisms for $\bfG$ and $\bfG'$, respectively).
By the definition of the homomorphism $b\colon \mcH_{K}\rightarrow\mcH'_{K'}$, $S'(b(f))(t_{\pi_{0}})=S(f)(\biota(t_{\pi_{0}}))$.
Hence we conclude that the identity
\[
S(f)(\biota(t_{\pi_{0}}))
=
S(f)(t_{\pi_{\psi}})
\]
holds for any $f\in\mcH_{K}$.
This implies that $\biota(t_{\pi_{0}})=t_{\pi_{\psi}}$.
\end{proof}

\end{document}